\NewDocumentCommand{\codeword}{v}{%
\texttt{\textcolor{black}{#1}}%
}
\newcommand{\Acomment}[1]{\textcolor{black}{#1}}
\newcommand{\stencilpt}[4][]{\node[circle,draw,inner sep=0.1em,minimum size=0.8cm,font=\tiny,#1] at (#2) (#3) {#4}}
\newcommand{\der}{\mathrm{d}}
\newcommand{\dx}{\Delta x}
\newcommand{\dy}{\Delta y}
\newcommand{\dt}{\Delta t}
\newcommand{\vecrhsT}{\mathbf{d}}
\newcommand{\vecx}{\mathbf{x}}
\newcommand{\vecy}{\mathbf{y}}
\newcommand{\vecz}{\mathbf{z}}
\newcommand{\vecb}{\mathbf{b}}
\newcommand{\vecrhs}{\mathbf{f}}
\newcommand{\vecf}{\mathbf{f}}
\newcommand{\vecg}{\mathbf{g}}
\newcommand{\vecfhat}{\widehat{\mathbf{f}}}
\newcommand{\vech}{\mathbf{h}}
\newcommand{\veck}{\mathbf{k}}
\newcommand{\mxA}{\mathbf{A}}
\newcommand{\mxE}{\mathbf{E}}
\newcommand{\mxXhat}{\widehat{\mathbf{X}}}
\newcommand{\mxL}{\mathbf{L}}
\newcommand{\mxR}{\mathbf{R}}
\newcommand{\mxU}{\mathbf{u}}
\newcommand{\mxV}{\mathbf{v}}
\newcommand{\mathd}{\mathrm{d}}
\newcommand{\mathe}{\mathrm{e}}
\newcommand{\mydim}{D}
\newcommand{\vecn}{\bm{n}}
\begin{document}

\pagestyle{empty}
\includepdf[pages=-,pagecommand={},offset=2.55cm -2.55cm]{./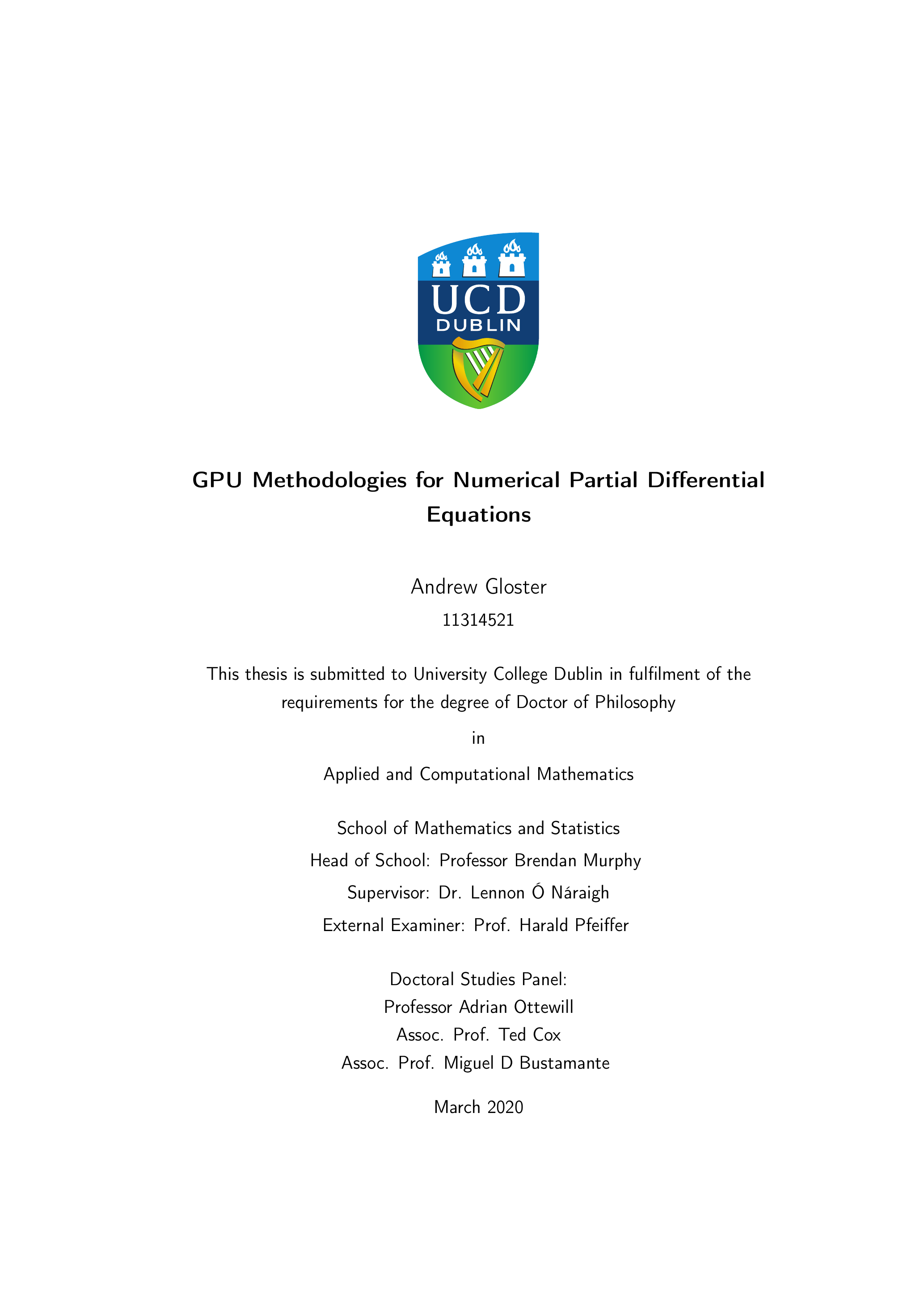}

\frontmatter      
\pagenumbering{roman}



\fancyhead{}  
\rhead{\thepage}  
\lhead{}  


\pagestyle{empty}  

\null\vfill
\textit{``You'll address me by my proper title, you little bollocks!'}

\begin{flushright}
Bishop Leonard Brennan (Father Ted 1996)
\end{flushright}

\vfill\vfill\vfill\vfill\vfill\vfill\null
\clearpage  





\tableofcontents  

\btypeout{Abstract}
\thispagestyle{plain}
\begin{center}{\huge{Abstract} \par}\end{center}
{ 
In this thesis we develop techniques to efficiently solve numerical Partial Differential Equations (PDEs) using Graphical Processing Units (GPUs).
Focus is put on both performance and re--usability of the methods developed, to this end a library, cuSten, for applying finite--difference stencils to numerical grids is presented herein.
On top of this various batched tridiagonal and pentadiagonal matrix solvers are discussed.
\Acomment{These} have been benchmarked against the current state of the art and shown to improve performance in the solution of numerical PDEs.
A variety of other benchmarks and use cases for the GPU methodologies are presented using the Cahn--Hilliard equation as a core example, but it is emphasised the methods are completely general. 
Finally through the application of the GPU methodologies to the Cahn--Hilliard equation new results are presented on the growth rates of the coarsened domains.
In particular a statistical model is built up using batches of simulations run on GPUs from which the growth rates are extracted, it is shown that in a finite domain that the traditionally presented results of $1/3$ scaling is in fact a distribution around this value.
This result is discussed in conjunction with modelling via a stochastic PDE and sheds new light on the behaviour of the Cahn--Hilliard equation in finite domains.
}
\clearpage



\btypeout{Statement of Original Work}
\thispagestyle{plain}
\begin{center}{\huge{Statement of Original Work} \par}\end{center}
{\normalsize 

I hereby certify that the submitted work is my own work, was completed 
while registered as a candidate for the degree stated on the Title Page, and I have not obtained a 
degree elsewhere on the basis of the research presented in this submitted work.

}
\clearpage

\btypeout{Sponsor}
\thispagestyle{plain}
\begin{center}{\huge{Sponsor} \par}\end{center}
{\normalsize 

This work was supported by the University College Dublin Structured Ph.D. Programme in Applied and Computational Mathematics and was funded by the UCD Research Demonstratorship.  
I acknowledge the support of NVIDIA Corporation for the donation of the two Titan X Pascal GPUs used for this research.
In addtion I wish to acknowledge the DJEI/DES/SFI/HEA Irish Centre for High-End Computing (ICHEC) for the provision of computational facilities and support.
\begin{figure}[b!]
\includegraphics[width=0.2\textwidth]{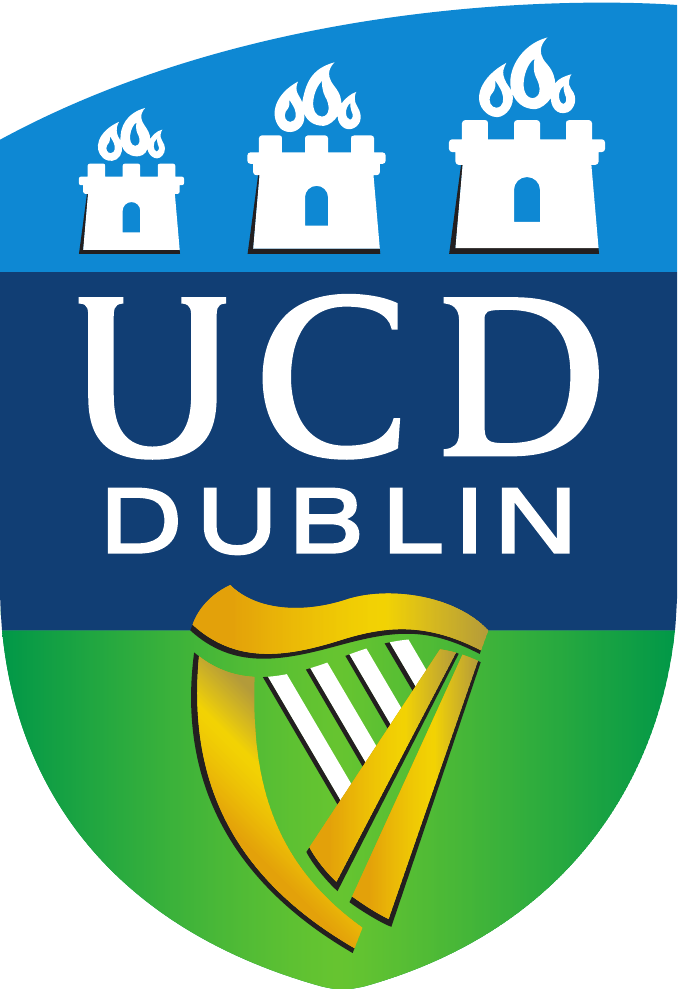}
\end{figure}

}
\clearpage

\btypeout{Collaborations}
\thispagestyle{plain}
\begin{center}{\huge{Collaborations} \par}\end{center}
{

\begin{itemize}
\item \textbf{Lennon {\'O} N{\'a}raigh}. Asst. Prof. Lennon {\'O} N{\'a}raigh served as my Ph.D. supervisor, and as such all the work in this thesis should be viewed as a collaboration with him. 
\item \textbf{Khang Ee Pang}. Wrote the serial C programs necessary for benchmarking in Chapter 3 and contributed to the associated paper as part of an internship with Lennon {\'O} N{\'a}raigh.
\item \textbf{Enda Carroll}. Wrote the tridiagonal versions of the solvers and contributed to the paper forming the basis of Chapter 4.
\item \textbf{Miguel Bustamante}. Supervisor of Enda Carroll with contributions to paper forming the basis of Chapter 4.
\item \textbf{Lung Sheng Chien}. Software engineer at NVIDIA, providied feedback on some of the GPU methodolgies presented in this thesis.
\item \textbf{Harun Bayraktar}. Manager, CUDA Mathematical Software Libraries at NVIDIA, providied feedback on some of the GPU methodolgies presented in this thesis.

\end{itemize}
}
\clearpage





\acknowledgements{
\addtocontents{toc}{\vspace{1em}}  
I would like to thank my supervisor Dr. Lennon {\'O} N{\'a}raigh for all his time, support and help throughout my PhD.
Without his guidance none of this work would have been possible and his ability to put things back together, and instil confidence, even when everything was falling apart was invaluable. 
I also owe a big thank you to Dr. Ted Cox who served as a secondary supervisor for part of this work and whose willingness to work with myself and Lennon allowed me to secure the original funding necessary to undertake this PhD.

The constant support and encouragement throughout all of my education that my parents gave me ultimately brought me to a position from which I was able to undertake a PhD.
Without the fundamental drive and passion for education they gave me I wouldn't have gotten past Junior Cert Maths, never mind a PhD.
I owe them a huge thanks for everything, from funding a year in Imperial College London through to listening to my rants when something was not going right. 
Arguments over how to solve simple quadratic equations in the kitchen at home with my Dad while studying for the Leaving Cert deserve a mention, it seems the perseverance paid off.

Finally to my girlfriend who I met in UCD while doing this PhD and to all my UCD friends, whether the original lunch crew, the Famous Five of the EIRSAT-1 team, Da Office and the rest of the people in the School of Mathematics and Statistics at UCD thank you for all the fun and support along with way.

}
\clearpage  

\lhead{Publications and Conferences}  
\chapter*{Publications and Conferences}

\section*{Contributed talks}
\noindent
- June 2019, IUTAM Symposium 2019, University College Dublin, Ireland: 'GPU methods for Fluid Mechanics.'

\noindent
- September 2018,  UCD Applied \& Computational Mathematics Seminar Series, University College Dublin, Dublin, Ireland: 'cuPentBatch -- A batched pentadiagonal solver for NVIDIA GPUs'

\section*{Publications}
- \textbf{Andrew Gloster, Lennon {\'O} N{\'a}raigh and Khang Ee Pang} 
cuPentBatch -- A batched pentadiagonal solver for NVIDIA GPUs. \\
Computer Physics Communications, Volume 241, Pages 113-121, 2019. \\

- \textbf{Andrew Gloster and Lennon {\'O} N{\'a}raigh}
cuSten -- CUDA Finite Difference and Stencil Library. \\ 
Software X, Volume 10, 2019.

\section*{Preprints}

\noindent
- \textbf{Andrew Gloster, Enda Carroll, Miguel Bustamante \Acomment{and} Lennon {\'O} N{\'a}raigh} Efficient Interleaved Batch Matrix Solvers for CUDA. \\ 
Preprint: arXiv:1909.04539, 2019 

- \textbf{Lennon {\'O} N{\'a}raigh and Andrew Gloster} A large-scale statistical study of the coarsening rate in models of Ostwald-Ripening. \\ 
Preprint: arXiv:1911.03386, 2019 (Submitted for review)

\section*{Source Code}
\Acomment{
cuSten library version used in thesis - \url{https://github.com/munstermonster/cuSten/releases/tag/2.1}}

\Acomment{
cuPentBatch version used in thesis - \url{https://github.com/munstermonster/cuPentBatch/releases/tag/1.0}}

\mainmatter	  
\pagestyle{plain}  


\lhead{\emph{Introduction}}  
\chapter{Introduction}
\label{chapter:intro}
Partial Differential Equations (PDEs) are present in almost all dynamic physical systems, examples include the Navier--Stokes equations for fluid flow \cite{pope2001turbulent, doering1995applied}, Euler equations for shock--waves \cite{fedkiwBook, hesthavenBook}, the Black--Scholes equation for options pricing \cite{black1973pricing, wilmott_howison_dewynne_1995} and Burgers equation, which is \Acomment{used in modelling} gas dynamics and traffic flow \cite{whitham2011linear}. 
In many situations analytic methods for solving a given PDE are not possible due to the complexity of the equations; an alternate approach is to solve the PDE numerically on a computer.
For these numerical simulations to have a high fidelity to the physical problem high resolution grids are needed which in turn require large data--sets and computational resources.
Typically High Performance Computing (HPC) is used to tackle these large simulations, specifically having many individual computers/processors work together, dividing the problem into small parts which are then calculated in parallel at the same time.
Graphics Processing Units (GPUs) are an increasingly popular HPC hardware solution to solve these computing problems in applied mathematics and computational physics.
This is due to the increased levels of parallelisation, speed--up and energy efficiency GPUs offer over standard parallelisation methods such as Open Multi--Processing (OpenMP) and MPI (Message--Passing--Interface) which rely on standard Central Processing Units (CPUs).
Indeed, 5 of the top 10 supercomputers in the TOP500 survey, conducted in June 2019, have GPUs as a core piece of hardware within their architecture for accelerating HPC applications.
The energy efficiency benefits of GPUs are made particularly clear in the Green500 survey, where efficiency is measured in terms of GFlops / watts (FLOPs -- floating point operations per Second).
9 of the top 10 supercomputers in this list make use of GPUs, as of June 2019.
The Green500 survey focuses on energy efficiency as well as computational speed of supercomputers, the more energy efficient the machine the lower the electricity costs are to run it and subsequently the lower the environmental cost of running large parallel HPC applications. 

In this thesis we will focus on methods for the application of GPUs to solve various batches of 1--dimensional (1D) and 2--dimensional (2D) PDEs numerically using NVIDIA GPUs.
Extending the use of GPUs to numerical problems and other areas outside of computer graphics is often refereed to General Purpose Computing on Graphics Processing Units (GPGPU).
NVIDIA is the current leading manufacturer of GPUs for scientific computing purposes as emphasised by the prevalence of their hardware in the TOP500 and Green500 surveys.
The numerical programs in this thesis will be written in CUDA (Compute Unified Device Architecture) which is an application programming interface (API) which extends the ability to execute programs on a NVIDIA GPU to the C/C++ programming languages.
Competing implementations include OpenCL and \Acomment{OpenACC}.
OpenCL is an effort to have a cross--platform programming language which can be applied on various hardware solutions including CPUs, GPUs and field--programmable gate arrays (FPGAs).
OpenACC is another implementation which attempts to implement programs on a GPU in a similar fashion to OpenMP where compiler directives are used to flag areas of code which should be executed in parallel on the GPU.

This thesis will also focus on the application of these GPU methodologies to the Cahn--Hilliard equation~\cite{CH_orig}.
The Cahn--Hilliard equation is a PDE with a fourth order derivative and a \Acomment{non--linear} term which models the phase separation of a binary mixture.
It was chosen as a representative PDE to which to apply GPUs to within the scope of this thesis, yet it is worth emphasising that the GPU methodologies discussed herein can be applied generally to the numerical study of PDEs.
We explore methods of parallelising the solution of the Cahn--Hilliard equation, whether as batches of independent 1D equations or larger 2D simulations.
Using these methods we will examine batches of 1D Cahn--Hilliard equations to examine averaged scaling behaviours and perform a parameter study, varying the given parameters across members of a batch of simulations, to produce flow pattern maps, using a clustering algorithm to group the data. 
We also run batches of 2D simulations of the Cahn--Hilliard equation to examine the application of Lifshitz--Slyozov--Wagner (LSW) theory to look at the growth rates of the separated binary phases.

In order to present and discuss the work in this thesis we first explore separate introductions to the four subtopics necessary to understand the work herein, these topics are presented in Chapter~\ref{chapter:method}.
In addition to this many of the chapters in this thesis are based on papers completed during the course of the PhD that are either published or under review at the time of writing.
Each paper has its own introduction and they all overlap to a large degree, thus these \Acomment{introductory} sections can be considered a unification of these paper introductions, the content of the papers is then restructured slightly in order to refer to them.
Also this layout ensures the discussion contained in this thesis is cohesive and avoids repetition.
The first subtopic is discussed in Section~\ref{sec2:fdmethod}.
\Acomment{We} look at Finite--Difference methods, our chosen method of numerical discretisation of PDEs.
Then in the following section (\ref{sec2:gpuArch}) we look at GPU hardware architectures and how they differ to standard CPU architectures.
We introduce the CUDA API and the key concepts behind its use as the third topic in Section~\ref{sec2:cudaAPI}.
The final topic, covered in Section~\ref{sec2:cahnEq}, will focus on the Cahn--Hilliard equation and the necessary background material relating to it relevant to this thesis.

Having covered the required background material in Chapter~\ref{chapter:method} we can then move to the body of the thesis.
In Chapter~\ref{chapter:cuSten} we present a GPU library, cuSten, developed to apply finite difference stencils to various batched 1D arrays and 2D arrays.
The aim of the library is to simplify the implementation of finite-difference programs on GPUs taking much of the heavy lifting away from the programmer, much like the implementations of cuBLAS and cuSPARSE simplify linear algebra operations, to allow for speedier code production.
Benchmarks and examples of the library application are covered in Chapter~\ref{chapter:cuSten}.
Following this, in Chapter~\ref{chapter:cuPentBatch}, a batched pentadiagonal solver is developed for the GPU.
This solver is benchmarked against the state of the art algorithm from cuSPARSE and general performance speed-ups for solving batches of 1D PDEs on a GPU versus serial and OpenMP implementations.
The solver is extended further in Chapter~\ref{chapter:efficient} to a more data efficient implementation which is suited particularly to PDE parameter studies, this is benchmarked against the solver from the previous chapter.

In Chapter~\ref{chapter:batched} we apply the 1D methodologies to examine averaged results across 1D Cahn--Hilliard equations, in particular looking at the scaling of the separated phases as a function of time.
We also apply the batch solving methodology to batches of forced 1D Cahn--Hilliard equations to look at generating data sets for flow--pattern maps.
\Acomment{The} maps are then created using a clustering algorithm, this is an attempt to automate a tedious process of clustering that usually has to be performed by hand in the absence of a metric that can classify solutions. 
The Alternating Direction Implicit (ADI) scheme for solving the Cahn--Hilliard equation developed in Chapter~\ref{chapter:cuSten} is then applied to solve batches of independent 2D simulations from which we extract $\beta$, the growth rate of the separated regions in the simulations.
\Acomment{The} batches are then used to build a statistical picture of $\beta$, this work is presented in Chapter~\ref{chapter:statistical}.
These statistics are also discussed in the context of LSW theory, \Acomment{the} underlying theory for the ripening/coarsening phenomena seen in the Cahn--Hilliard equation.
Finally in Chapter~\ref{chapter:con} a conclusion and discussion of the work carried out as part of this thesis is presented. 
Discussion of possible future work directions are also included in this final chapter.


\lhead{\emph{Chapter 2}}  
\chapter{Theory and Methodology}
\label{chapter:method}
In this chapter we present the four primary topic areas that are covered in this thesis and the theory/methodologies behind them. 
This is to provide the background for the content discussed in later chapters and to remove overlap of discussion from the papers that resulted from the work completed in this thesis.

\section{Finite--Difference Methods}
\label{sec2:fdmethod}
In this thesis we will be making use of one of the standard methods for the numerical discretisation of PDEs, the Finite--Difference Method.
It is a method for approximating derivatives using weighted differences between various points in a numerical grid.
We break the discussion into three subsections here where we first describe finite--differencing stencils in Section~\ref{subsec2:fdStencil} which are used to discretise the domain. 
We then present a discussion in Section~\ref{subsec2:numStab} on numerical stability where we discuss the conditions one must enforce in order to ensure numerical stability.
Finally a discussion, presented in Section~\ref{subsec2:numAcc}, included on methods for showing that the expected accuracy of a given numerical scheme has been achieved.

\subsection{Finite--Difference Stencils}
\label{subsec2:fdStencil}
We begin by defining the Taylor series of a well--behaved function (smooth, differentiable and continuous) at $x + h$ where $h$ is a small increment in space

\begin{equation}
f(x + h) = f(x) + \frac{f^{\prime}(x)}{1!}h + \frac{f^{(2)}(x)}{2!}h^2 + \dots
\end{equation}

We then set $x = a$ as this is the explicit point we wish to approximate our derivative at, we also divide across by $h$ to yield

\Acomment{
\begin{equation}
\frac{f(a + h)}{h} = \frac{f(a)}{h} + f^{\prime}(a) + \frac{f^{(2)}(a)}{2!}h + \dots
\end{equation} 
}

Now rearranging for the derivative we have

\Acomment{
\begin{equation}
f^{\prime}(a) = \frac{f(a + h) - f(a)}{h} - \frac{f^{(2)}(a)}{2!}h + \dots
\end{equation}
}
We now neglect terms of $O(h)$ to yield a final approximation for the first derivative

\begin{equation}
f^{\prime}(a) = \frac{f(a + h) - f(a)}{h} + O(h)
\label{eq2:firstorderacc}
\end{equation}

thus we have a scheme that is first order accurate.
Accuracy in this sense is given by the order of $h$ of the largest truncation term, here we have truncated at the term $\frac{f^{(2)}(x)}{2!}h$ where $h$ has a power of $1$, and so the scheme is first order accurate.
A clear analogy can be seen between this approximation and the standard definition of differentiation from first principles, if we let $h \rightarrow 0$ in Equation~\eqref{eq2:firstorderacc} \Acomment{we then recover} the standard definition of the first derivative of a function.

We now adopt the common notation for finite--difference methods.
Let $\dx = h$ be the spacing in the $x$--direction of a \Acomment{uniform} numerical grid.
Position within this grid can be determined by $x = i \dx$ where $i = 1, 2\dots, N$ \Acomment{and} $N$ is the number of points used to discretise the finite domain $\Omega$.
$\Delta x$ can be recovered directly by dividing the domain length $L$ by \Acomment{$N + 1$}, so we have \Acomment{$\dx = L / (N + 1)$}. 
Similar indexing with $j$ is adopted for $y$--direction derivatives and $n$ for time derivatives.
\Acomment{Typically spatial} derivative indexing is subscripted with time indexing superscripted. 
So adopting this notation changes equation~\eqref{eq2:firstorderacc} into 

\begin{equation}
\left(\frac{\der f}{\der x}\right)_i = \frac{f_{i+1} - f_i}{\dx}
\label{eq2:forDiff}
\end{equation}

The above expression is a forward difference as we're differencing using the current point $i$ and the point in front $i+1$, similarly the backwards difference can be defined by differencing using $i$ and $i - 1$ to give 

\begin{equation}
\left(\frac{\der f}{\der x}\right)_i = \frac{f_{i} - f_{i - 1}}{\dx}
\label{eq2:backDiff}
\end{equation}

In order to extract higher derivatives we can simply apply the above expressions together by applying a backward difference to a forward difference

\begin{equation}
\left(\frac{\der^2f}{\der x^2}\right)_i = \frac{ \frac{f_{i + 1} - f_{i}}{\Delta x}  -  \frac{f_{i} - f_{i - 1}}{\dx}    }{\Delta x}
\end{equation}

which simplifies to the classic second order accurate central difference approximation to the second derivative

\begin{equation}
\left(\frac{\der^2f}{\der x^2}\right)_i = \frac{f_{i + 1} - 2f_i + f_{i - 1}}{\dx^2}
\label{eq2:centreSecond}
\end{equation}

Central difference is the terminology used whenever there is a symmetric weighting of values around position $i$ such as the case above where we have used the points $i - 1$ and $i + 1$.
For completeness we show this expression to be second order accurate, first generalising the coefficients yields

\begin{equation}
\left(\frac{\der^2f}{\der x^2}\right)_i = \frac{\alpha f_{i + 1} + \beta f_i + \gamma f_{i - 1}}{\dx^2}
\label{eq2:generalSecond}
\end{equation}

We then Taylor expand the terms $f_{i + 1}$ and $f_{i - 1}$ as follows

\begin{align}
f_{i + 1} & = f_i + \dx \left(\frac{\der f}{\der x}\right)_i + \frac{\dx^2}{2} \left(\frac{\der^2 f}{\der x^2}\right)_i + \frac{\dx^3}{3!} \left(\frac{\der^3 f}{\der x^3}\right)_i + \dots \\
f_{i - 1} & =  f_i - \dx \left(\frac{\der f}{\der x}\right)_i + \frac{\dx^2}{2} \left(\frac{\der^2 f}{\der x^2}\right)_i - \frac{\dx^3}{3!} \left(\frac{\der^3 f}{\der x^3}\right)_i + \dots
\end{align}

which can then be substituted into equation~\eqref{eq2:generalSecond} to give

\Acomment{
\begin{align}
\left(\frac{\der^2f}{\der x^2}\right)_i  = & \frac{\alpha + \beta + \gamma}{\dx^2}f_i + \frac{\alpha - \gamma}{\dx}\left(\frac{\der f}{\der x}\right)_i + \\ &  \frac{\alpha + \gamma}{2}\left(\frac{\der^2 f}{\der x^2}\right)_i + \frac{\alpha - \gamma}{6}\Delta x\left(\frac{\der^3 f}{\der x^3}\right)_i + O(\dx^2)
\end{align}
}

This system can be solved by setting $\alpha = \gamma = 1$ and $\beta = -2$, thus recovering Equation~\eqref{eq2:centreSecond} and showing that it is indeed a second order accurate approximation to the second derivative. 
Later in this thesis we will also require a difference for the fourth derivative which we give here as a second order accurate expression

\begin{equation}
\frac{\der^4 f}{\der x^4} = \frac{f_{i + 2} - 4 f_{i + 1} + 4 f_i - 4 f_{i - 1} + f_{i + 2}}{\dx^4}
\label{eq2:centreFourth}
\end{equation}

Extensions of the Finite--Difference Method to more sophisticated schemes such as Essentially--Non--Oscillatory (ENO)~\cite{shuENO, shu88, shu89, fedkiwBook, hesthavenBook} and Weighted--Essentially--Non--Oscillatory (WENO)~\cite{jcpWENO, shuENO, fedkiwBook, hesthavenBook} are available in order to deal with taking numerical derivatives across discontinuities such as shocks.
These schemes account for the direction shocks are travelling in a system, which if not properly dealt with can lead to instabilities and numerical artefacts, a typical feature that appears is an oscillation near the shock fronts known as Gibbs--Phenomena~\cite{hesthavenBook}.
For expressions that deal with non--uniform grids and retain accuracy with respect to truncation errors one must resort to coordinate transformations from the physical non--uniform space to a uniform space, carry out the differencing there, followed by transforming back~\cite{chung2010computational}.
Typically if one resorts to simple differencing on non--uniform grids by making $\dx$ variable, then no better than fist order accuracy will be achieved.


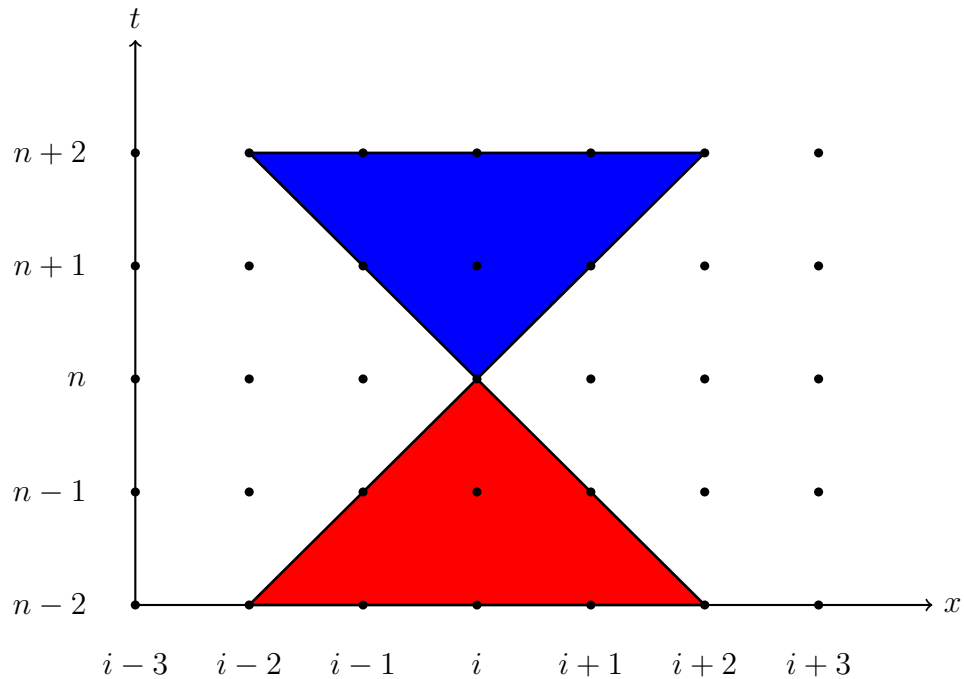
\begin{figure}
	\centering
    \begin{tikzpicture}[scale=3]
    \draw [<->,thick] (0,2.5) node (yaxis) [above] {$t$}
        |- (3.5,0) node (xaxis) [right] {$x$};

    \draw (0.5, 0.0) coordinate (a1) -- (2.5, 2.0) coordinate (a2);
    \draw (2.5, 0.0) coordinate (b1) -- (0.5, 2.0) coordinate (b2);
	\coordinate (c) at (intersection of a1--a2 and b1--b2);
	\filldraw[draw=black, fill=red, line width=1pt] (a1) -- (b1) -- (c) -- cycle; 
	\filldraw[draw=black, fill=blue, line width=1pt] (a2) -- (b2) -- (c) -- cycle;

	\foreach \x in {0,0.5,...,3.0} {
        \foreach \y in {0,0.5,...,2} {
            \fill[color=black] (\x,\y) circle (0.02);
        }
    }

    \node at (0.0, 0.0) [below = 0.5cm] {$i - 3$};
    \node at (0.5, 0.0) [below = 0.5cm] {$i - 2$};
    \node at (1.0, 0.0) [below = 0.5cm] {$i - 1$};
    \node at (1.5, 0.0) [below = 0.5cm] {$i$};
    \node at (2.0, 0.0) [below = 0.5cm] {$i + 1$};
    \node at (2.5, 0.0) [below = 0.5cm] {$i + 2$};
    \node at (3.0, 0.0) [below = 0.5cm] {$i + 3$};

    \node at (0.0, 0.0) [left = 0.5cm] 	{$n - 2$};
    \node at (0.0, 0.5) [left = 0.5cm] 	{$n - 1$};
    \node at (0.0, 1.0) [left = 0.5cm] 	{$n$};
    \node at (0.0, 1.5) [left = 0.5cm] 	{$n + 1$};
    \node at (0.0, 2.0) [left = 0.5cm] 	{$n + 2$};

    \end{tikzpicture}
    \caption{Diagram showing the domain of dependence in red and the domain of influence in blue.}
    \label{fig:dependence}
\end{figure}

\subsection{Numerical Stability}
\label{subsec2:numStab}
In order \Acomment{to evolve a system numerically in time there} are conditions on the relationship between the time--step size $\dt$ and the spacing $\dx$ that must be satisfied in order to ensure that no incorrect information is propagated through the domain.
This criteria can be described heuristically through the concept of the domain of dependence.
\Acomment{Imagining a physical wave travelling through a discrete computational space, the speed of the computation $\Delta x / \Delta t$ must be at least as fast as the physical wave speed to ensure the correct information is propagated throughout the computation.
Should the computational speed not be fast enough then incorrect, un-physical data will be included in the computation, typically leading to the development of errors and ultimately numerical infinities.}
Thus we must have that

\begin{equation}
u \leq C \frac{\dx}{\dt}
\end{equation}

Here $C$ is a dimensionless number know as the Courant number, it is also know as the Courant-Friedrichs-Lewy (CFL) condition~\cite{hesthavenBook}.
This equation now allows us to define the domain of dependence as the area shown in red in Figure~\ref{fig:dependence}, in order for the point $i$ to be calculated all of the information necessary must be contained in the past space--time cone. 
The slopes of the cone are given by $\pm u$.
The points which rely on this point $i$ at future times must fall within the blue region, this region is  known as the domain of influence.

While the domain of dependence provides a heuristic for how to ensure numerical stability it does not provide us with a usable expression to determine the necessary relationship between $\dx$ and $\dt$, for this we must turn to Von Neumann stability analysis which is based on Fourier series.
\Acomment{For the purposes of this discussion we provide an example numerical scheme which evolves the diffusion equation using an explicit time-stepping scheme.
\begin{equation}
\frac{\partial T}{\partial t} = \alpha \frac{\partial^2 T}{\partial x^2}
\label{eq2:diffusionPDE}
\end{equation}
We then discretise Equation~\eqref{eq2:diffusionPDE} using a first order forward time stencil along with a second order spatial central difference stencil to yield
\begin{equation}
T^{n + 1}_i = \sigma T^n_{i + 1} +  (1 - 2 \sigma) T^n_i + \sigma T^n_{i - 1}
\label{eq2:heatFTCS}
\end{equation}
where we have defined $\sigma = \alpha \dt / \dx^2$.}
In order to find the stability criteria using Von Neumann analysis we take a typical term of a Fourier series solution at time step $n$ given by 

\begin{equation}
T^n(x) =  \xi(k)^n e^{ik_xx}
\end{equation}

and substitute it into equation~\eqref{eq2:heatFTCS} to yield

\begin{equation}
\xi(k)^{n + 1} e^{ik_xx} = \sigma \xi(k)^n e^{ik_x(x + \dx)} + (1 - 2 \sigma) \xi(k)^n e^{ik_xx} +  \sigma \xi(k)^n e^{ik_x(x - \dx)}
\end{equation}

Simplifying and solving for the amplitude $\xi(k)$ we have

\begin{equation}
\xi(k) = 1 - 4\sigma \sin^2\left(\frac{k_x \dx}{2}\right)
\end{equation}

From here we can say there is a sufficient condition for stability if the absolute value of this expression is bounded above by $1$, in other words

\begin{equation}
\left|1 - 4\sigma \sin^2\left(\frac{k_x \dx}{2}\right)\right| \leq 1
\end{equation}

It is easily seen that the term $4\sigma \sin^2\left(\frac{k_x \dx}{2}\right)$ is always positive and so a sufficient condition becomes

\begin{equation}
4\sigma \sin^2\left(\frac{k_x \dx}{2}\right) \leq 2
\end{equation}

which, when one considers that $\sin^2(x) \leq 1$ and is strictly positive, can be used to give a restriction on $\sigma$ given by

\begin{equation}
\sigma = \frac{\alpha \dt}{\dx^2} \leq \frac{1}{2}
\end{equation}

So now we have our desired relationship between $\dt$ and $\dx$, for a given spacing $\dx$ we must have that 

\begin{equation}
\dt \leq \frac{\dx^2}{2\alpha}
\label{eq2:heatCFL}
\end{equation}

\Acomment{
While this example is purely instructional it provides us with a clear picture of the relationship between $\Delta x$ and $\Delta t$ and how this relationship affects the stability of a numerical scheme.
Should $\Delta t$ be too large then this scheme will become numerically unstable and lead to numerical infinities within the domain.
It can also be seen in this example that the choice of numerical discretisation and differential order of the PDE itself impact on the stability of the scheme.
This is made obvious in the appearance of $\sigma$ in the stability condition, a quantity that comes from how we chose to discretise the operators in Equation~\eqref{eq2:diffusionPDE}.
In particular the choice of an explicit first order accurate stencil for the time differential contributes to the stability criteria, should we have chosen an implicit scheme such as Crank--Nicolson then unconditional stability could have been achieved.
Unconditional stability eliminates the restriction of the relationship between time-step size and spatial discretisation allowing for larger time-steps to be taken when computing, reducing overall compute time, a desirable feature when computing high resolution systems.
We explore the application of a Crank--Nicolson scheme later in Chapter~\ref{chapter:cuPentBatch} to the hyperdiffusion  equation where unconditional stability is achieved.
}

In reality for many schemes it is only useful, or indeed possible, to apply Von Neumann stability analysis when dealing with linear equations.
Typically with non--linear equations one must extract a condition on $\dt$ via trial and error, this is commonly \Acomment{achieved} by varying the Courant number $C$ above, taking into account also the maximum wave--speed in the domain $u$ where possible, this can easily be done in advection equations where the wave--speed is an analytic expression but can be harder in more involved systems.
\Acomment{Should $C$ be chosen to be too large then the numerical scheme will give un-physical results and ultimately lead to numerical infinities.
Other methods to improve stability of a given scheme include resorting to implicit time--stepping, operator splitting, which allows for application of schemes to individual terms which are then brought back together, and projection methods, where the system is solved in another space that is easier to discretise in and then transformed back to physical coordinates.}


\begin{figure}
    \centering
        \includegraphics[width=0.6\textwidth]{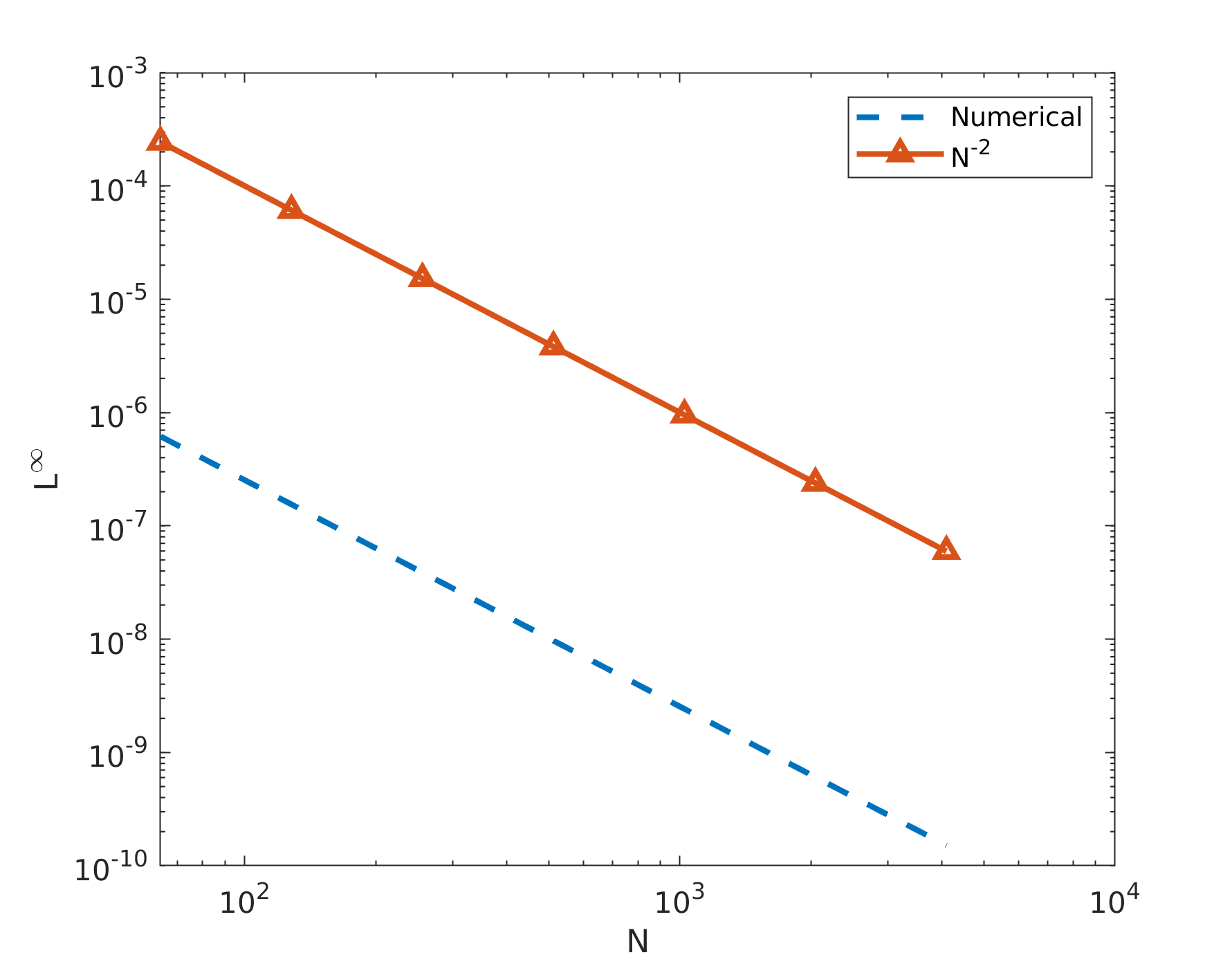}
        \caption{\Acomment{Plot of $L^{\infty}$ norm of the error relative to the analytic solution of the heat equation versus number of grid points showing a $O(\dx^2)$ accuracy.}}
    \label{fig2:convergenceHeatAnalytical}
\end{figure}

\subsection{Numerical Accuracy}
\label{subsec2:numAcc}
After implementing a given numerical scheme one must benchmark it to ensure that the expected order of accuracy has been achieved and that the solution is behaving as expected.
As an example in this section we make use of the scheme we previously discussed in Equation~\eqref{eq2:heatFTCS}, this is implemented in MATLAB with periodic boundary conditions.
We choose periodic boundary conditions as this allow us to produce an analytic solution for the heat equation.
We take an $L^\infty$--norm between the analytic and numerical solutions and norm should then converge with increasing $N$ with the same spatial order of accuracy as the scheme.

For the convergence study we take an initial condition of $T(x, t = 0) = A_0 \sin(kx + \phi)$, where $A_0$ is some initial amplitude, this coupled with the periodic boundary gives us a solution to benchmark against

\begin{equation}
T(x, t) = A_0 e^{- \alpha k^2 t} \sin(kx + \phi)
\end{equation} 

We simulate up to $t = 10$ with $\alpha = 1.0$, $k = 1.0$ and $\phi = 0$.
The domain is set to a size of $2\pi$.
In order to ensure stability and satisfy equation~\eqref{eq2:heatCFL} we set $\dt = 0.9 \frac{\dx^2}{2\dt}$.
The results of this study are presented in Figure~\ref{fig2:convergenceHeatAnalytical}.

\begin{figure}
    \centering
        \includegraphics[width=0.6\textwidth]{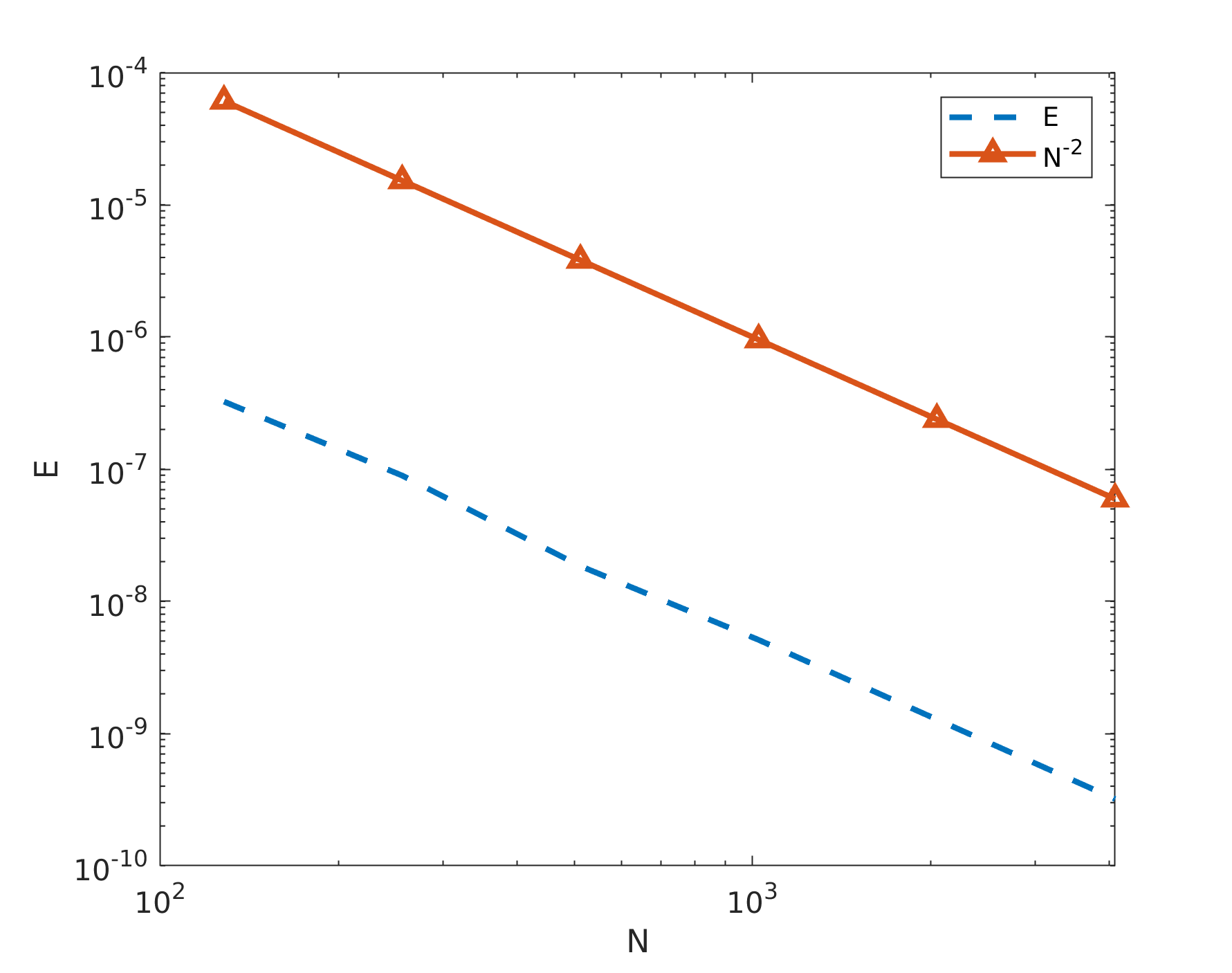}
        \caption{Plot of numerical convergence of the heat equation versus number of grid points showing a $O(\dx^2)$ accuracy.}
    \label{fig2:convergenceHeatNumerical}
\end{figure}

This process is not always possible due to many cases where there is no known analytic solution to benchmark against, thus we must resort to a numerical benchmark.
The numerical benchmark methodology we will follow in this thesis is that presented in~\cite{convergeCite}.
This benchmark is performed by successively refining grids with the same initial condition and comparing matching points, matching points are easily achieved by repeatedly doubling the total number of points in the domain. 
Thus the quantity we are looking to compute is given by
\begin{equation}
E_N(\cdot, t) = \frac{1}{\Omega} \int_\Omega |(\cdot)_N({\bf{x}}, t) - (\cdot)_{N/2}({\bf{x}}, t)|dV
\end{equation}
which can then be approximated by a sum over the domain
\begin{equation}
E_N(\cdot, t) \approx \frac{1}{\Omega} \sum_{i = 1}^{N/2} |(\cdot)_N(x_{2i-1}, t) - (\cdot)_{N/2}(x_i, t)|\Delta x_{N/2},
\label{eq2:converge1D}
\end{equation}

In Table~\ref{table2:converge1DHeat} the results are presented for this convergence study along with a plot in Figure~\ref{fig2:convergenceHeatNumerical}, both of which show clear $O(\dx^2)$ accuracy.
Comparing the analytic and numerical methods it is clear that both are effective measures \Acomment{of how} the scheme performs under grid convergence.
The numerical methodology can be extended to 2D by using

\begin{multline}
E_N(\cdot, t) \approx \frac{1}{\Omega} \sum^{N/2}_{i = 1, j = 1} | ((\cdot)_N(\vecx_{2i - 1, 2j - 1}, t) + (\cdot)_N(\vecx_{2i - 1, 2j}, t) + \\  (\cdot)_N(\vecx_{2i, 2j - 1}, t) + (\cdot)_N(\vecx_{2i, 2j}, t)) / 4 - (\cdot)_{N/2}(\vecx_{i,j}, t)| \Delta x^2_{N/2}
\label{eq2:converge2D}
\end{multline}

This equation will be applied to 2D convergence studies later in this thesis.

\begin{table}
\centering
\begin{tabular}{c|c|c} 
\hline
    {$N_x$} & {$E_N$} & {$\log_2(E_N / E_{2N})$} \\
    \hline
    128   & $3.2479 \times 10^{-7}$      & 1.8598   \\
    256   & $8.9486 \times 10^{-8}$      & 2.2749    \\
    512   & $1.8491 \times 10^{-8}$    & 1.8544    \\
    1024  & $5.1138 \times 10^{-9}$    & 1.9999    \\
    2048  & $1.2786 \times 10^{-9}$    & 2.0000    \\
    4096  & $3.1965 \times 10^{-10}$    &    \\
\end{tabular}
\caption{Details of the numerical convergence study for the diffusion equation solved with forward time central difference numerical scheme.}
\label{table2:converge1DHeat}
\end{table}


\begin{figure}
    \centering
        \includegraphics[width=0.8\textwidth]{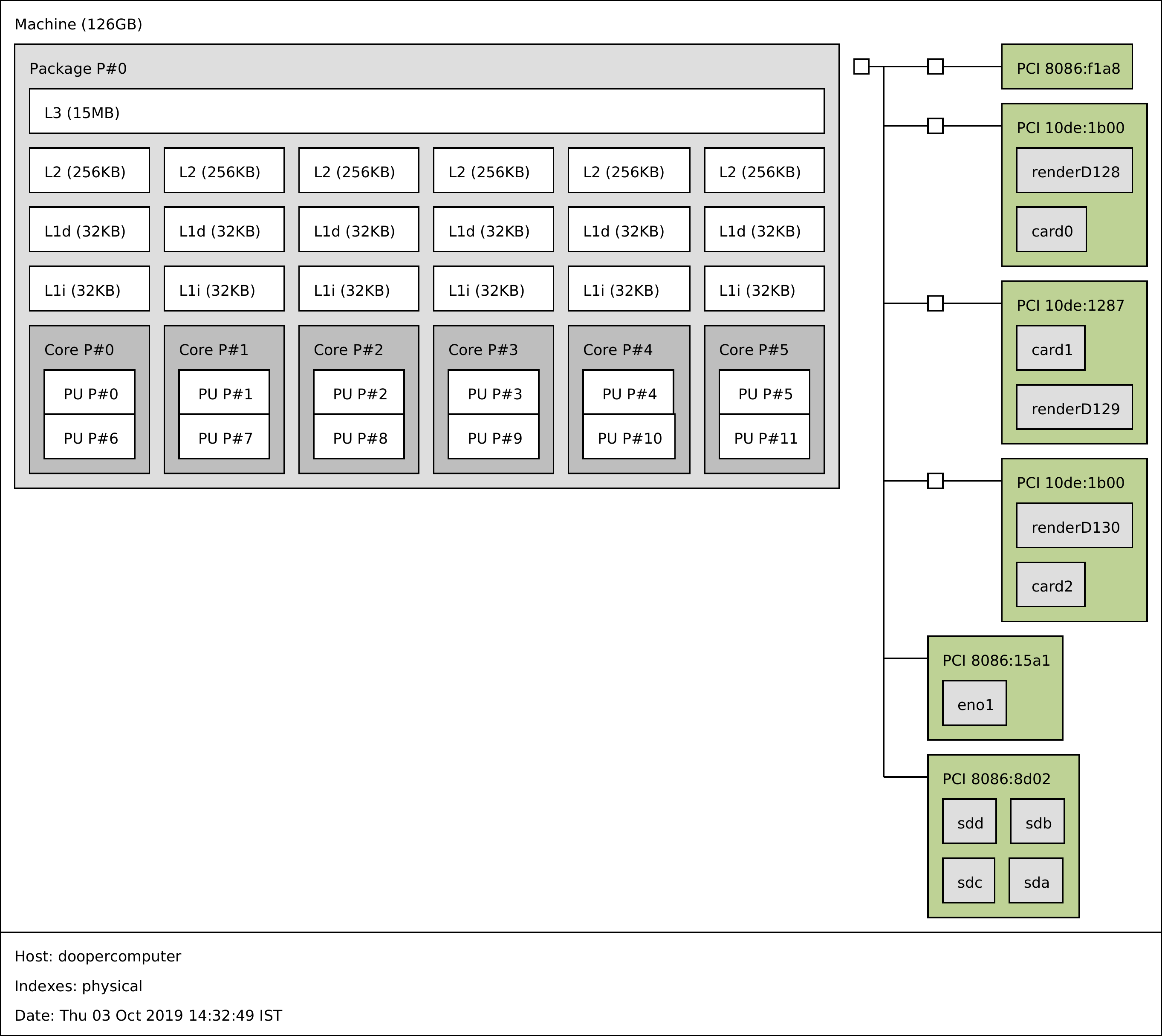}
        \caption{Diagram showing hardware layout of a standard 6--core, hyper--threaded computer with shared memory. This diagram was produced using hwloc~\cite{hwloc}, a tool for extracting the topology of computer architectures.}
    \label{fig2:dooper}
\end{figure}

\section{Parallel Architectures: CPU vs. GPU}
\label{sec2:gpuArch}
In this section we discuss traditional methods for the parallelisation of computational problems and then introduce GPUs, comparing their differences and similarities to the traditional approaches. 
We also touch on the limits of parallel computing due to the laws of physics and the law of diminishing returns.
A brief discussion on methodologies for parallelising computational problems is also presented.

\subsection{Distributed Memory Architecture}
Traditional architectures for implementing parallel applications rely on single standard CPU cores equipped with either shared or distributed memory~\cite{hager2010introduction}.
In the case of shared memory, many single CPUs share the same block of RAM, while with distributed memory each CPU has its own RAM.
An example of a shared memory computer can be seen in Figure~\ref{fig2:dooper} where 6 CPU cores, which are hyper--threaded, share one bank of 128GB RAM.
The levels of cache in the CPU can also be seen in this diagram along with the PCI slots, we will discuss the PCI slots later in the context of GPUs.
Commonly to parallelise tasks on such a machine one uses OpenMP, where many CPUs have access to one block of memory in a program.
It is clear that a bottleneck exists in terms of the total number of CPUs that can be put on a single machine and the amount of RAM that can be made available, thus limiting the applicability of such machines to smaller computational problems.

For large scale computation most systems rely on distributed memory.
At a basic level distributed memory systems can be viewed as a collection of standard independent computers, equipped with their own RAM and CPUs, connected together via Ethernet cables (or Infiniband where performance is a major concern) to a network switch.
The machines can then talk to each--other over the switch using a message passing standard such as MPI.
OpenMPI~\cite{openMPI} and MPICH~\cite{mpich2} are the dominant implementations of this standard with vendors such as Cray and Intel also supplying their own versions for specific machines.
The MPI standard is available in a broad range of languages including C/C++, FORTRAN and Python among others.
A number of switches can then be linked together to form an even larger machine, typically know as a supercomputer, examples of world leading supercomputers at the time of writing include Summit, a US machine hosted at Oak Ridge and Sunway TaihuLight, a Chinese machine hosted at the National Supercomputing Centre in Wuxi. 
Thus parallelisation on these machines is achieved via many CPUs, each performing their own portion of the program in serial, but simultaneously and in parallel with all other CPUs on the system.
Summit augments its performance with GPUs allowing for hybrid GPU/CPU programs.

Typical performance measures of supercomputers rely on the total peak number of FLOPs achieved by the machine, the standard benchmark tool is LINPACK where a large linear algebra problem is solved on the machine in a distributed manner and the necessary metrics are extracted.
Current leading machines operate on petaFLOPs levels with projects under--way to achieve exaFLOPs.
In a typical HPC problem there are three fundamental limitations to performance latency, bandwidth and Amdahl's Law which we now discuss.


\subsection{Latency and Bandwidth}
Fundamentally all signals, whether travelling on a chip or through a cable such as fibre optic or Ethernet, are limited in terms of speed by how fast an electromagnetic wave can travel in that medium.
Thus both the physical distance between CPUs in a supercomputer and the materials out of which they are made play a role.
In particular, the impact of distance on how long it takes a wave to travel, gives a clear picture of why heavily distributed computers (separate countries for example) can be inefficient for HPC applications.
This limitation due to the speed of light in computing terms is called latency and it is defined as the amount of time to send a zero byte message~\cite{hager2010introduction}.

Bandwidth is the amount of data that can be transferred per second between two locations, it is analogous to the flow rate of water through a pipe and is, like latency, hardware dependent.
Together latency and bandwidth can be combined to yield a time to transfer a given block of data between two nodes on a supercomputer.

\begin{equation}
\mathrm{Time} = \mathrm{Latency} + \frac{\mathrm{DataSize}}{\mathrm{Bandwidth}}
\end{equation}

This expression presents us with a clear picture of the importance of the hardware connections present on a given machine.
The layout of the network connections also has a large impact on moving data, this area falls under the topic of network topology, beyond the scope of this thesis~\cite{groth2005network}.

Other limits presented by physics on computational times include the number of transistors that can be placed within a given chip area, this is limited by Quantum Mechanics.
While the clock speed of CPUs, how fast computations are completed, is limited by the ability to cool the CPU efficiently.
The higher the clock speed the greater the amount of heat produced, this can lead to hardware melting if not dealt with correctly.


\subsection{Amdahl's Law}
Beyond limits due to the physical properties of the supercomputer there is also a limit on the amount of parallelisation that can be used to speed--up a particular algorithm.
There is a diminishing return on adding extra CPUs to solve a given problem, this restriction is known as Amdahl's law.
We derive it here, we begin by defining the time take to complete a computational task.

\begin{equation}
T_s = s + p
\end{equation}

Here $s$ denotes the amount of time that the serial portion of the task takes and $p$ denotes the amount of time the parallelisable portion of the task takes.
Thus we can define the amount of time this task would take using $N$ CPUs in parallel as

\begin{equation}
T_p = s + \frac{p}{N}
\end{equation}

We can then define the speed--up, $S$, of an application due to parallelisation as the quotient of these two quantities 

\begin{equation}
S = \frac{T_s}{T_p} = \frac{s + p}{s + \frac{p}{N}}
\end{equation}

Normalising using $T_s = 1$ allows us to make the substitution $p = 1 - s$ yielding a final expression

\begin{equation}
S = \frac{T_s}{T_p} = \frac{1}{s + \frac{1 - s}{N}}
\end{equation}

Thus it is easy to see that as $N \rightarrow \infty$ we get that $S$ is bound by $1 / s$, so all parallelisable programs have their execution time theoretically bound above by the serial portion of the execution.
The diminishing returns of adding too many processes to solve a problem can be seen in Figure~\ref{fig2:amdhal} where the curve flattens out as $N$ gets large, adding more when already at the plateau will simply \Acomment{waste} electricity and not improve performance.

\begin{figure}
    \centering
        \includegraphics[width=0.8\textwidth]{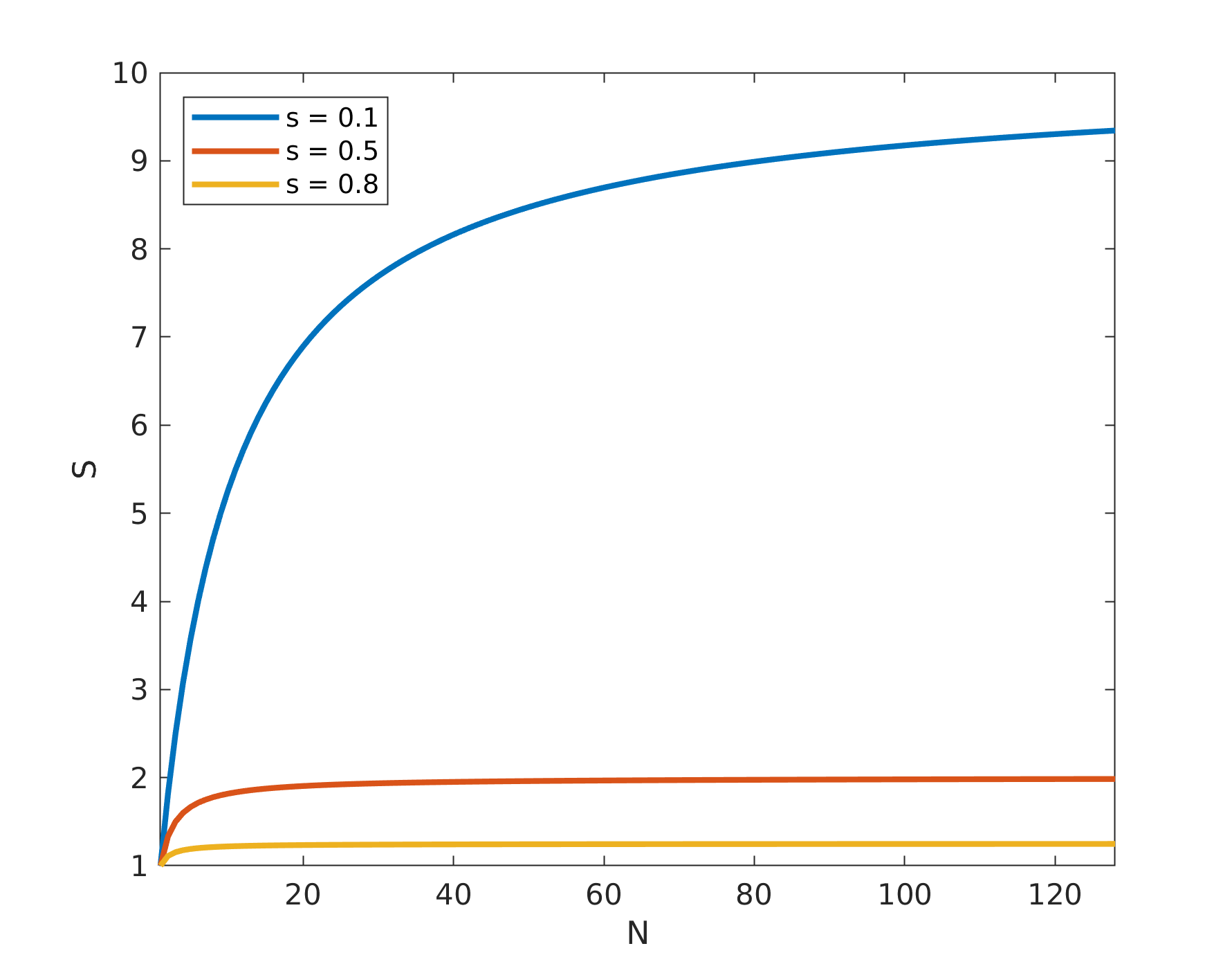}
        \caption{Diagram showing Amdhal's law for various values of $s$.}
    \label{fig2:amdhal}
\end{figure}


\subsection{GPU Architecture}
In this thesis we will focus on NVIDIA GPUs~\cite{cudaDoc}, these are composed of an array of multi--threaded Streaming Multiprocessors (SMs) on which groups of threads are executed in parallel.
The fundamental group size of threads on a NVIDIA GPU is called a warp and they each consist of 32 threads.
A key aspect of these warps and how they execute instructions is that all threads in a warp execute the same instruction at the same time in parallel on a given SM.
This is where GPUs fundamentally differ from a standard CPU, on a standard CPU a thread can complete different instructions to another without impacting the execution of \Acomment{each other}.
While on a GPU threads are not independent.
For example if an \codeword{if} statement is being computed on a GPU then threads with data not satisfying that \codeword{if} statement simply do nothing while the threads in the same warp satisfying the statement are computed.
This type of architecture is called Single--Instruction, Multiple--Thread (SIMT) while the behaviour of some threads doing nothing while other execute is called divergence.
For optimal performance it is desirable that there is no divergence within a GPU program, ensuring that all threads are performing computation when possible.

A program making use of a NVIDIA GPU invokes a kernel on the GPU which then distributes warps to each of the SMs, with multiple warps assigned to each SM.
This allows the SM to hide instruction latency in the execution of warps by rotating through each warp assigned to it, computing as instructions and data become available.
In order to maximise the instruction throughput on a GPU it is also desirable that the threads within a warp access neighbouring memory locations, coalesced memory, in RAM.
This is due to the fact that data is collected from the RAM on the GPU (known as global memory) in chunks of at least 32 bytes, thus memory accesses which are not coalesced leads to the retrieval of unused data and extra memory fetch overhead in order to retrieve the data stored elsewhere.
For memory which is reused by the same warp or group of warps, known as a block, on the same SM the programmer can make use of shared memory.
Shared memory is faster than global memory as it is located on the SM rather than the global memory which is shared among all SMs on the GPU device.

Finally we note that the GPU itself is connected to the computer over a standard PCIe lane on the motherboard and the SMs only access directly the RAM available on the GPU, thus there is a memory retrieval overhead when going between the standard CPU RAM (host RAM) and the GPU RAM (device RAM).
This introduces a performance limitation due to the available bandwidth and latency between the two sets of RAM.
In many cases, while a task may be parallelisable, one should consider the cost in moving to and from the GPU.
Should this cost be too great for the GPU to offer any significant speed--up over a standard CPU parallelisation then one should not use the GPU.
Also due to the limited size of the device RAM, data needs to be loaded and unloaded from host RAM as needed for computation.
Various strategies are available to overcome this, the data stream can be pipelined asynchronously such that data is always being loaded/unloaded while computation is taking place, thus overlapping the necessary tasks and ensuring the data throughput on the GPU is kept high.
MPI implementations are also available to make use of multiple GPUs on a distributed memory machine to further divide large problems, these also allow for direct transfer of data between GPUs within these machines.
Indeed \Acomment{hybrid} implementations of GPU programs exist, making use of both GPUs and the CPUs present on the compute nodes, maximising performance of a machine equipped with both. 
An example hybrid problem could have GPUs performing computation while CPUs handle IO in parallel.

Now that we have covered GPU architecture in brief we move to discuss briefly how problems are parallelised in HPC applications.
The CUDA API is discussed in the section following this where we examine in more depth how a GPU is used from the programmer's point of view.


\subsection{Problem Parallelisation}
In order to take advantage of parallel processing on either CPUs or GPUs one most frame the problem they wish to solve into a set of independent tasks that can be then computed in parallel.
A variety of approaches are available \Acomment{to} parallelise problems, we discuss a few here in brief.

The most straightforward of class of problem to parallelise are referred to as 'embarrassingly parallel', in other words problems that are naturally made up of independent tasks with no interdependence between said tasks other than collection of the final answers.
The most common of these problems in physics/applied mathematics is Monte Carlo analysis.
The sampling of the probability space and subsequent computation can be done independently for each sample with the final answer then collected at the end.
Beyond physics/applied mathematics problems including ray tracing and brute force password cracking can be considered embarrassingly parallel.
In this thesis we will be exploring an embarrassingly parallel problem, namely solving batches of independent 1D equations, this will be discussed further in Chapters~\ref{chapter:cuPentBatch},~\ref{chapter:efficient} and~\ref{chapter:batched}.

Commonly problems in physics/applied mathematics are not embarrassingly parallel due to interdependency between various pieces of the problem, thus a decomposition methodology is required.
Various approaches are available, the most common involves employing a domain decomposition.
The simplest domain decomposition is to physically divide the domain into smaller chunks, for example a simple division of a 2D grid into squares/rectangles, which can then each be handled by a separate CPU/GPU process.
Each process computes its portion of the solution and then the necessary halo cell information is communicated between processes using MPI at each time--step.
More sophisticated physical decomposition techniques involve using graph theory algorithms~\cite{metis} such as the greedy algorithm to divide points in the domain among processes, this method is commonly used in situations where the grid is non--uniform and thus the distribution of work is less obvious.
In this thesis we will focus on the \Acomment{Alternating Direction Implicit} (ADI) method which will allow us to decompose a 2D problem into a set of independent 1D linear equations which can then be solved in parallel.
This is discussed later in Chapters~\ref{chapter:cuSten} and~\ref{chapter:statistical}.


\section{CUDA API}
\label{sec2:cudaAPI}
In order to make use of NVIDIA GPUs the most common approach is to use their API for the C/C++ programming language, Compute Unified Device Architecture (CUDA) and is compiled with the \codeword{nvcc} compiler, itself built on top of \codeword{gcc}.
Implementations are also available for Python and FORTAN with wrappers also available for other languages.
We discuss the CUDA API briefly here to cover the terminology required to read this thesis, for a complete background the recommended resource is the CUDA documentation~\cite{cudaDoc}.

The CUDA programming model relies on the grouping of threads into blocks.
These thread blocks at the hardware level are then implemented by the compiler as a group of warps, so it is always important to keep warps in mind when programming in CUDA.
The compiler automatically assigns each block to a SM on the GPU.
It can be advantageous to make blocks multiples of 32 threads to ensure a thread block is implemented as a complete integer number of warps and no threads within a warp are wasted.

The programmer then implements a CUDA kernel, in other words a function to be executed on a GPU, which makes use of the CUDA indexing API for the thread blocks to apply operations on a grid.
This API allows for indexing threads in up to three dimensions.
The API additionally provides a means for dividing a given grid of points into a suitable number of blocks with the user only needing to specify the number of threads per block.
The returned values of this API are then given to the kernel as part of an argument before the standard function arguments, contained in a \codeword{<<< ...., ..... >>>} execution configuration syntax.

In this thesis we will concern ourselves with two primary types of memory, global and shared.
Global memory is the standard RAM on the GPU device that can be accessed by any thread in a given kernel and thus is the slowest memory type.
It is primarily used as the primary location for data storage on the GPU and is where memory can be copied to and from the host RAM, we will discuss the process of copying later.
Shared memory is located on the SM and belongs to the block of threads executing on that SM.
Due to its location it is significantly faster than Global memory but comes with a significant size restriction, it is commonly used for memory which will be \Acomment{repeatedly accessed} by a block of threads.
An example of this would be the stencil data for a finite difference scheme, \Acomment{the data} will need to be accessed multiple times during a computation and so should be copied from global memory to shared memory before computation takes place.

Traditionally the data used in GPU computations required two explicit copies of data, one allocated on the host (allocated using \codeword{malloc}) and the other on the device {allocated using \codeword{cudaMalloc}}.
The programmer would then have to explicitly copy data to and from the device for computation using the API.
The copying of data and execution of this data could then be pipelined using CUDA Streams, streams are used for managing the execution of multiple kernels on a device and data transfer to/from that device.
By assigning a kernel or copy operation to a stream the user is able to specify more clearly when particular operations should happen, giving more control over kernels which are treated asynchronously otherwise. 
More recent implementations of CUDA now include Unified Memory, this unifies the pointer address space for the host and device, thus eliminating the need for two copies of data making a program tidier (memory is allocated once using \codeword{cudaMallocManaged}).
When called on the host or device the data is automatically transferred to where it is needed, data transfer can still be handled explicitly using streams where desired to avoid page faults and increase performance.

The CUDA API also provides other useful features such as dynamic parallelism which allows for kernels to call kernels, this is useful for recursive programs such as those involving grid refinement.
Cooperative groups allow for various levels of synchronisation across a kernel from warp level to grid level barriers. 
Useful functions including thread voting and data shuffles are also implemented using cooperative groups, voting can be used to check that a certain condition has been met by all threads in a warp while shuffles can be used for optimal reduction algorithms.
CUDA is also equipped with GPU versions of many standard numerical libraries including BLAS and FFT, these carry the same names prefixed with 'cu' so cuBLAS and cuFFT.
The cuSPARSE library, used for solving sparse matrix systems, is explored in this thesis in Chapters~\ref{chapter:cuPentBatch} and~\ref{chapter:efficient} where we develop our own version of a pentadiagonal solver and show it outperforms that the solver in cuSPARSE.


\section{Cahn--Hilliard Equation}
\label{sec2:cahnEq}
In this section we discuss the Cahn--Hilliard equation as this is the chosen equation in this thesis for applying the GPU methodologies that we have developed.
It is emphasised here though that the Cahn--Hilliard was chosen as a representative equation and that the GPU methodologies are indeed general.
The Cahn--Hilliard equation was chosen as it had a representative mix of time--dependence, diffusion and non--linear terms, indeed in our examination of it in Chapter~\ref{chapter:batched} we also include an advection term, thus it presents many of the challenges present in more general PDEs.
In particular the diffusion term is fourth order so this will allow us to test large stencil sizes. 
We also discuss the equation briefly here in a general sense to provide context for work in later chapters and to avoid repetition of discussion relating to it.

The Cahn--Hilliard equation itself models phase separation in a binary liquid; when a binary fluid in which both components are initially well mixed undergoes rapid cooling below a critical temperature, both phases spontaneously separate to form domains rich in the fluid's component parts~\cite{CH_orig}. 
The domains expand over time in a phenomenon known as coarsening~\cite{Bray_advphys}.   
The equation is used as a model in polymer physics~\cite{Hashimoto} and inter--facial flows~\cite{ding2007diffuse}.  
The equation can be stated as follows
\begin{equation}
\frac{\partial C}{\partial t} = D\nabla^2(C^3-C-\gamma\nabla^2C),\qquad \vecx\in \Omega,\qquad t>0,
\label{eq2:ch}
\end{equation}
$C$ is a volume fraction tracking the abundance of the different binary fluid components, with $C=\pm 1$ corresponding to the pure phases.
Here $D$ is the diffusion coefficient and $\sqrt{\gamma}$ characterises the length scale of the regions joining the separated binary fluids.
We can also identify $\mu = C^3 - C - \gamma\nabla^2 C$ as the chemical potential thus we can view the Cahn--Hilliard equation also in conservative form as

\begin{equation}
\frac{\partial C}{\partial t} = \nabla \cdot \mathbf{j}(C)
\end{equation}

where $\mathbf{j}(C) = D \nabla \mu$.
Under suitable boundary conditions on $\partial\Omega$, the Cahn--Hilliard equation~\eqref{eq2:ch} reduces the following free energy:

\begin{subequations}
\begin{align}
F= & \int_{\Omega}\left[\tfrac{1}{2}\left(C^2-1\right)^2+\tfrac{1}{2}\gamma|\nabla C|^2\right]\mathd^n x \\
\frac{\mathd F}{\mathd t}= & -\int_{\Omega}\left|\nabla (C^3-C-\gamma\nabla^2C)\right|^2\mathd^n x.
\label{eq2:dFdt_ch}
\end{align}
\end{subequations}

Here, $n$ is the dimension of the space, which in our investigations in this thesis, will be set equal to either $n = 1$ or $n = 2$, as required.
Solutions of Equation~\eqref{eq2:ch} are characterized by a rapid relaxation to $C=\pm 1$ locally, in domains, followed by slow domain growth -- this evolution is driven by the energy-minimization~\eqref{eq2:dFdt_ch} and the conservation law $(\mathd/\mathd t)\int_\Omega C\,\mathd^D x=0$, the latter being a further consequence of the structure of Equation~\eqref{eq2:ch} and the assumed boundary conditions.

In this thesis we will examine various features of the Cahn--Hilliard equation and, for simplicity, we will focus on periodic boundary conditions.
Focus will be put on using large numbers of individual 1D simulations to examine the averaged growth across simulations of the separated regions of the fluids.
The 1D equation will then be extended to include a travelling--wave term to present a methodology on how GPUs can be used to produce a large dataset of a given parameter space.
Then using this dataset automate the production of flow--pattern maps, a methodology it is hoped can be applied more generally to two phase flows.
Finally batches of 2D simulations on GPUs will also be used to produce a dataset of $\beta$, the power of $t$ to which the separated regions of fluid grow in 2D.
Here new results will be presented showing how in a finite sized domain the value of $\beta$ does not have a constant value of $1/3$ as per the literature but is a random variable that can be drawn from a statistical distribution around $1/3$.
These results will also be discussed in the context of LSW theory for Ostwald Ripening and extended to the Cahn--Hilliard--Cooke equation.

\lhead{\emph{Chapter 3}} 
\chapter{cuSten -- CUDA Finite Difference and Stencil Library}
\label{chapter:cuSten}
\Acomment{This chapter presents the content of the cuSten paper 'cuSten -- CUDA Finite Difference and Stencil Library'~\cite{gloster2019custen}. 
The cuSten library is developed to help a programmer deal with implementing a finite--difference stencil using CUDA, hiding much of the work needed in the back--end.
It is designed to treat stencils much like cuBLAS and cuSPARSE treat linear algebra algorithms, simplifying the interface and eliminating the re--writing of code.}

\section{Introduction}
\label{sec3:intro}
To discretise any PDE system numerically, several standard approaches exist, including the finite-difference, finite-volume, and finite-element methods.   
For definiteness, this thesis focuses on the Finite--Difference Method as presented in Section~\ref{sec2:fdmethod}, however, it can be applied in any situation requiring stencil-based operations.

There is a large field of literature associated with the implementation Finite--Difference Methods using CUDA, a few examples include~\cite{Micikevicius093dfinite,waveFD,elsStencil,CUDAthesis}.
This literature commonly explains how to approach the problem of implementing a finite--difference scheme using CUDA but yet the authors of the relevant books/papers/articles provide no publicly available library or code with their papers that a reader readily use in their own project, thus requiring the reader to rewrite code that repeats work already done elsewhere.
Libraries providing PDE solvers and other stencil--based computations exist, such as \cite{libGeo} and indeed some approaches that can generate code for the programmer~\cite{autoGenZhang,autoGenHolewinski}, but these libraries and approaches can be limiting due to investment cost in learning essentially a full software package or new method.
Indeed the PETSc library~\cite{petscwebpage,petscuserref,petscefficient}, which supports finite--difference methods, also now provides a GPU implementation but this limits the program to be written mostly using that library's API (thus limiting flexibility), and requires the programmer to also develop knowledge of cuTHRUST~\cite{cudaDoc} to implement the GPU aspects of the library effectively. 
It is noted that the PETSc web-page~\cite{petscwebpage} documents some difficulties associated with using GPUs effectively in PETSc. 
As such, we present cuSten as a computational framework complementary to PETSc, readily deployable by a programmer interested in Physics applications, with relatively low overhead in terms of learning to implement large complex libraries.

Common problems at development time of finite--difference programs include readjusting boundaries when changing finite--difference schemes or ensuring the correct data has been loaded onto the GPU at the time of computation, both of these are dealt with by cuSten.
It aims to overcome these difficulties along with addressing the problems with the above problems by providing a new software tool, introducing a simple set of four functions (three in many cases) for the programmer to implement their finite difference solver.
These functions are accessed much like cuBLAS or cuSPARSE giving freedom to the programmer to build the program as they choose but eliminating the need to worry about the finite difference implementation specifics.
This tool allows a programmer to simply input their desired finite-difference stencil and the direction in which it should be applied and then the rest of the implementation, including the domain decomposition, boundary positioning and data handling are wrapped into functions which are easily called.
This approach reduces the development time necessary for implementing new systems/solvers and provides a robust framework that does not involve a black-box-approach to the solution from the programmer.  
Furthermore, the approach does not require a major overhead of time to invest in learning/implementing a new tool.

It is not intended that the code produced by this tool be the most efficient implementation of a given scheme versus a dedicated code for a specific problem, but it is intended that the development time of a code is drastically cut by removing the need for the programmer to do unnecessary work at development time.
We include a comprehensive example of the application of this tool in Section~\ref{sec3:ADI}.
Here the ease of implementation of the cuSten library to solve the 2D Cahn--Hilliard equation is highlighted.
A benchmark of cuSten versus a serial implementation of the same is also included to highlight the improvements in performance due to parallelisation on the GPU. 
2D problems are the main focus of this new tool; 2D problems provide a test-bed for the development numerical algorithms which can then be extended to 3D, where debugging, testing, and validation are more time-consuming.  
The extension of the present method to 3D is discussed in Section~\ref{sec3:con} below.
In terms of floating point precision this library focuses on the use of the double floating point type as in most application it is desirable to have 64 bit precision when solving PDEs, the source code is easily modified using a standard text editor with find and replace to change to other data types if so desired (this is discussed also in Section~\ref{sec3:con} below).

In Section~\ref{sec3:arch} we introduce the underlying architecture of the cuSten library, including how it uses streams and events for optimal memory management. 
Then in Sections~\ref{sec3:fun} and~\ref{sec3:ex} we talk the reader through the cuSten API along with examples and where to find all the source code within the library should they wish to edit it, the API is further explained in the Doxygen documentation included with the library itself.
Section~\ref{sec3:ADI} presents the implementation of a full 2D Cahn--Hilliard solver along with a GPU versus CPU benchmark of the cuSten library and finally concluding discussions with potential future work are presented in Section~\ref{sec3:con}.


\section{Software Architecture}
\label{sec3:arch}
The library in this chapter makes use of the CUDA API for C/C++ discussed in Section~\ref{sec2:cudaAPI} of this thesis.
The tool is built on two main sets of code, one handling the creation and destruction of the \codeword{cuSten_t} data type which handles all of the programmer's inputs (found in \codeword{/src/struct}) and the other handling the compute kernels (found in \codeword{/src/kernels}). 

At the top level will be the main program solving whatever PDE is of concern to the programmer and the library is called through the header \codeword{cuSten.h}.
The programmer provides the necessary memory to the library using Unified Memory along with the stencil details, these will be detailed in Section~\ref{sec3:fun}. 
Unified Memory was chosen as it simplifies the handling of memory in the library and interfacing with the rest of programmer's code. 
The ability to address data beyond the device memory limit is also useful in cases where not all the data required for a given program fits in device memory, the movement of memory on and off the device is handled by the cuSten library as explained in the following paragraph.

To take advantage of Unified Memory the library allows the programmer to divide their domain into `tiles' such that each tile will fit into the device RAM.
Each tile is a chunk of the total domain in the y direction to ensure the memory is contiguous.
The tiles are loaded onto the GPU in time for the kernel to be launched such that there are no GPU page faults.
The programmer also has the option to unload the tiles onto host RAM after the computation is completed on a given tile, this can be for IO or if the programmer needed to free device RAM for the next tile or a new task.
This system ensures that loading/unloading data and computation is implemented as a pipeline using separate streams for data loading/unloading and kernel launches ensuring that everything overlaps and ensuring that as little time is wasted retrieving memory over the PCIe bus which is a bottleneck to a memory bound program.
Finite difference programs, such as the ones discussed in this article, are typically memory bound as only a few computations are required per point in the array yet the memory overhead can be quite large when several variables need to have stencils applied to them. 
Events are used to ensure the data has been loaded prior to the launch of a kernel. 

The programmer has the choice of supplying a standard linear stencil or a function pointer with additional input coefficients to the library, examples of which are discussed in Sections~\ref{sec3:weights} and~\ref{sec3:pointer} respectively.
Within the compute kernel blocks of data with suitable boundary halos are loaded into shared memory.
The stencil or function is then applied to the block with each thread calculating the output for its position.
When this has completed the data is then output as blocks into the memory provided by the programmer for output, the same memory cannot be used for both as the blocks require overlapping data and thus cannot use already output values.

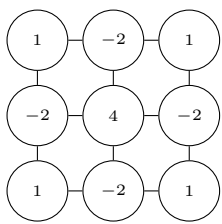
\begin{figure}
	\centering
\begin{tikzpicture}
  \stencilpt{-1,-1}{i-5}{$1$};
  \stencilpt{0,-1}{i-4}{$-2$};
  \stencilpt{1,-1}{i-3}{$1$};
  \stencilpt{-1,0}{i-2}{$-2$};
  \stencilpt{0,0}{i-1}{$4$};
  \stencilpt{1,0}{i}{$-2$};
  \stencilpt{-1,1}{i+1}{$1$};
  \stencilpt{0,1}{i+2}{$-2$};
  \stencilpt{1,1}{i+3}{$1$};
  \draw (i-5) -- (i-4)
        (i-4) -- (i-3)
        (i-5)   -- (i-2)
        (i-4) -- (i-1)
        (i-3) -- (i)
        (i-1) -- (i)
        (i)   -- (i+3)
        (i+1) -- (i+2)
        (i-2) -- (i-1)
        (i+3) -- (i+2)
        (i-1) -- (i+2)
        (i-2) -- (i+1);
\end{tikzpicture}
\caption{Typical stencil for a second order accurate cross derivative $\frac{\partial^4}{\partial x^2 \partial y^2}$.}
\label{fig3:stencil}
\end{figure}


\section{Software API}
\label{sec3:fun}
The programmer can use up to four functions for the application of any given finite difference stencil, in most cases only three are required.
The possible stencil directions include x, y and xy, where xy allows for cross derivatives which require that diagonal/off-diagonal information is available for the stencil to be completed, the stencil size is not limited in any direction and can be any desired shape, for example the stencil can be a $5 \times 3$ in dimension and use every point within that area.
Indeed the area for the stencil need not be centred at $(i, j)$ and it can extend in any direction more than another as necessary, this can be done be specifying non symmetric quantities for the number of points required left/right or top/bottom of $(i, j)$ in the stencil.
A typical stencil for a second order accurate cross derivative $\frac{\partial^4}{\partial x^2 \partial y^2}$ is shown in Figure~\ref{fig3:stencil}, this stencil also appears in the linear biharmonic term for the Cahn--Hilliard solver presented in section~\ref{sec3:adiApply}.

Each direction then comes with a periodic and non-periodic boundary option along with a choice between supplying just a set of weights (example in \ref{sec3:weights}) which are applied linearly or a function pointer (examples in Section~\ref{sec3:pointer} and~\ref{sec3:adiApply}) that can be used to apply more sophisticated schemes.
In order to apply non--periodic boundary conditions the programmer will need to write their own boundary condition kernel, this was done to keep the library flexible to the programmer's desired numerical scheme which may require more sophisticated boundaries than simple Neumann/Dirichlet conditions. 
The cuSten library simply leaves the data in the boundary cells untouched when performing a non--periodic computation.
The naming convention for the functions available in the library is 
\[\codeword{cuSten[Create/Destroy/Swap/Compute]2D[X/Y/XY][p/np][BLANK/Fun]}\]
The descriptions for the options are as follows:

\codeword{Create}: This will take the programmer inputs such as the stencil size, weights, number of tiles to use etc. and return the \codeword{cuSten_t} ready for use later in the code.

\codeword{Destroy}: This will undo everything in create, freeing pointers and streams etc. To be used when the programmer has finished using the current stencil, for example at the end of a program.

\codeword{Swap}: This will swap all relevant pointers, in other words swap the input and output data pointers around so the stencil can be applied to the updated stencil after time-stepping. The need for this function is generally dependent on the overall numerical scheme a programmer is using, it is not needed in all situations.

\codeword{Compute}: This will run the computation applying the stencil to the input data and outputting it to the appropriate output pointer.

\codeword{X}: Apply the stencil in the x direction.

\codeword{Y}: Apply the stencil in the y direction.

\codeword{XY}: Apply the stencil in the xy direction simultaneously (for situations with cross derivatives etc.). The library will account for corner halo data in this situation.

\codeword{p}: Apply the stencil with periodic boundary conditions.

\codeword{np}: Apply the stencil with non-periodic boundary conditions, this leaves suitable boundary cells untouched for the programmer to then apply their own boundary conditions.

\codeword{Fun}: Version of the function to be used if supplying a function pointer, otherwise leave blank.

The functions are then called in order of \codeword{Create}, \codeword{Compute}, \codeword{Swap} (if necessary) and then \codeword{Destroy} at the end of the program. 
Complete usage examples are found in the next section with further examples found in \codeword{examples/src}. 
The complete API can be found in the Doxygen documentation, see \codeword{README} on how to generate this.


\section{Examples}
\label{sec3:ex}
In this section we provide an overview of using library.
We present three examples.  
The first is an implementation using linear stencil weights.  
The second involves a function pointer instead.  
The third example is at the level of a detailed physics problem (advection in Fluid Mechanics), and is included here to demonstrate to the user how to modify the source code as necessary.
These three examples (and more) can be found in \codeword{examples/src}. 
The \codeword{README} provides compilation details.
In all examples in the repository we take derivatives of various trigonometric functions as these are easy to benchmark against in periodic and non-periodic domains.

\subsection{Standard Weights}
\label{sec3:weights}
We present here the example \codeword{2d_x_np.cu}, it is recommended to have this example open in a text editor to follow along. 
In this example we implement an 8th order accurate central difference approximation to the second derivative of $\sin(x)$ in the $x$ direction.
The domain has 1024 points in $x$ and 512 points in $y$, set by nx and ny respectively with the domain size lx set to $2 \pi$.

Unified memory is allocated with \codeword{dataOld} set to the input $\sin(x)$ and answer set to $-\sin(x)$, \codeword{dataNew} is zeroed to ensure correct output.
We choose to implement this scheme on compute device 0 by setting \codeword{deviceNum} and implement the scheme using a single tile, setting \codeword{numTiles} to 1.
The stencil is then implemented by setting the parameters \codeword{numSten}, \codeword{numStenLeft} and \codeword{numStenRight} along with providing an array of the stencil weights the same length as \codeword{numSten}.
\codeword{numSten} is the total number of points in the stencil, in this case 9, while \codeword{numStenLeft/Right} are the number of points in the left and right of the stencil, both 4 in this case. 
A \codeword{cuSten_t} named \codeword{xDirCompute} is then declared and fed along with the above parameters into custenCreate2DXnp, this then equips \codeword{cuSten_t} with the necessary information. 
The ordering of parameters to be fed into \codeword{cuStenCreate2DXnp} can be found in both the Doxygen documentation and \codeword{cuSten/src/struct/cuSten_struct_functions.h}

The computation is  run using \codeword{cuStenCompute2DXnp(&xDirCompute, HOST)} where the \codeword{HOST} indicates we wish to load the data back to the host memory after the computation is completed, \codeword{DEVICE} if you wish to leave it in device memory.
Finally the result is output along with the expected answer to \codeword{stdout}, the 4 cells on either side in the x direction will be $0.0$ due to the boundary, these would then be set by the programmer using suitable boundary conditions in a full solver.
Then the \codeword{cuStenDestroy2DXnp} function is called to destroy the \codeword{cuSten_t}.
Memory is then freed in the usual manner.

\subsection{Function Pointer}
\label{sec3:pointer}
Now we present the function pointer version of the above example, \codeword{2d_x_np_fun.cu}, again is is recommended to have a text editor open with the code to follow along.
Many of the parameters are the same as before except this time we remove the weights and replace them with coefficients that are then fed into the function pointer by the library.

The function pointer in this case implements a standard second-order accurate central-difference approximation to the second derivative of $\sin(x)$. 
We supply \codeword{numSten}, \codeword{numStenLeft} and \codeword{numStenRight} as before but now we also need \codeword{numCoe} to specify how many coefficients we need in our function pointer.

Our function pointer is of type \codeword{devArg1X}, where the 1 indicates how many input data sets are required. 
Each thread in a block will call the function and it returns the desired output value for that thread, each index in the array has one thread assigned to it.
The inputs are pointers to the input data, the coefficients and the index location in the stencil
\[
\codeword{CentralDifference(double* data, double* coe, int loc)}
\]
The central-difference scheme is implemented in a standard way with indexing done relative to loc, the coefficient in this case is set to $1.0 / \Delta x ^2$ as is standard.
A key point to notice, is that the programmer must allocate memory for the function pointer on the device, this can be seen on line 131 and 132 of the example code prior to calling the \codeword{Create} function.

The rest of the access to the API is then the same as before except some of the inputs change and there is 
a \codeword{Fun} at the end of each function name, for example \codeword{cuStenCreate2DXnpFun}. 
We will see later in Section~\ref{sec3:ADI} how function pointers provide us with a powerful tool to apply stencils to non-linear quantities, in particular we will see this with the cubic term of the Cahn--Hilliard equation to which we wish to apply a Laplacian. 

\subsection{Advection}
The library also comes with an extra variant of the above functions \codeword{2d_xyADVWENO_p} in which a 2D periodic advection WENO scheme has been implemented by modifying the \codeword{2DXYp} source code. 
This is included as an example to show the user how to modify the source code as necessary to more specific needs or in situations where the function pointer may not meet requirements, for example in this situation where extra data needed to be input in the form of $u$ and $v$ velocities. 
The files can be found in the cuSten/src folder with how its called in \codeword{examples/src/2d_xyWENOADV_p.cu}.

A brief overview of the modifications made to the \codeword{2DXYp} code are as follows:

\begin{itemize}
\item The stencil dimensions are now set automatically when the creation function is called.
\item The $u$ and $v$ velocities were linked to the cuSten type with appropriate tiling.
\item Additional asynchronous memory copies were included in the memory loading portion of the code to ensure the velocities are present on the device at the required time.
\item The corner data copying to shared memory blocks was removed from the kernel as it is no longer required.
\item The standard stencil compute was removed and replaced with a device function call to a WENO solver, details of the solver can be found in~\cite{fedkiwBook}.
\end{itemize}


\section{cuPentCahnADI}
\label{sec3:ADI}
In this section we show how the cuSten library can be used as part of a larger program developed using the cuPentBatch~\cite{gloster2019cupentbatch} solver, a batched pentadiagonal matrix solver, details of which can be found in Chapter~\ref{chapter:cuPentBatch}.
We also provide a benchmark at the end of the section to show how cuSten performs versus a serial implementation.
A discussion of the Cahn--Hilliard equation can be found in Section~\ref{sec2:cahnEq}, here we focus on its implementation in CUDA using the cuSten library.

\subsection{Discretisation}
For simplicity, we focus on the case where $\Omega=(0,2 \pi)^\mydim$, with periodic boundary conditions applied in each of the $\mydim$ spatial dimensions, (in this thesis we will focus on $\mydim = 1, 2$).  The method of solution we choose is based on the ADI method presented in~\cite{stableHyper} for the linear hyperdiffusion equation -- we extend that scheme here and apply it to the non-linear Cahn--Hilliard equation as follows:

\begin{subequations}
\begin{align}
{\bf{L_x}}w = -\frac{2}{3}(C^{n} - C^{n-1}) - \frac{2}{3} \Delta t \nabla^4 \bar{C}^{n+1} + \frac{2}{3}D \Delta t \nabla^2(C^3 - C)^n \\
{\bf{L_y}}v = w \\
C^{n + 1} = \bar{C}^{n+1} + v,
\end{align}%
\label{eq3:ch_adi}%
\end{subequations}%
Where ${\bf{L_x}} = {\bf{I}} + \frac{2}{3} D \gamma \Delta t \partial_{xxxx}$ and similarly for ${\bf{L_y}}$.
\Acomment{
In Equation~\eqref{eq3:ch_adi} we invert the batches of ${\bf{L_x}}$ and ${\bf{L_y}}$ matrices in parallel using cuPentBatch with one CUDA thread per system.
More details of this methology are presented in \cite{gloster2019cupentbatch}, this paper is also presented in Chapter~\ref{chapter:cuPentBatch} as part of the work completed during this thesis.
We transpose the matrix when changing from the x direction to y direction sweep to ensure the data is in the proper interleaved format.
}
To deal with the periodic element of the inversion the method is the same as in Reference~\cite{gloster2019cupentbatch,navon_pent}. 
To recover the initial $n-1$ time step we simply set this to the initial condition and appropiately update and time steps there after. 
The derivatives are discretised using standard second order accurate central differences discussed in Section~\ref{subsec2:fdStencil} and $\dx = \dy$ for the a uniform grid.

\subsection{Application of cuSten}
\label{sec3:adiApply}
The code for the example can be found with the repository in the \codeword{cuCahnPentADI} folder, supplied also in this folder is a \codeword{Makefile} to compile the files and a Python script to analyse the results which we present in Section~\ref{sec3:results}.
cuSten is applied for all of the finite-difference elements of the code excluding the matrix inversion where we use cuPentBatch. 
Between lines 148 and 190 we can see an application of a more sophisticated function pointer than presented previously in Section~\ref{sec3:pointer}, here we apply the Laplacian to the right-hand-side (RHS) non-linear term $C^3 - C$.
The coefficients are declared between lines 481 and 516, the non--linear term is a $5 \times 5$ stencil.
This shows a clear example of ease of use of the function pointers and the easy swap in/out of values. 
Note how the indexing starts from the top left of the stencil and sweeps left to right in i, row by row in j for indexing. 

The linear terms for the RHS are implemented using standard weighted schemes, the scheme uses a $5 \times 5$ stencil, this and the non--linear term highlight one of the key features of the library with the easy change in stencil size and the boundaries are dealt with automatically (in this case periodic).
The additional static functions at the start of the file apply the time stepping parts of the algorithm and combination of terms to set the full RHS. 
Output is done using the standard HDF5 library, this is required for the cuPentBatchADI program but not the cuSten library itself.

\begin{figure}
    \centering
        \includegraphics[width=0.6\textwidth]{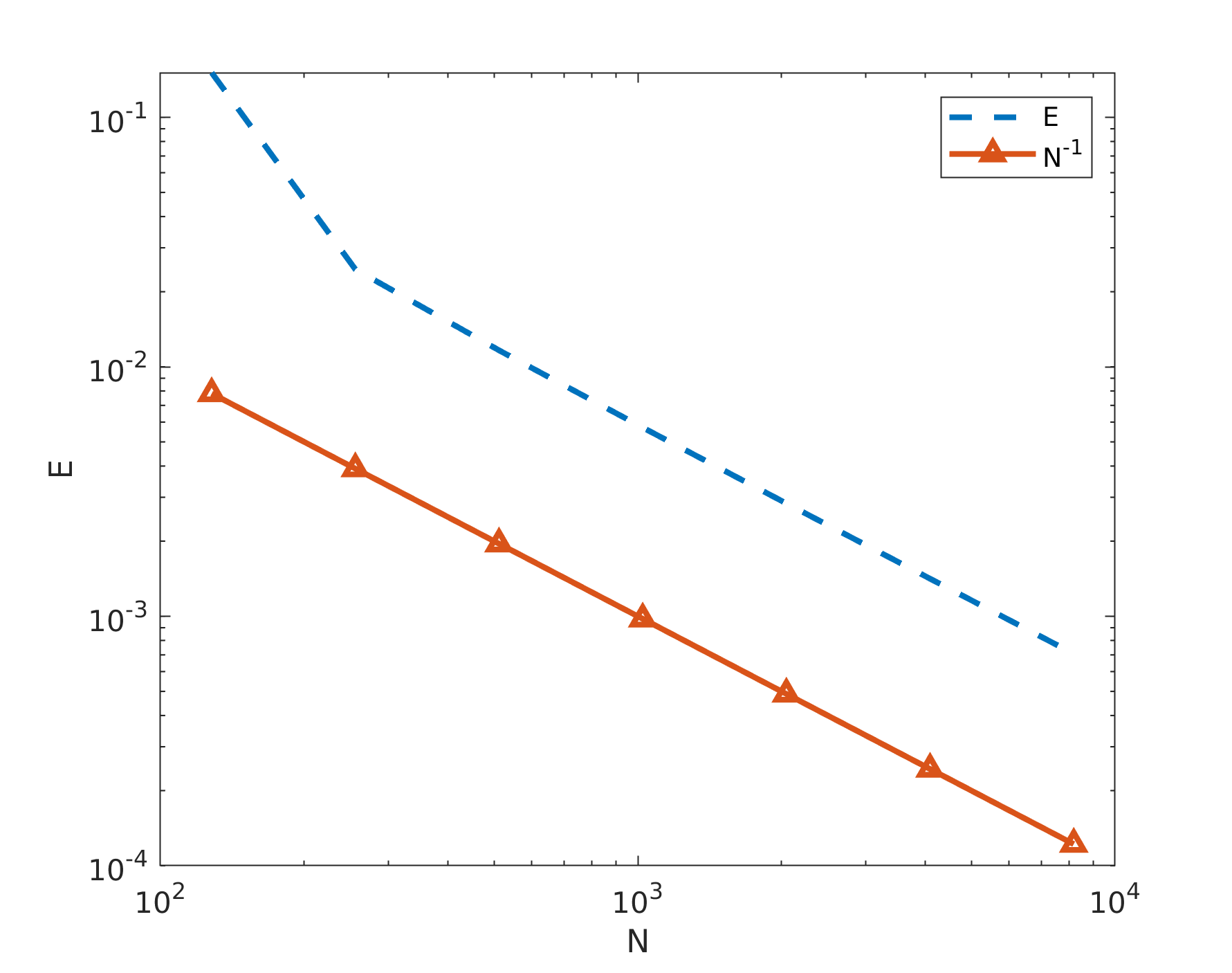}
        \caption{Plot showing the convergence of the ADI numerical scheme for the Cahn--Hilliard equation.}
    \label{fig3:convergence2D}
\end{figure}

\subsection{Convergence of Numerical Scheme}
We first begin by performing a convergence study of the numerical scheme using the methodology presented in Section~\ref{subsec2:numAcc} for 2D solutions.
For the convergence study the initial condition is set as

\begin{equation}
C(\vecx, 0) = \epsilon \tanh(r - \pi)
\end{equation}

where $r = \sqrt{x^2 + y^2}$ to ensure that one bubble will form in domain and $\epsilon = 1 \times 10^{-6}$. 
Other parameters are set as follows $\Omega = [2\pi, 2\pi]$, $D = 1$ and $\gamma = 0.01$ with a final simulation time of $T = 10$.
We choose a time step of $\Delta t = 0.1 \Delta x$ where $\Delta x$ is uniform grid spacing in the $x$ and $y$ directions.

\begin{table}
\centering
\begin{tabular}{c|c|c} 
\hline
    {$N_x$} & {$E_N$} & {$log_2 (E_N / E_{2N})$} \\
    \hline
    128   & 0.1510  & 2.6187 \\
    256   & 0.0246  & 1.0785 \\
    512   & 0.0116 & 1.0281 \\
    1024  & 0.0057 & 1.0107 \\
    2048  & 0.0028 & 1.0022 \\
    4096  & 0.0014 & 0.9996  \\
    8192  & 0.0007 &      \\

\end{tabular}
\caption{Table showing rates of convergence for simulations of the 2D Cahn--Hilliard equation using the numerical scheme in equation~\ref{eq3:ch_adi}.}
\label{table:converge2D}
\end{table}

We can see from the results presented in Figure~\ref{fig3:convergence2D} and Table~\ref{table:converge2D} that this scheme does indeed converge and is a first order accurate scheme. 
The hyperdiffusion version of the scheme is second order accurate but the addition of the non-linear Cahn-Hilliard term reduces it to first order accuracy.
\Acomment{This loss of accuracy is likely due to the addition of an explicit non-linear term to an implicit linear method.}

\begin{figure}[ht]
  \centering
    \includegraphics[width=0.7\textwidth]{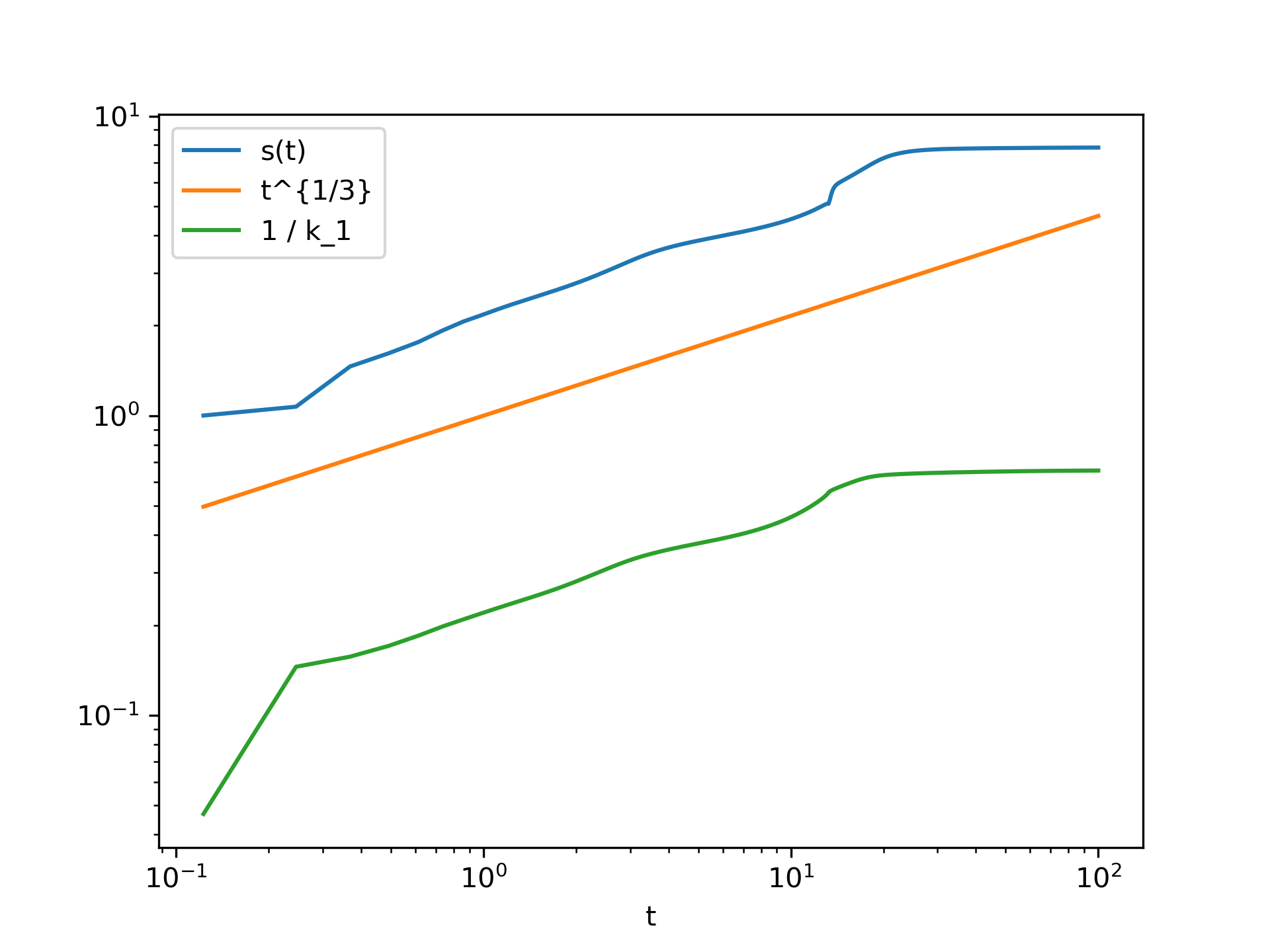}
    \caption{Plot showing $s(t)$ and $k_1$ as functions of $t$. We can see the clear $t^{1/3}$ behaviour as expected in each.}
  \label{fig3:scaling}
\end{figure} 

\subsection{Numerical Results}
\label{sec3:results}
In order to analyse the performance of the code we use two standard tests to quantify the coarsening rate \cite{LennonAurore}. 
First we have the quantity $s(t)$ which can be defined as 
\begin{equation}
s(t) = \frac{1}{1 - \langle C^2\rangle}
\end{equation} 
\Acomment{where} $\langle \cdot\rangle$ denotes the spatial average, which we calculate by a simple integration over the domain using Simpsons's rule.
Secondly we plot $1 / k_1(t)$, which also captures the growth in length scales, where $k_1$ can be defined as 
\begin{equation}
k_1(t) = \frac{\int d^nk |\hat{C}|^2}{\int d^nk |{\bf{k}}|^{-1} |\hat{C}|^2}
\end{equation}
with the hat denoting the Fourier Transform. 
We run the simulation to a final time $T=100$ with $n_x = n_y = 512$ points, the time--step size is set at $\Delta t = 0.1 \Delta x$.
The initial conditions are a random uniform distribution of values between $-0.1$ and $0.1$, we have set the coefficients $D$ and $\gamma$ to $1.0$ and $0.01$ respectively.  
The initial condition is chosen to mimic a `deep quench', where the system is cooled suddenly below the critical temperature, which allows for phase separation to occur spontaneously~\cite{Zhu_numerics}.
The quantities $s(t)$ and $1/k_1(t)$ are plotted in Figure~\ref{fig3:scaling} as a function of $t$ with a reference line of $t^{1/3}$ included as both should scale proportionally to this.
We can see clear match between our two quantities and $t^{1/3}$. 
Finite-size effects spoil the comparison between numerics and theory towards the end of the computation, as by that time the $(C=\pm 1)$-regions fill out the computational domain.  
Figure~\ref{fig3:contour_all} \Acomment{illustrates} the behaviour of the solution in space and time: the system clearly evolves into extended regions where $C=\pm 1$, which grow over time, consistent with Figure~\ref{fig3:scaling} and the established theory~\cite{LS}, in particular we can see the development of the finite size effects.
\Acomment{Indeed we will explore the physics of these finite size effects further in Chapter~\ref{chapter:statistical}, in particular presenting new results focusing on statistics relating of the growth rates of the separated regions which on average grow in proportion to $t^{1/3}$.}  
\begin{figure}
    \centering
    \begin{subfigure}[b]{0.49\textwidth}
        \centering
        \includegraphics[width=1.0\textwidth]{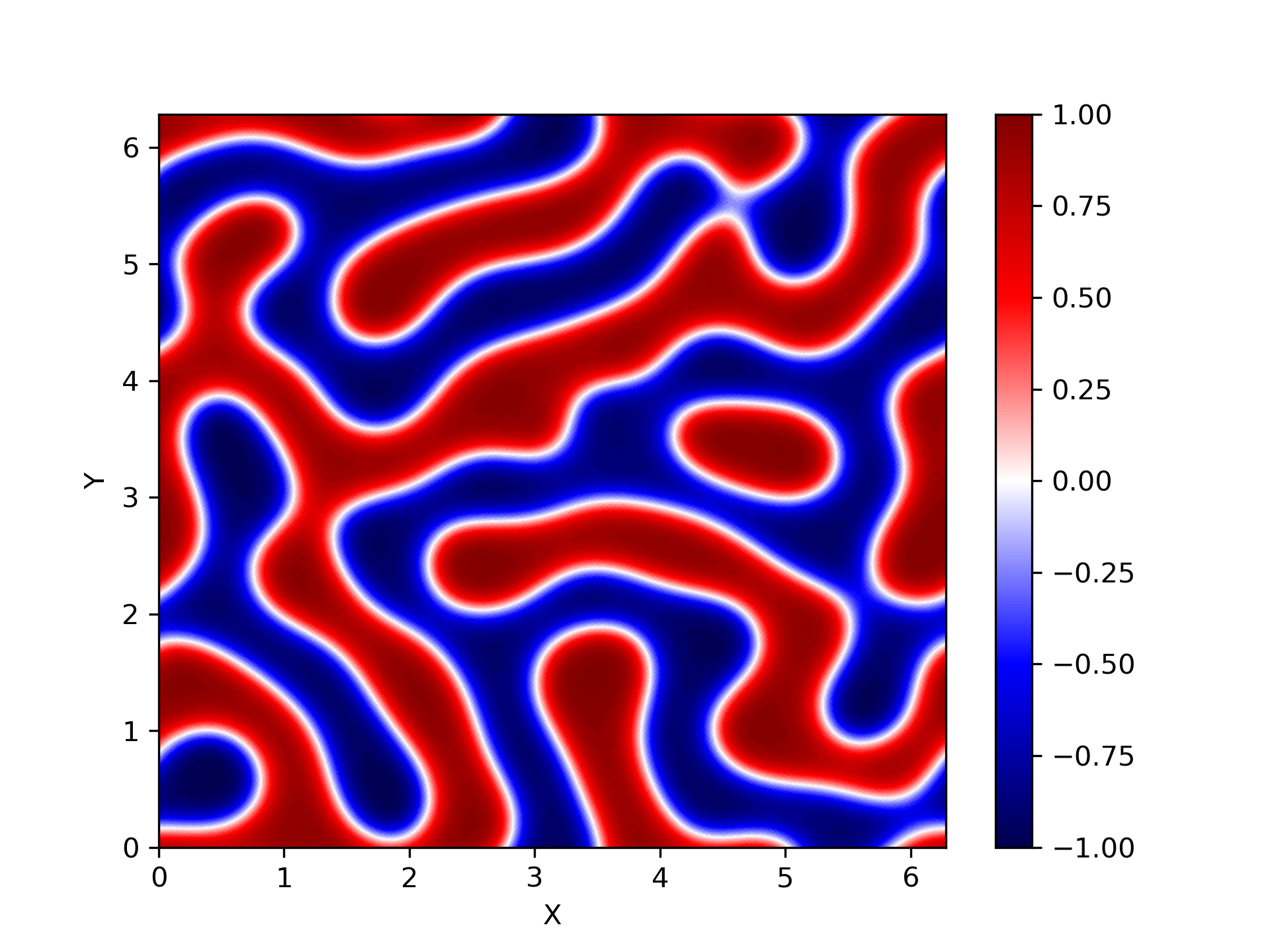}
    \caption{$t = 0.7363107782$}
    \end{subfigure}
    \hfill
    \begin{subfigure}[b]{0.49\textwidth}
        \centering
        \includegraphics[width=1.0\textwidth]{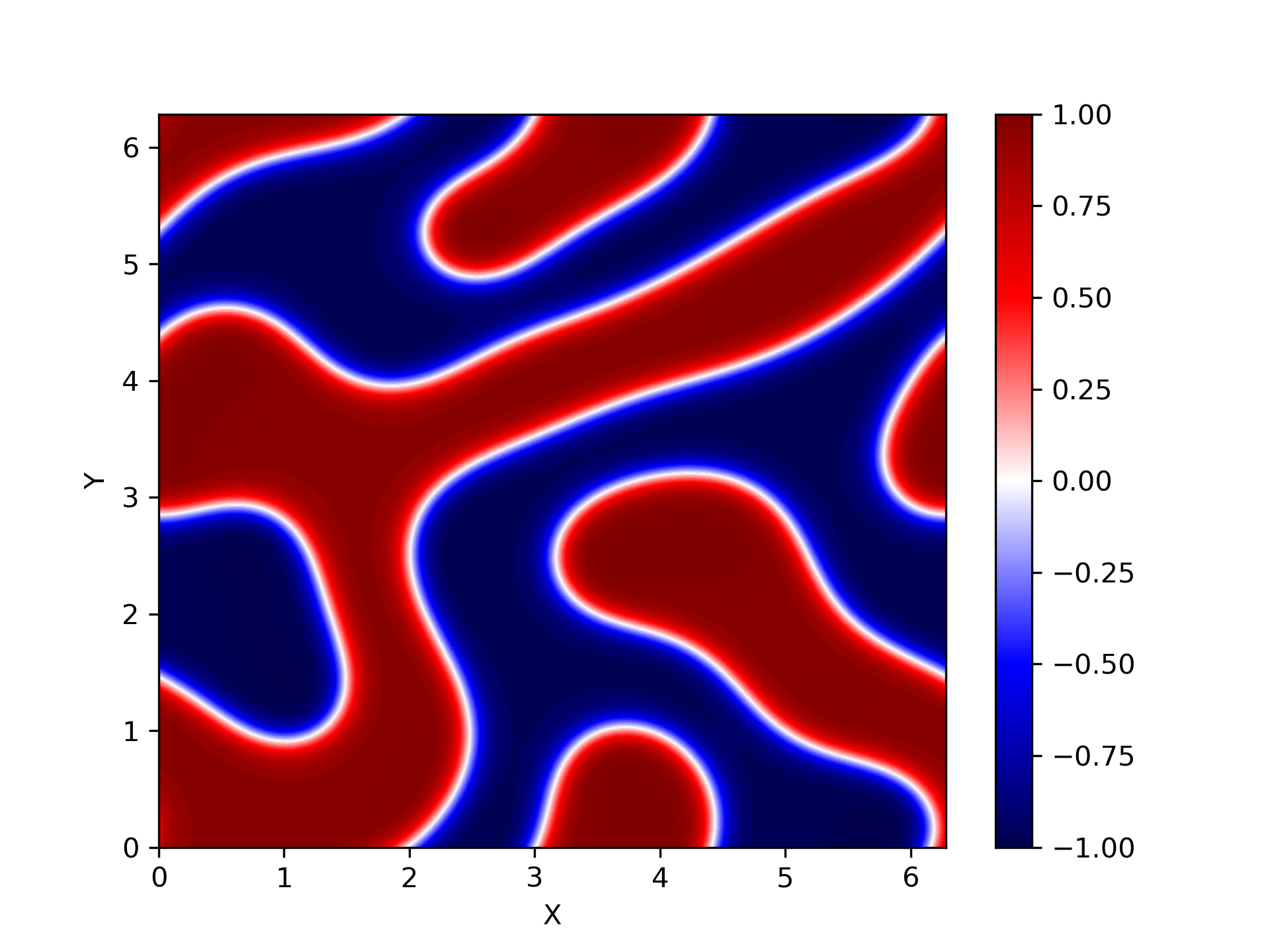}
    \caption{$t = 3.1906800388$}
    \end{subfigure}
    \vskip\baselineskip
    \begin{subfigure}[b]{0.49\textwidth}
        \centering
        \includegraphics[width=1.0\textwidth]{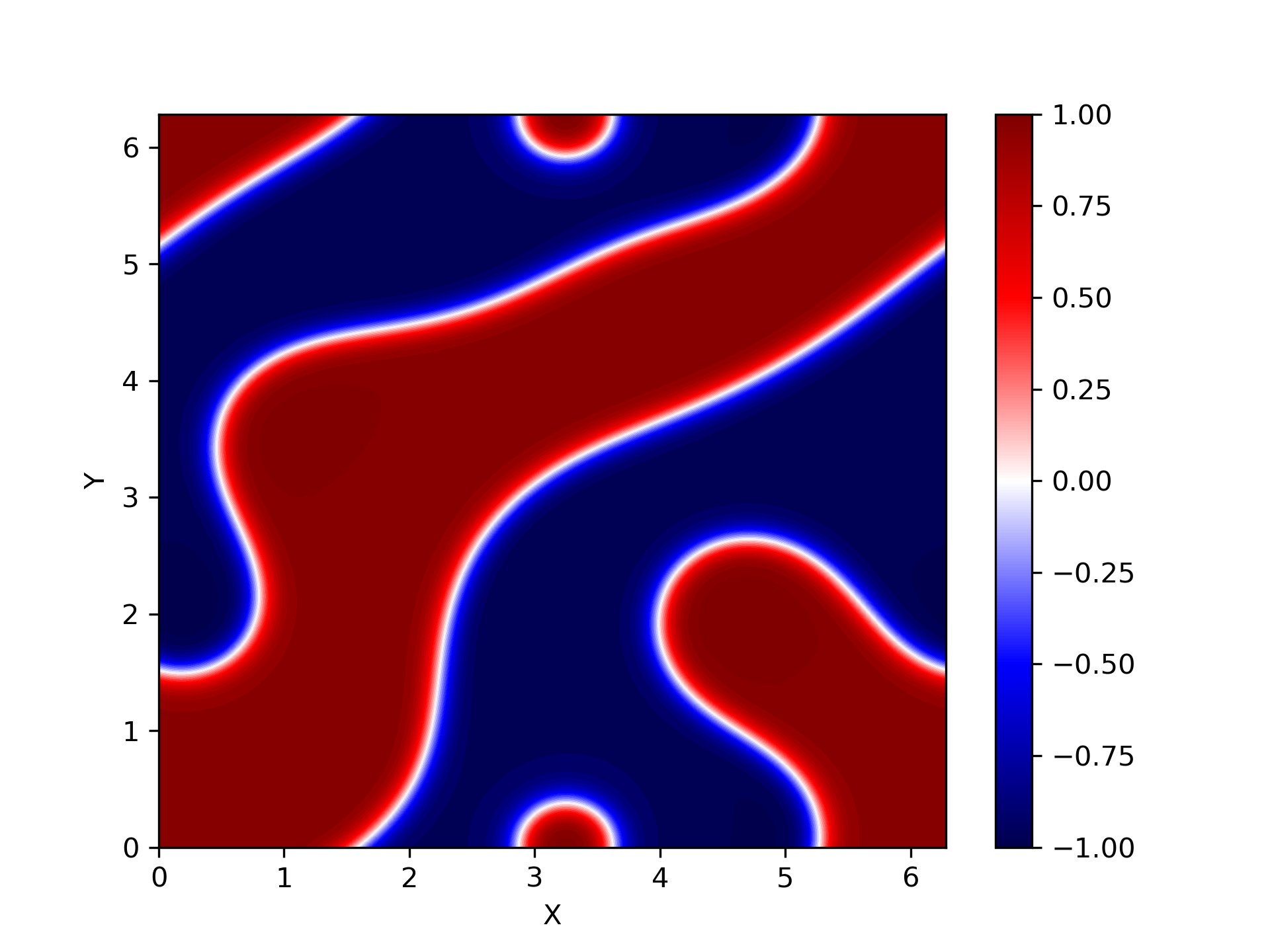}
    \caption{$t = 12.3945647661$}
    \end{subfigure}
    \begin{subfigure}[b]{0.49\textwidth}
        \centering
        \includegraphics[width=1.0\textwidth]{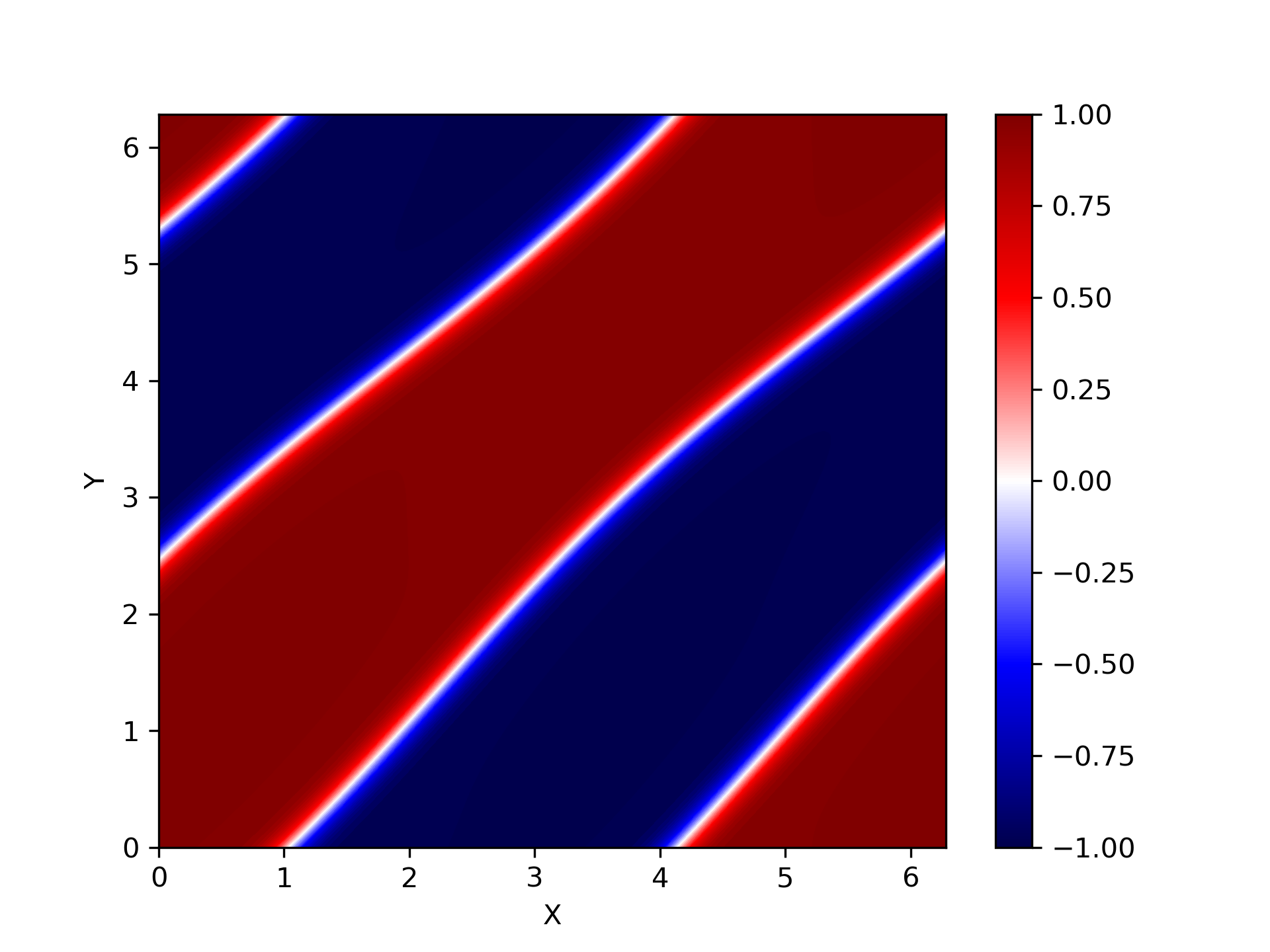}
    \caption{$t = 73.7537962816$}
    \end{subfigure}
        \caption{\Acomment{Contour plots showing the Cahn--Hilliard equation with a symmetric mixture at various time--steps.}}
    \label{fig3:contour_all}
\end{figure}

\subsection{Benchmark of cuSten}
\Acomment{
In this section we benchmark the cuPentCahnADI program, which uses cuSten and cuPentBatch, against a serial version of the program running on a CPU.
The GPU used in this benchmark is an NVIDIA Titan X Pascal and the CPU is an Intel i7--6850K which has 6 hyper-threaded cores operating at $3.6 GHz$.
The benchmark is performed by measuring the time to time--step the simulation to a final time of $T = 10$, scaling $N$ where $N \times N$ is the total size of the domain.
As the number of time-steps in the simulation is proportional to $N$ the entire computation is $O(N^3)$, in other words $O(N)$ time-steps each with $O(N^2)$ computations to be performed on the computational domain. 
All start--up overheads and program--finish overheads are excluded from the timing, this is to ensure a fair benchmark of only the numerical computation, $T = 10$ was chosen to ensure any effects of background processes due to the operating system are averaged out.
No IO steps were included in either code.
The same parameters and initial conditions were used as in the previous section, the serial code and version of cuPentCahnADI which outputs times rather than simulation data can be found in the folder} \codeword{cuPentSpeedUp}.

\Acomment{
In Figure~\ref{fig3:timeScale} we present the scaling in time as a function of $N$ for the serial and GPU codes, superimposed are the lines for $N^1$ and $N^3$ for comparison.
It can be seen clearly from these plots that the CPU code scales in time as $N^3$, in keeping with the above analysis.
While the GPU code initially scales with $N^1$ increasing to $N^3$ as $N$ increases.
This behaviour for the GPU scaling be attributed to the fact that initially at small $N$ the GPU is able to perform the $O(N^2)$ computations at each time-step completely in parallel, thus eliminating them from the scaling and so the dominant scale is the $O(N)$ time-steps being performed.
As $N$ increases the GPU becomes saturated with work, not all of the $O(N^2)$ computations can be performed at once and so the $O(N^3)$ scaling becomes dominant again.
}

\begin{figure}[ht]
  \centering
    \includegraphics[width=0.8\textwidth]{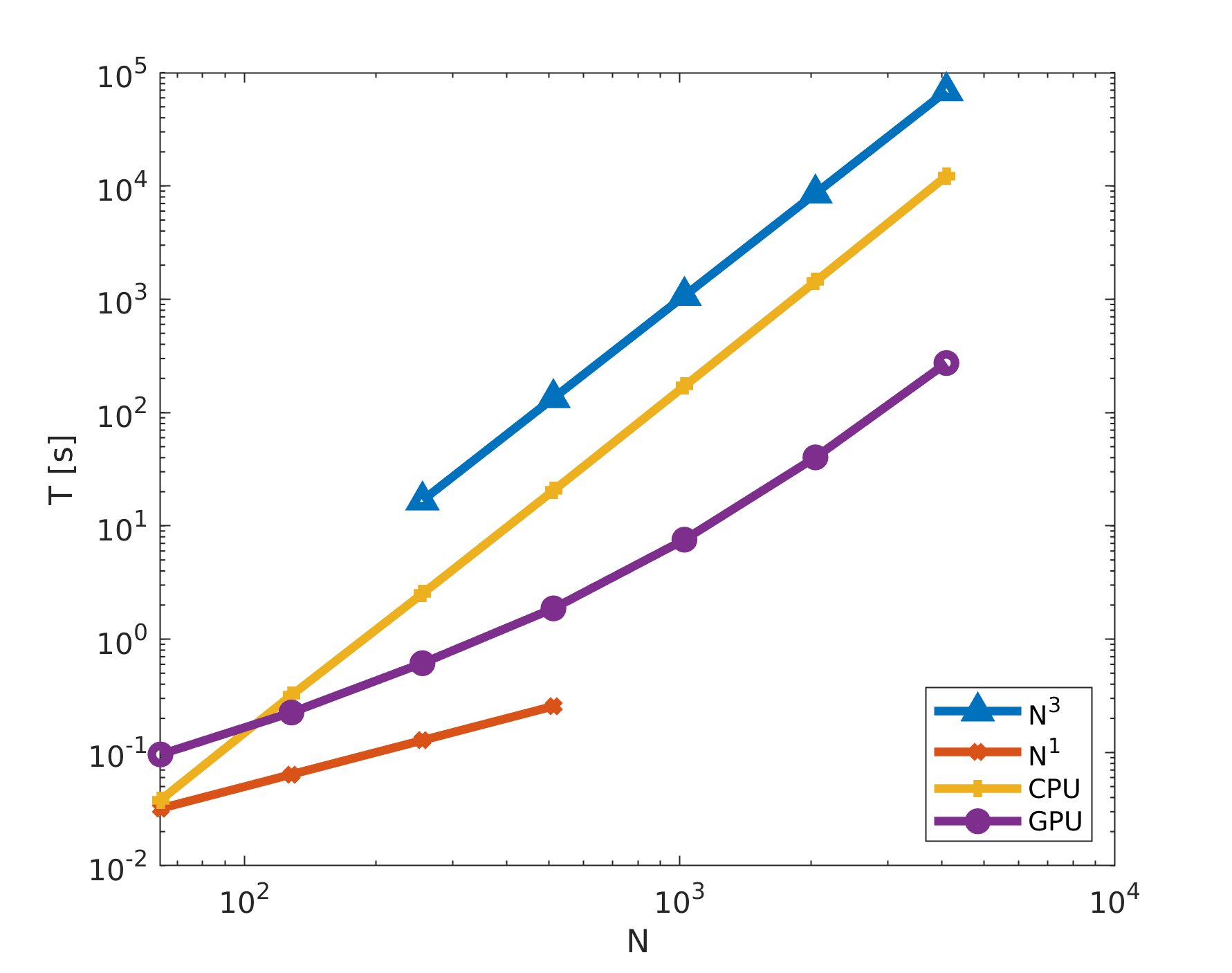}
    \caption{Plot showing how the CPU and GPU times scale with $N$.}
  \label{fig3:timeScale}
\end{figure} 

Further evidence of this can be seen in Figure~\ref{fig3:speedup} as the curve begins to level off for $N$ large. 
In this plot though we can see the clear advantage of parallelising this 2D solver on a GPU versus the serial CPU code, the speed-up is on the $O(10)$ for all reasonable grid resolutions, indeed the speed-up gets to $40\times$ faster for large $N$, a significant performance increase.
This performance would be increased further on newer GPUs such as the V100.
Thus significant performance can be gotten by using a GPU with the cuSten library for 2D computations.
The advantages of GPUs for the speed-up of batches of 1D problems has already been discussed in~\cite{gloster2019cupentbatch}, the results presented in that paper used an earlier version of the cuSten library.

\begin{figure}[ht]
  \centering
    \includegraphics[width=0.8\textwidth]{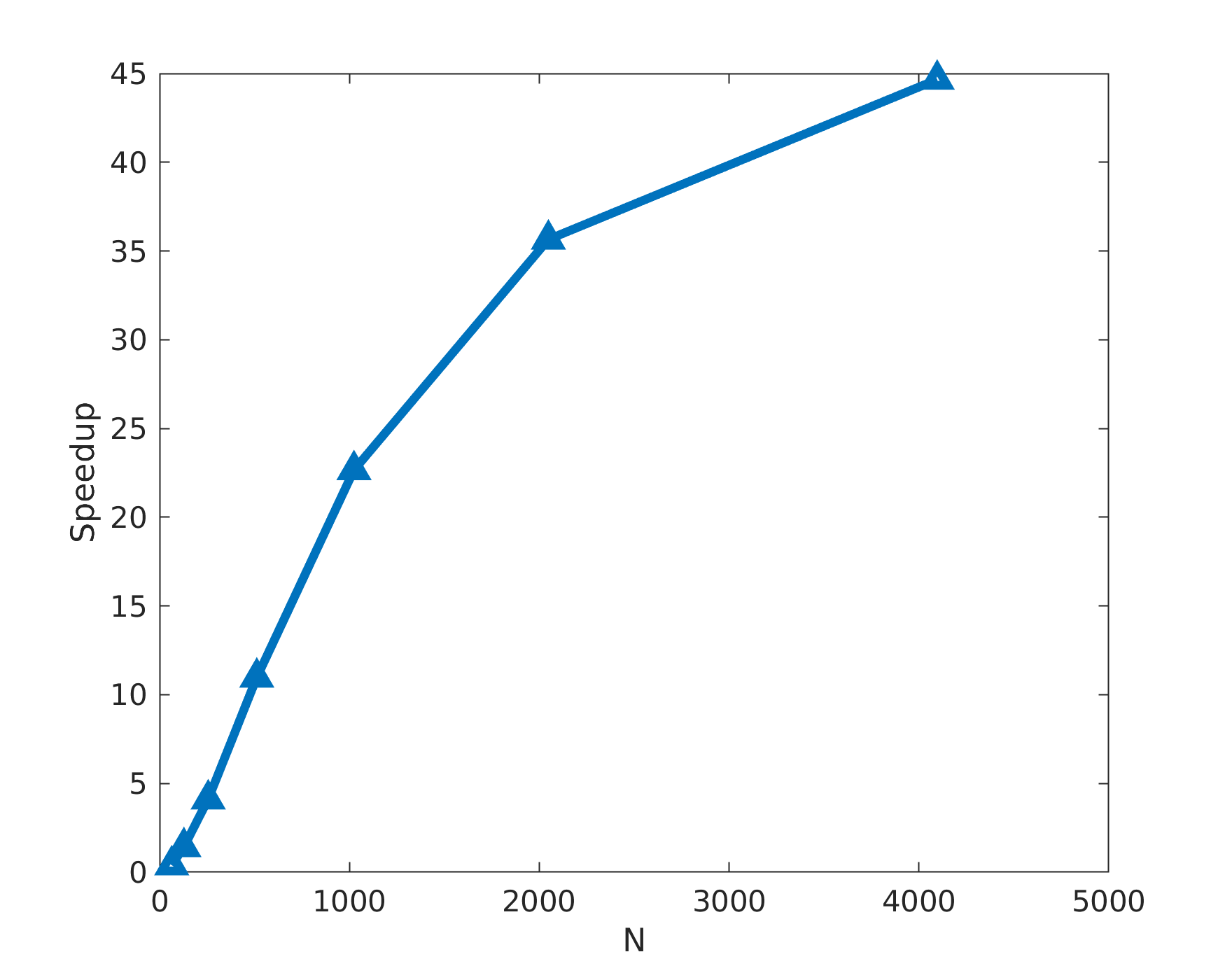}
    \caption{Plot showing speed-up of the GPU code versus the CPU code as a function of $N$.}
  \label{fig3:speedup}
\end{figure} 


\section{Discussion and Conclusions}
\label{sec3:con}
\subsection{Possible Future Extensions to the Library}
As previously mentioned the current library is limited to 2D uniform grids with double precision. 
Future areas of expansion could include moving the current library functions into C++ templates, this would allow for easier generalisation to other data types without the current need for find and replace to be done manually. 
Expansion to 3D and non-uniform grids is less trivial. 
3D would require a different approach to loading data than currently implemented as data will not be contiguous in RAM in the z direction, a more sophisticated loading scheme with pointers would be required.
For non-uniform grids additional data would need to be loaded into memory, it is likely in this situation that a hybrid of modifying the code such as in the WENO example to have extra data available ($u$ and $v$ velocities in the case of WENO, coordinate transformations in the case of a non-uniform grid) and using function pointers would be the best approach to make to the existing source. 

\subsection{MPI}
The design of the library lends itself to an MPI domain decomposition to be used in a hybrid code with the cuSten library.
Each MPI process could be assigned to a GPU using the deviceNum parameter, then the user would apply the non periodic versions of the stencils along with using MPI to swap the boundary halos.
Memory exchange is simplified in MPI due to the use of Unified Memory, the required data will be copied directly between GPU devices. 
This allows for the application of this library in much larger solvers which require more than just a single GPU.

\subsection{Concluding Remarks}
In this chapter we have shown how cuSten can be used to simplify the implementation of finite difference programs in CUDA compared with other state of the art libraries such as PETSc.
cuSten has a lightweight interface with a minimal learning curve required to implement the functions as part of a wider project. 
The library has been benchmarked against a serial code using a Cahn--Hilliard solver and numerous examples are provided to show potential users how to use the functionality provided. 
It has wide ranging applications in finite-difference solver development and in further areas requiring stencil--based operations such as image processing and optimisation problems. 
\lhead{\emph{Chapter 4}}  
\chapter{cuPentBatch -- A batched pentadiagonal solver for NVIDIA GPUs}
\label{chapter:cuPentBatch}
\Acomment{
In this chapter cuPentBatch is introduced, a batched pentadiagonal solver for NVIDIA GPUs.
The work in this chapter was primarily the subject of the paper 'cuPentBatch -- A batched pentadiagonal solver for NVIDIA GPUs~\cite{gloster2019cupentbatch}.
The development of cuPentBatch was motivated by applications involving batched numerical solutions of parabolic partial differential equations, necessitated by parameter studies of various 1D physical models and methods such as ADI in 2D which relies on batched matrix inversions.  
We demonstrate that cuPentBatch outperforms the NVIDIA standard pentadiagonal batch solver gpsvInterleavedBatch for the class of physically--relevant computational problems encountered herein.
}


\section{Introduction}
\label{sec4:intro}
This chapter considers the solving of batches of pentadiagonal linear systems, systems with five matrix diagonals, using GPU computing.   
Batched solutions of $\mxA \vecx = \vecb$, in particular where $\mxA$ is a tridiagonal or pentadiagonal matrix, are becoming increasingly prevalent methods for tackling a variety of problems on GPUs which offer a high level of parallelism~\cite{zhang2010fast, kim2011scalable, cuThomasBatch, gloster2019cupentbatch}. 
In the field of gravitational wave data analysis, much work is done simplifying complex waveforms and analyses to achieve results in a physically reasonable and desirable amount of time. 
A large portion of this work involves using cubic splines for interpolation~\cite{splineIEEE}, a highly parallelizable process that has shown promising results using GPUs in terms of accelerating established analysis procedures, as well as allowing for new methods with the extreme increase in computational speed.  
Fluid mechanics has also seen a broad application of GPUs where solutions of Poisson's equation are commonly required~\cite{hockney1964fast, valeroPoisson}, \Acomment{tsunami modelling and simulation~\cite{reguly2018volna}}, numerical linear algebra \cite{laraVariableBatched, HaidarSmallMatrices}, batch solving of 1D partial differential equations~\cite{gloster2019cupentbatch} and ADI methods for 2D simulations \cite{ADIGPU, gloster2019custen}.
Other examples of areas using GPUs include image in-painting \cite{imageInpainting}, simulations of the human brain~\cite{cuHines, BrainsOilGas}, and animation~\cite{pixar}.
This is a computationally-intensive task, and justifies the deployment of GPU computing. 
Batched computations are becoming increasingly prevalent as methods for tackling large numbers of individual problems to generate data sets for parameter studies or for solving loosely coupled systems of equations. 
At the library level various implementations/functions have been developed for batched solves, examples include the Batched BLAS project \cite{dongarra2017optimized, dongarra2017design}, as part of the CUDA libraries cuBLAS, cuSPARSE and cuSOLVER, and the Intel MKL library.

CUDA already contains a library for solving pentadiagonal problems in batch mode, and the current state of the art algorithm is gpsvInterleavedBatch, which comes as part of the cuSPARSE library in CUDA.  
The problem to be solved may be written in abstract terms as $\mxA \vecx_i=\vecrhs_i$, where the index $i$ labels the various pentadiagonal problems to be solved.  
The application we have in mind is a parametric study, in which the vector $\vecrhs$ may depend on a physical parameter (or parameters); hence, the index $i$ labels different values taken by the parameter.  
As such, the vectors $\vecx_i$ and $\vecrhs_i$ change as the index $i$ changes, but the matrix $\mxA$ is the same in each case.  
In this context, use of gpsvInterleavedBatch is not appropriate, as gpsvInterleavedBatch  updates the entries of $\mxA$ for each instance of the linear problem, which leads to superfluous memory access and unnecessary computational overhead.  
As such, the key point of the present work is to develop and test a new pentadiagonal batch solver (cuPentBatch) which leaves the matrix $\mxA$ intact for each instance of the pentadiagonal solver, thereby enhancing computational performance -- in short, we demonstrate that our newly developed cuPentBatch outperforms gpsvInterleavedBatch for the computational problems encountered herein.

The starting-point for developing the batched pentadiagonal solver is an existing batched tridiagonal solver called cuThomasBatch~\cite{cuThomasBatch, cuThomasVBatch}, based on the Thomas Algorithm, and now part of the CUDA library as \codeword{gtsvInterleavedBatch}. 
We herein extend cuThomasBatch to accommodate pentadiagonal problems.  
We also provide several examples from Computational Physics and Applied Mathematics where pentadiagonal problems naturally arise -- not only as computational problems to be solved on a one-off basis, but in the context of parametric studies, where solutions in batch mode are essential.  
The pentadiagonal problems we exemplify are symmetric positive definite -- this justifies the use of an extended Thomas algorithm, which is numerically stable in precisely this setting.

The chapter is organised as follows.  
In Section~\ref{sec4:model} we outline the numerical PDE-based models that provide the motivation for the development of the pentadiagonal solver.  
We describe the pentadiagonal system to be solved and outline the algorithm for its solution.  
We also present some sample numerical results with validation.  
We introduce the parallel pentadiagonal solver in Section~\ref{sec4:perf} and present the results of a  performance analysis -- we show how the present algorithm has superior performance to the existing in-house CUDA library (gpsvInterleavedBatch).  
Concluding remarks are presented in Section~\ref{sec4:conc}.


\section{Physical and Computational model}
\label{sec4:model}
As we are motivated by key physical problems from applied mathematics and computational physics, in this section we develop the new pentadiagonal solver in the context of physical models, namely the hyperdiffusion equation.  
We choose the hyperdiffusion equation as it is a simplicifcation of the full Cahn--Hilliard equation, namely limiting the equation to the fourth--order derivative term, which we will study in later chapters in this thesis. 
We also limit our study to 1D equations as the methodology is the same when extended to 2D ADI methods.
At the same time, we emphasise that the algorithms developed herein are generic and carry over to arbitrary pentadiagonal systems.


\subsection{The hyperdiffusion equation}
\label{sec4:hyper}
Based on the motivation given above, we focus on the following hyperdiffusion equation in one spatial dimension:
\begin{equation}
\frac{\partial C}{\partial t}=-\gamma D \frac{\partial^4 C}{\partial x^4},\qquad t>0,\qquad x\in (0,L),
\label{eq:hyperdiff}
\end{equation}
with periodic boundary condition $C(x+L,t)=C(x)$ and initial condition $C(x,t=0)=f(x)$, valid on $[0,L]$.  
We henceforth rescale the space and time variables; this is equivalent to setting $\gamma=D=L=1$.  
We \Acomment{discretise} Equation~\eqref{eq:hyperdiff} in space using centred differences and in time using the Crank--Nicolson method.  
We use standard notation for the discretisation, with
\[
C_i^n = C(x=i\Delta x ,t=n\Delta t),
\]
where $\Delta x$ is the grid spacing in the $x$-direction.  
The grid spacing, the problem domain length $L$ and the number of unknowns $N$ are related through $\Delta x=L/N$.    
In this way, the discretised version of Equation~\eqref{eq:hyperdiff} is written as
\begin{multline}
\frac{C_i^{n+1}-C_i}{\Delta t}=
-\tfrac{1}{2}\Delta x^{-4}\left[C_{i+2}^{n+1}-4C_{i+1}^{n+1}+6C_{i}^{n+1}-4C_{i-1}^{n+1}+C_{i-2}^{n+1}\right]\\
-\tfrac{1}{2}\Delta x^{-4}\left[C_{i+2}^{n}  -4C_{i+1}^{n}  +6C_{i}^{n}  -4C_{i-1}^{n}+C_{i-2}^{n}\right].
\label{eq:disc}
\end{multline}
Upon rearranging terms, Equation~\eqref{eq:disc} can be written more compactly as follows:
\begin{multline}
\sigma_x C_{i - 2}^{n + 1} - 4 \sigma_x C_{i - 1}^{n+1} + (1 + 6 \sigma_x)C_{i}^{n+1} - 4 \sigma_x C_{i+1}^{n+1} + \sigma_x C_{i+2}^{n+1} \\ 
=  - \sigma_x C_{i - 2}^{n} + 4 \sigma_x C_{i - 1}^{n} + (1 - 6 \sigma_x)C_{i}^{n} + 4 \sigma_x C_{i+1}^{n} - \sigma_x C_{i+2}^{n},
\label{eq:1d_hyper_scheme}
\end{multline} 
where $\sigma_x = \Delta t / 2\Delta x^4$. 
\begin{remark}
With the Crank--Nicolson temporal discretisation and the centred spatial discretisation, the truncation error in the hyperdiffusion equation~\eqref{eq:1d_hyper_scheme} is $O(\Delta t^2, \Delta x^2)$. 
It can also be shown that this discretisation is unconditionally stable using Von Neumann stability analysis.
\end{remark}
\begin{remark}
A finite-difference approximation of the Heat Equation $\partial_t C=\partial_{xx} C$ with Crank--Nicolson temporal discretisation and \Acomment{fourth-order} accurate spatial discretisation (specifically, involving nearest neighbours and next-nearest-neighbours on the spatial grid) also produces a pentadiagonal problem that can solved with the methods developed herein.
\end{remark}
We conclude this section by emphasising that both the Cahn--Hilliard and hyperdiffusion equations fall into the category of fourth-order parabolic PDEs~\cite{Elliott_Zheng}, as the highest-order derivative term appears in \Acomment{a linear fashion} (specifically, through the appearance of the operator $\mathcal{L}=-\gamma D\partial_{xxxx}$).    
The linear operator $\mathcal{L}$ satisfies the generalised parabolic property
\[
\langle \phi ,\mathcal{L}\phi\rangle=\int_0^L \phi \left(\mathcal{L}\phi\right)\,\mathd x
=-\gamma D\int_0^L |\partial_{xx}\phi|^2\,\mathd x\leq 0,
\] 
i.e. $\langle \phi,\mathcal{L}\phi\rangle\leq 0$ for all non-zero smooth real-valued $L$-periodic functions $\phi(x)$.


\subsection{The pentadiagonal matrix system}
Equation~\eqref{eq:1d_hyper_scheme} can be rewritten as a pentadiagonal matrix system, modulo some off-diagonal terms to deal with the periodic boundary conditions:
\begin{subequations}
\begin{equation}
\underbrace{
\begin{pmatrix}
c & d & e & 0 & \cdots &  0 & a & b \\
b & c & d & e & 0 & \cdots &   0 & a \\
a & b & c & d & e & 0 & \cdots  & 0\\
0 & \ddots & \ddots & \ddots & \ddots & \ddots & \ddots &   \vdots\\
\vdots& \ddots & \ddots & \ddots & \ddots & \ddots & \ddots & 0 \\
0 & \cdots  & 0  &  a  & b  & c & d & e \\
e & 0 &  & 0   & a  & b & c & d \\
d & e & 0  &   \cdots & 0 &  a & b & c
\end{pmatrix}
}_{=\mxA}
\underbrace{
\begin{pmatrix}
x_{1} \\
x_{2} \\
\vdots \\
\vdots \\
\vdots \\
x_{N-2} \\
x_{N-1} \\
x_{N}
\end{pmatrix}
}_{=\vecx}
=
\underbrace{
\begin{pmatrix}
f_{1} \\
f_{2} \\
\vdots \\
\vdots \\
\vdots \\
f_{N-2} \\
f_{N-1} \\
f_{N}
\end{pmatrix}
}_{=\vecrhs}.
\label{eq:matrix_sys1}
\end{equation}
Here, the coefficients of the matrix in Equation~\eqref{eq:matrix_sys1} have the following meaning:
\begin{equation}
a  = \sigma_x,\qquad b = - 4 \sigma_x,\qquad
c = 1 + 6\sigma_x, \qquad
d= - 4 \sigma_x, \qquad e = \sigma_x.
\end{equation}%
Similarly,
\begin{equation}
f_i = - \sigma_x C_{i - 2}^{n} + 4 \sigma_x C_{i - 1}^{n} + (1 - 6 \sigma_x)C_{i}^{n} + 4 \sigma_x C_{i+1}^{n} - \sigma_x C_{i+2}^{n}.
\end{equation}%
\label{eq:matrix_sys}%
\end{subequations}%
As such, by inverting the matrix~\eqref{eq:matrix_sys}, the solution of the hyperdiffusion equation is advanced from time step $n$ to time step $n+1$.  
Here, information concerning $C$ at time step $n$ is contained in the vector $\vecrhs$, from which $C$ at time step  $n+1$ is extracted via the vector $\vecx$.

The matrix~\eqref{eq:matrix_sys} can be inverted using any standard method but our focus is now on using a specific pentadiagonal solver~\cite{numalgC}. 
This though requires us to re-examine the matrix above which has terms that lie off the diagonal, thus an additional step will be required to remove these terms.  
For this purpose, we use the algorithm of Navon in Reference~\cite{navon_pent}.   
As such, the matrix $\mxA$ is decomposed such that the last two rows and last two columns are eliminated, this then reduces the matrix to a pure pentadiagonal form that can be solved along with an additional smaller solve to deal with the eliminated points in the matrix. 
Therefore, following the discussion in Reference~\cite{navon_pent} we introduce the matrix $\mxE$ which is simply an $(N - 2) \times (N-2)$ reduced version of $\mxA$, removing the last two rows and columns:
\begin{equation}
\mxE = 
\begin{pmatrix}
c & d & e & 0 & \cdots &  0 & \cdots & 0 \\
b & c & d & e & 0 & \cdots & \cdots   & \vdots \\
a & b & c & d & e & 0 & \cdots  & 0\\
0 & \ddots & \ddots & \ddots & \ddots & \ddots & \ddots &   \vdots\\
\vdots& \ddots & \ddots & \ddots & \ddots & \ddots & \ddots & 0 \\
0 & \cdots  & 0  &  a  & b  & c & d & e \\
0 & \cdots & \cdots  & 0   & a  & b & c & d \\
0 & \cdots & \cdots  &   \cdots & 0 &  a & b & c
\end{pmatrix}
\end{equation}
We define the following vectors based on these eliminations, all of row dimension $(N-2)$:
\begin{equation}
\mxXhat = 
\begin{pmatrix}
x_{1}^{n+1} \\
x_{2}^{n+1} \\
\vdots \\
\vdots \\
\vdots \\
x_{N-2}^{n+1}
\end{pmatrix},
\qquad
\vech =
\begin{pmatrix}
e & d \\
0 & e\\
0 & 0 \\
\vdots \\
a & 0 \\
b & a
\end{pmatrix},
\qquad
\vecfhat=
\begin{pmatrix}
f_{1} \\
f_{2} \\
\vdots \\
\vdots \\
\vdots \\
f_{N-2}
\end{pmatrix},
\qquad
\veck=
\begin{pmatrix}
a & b \\
0 & a\\
0 & 0 \\
\vdots \\
e & 0 \\
d & e
\end{pmatrix}.
\end{equation}
We have therefore reduced our system~\eqref{eq:matrix_sys} to two coupled simultaneous equations that can be written as follows:
\begin{subequations}
\begin{align}
\mxE\mxXhat + \veck 
\begin{pmatrix}
x_{N - 1} \\
x_{N}
\end{pmatrix}
= \vecfhat
\\
\vech^T \mxXhat + 
\begin{pmatrix}
c & d \\
b & c
\end{pmatrix}
\begin{pmatrix}
x_{N - 1} \\
x_{N}
\end{pmatrix}
=
\begin{pmatrix}
f_{N - 1} \\
f_{N}
\end{pmatrix}.
\label{eq:xhat}
\end{align}%
\end{subequations}%
We can solve for $\mxXhat$ in the first simultaneous equation through a pentadiagonal inversion of E and obtain:
\begin{equation}
\mxXhat = \mxE^{-1} \left[\vecfhat - \veck 
\begin{pmatrix}
x_{N - 1} \\
x_{N}
\end{pmatrix}
\right]
\label{eq:solve}
\end{equation}
Equation~\eqref{eq:solve} is substituted into Equation~\eqref{eq:xhat}.  After some rearrangement of terms, these operations yield an expression for the final two unknowns:
\begin{equation}
\begin{pmatrix}
x_{N - 1} \\
x_{N}
\end{pmatrix} = 
\left[
\begin{pmatrix}
c & d \\
b & c
\end{pmatrix} 
- \vech^T \mxE^{-1}\veck\right]^{-1}
\left[
\begin{pmatrix}
f_{N - 1} \\
f_{N}
\end{pmatrix}
- \vech^T\mxE^{-1}\vecfhat
\right].
\label{eq:first_two}
\end{equation}
As such, we solve for the final two unknowns first (via Equation~\eqref{eq:first_two}).  
We then substitute the result for $(x_{N-1},x_N)^T$) into Equation~\eqref{eq:solve} and then invert to yield the entire vector $\vecx$.  
Computationally the expressions for the inverted matrix in~\eqref{eq:first_two} and the $\vech^T\mxE^{-1}$ can be computed and stored at the start of any code to be reused as required. 
This eliminates much of the overhead for each time step of the hyperdiffusion algorithm.  
In particular we make use of the fact that $(\mxE^{-1})^T\vech = (\vech^T\mxE^{-1})^T$ to simplify this process further.


\subsection{Solution of the pentadiagonal system}
In this section we describe a standard numerical method~\cite{numalgC} for solving a pentadiagonal problem $\mxA\vecx=\vecrhs$.  
We present the algorithm in a general context (in particular, independent of the earlier discussion on finite-difference solutions of PDEs).  
As such,  in this section we assume that $\mxA$ is strictly pentadiagonal  with arbitrary nonzero entries, such that
 \begin{equation*}
\mxA = 
\begin{pmatrix}
c_1 & d_1 & e_1 & 0 & \cdots &  0 & \cdots & 0 \\
b_2 & c_2 & d_2 & e_2 & 0 & \cdots & \cdots   & \vdots \\
a_3 & b_3 & c_3 & d_3 & e_3 & 0 & \cdots  & 0\\
0 & \ddots & \ddots & \ddots & \ddots & \ddots & \ddots &   \vdots\\
\vdots& \ddots & \ddots & \ddots & \ddots & \ddots & \ddots & 0 \\
0 & \cdots  & 0  &  a_{N - 2}  & b_{N - 2}  & c_{N - 2} & d_{N - 2} & e_{N - 2} \\
0 & \cdots & \cdots  & 0   & a_{N - 1}  & b_{N - 1} & c_{N - 1} & d _{N - 1}\\
0 & \cdots & \cdots  &   \cdots & 0 &  a_{N} & b_{N}  & c_{N} 
\end{pmatrix}.
\end{equation*}

Three steps are required to solve the system:
\begin{enumerate}
\item Factor $\mxA = \mxL\mxR$  to obtain $\mxL$ and $\mxR$.
\item Find $\vecg$ from $\vecf = \mxL\vecg$
\item Back-substitute to find $\vecx$ from $\mxR\vecx = \vecg$
\end{enumerate}
Here, $\mxL$, $\mxR$ and $\vecg$ are given by the following equations:
\begin{subequations}
\begin{equation}
\mxL = 
\begin{pmatrix}
\alpha_1 &  &  &  &  &   &   \\
\beta_2 & \alpha_2 &  &  &  &  &     \\
\epsilon_3 & \beta_3 & \alpha_3 &  &  &  &   \\
 & \ddots & \ddots & \ddots &  &  &     \\
 &  &  \epsilon_{N - 1}  & \beta_{N - 1}  & \alpha_{N - 2} &    \\
&  &    & \epsilon_{N - 1}  & \beta_{N - 1} & \alpha_{N }\\
\end{pmatrix},
\qquad
\vecg = \begin{pmatrix}
g_{1} \\
g_{2} \\
\vdots \\
\vdots \\
g_{N-1} \\
g_{N}
\end{pmatrix},
\end{equation}
\begin{equation}
\mxR = 
\begin{pmatrix}
1 & \gamma_1  & \delta_1  &  &  &   &   \\
 & 1 & \gamma_2  & \delta_2  &  &  &     \\
 &  & \ddots & \ddots & \ddots  &  &   \\
 & & & 1  & \gamma_{N-2}  & \delta_{N-2}    \\
 &  &  &  & 1 & \gamma_{N-1}    \\
&  &    & &  & 1\\
\end{pmatrix}
\end{equation}%
\end{subequations}%
(the other entries in $\mxL$ and $\mxR$ are zero).  
The explicit factorisation steps for the factorisation $\mxA=\mxL\mxR$ are as follows:
\begin{enumerate}
\item $\alpha_1 = c_1$
\item $\gamma_1 = \frac{d_1}{\alpha_1}$
\item $\delta_1 = \frac{e_1}{\alpha_1}$
\item $\beta_2 = b_2$
\item $\alpha_2 = c_2 - \beta_2\gamma_1$
\item $\gamma_2 = \frac{d_2 - \beta_2 \delta_1}{\alpha_2}$
\item $\delta_2 = \frac{e_2}{\alpha_2}$
\item For each $i = 3, \cdots, N-2$
\begin{enumerate}
\item $\beta_i = b_i - a_i \gamma_{i-2}$
\item $\alpha_i = c_i - a_i\delta_{i-2} - \beta_i \gamma_{i-1}$
\item $\gamma_i = \frac{d_i - \beta_i \delta_{i-1}}{\alpha_i}$
\item $\delta_i = \frac{e_i}{\alpha_i}$
\end{enumerate}
\item $\beta_{N-1} = b_{N-1} - a_{N - 1}\gamma_{N-3}$
\item $\alpha_{N - 1} =  c_{N-1} - a_{N-1}\delta_{N-3} - \beta_{N-1}\gamma_{N-2}$
\item $\gamma_{N-1} = \frac{d_{N-1}-\beta_{N-1}\delta_{N-2}}{\alpha_{N-1}}$
\item $\beta_{N} = b_{N} - a_{N }\gamma_{N-2}$
\item $\alpha_{N} =  c_{N}- a_{N}\delta_{N-2} - \beta_{N}\gamma_{N-1}$
\item $\epsilon_i = a_i, \quad \forall i$
\end{enumerate}

The steps to find $\vecg$ are as follows:

\begin{enumerate}
\item $g_1 = \frac{f_1}{ \alpha_1}$
\item $g_2 = \frac{f_2 - \beta_2 g_1}{\alpha_2}$
\item  $g_i = \frac{f_i - \epsilon_i g_{i-2} - \beta_i g_{i - 1}}{\alpha_i} \quad \forall i = 3 \cdots N$
\end{enumerate}

Finally, the back-substitution steps  find $\vecx$ are as follows:

\begin{enumerate}
\item $x_N = g_N$
\item $x_{N-1} = g_{N-1} - \gamma_{N-1}x_N$
\item  $x_i = g_i - \gamma_i x_{i+1} - \delta_{i}x_{i+2} \quad \forall i = (N-2) \cdots 1$
\end{enumerate}

In this work, we implement this algorithm in serial and parallel batch. 
It can be easily seen that only six vectors are required to implement this algorithm: five for the left-hand side and one for the right-hand side.  
In the initial factorisation step $\mxA=\mxL\mxR$ we overwrite the input matrix $\mxA$ with the factorised matrices $\mxL$ and $\mxR$ which can then be used for the inversion steps later, this is done to minimise memory usage.
It should be noted that this method is $O(N)$ and each system of equations in the batch must be solved serially by a thread.


\subsection{Validation of Scheme}

\begin{figure}
	\centering
		\includegraphics[width=0.6\textwidth]{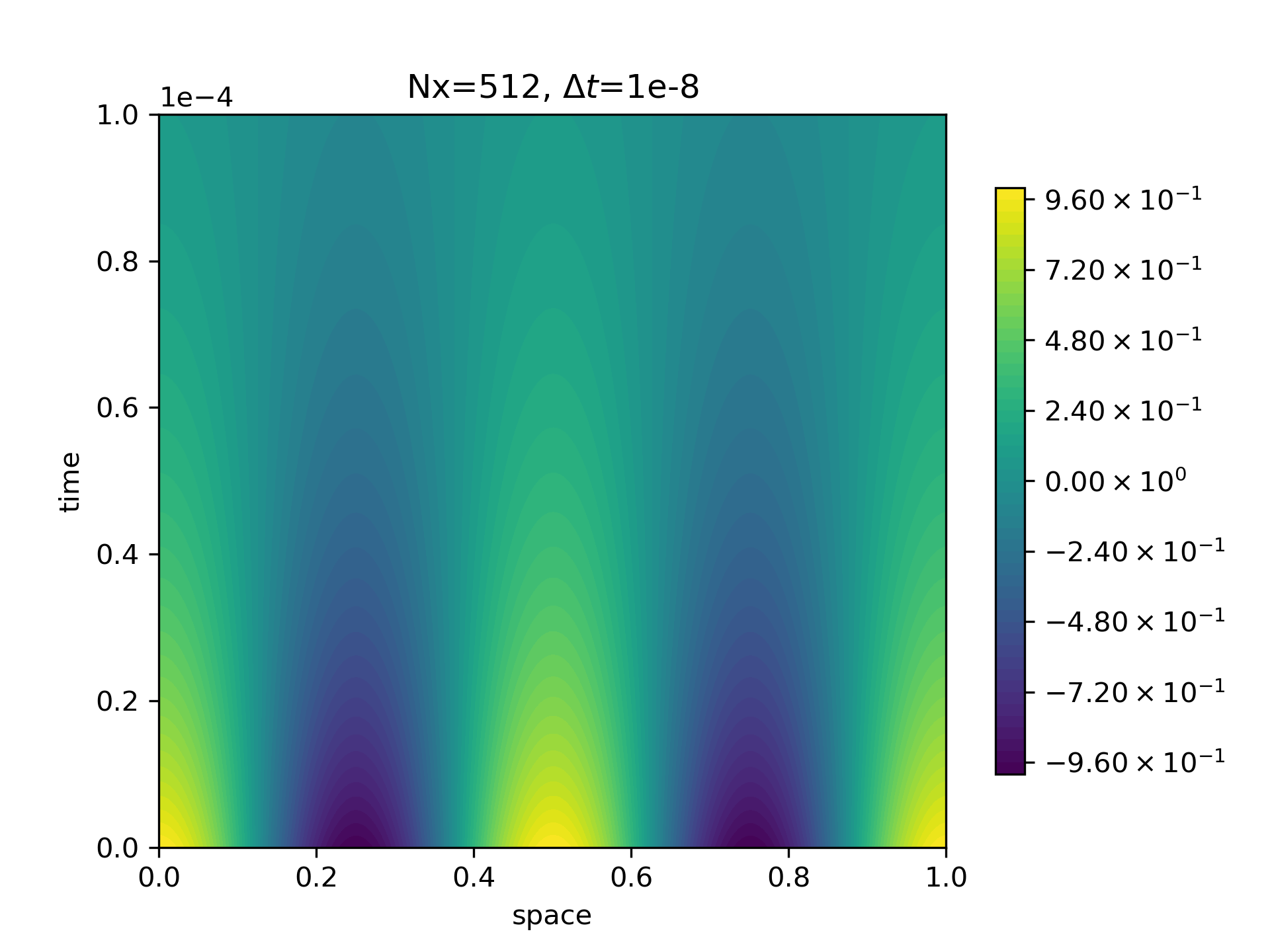}
		\caption{Space-time plot of the solution of the hyperdiffusion equation for a final time $T=10^{-4}$.  The model parameters and the initial condition are given in the main text.}
	\label{fig4:1d_hyperdiffusion_sol}
\end{figure}
We have validated the implicit finite-difference method~\eqref{eq:disc}--\eqref{eq:1d_hyper_scheme} for the hyperdiffusion equation.  We use the pentadiagonal solver developed above.  As a first validation step, we have implemented the numerical algorithm in a serial C code.  This serves as a base case against which to compare the performance of the GPU code in what follows.
An advantage of performing validation tests with the hyperdiffusion equation is that the hyperdiffusion equation admits exact solutions.  As such, a harmonic initial condition $C(x,t=0)= A\cos(kx+\varphi)$ (with constant amplitude $A$, wavenumber $k=(2\pi/L)n$ and phase $\varphi$ evolves into an exponentially-damped harmonic solution for $t>0$, 
\begin{equation}
C(x,t)=A\mathe^{-\lambda t}\cos(kx+\varphi),\qquad t>0.
\label{eq:exact}
\end{equation}  
Here, $n$ is a positive integer, and $\lambda=\gamma D k^4$ is the known analytical decay rate.  In this section we work with $\gamma=D=L=1$.  We also take $A=1$, $\varphi=0$, and $n=2$. 

Based on this numerical setup, a spacetime plot of the numerical solution $C(x,t)$ is shown in Figure~\ref{fig4:1d_hyperdiffusion_sol}, starting at $t=0$, and ending at the final time $T=10^{-4}$.  The amplitude numerical solution exhibits a rapid decay in time, consistent with the exact solution~\eqref{eq:exact}. 
We further examine the $L^2$ norm of the absolute error $\epsilon_N(t)$, given here in an obvious notation by
\begin{equation}
\epsilon_N(t)=\bigg\{\frac{1}{N}\sum_{i=1}^N \left[C_{\mathrm{numerical}}(i\Delta x,t)
-C_{\mathrm{analytical}}(i\Delta x,t)\right]^2\bigg\}^{1/2}.
\label{eq:myerr}
\end{equation}
Here, the dependency of the error on the number of grid points is indicated by the subscript $N$.  We examine this dependency by taking $t=T$ and investigating the functional relationship between $\epsilon_N(T)$ and $N$ in Figure~\ref{fig4:1d_hyperdiffusion_convergence}.  The error decreases as $\epsilon_N(T)\sim N^{-2}$, consistent with the fact that that our chosen spatial discretisation of the fourth-order derivative in the hyperdiffusion equation is $O(\Delta x^2)$ (i.e., $O(N^{-2}$)).
\begin{figure}
	\centering
		\includegraphics[width=0.6\textwidth]{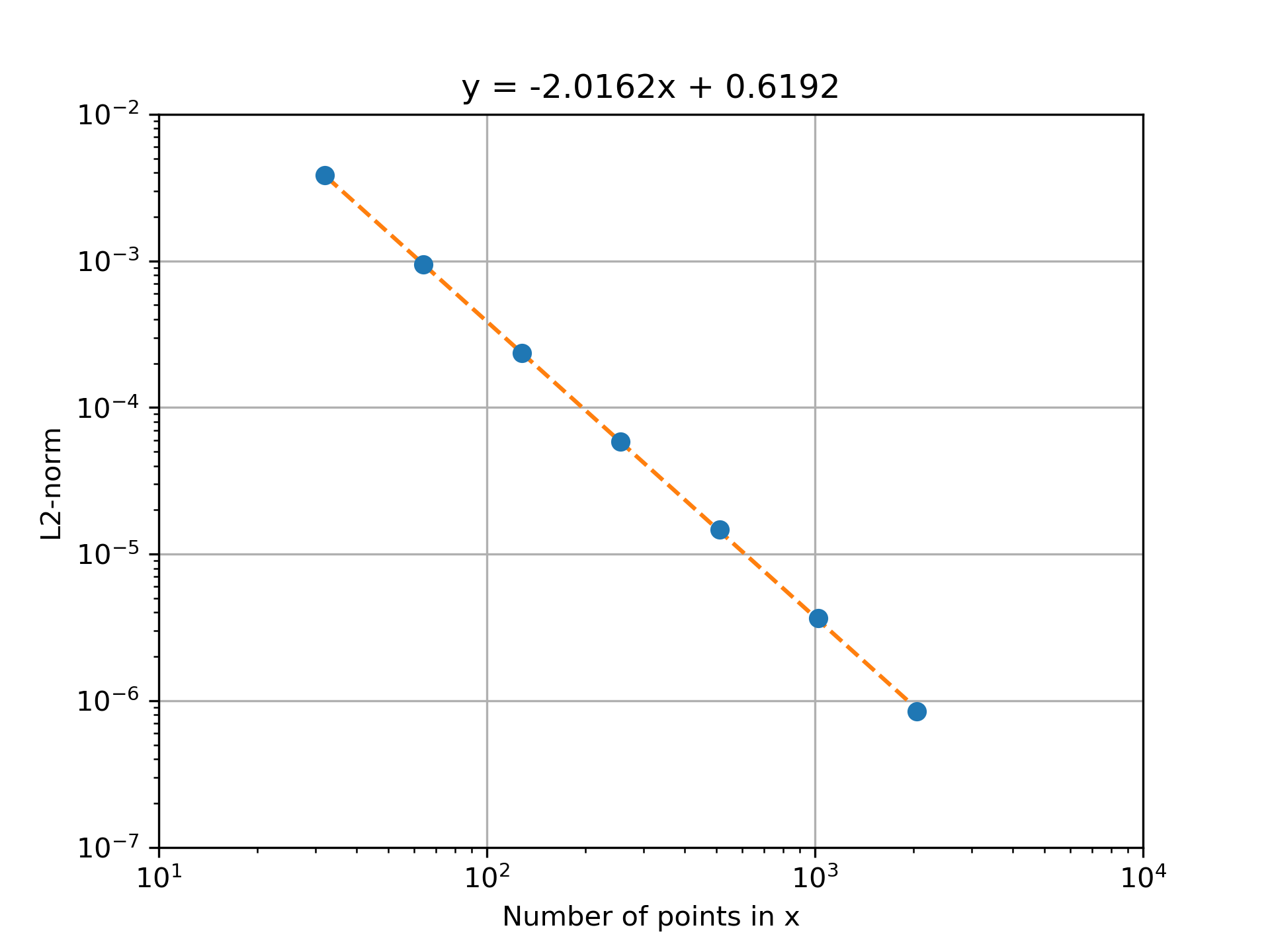}
		\caption{A plot of $\epsilon_N(T)$ as a function of $N$.  The final time is $T=10^{-4}$ and the time step is $\Delta t$ is $10^{-8}$.  The model parameters and the initial condition are given in the main text.  The line of best fit (on a log-log scale) is fitted and yields $\epsilon_N(T)\propto N^{-2.0162}$, compared to the theoretical value  $\epsilon_N(T)\propto N^{-2}$. }
	\label{fig4:1d_hyperdiffusion_convergence}
\end{figure}


\subsection{Implementation on GPU}
\label{sec4:implementation}
In order to solve the above scheme in batched form on a GPU we follow the methodology of cuThomasBatch \cite{cuThomasBatch, cuThomasVBatch} with some modifications. 
We retain the key aspect of interleaved data layout, this means that the first row of the batch data will contain the first entry in each linear system $\mxA\vecx_i=\vecf_i$ (the subscript $i$ labels the different systems in the batch), the second row the second entry and so on. 
The scheme is then implemented as in the serial case, but with one thread per system. 
This allows the GPU threads to access the global memory with coalesced memory accesses and prevents the need to worry about the physical size limits of shared memory. 
Where our implementation differs, apart from the change in type of matrix, is the splitting of the initial factorisation steps from the solve steps. 
This allows a user who wishes to use a constant matrix repeatedly to avoid factorising at every function call, and, we will show, a user who requires a new matrix at every call is not unjustly penalised versus using the existing gpsvInterleavedBatch. 
Indeed in many cases we see an improvement in performance even when refactoring the matrix at every time step. 
Finally, we use the library cuSten~\cite{gloster2019custen} which was discussed in the previous chapter to generate the right-hand side of each linear system in the batch.

The gpsvInterleavedBatch function relies on QR factorisation to solve the system of equations with householder reflection \cite{matComp}. 
It also relies on an interleaved data layout, thus making global data access performance identical to that of cuPentBatch. 
While QR factorisation is numerically stable a priori when compared to cuPentBatch it requires a greater number of operations. 
We note this as a flaw in cuPentBatch but we will show that for systems where the stability of the inversion is not a concern, such as in our example problem discussed in the following section where the matrix is symmetric positive definite, that cuPentBatch is a more efficient and faster algorithm. 
It should be noted that diagonally dominant is also a valid criterion for stability when solving with cuPentBatch.

\begin{remark}
The function gpsvInterleavedBatch uses dense QR factorisation with a zero fill pattern to accommodate the 5 diagonals while cuPentBatch is an LU factorisation without pivoting for 5 diagonals. 
Thus gpsvInterleavedBatch has a higher operation count than cuPentBatch. 
The performance benefit of this reduction is shown in section~\ref{sec4:paraversus}.
\end{remark}


\section{Performance Analysis}

For the purpose of performance analysis, we  solve a benchmark problem comprising  a series of identical one-dimensional hyperdiffusion simulations, as outlined in section~\ref{sec4:model}. 
To fix the emphasis on the performance analysis, each system in the batch has the same initial conditions and parameters.
Furthermore, we run each simulation for 250 time steps to average out any small variations in execution time by the computer due to scheduling, OS overhead etc. 
The measured time also omits any start up costs, setting initial conditions etc. 
The calculations are performed on an NVIDIA Titan X Pascal with 12GB of GDDR5 global memory and an Intel i7-6850K with 6 hyper-threaded cores. 
The system is running Ubuntu 16.04 LTS, CUDA v9.2.88, gcc 5.4 and has 128GB of RAM. 
Compiler flags used were \codeword{-O3} \codeword{-lineinfo} \codeword{--cudart=static} \codeword{-arch=compute_61} \codeword{-code=compute_61} \codeword{-std=c++11} \codeword{-lcusparse} \codeword{-lcublas}. 
Also it should be noted that these benchmarks are for 64 bit doubles, so the \codeword{cusparseDgpsvInterleavedBatch} is the variety of the cuSPARSE function used, this choice was made as when solving numerical PDEs higher floating point accuracy is generally desirable. 

In benchmarking we have measured the following three quantities:
\begin{enumerate} 
\item The time it takes to solve a batch of hyperdiffusion equations using gpsvInterleavedBatch. We shall refer to this method as simply gpsv from now on.
\item The time it takes to solve a batch of hyperdiffusion equation using cuPentBatch, factorising the matrix once at the beginning and repeatedly solving. We shall refer to this method as cuPentBatchConstant from now on.
\item The time it takes to solve a batch of hyperdiffusion equations using cuPentBatch, resetting and factorising the matrix repeatedly at every time step. This is to examine the performance in cases where the user will want to reset the matrix at every time step.   We shall refer to this method as cuPentBatchRewrite from now on.

Even in the present context of solving parabolic numerical PDEs in batch mode, it is conceivable that the matrix $\mxA$ may change at each time step -- for instance, in situations involving mesh refinement, adaptive time stepping or where the diffusion coefficient $D$ is no longer constant.
\end{enumerate} 
\label{sec4:perf}
Based on these measurements, we quantify the performance of our cuPentBatch using the following speedup ratios:
\begin{subequations}
\begin{equation}
\text{Speedup}=
\frac{\text{Time taken by gpsv}}{\text{Time taken by cuPentBatchConstant}},
\end{equation}
or
\begin{equation}
\text{Speedup}=
\frac{\text{Time taken by gpsv}}{\text{Time taken by cuPentBatchRewrite}},
\end{equation}%
\end{subequations}
depending on the context.  Hence, if $\text{Speedup}>1$, our in-house methods
are outperforming the standard gpsv.

\begin{figure}[ht]
	\centering
		\includegraphics[width=0.7\textwidth]{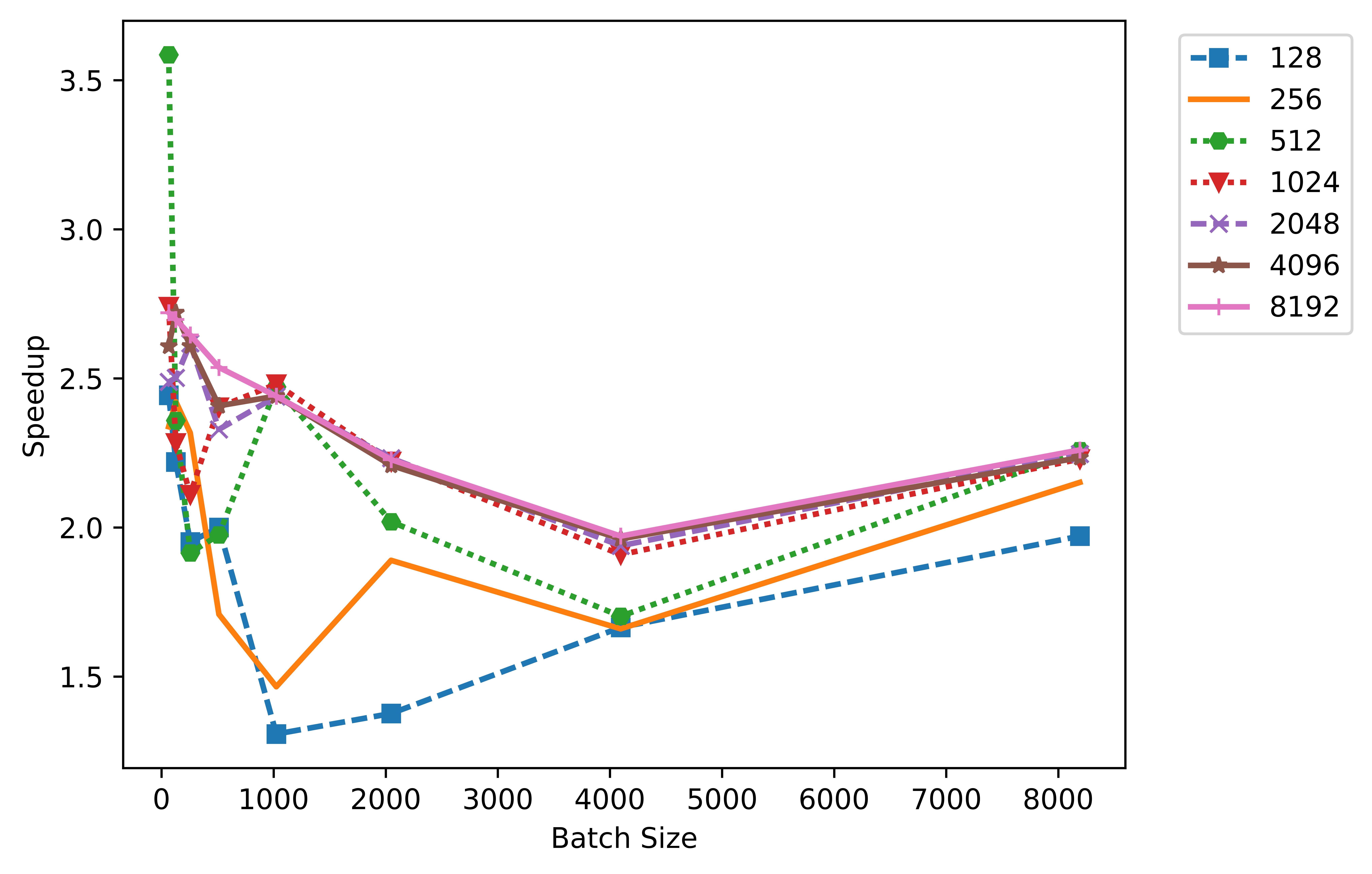}
		\caption{Speedup of cuPentBatchConstant versus gpsv. The number of unknowns for each is shown in the legend.}
	\label{fig4:fixConstantNFULL}
\end{figure} 

\begin{figure}[ht]
	\centering
		\includegraphics[width=0.7\textwidth]{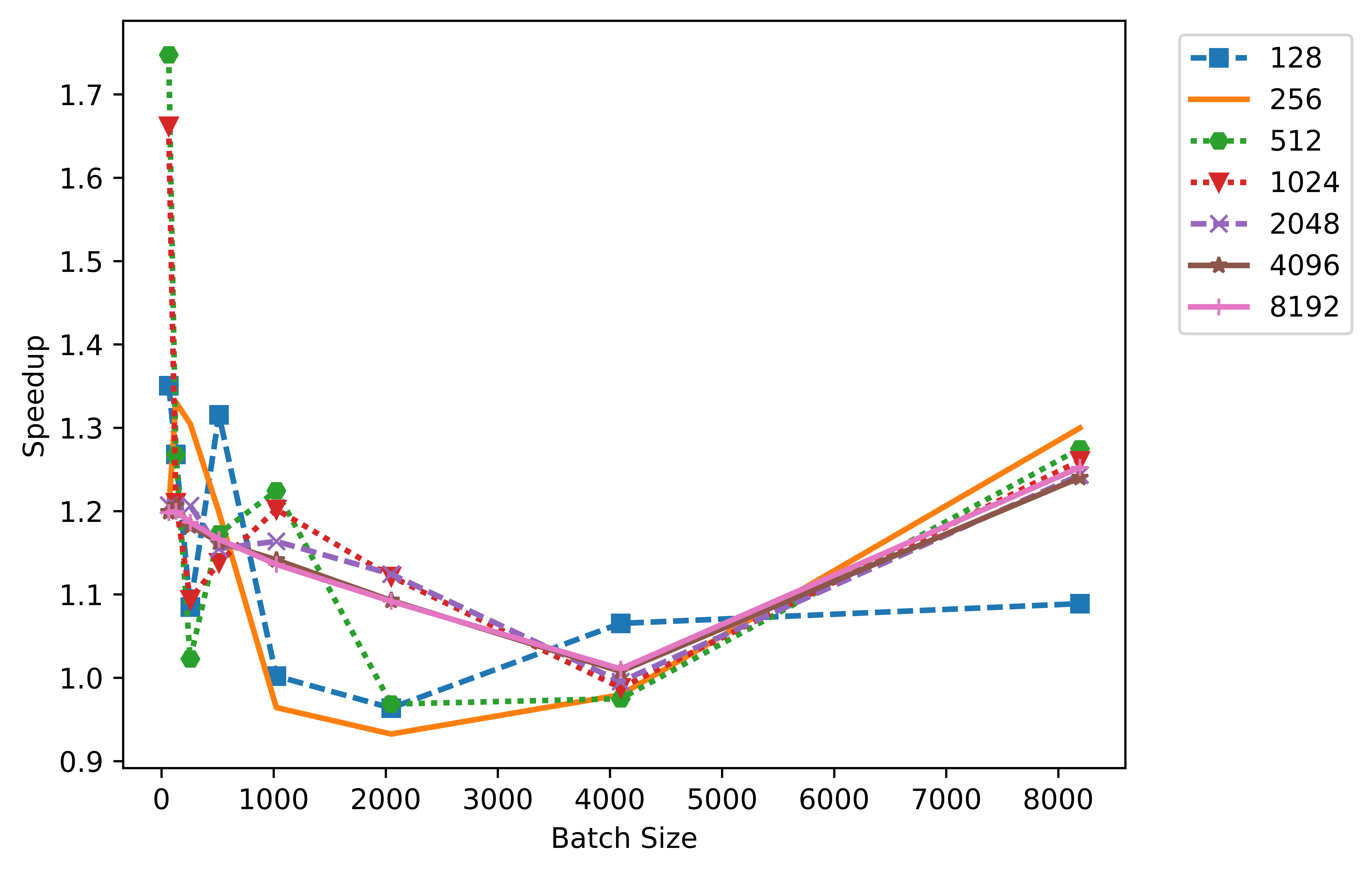}
		\caption{Speedup of cuPentBatchRewrite versus gpsv. The number of unknowns for each is shown in the legend.}
	\label{fig4:fixRewriteNFULL}
\end{figure}


\subsection{cuPentBatch vs. gpsvInterleavedBatch}
\label{sec4:paraversus}
\Acomment{We begin by fixing the number of unknowns and varying the batch size. 
The scaling is always $O(N)$ regardless of the whether we're benchmarking the Rewrite or Constant method.}
In Figure~\ref{fig4:fixConstantNFULL} we can see clear speedup for all cases of cuPentBatchConstant, generally over $2\times$ better performance for batches with high numbers of unknowns.  
Here, we see the clear advantage of the single factorisation and multiple solve over the multiple rewrites and factorisations that are required by gpsv. 
These batches are also small enough that they easily fit on the GPU memory thus the benchmark is free of any memory transfer penalties, only the run time of the algorithms is being compared.

In Figure~\ref{fig4:fixRewriteNFULL} we can see the speedup of cuPentBatchRewrite versus the gpsv algorithm. 
As the matrix is now being treated as non-constant between time steps the performance is much closer to that of gpsv. 
Nevertheless, due to the reduced number of operations required by cuPentBatch compared to gpsv, an increase in performance can be seen.  
\Acomment{This is most visible where there are larger numbers of unknowns, thus the serial aspect of the pentadiagonal inversion dominates, where there is an increase in performance such that $\text{Speedup} = 1.2 - 1.3$.}

\begin{figure}[ht]
	\centering
		\includegraphics[width=0.7\textwidth]{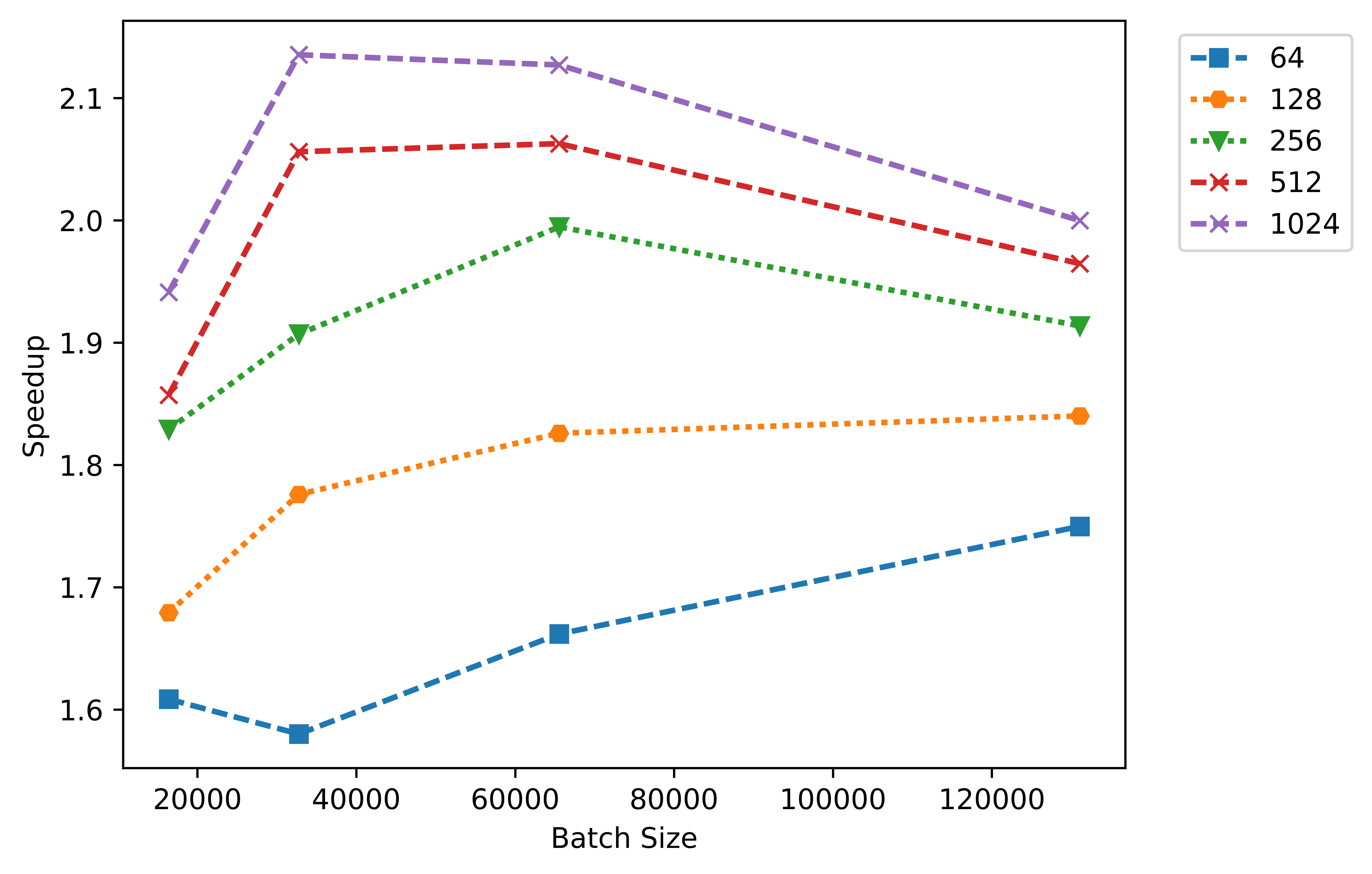}
		\caption{Speedup of cuPentBatchConstant versus gpsv for larger batch sizes $O(10^4-10^5)$. The number of unknowns for each is shown in the legend.}
	\label{fig4:fixLNEQConstantNFULL}
\end{figure} 

\begin{figure}[ht]
	\centering
		\includegraphics[width=0.7\textwidth]{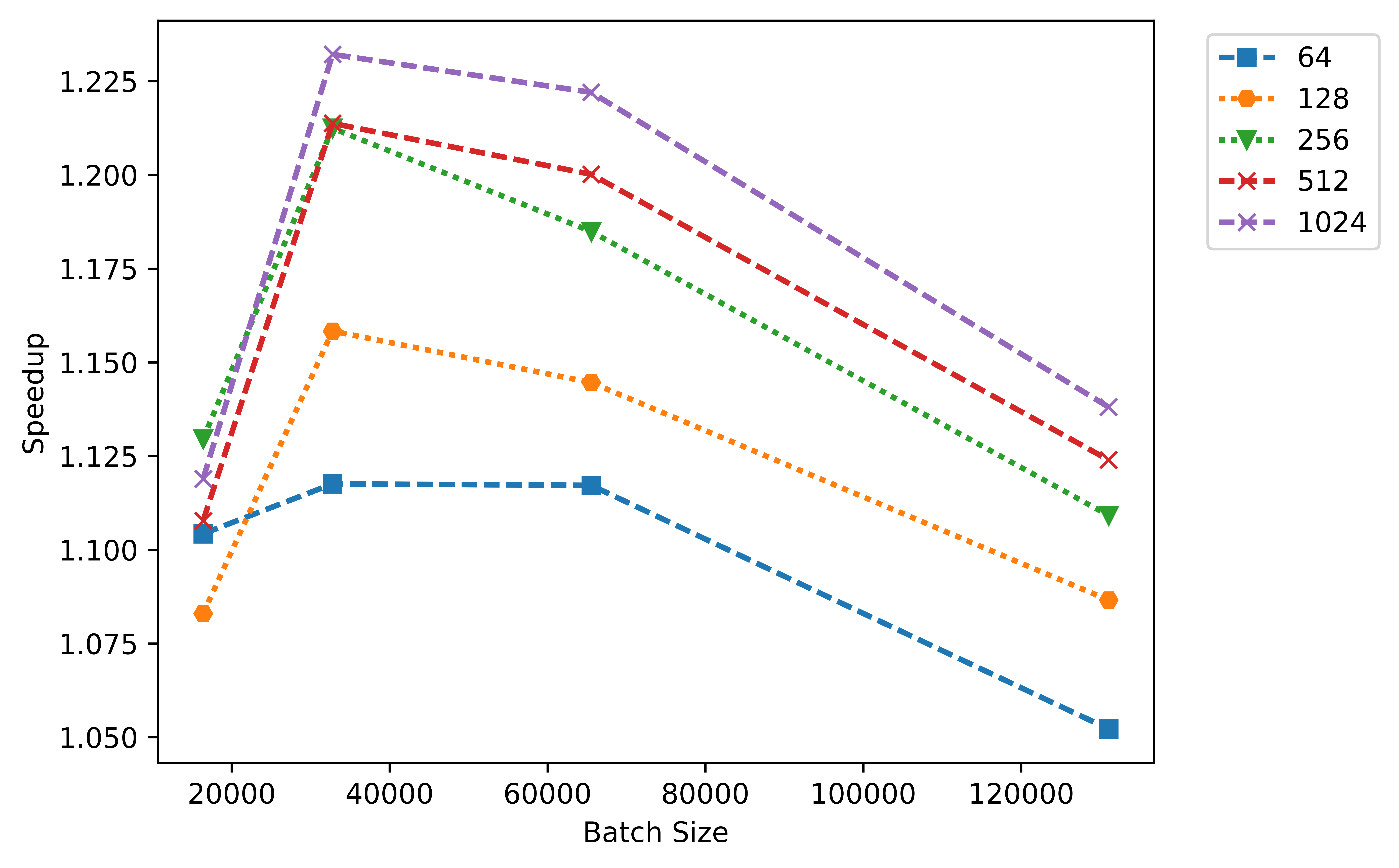}
		\caption{Speedup of cuPentBatchRewrite versus gpsv for larger batch sizes $O(10^4-10^5)$. The number of unknowns for each is shown in the legend.}
	\label{fig4:fixLNEQRewriteNFULL}
\end{figure} 

Taking the batch size to an extreme $O(10^4-10^5)$ we can see the performance comparisons in Figures~\ref{fig4:fixLNEQConstantNFULL} and~\ref{fig4:fixLNEQRewriteNFULL}. 
In both we can see that the improvement drops away slightly as batch size increases but cuPentBatch is still faster in both cases, particularly for the higher unknown sizes of 512 and 1024. 
\Acomment{This slight reduction can be attributed to the large amount of data that the GPU needs to access from global memory on the GPU which is the slowest memory access location
There is very little reuse of memory as the forward and backwards sweeps in the pentadiagonal system are solved.}
Thus it is clear for almost all batch sizes cuPentBatch is the better performer regardless of fixing a constant matrix or using a new one for every time step. 

\begin{figure}[ht]
	\centering
		\includegraphics[width=0.7\textwidth]{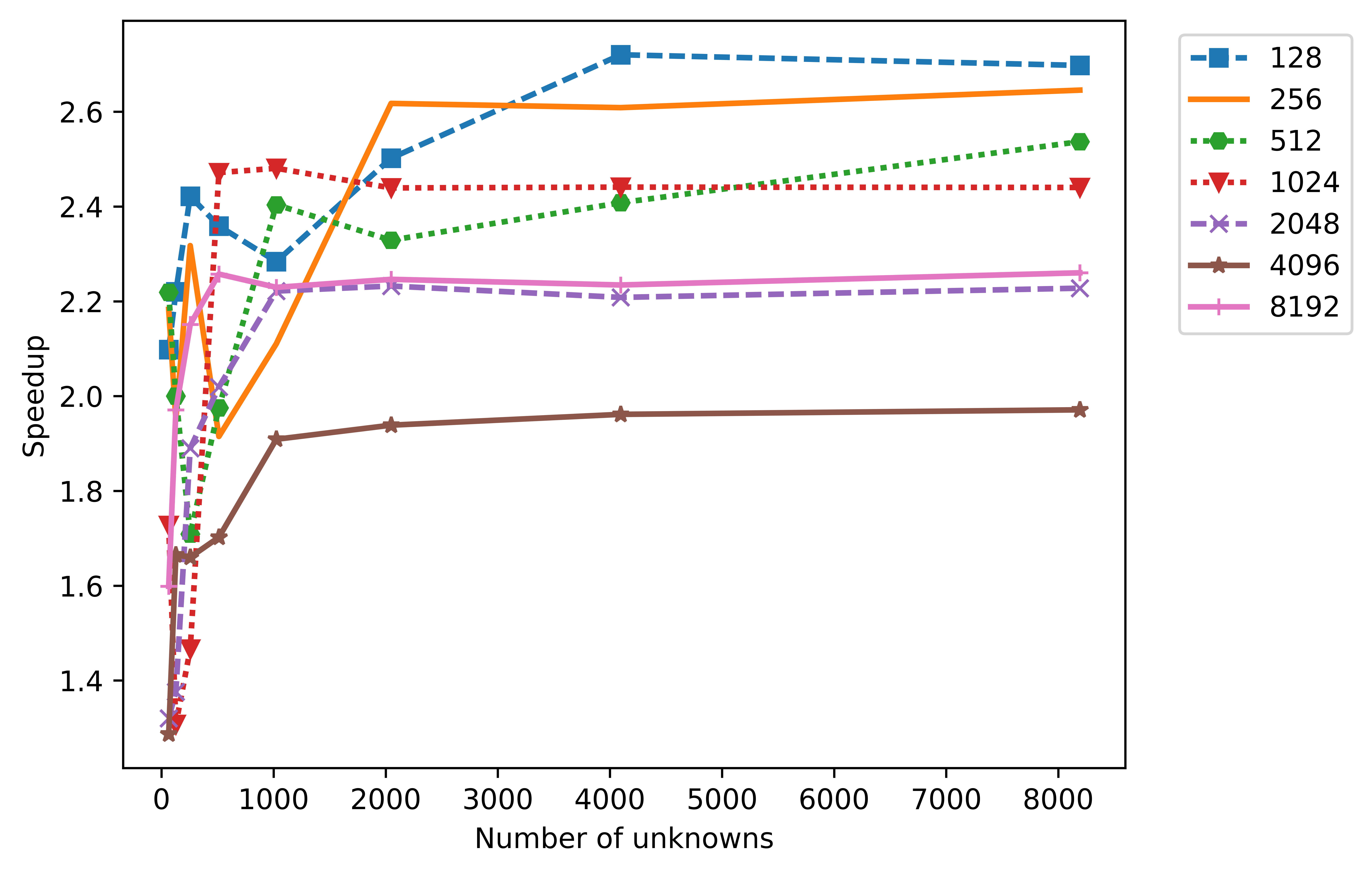}
		\caption{Speedup of cuPentBatchConstant versus gpsv. The batch size for each is shown in the legend.}
	\label{fig4:fixConstantNEQ}
\end{figure} 

\begin{figure}[ht]
	\centering
		\includegraphics[width=0.7\textwidth]{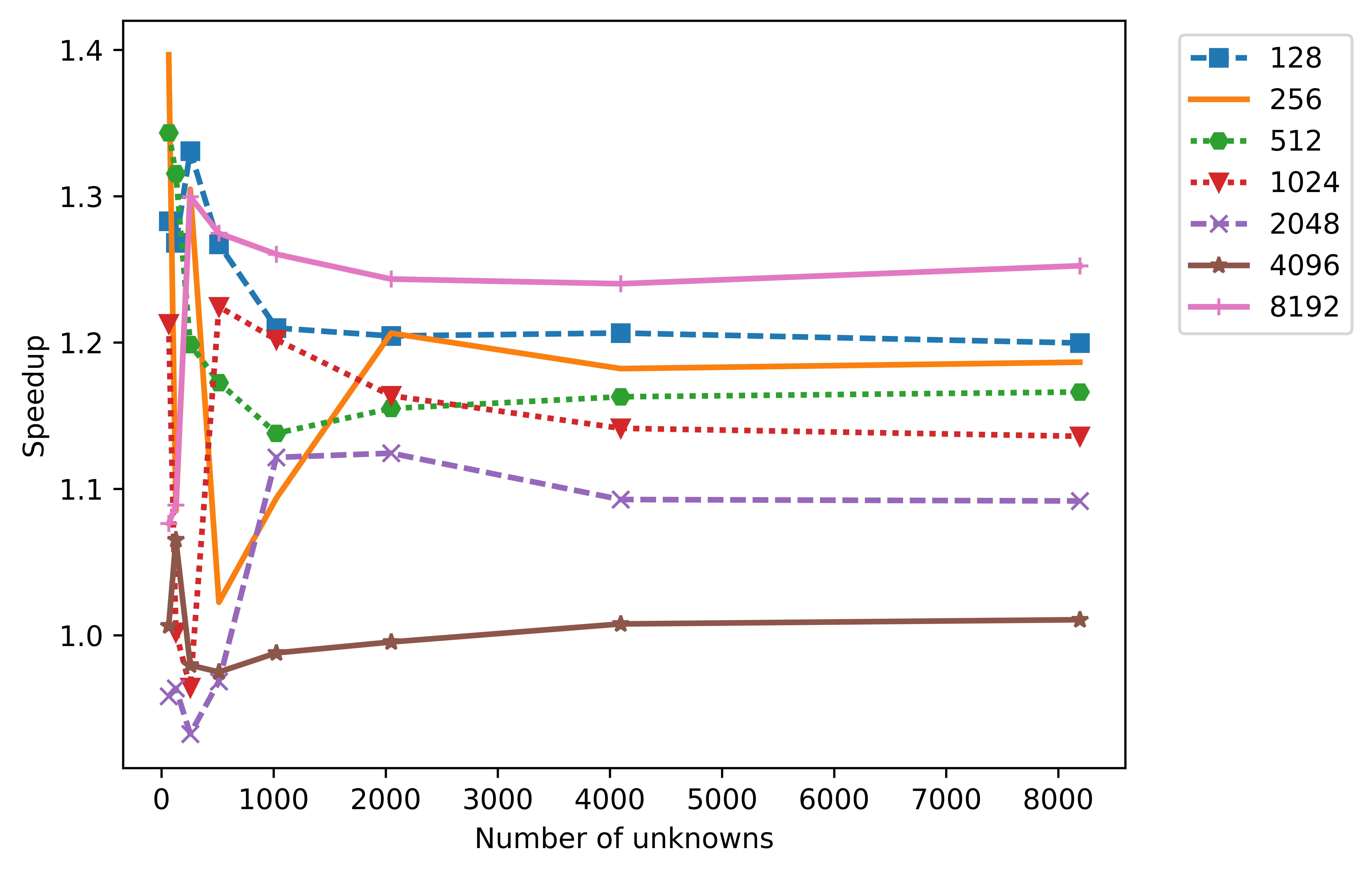}
		\caption{Speedup of cuPentBatchRewrite versus gpsv. The batch size for each is shown in the legend.}
	\label{fig4:fixRewriteNEQ}
\end{figure} 

We now perform a further analysis where we keep the size of the batch fixed and vary the number of unknowns. 
For cuPentBatchConstant in Figure~\ref{fig4:fixConstantNEQ} we again see significant speedup, especially at higher numbers of unknowns where speed up is well over $2\times$. 
Similarly in Figure~\ref{fig4:fixRewriteNEQ} we see better performance.   
It is clear that at high numbers of unknowns cuPentBatch performs significantly better than gpsv. 
\Acomment{In both figures the asymptotic behaviour can be attributed to a saturation of the GPU's ability to solve members of the batch in parallel and both methods are then bounded by the serial portion of their respective algorithms.
At large numbers of unknowns the speedup is further evident in Figures~\ref{fig4:fixLNFULLConstantNEQ} and~\ref{fig4:fixLNFULLRewriteNEQ} where the resolution of each hyperdiffusion equation is highly resolved with a moderate number of equation being solved in the batch.}
Summarising, cuPentBatch outperforms gpsvDInterleavedBatch in terms of scaling the number of unknowns in a system.

\begin{figure}[ht]
	\centering
		\includegraphics[width=0.7\textwidth]{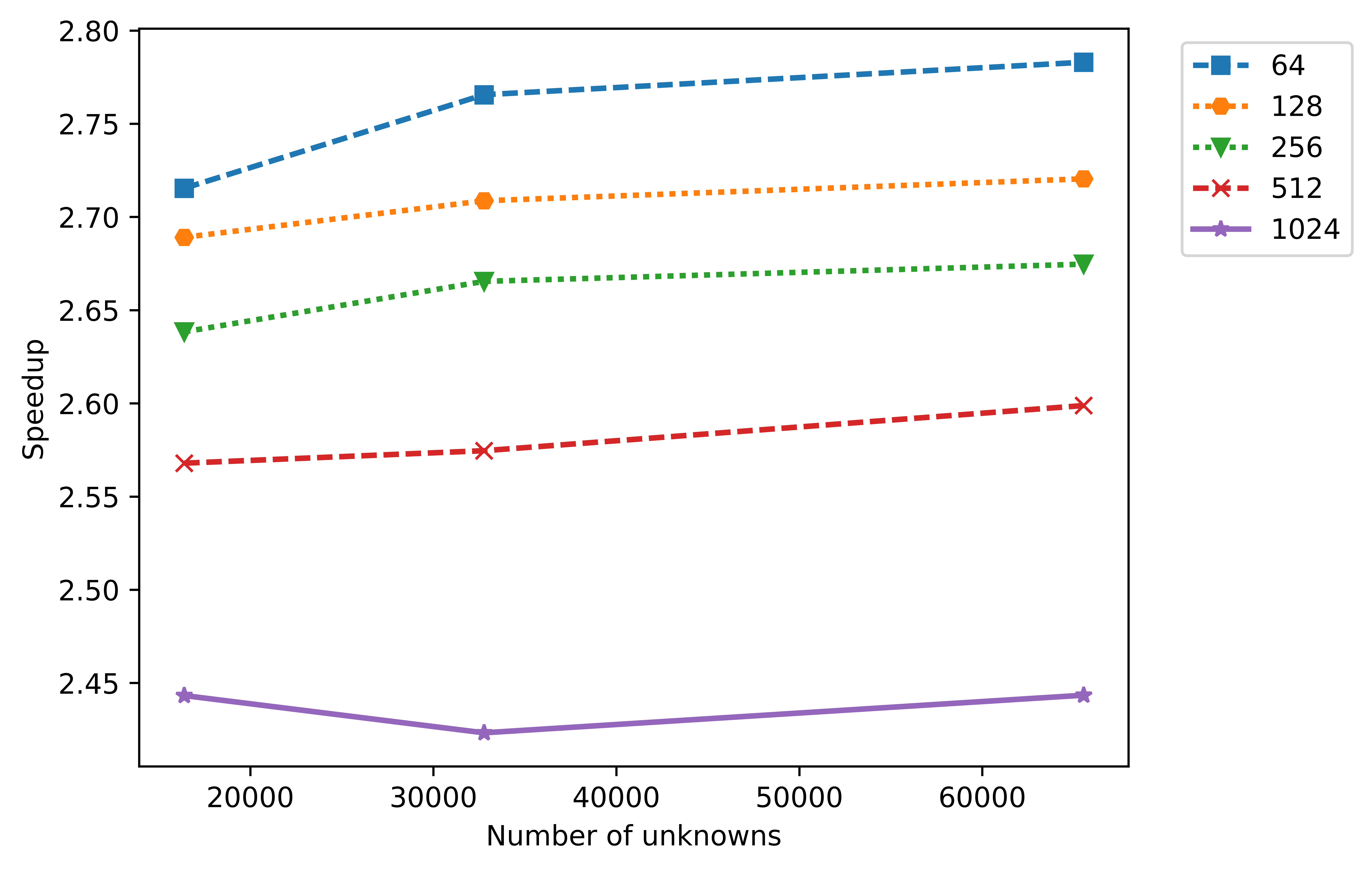}
		\caption{Speedup of cuPentBatchConstant versus gpsv  for large numbers of unknowns $O(10^4)$. The batch size for each is shown in the legend.}
	\label{fig4:fixLNFULLConstantNEQ}
\end{figure} 

\begin{figure}[ht]
	\centering
		\includegraphics[width=0.7\textwidth]{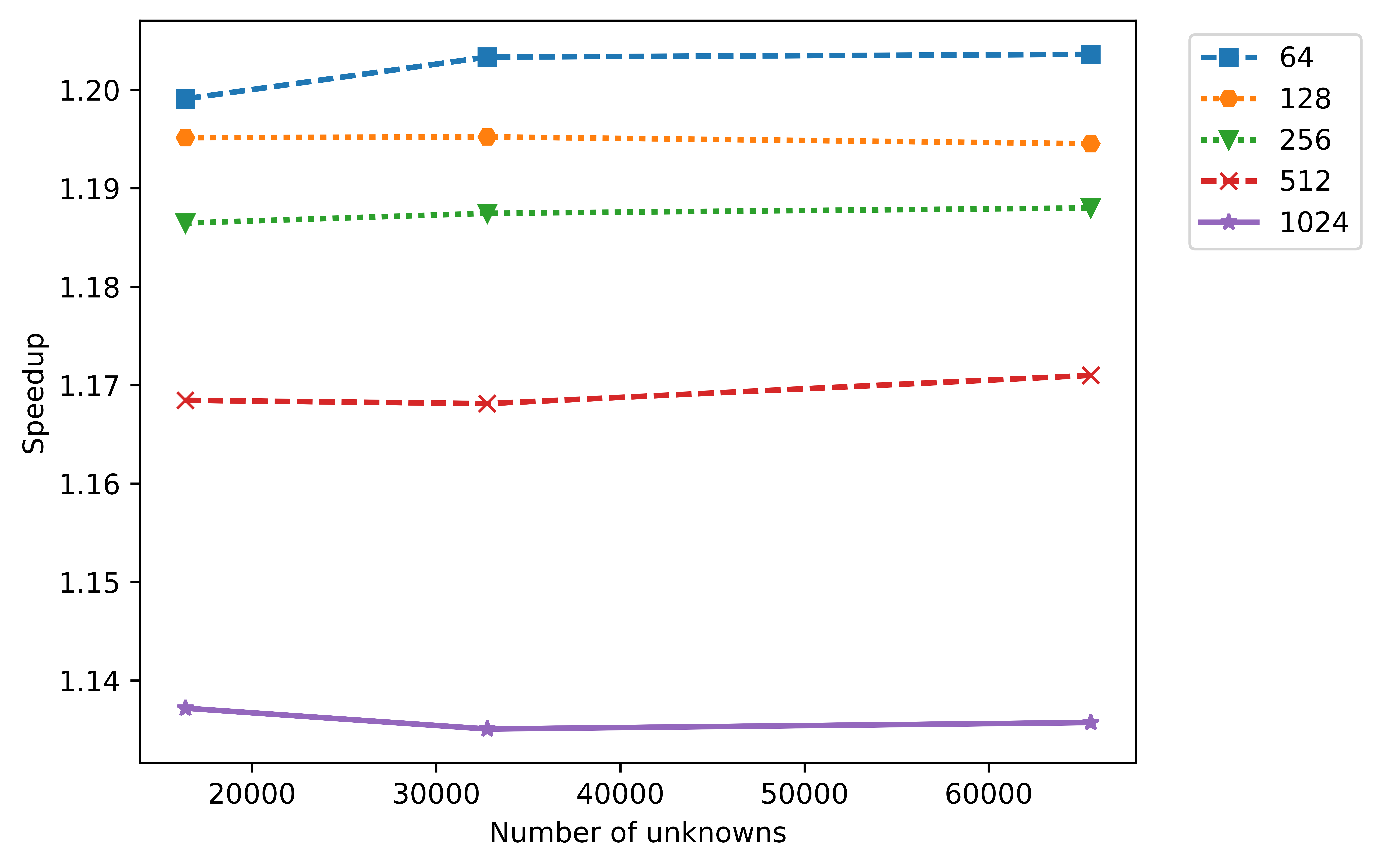}
		\caption{Speedup of cuPentBatchRewrite versus gpsv for large numbers of unknowns $O(10^4)$. The batch size for each is shown in the legend.}
	\label{fig4:fixLNFULLRewriteNEQ}
\end{figure} 

\subsection{cuPentBatch vs. Serial}
Given that we have established the speedup available to us over gpsvDInterleavedBatch we now show that for solving batches of pentadiagonal systems cuPentBatch is far superior to doing the same calculation in serial. 
\Acomment{The serial benchmark was run on the CPU of the same machine as the GPU benchmark with similar compiler optimisations turned on.}
The data is laid out in a standard format, not interleaved. 
This is to allow the memory to be accessed in C's preferred row major format, one hyperdiffusion system per row. 

\begin{figure}[ht]
	\centering
		\includegraphics[width=0.7\textwidth]{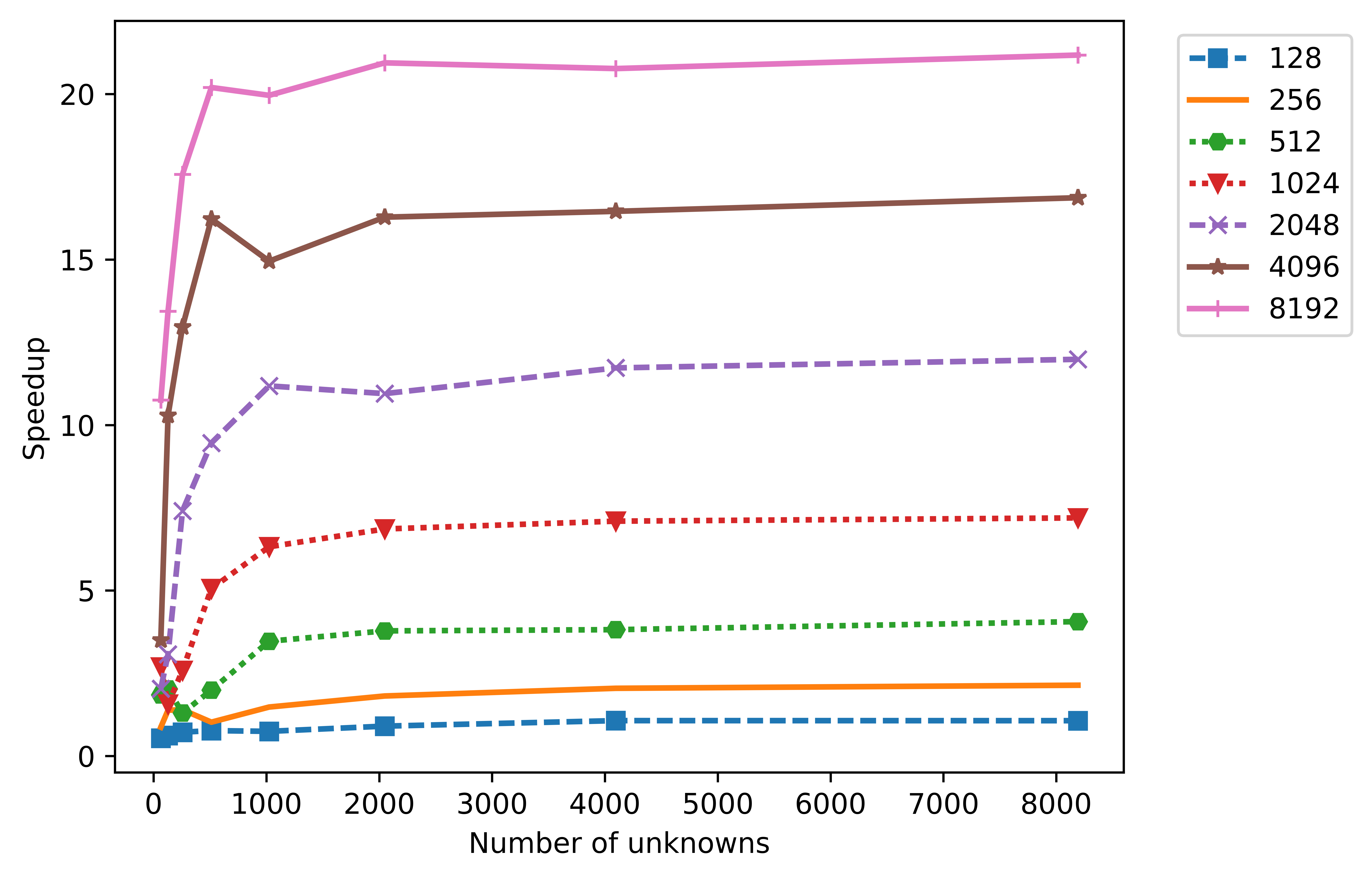}
		\caption{Speedup of cuPentBatchConstant versus serial. The batch size for each is shown in the legend.}
	\label{fig4:serialConstantNEQ}
\end{figure} 

\begin{figure}[ht]
	\centering
		\includegraphics[width=0.7\textwidth]{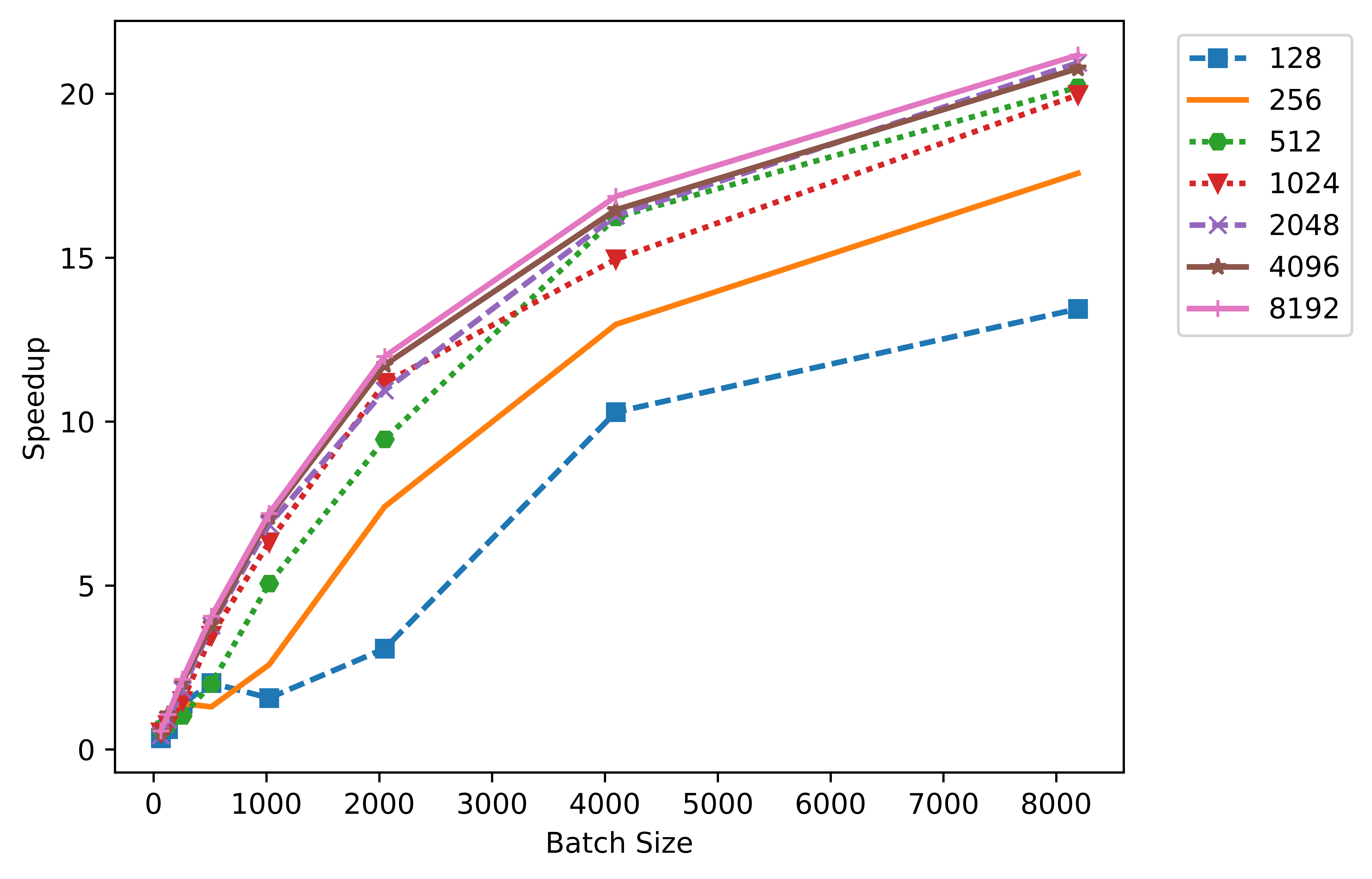}
		\caption{Speedup of cuPentBatchConstant versus serial. The number of unknowns for each is shown in the legend.}
	\label{fig4:serialConstantNFULL}
\end{figure} 

\begin{figure}[ht]
	\centering
		\includegraphics[width=0.7\textwidth]{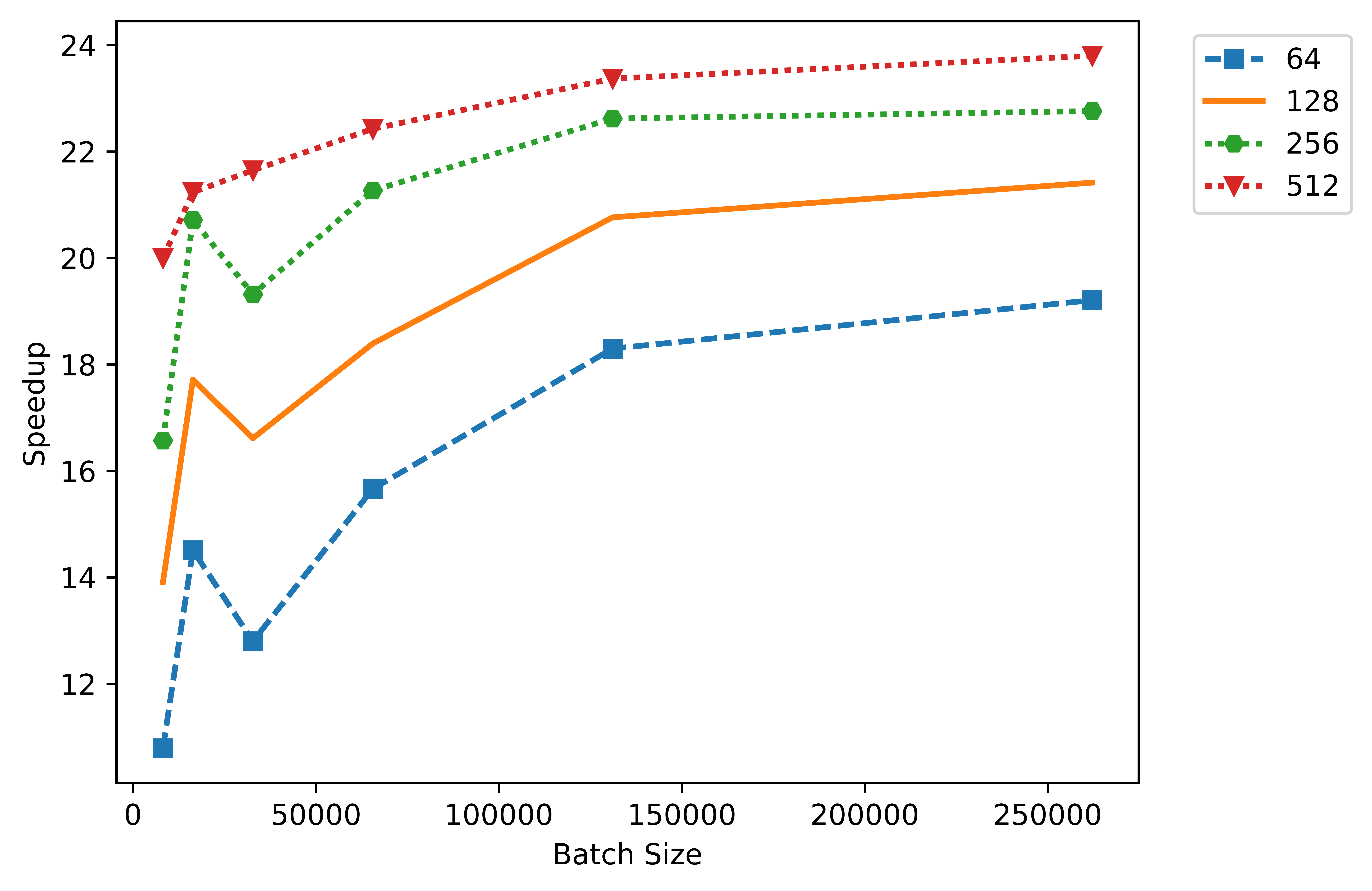}
		\caption{Speedup of cuPentBatchConstant versus serial with large batch size. The number of unknowns for each is shown in the legend.}
	\label{fig4:serialExtConstantNFULL}
\end{figure} 

\Acomment{
In Figure~\ref{fig4:serialConstantNEQ} we see a speedup comparison of cuPentBatchConstant and the serial version of the code keeping the batch size constant and varying the number of unknowns in a system. 
Again we take the time taken to execute the serial code and divide this by the time taken by the GPU code. 
At low batch numbers the speedup is minimal as the serial aspect of the pentadiagonal inversion dominates. 
As the size of the batch increases so does the speed up, with over $20\times$ faster performance for systems with a batch size of 8192. 
This finding is reinforced in Figure~\ref{fig4:serialConstantNFULL} where the number of unknowns is kept constant and the batch size is varied. 
Significant speedup can only be seen at higher batch sizes with over $10\times$ faster for most systems with a batch size $>2048$. 
Taking the batch number to an extreme in Figure~\ref{fig4:serialExtConstantNFULL} we can further see how cuPentBatch scales well in terms of increasing batch size.
}

\Acomment{
We see the clear presence of Amdahl's Law in these graphs, particularly in Figure~\ref{fig4:serialConstantNEQ} and Figure~\ref{fig4:serialConstantNFULL}. 
We see the benefits of parallelising the code until the serial aspect of the pentadiagonal solve begins to dominate. 
The performance increases then level off at this point and no increased speedup can be obtained from the system. 
As the problem is fundamentally memory bound with relatively few computations per grid point the speedup of the batch solution over serial is fundamentally bound by the number of SMs on the GPU and the possible memory bandwidth to access the data in global memory. 
We can see that saturation of the number of tasks, in other words how many members of the batch, the GPU can perform in parallel in Figure~\ref{fig4:serialExtConstantNFULL} as the performance increases level off at large batch sizes.
Further improved performance will be seen on better GPUs such as the Telsa V100 which have a better architecture for computing double precision numbers that the GPUs used in this study and also have additional SMs available.
}


\subsection{cuPentBatch vs. OpenMP}
The OpenMP implementation is the same as the serial code except we have parallelised the loop over the batches,  the speedup is measured as the time taken for the OpenMP version divided by cuPentBatchConstant. 
For the OpenMP benchmark we ran the batch solver with 512 unknowns and varied with high batch numbers, we choose this method for comparison as we are most concerned with scaling at high batch numbers as such problems benefit most from parallelisation. 
For consistency the same system was used as for the previous computations. 
The number of threads was set at 8 as this was the highest power of 2 available. The results in Figure~\ref{fig4:ompCompare} show a speedup of $5\times$ to $6\times$ in every case, thus demonstrating a substantial improvement in performance.  

\begin{figure}[ht]
	\centering
		\includegraphics[width=0.7\textwidth]{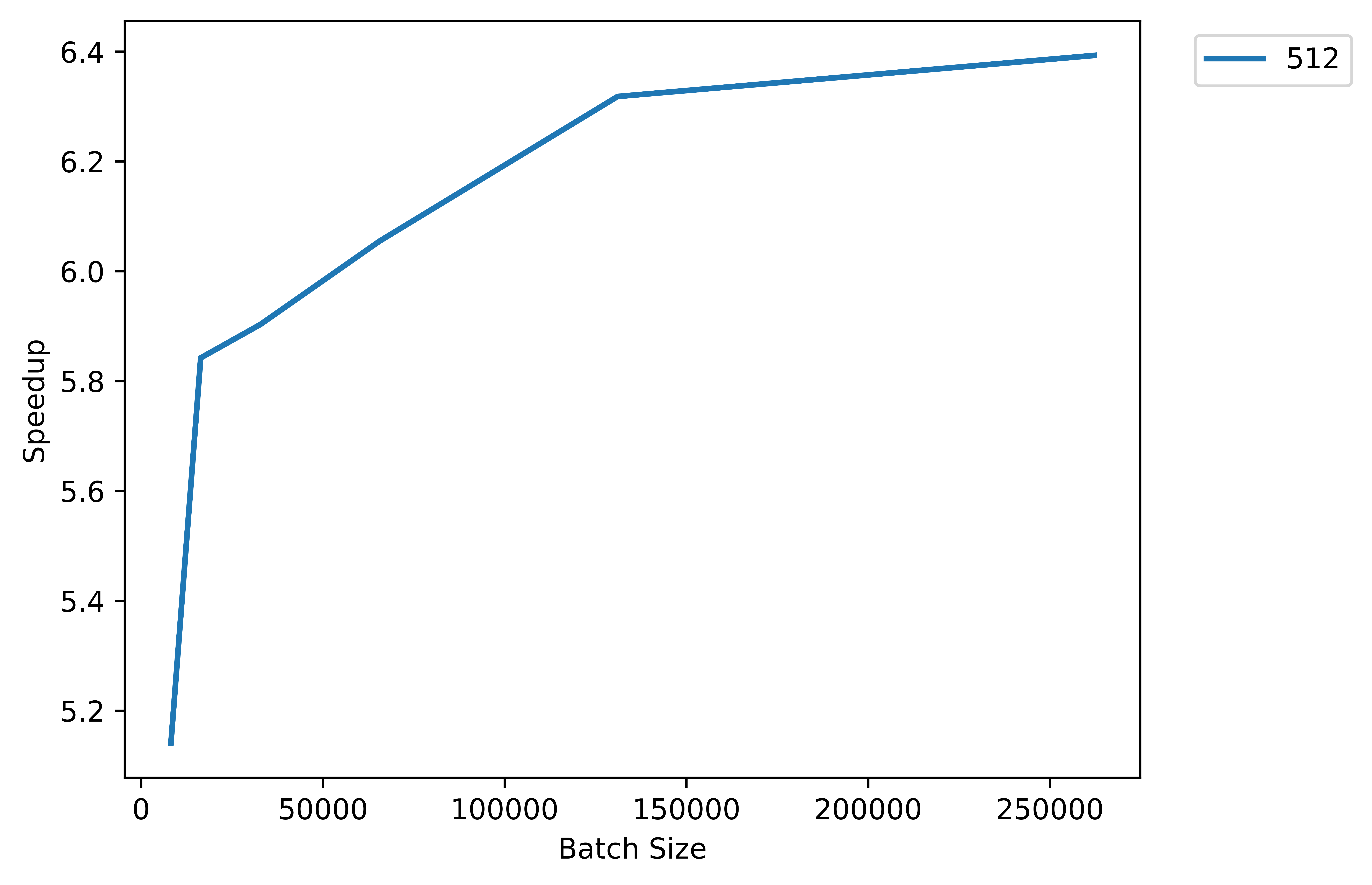}
		\caption{Performance of cuPentBatch versus an OpenMP version of the serial code, the number of unknowns was fixed at 512 and 8 threads were used for the OpenMP benchmark.}
	\label{fig4:ompCompare}
\end{figure}


\section{Discussion and Conclusions}
\label{sec4:conc}
Summarising, this chapter has introduced a new pentadiagonal solver (cuPentBatch) for implementation on NVIDIA GPUs, specifically aimed at solving large numbers of pentadiagonal problems in parallel, in batch mode.
It has been shown that the solver cuPentBatch is superior in terms of performance and efficiency to that of the standard existing NVIDIA pentadiagonal solver, gpsvInterleavedBatch. 
Our method further exhibits substantial performance speedup when compared with serial and OpenMP implementations.
Our method is particularly useful for solving parabolic numerical PDEs, where the matrix to be solved at each time step is constant and symmetric positive definite.
We have demonstrated a potential application of our method in the context of parameter studies, whereby the pertinent PDE possesses parameters which may be varied over different simulations to produce different solution types. 
By solving multiple instances of the PDE in batch mode, our method can speed up such parameter studies.

A further application of our method may in future be found in solving parabolic numerical PDEs in two and three dimensions -- here the pertinent parabolic PDE is typically solved using using an implicit temporal discretisation and a standard finite-difference spatial discretisation.  
The resulting matrix to be inverted at each time step can be reduced to a series of one-dimensional problems using the alternating-direction-implicit (ADI) technique~\cite{douglas1955numerical, ADIGPU}, see Chapter~\ref{chapter:cuSten} for an application of this method to the 2D Cahn--Hilliard equation.  
In this scenario, the present pentadiagonal batch solver may prove useful for parallelising this wide variety of numerical algorithms.

\lhead{\emph{Chapter 5}}  
\chapter{Extending cuPentBatch -- Efficient Interleaved Batch Matrix Solvers for CUDA}
\label{chapter:efficient}
In this chapter we extend the cuPentBatch solver to a more efficient implementation with a single LHS matrix shared amongst threads on the GPU, the work here was the subject of the paper "Efficient Interleaved Batch Matrix Solvers for CUDA"~\cite{gloster2019efficient}.

\section{Introduction}
\label{sec5:intro}
In this chapter we will focus on the development of tridiagonal and pentadiagonal batch solvers with single LHS, multiple RHS, matrices in CUDA for application in solving batches of 1D problems and 2D ADI methods.
In particular we develop solvers in this work which are more efficient in terms of data storage than the state of the art, this saving of data usage is due to the fact we have only one global copy of the LHS matrix rather than one for each thread/system.
While the primary improvement is in data storage reduction the new functions also provide increased speedup, due to better memory access patterns, when compared to the existing state of the art.
This increase in efficiency is also beneficial as it leads to further savings in resources, both in terms of run--time and electricity usage.

To date all existing batch pentadiagonal solvers in CUDA require that each system in the batch being solved have its own copy of the LHS matrix entries, leading to a sub-optimal use of GPU RAM and increased memory access overhead when only one global copy is actually needed. 
We also note that, while there are options for single LHS, multiple RHS, tridiagonal matrices in cuSPARSE these all rely on Cyclic Reduction or pivoting algorithms.
In the context of numerically solving PDEs on well behaved uniform grids these solution methods have unnecessary computational overhead when compared to the standard Thomas Algorithm for solving tridiagonal systems.
Thus we propose the implementation of two solvers, one implementing the Thomas Algorithm for tridiagonal matrices and the other implementing the pentadiagonal equivalent as presented in~\cite{gloster2019cupentbatch, numalgC}, each with a single global copy of the LHS matrix and multiple RHS matrices.
We will benchmark these implementations against cuThomasBatch \cite{cuThomasBatch} (implemented as gtsvInterleavedBatch in the cuSPARSE library) and existing work by the authors cuPentBatch \cite{gloster2019cupentbatch}, for tridiagonal and pentadiagonal matrices respectively, which are the existing state of the art for the algorithms we are interested in.
We point the user to the papers \cite{cuThomasBatch} and \cite{gloster2019cupentbatch} for existing benchmark comparisons of these algorithms with multiple LHS matrices with the cuSPARSE library and comparisons with serial/OpenMP implementations.
Thus we can compare our new implementations to the existing state of the art, cuThomasBatch and cuPentBatch, from which relative performance compared to the rest of cuSPARSE can be interpolated by the reader. 

The chapter is laid out as follows, in Section~\ref{sec5:method} we outline the modified methodology of interleaved data layout for RHS matrics and a single global copy for the LHS matrix.
In Sections~\ref{sec5:tri} and~\ref{sec5:pent} we outline the specifics for tridiagonal and pentadiagonal matrices respectively including also benchmarks against the state of the art algorithms to show performance is at least as good, if not better in most cases of batch size and matrix size. 
Finally we present our conclusions in Section~\ref{sec5:con}.


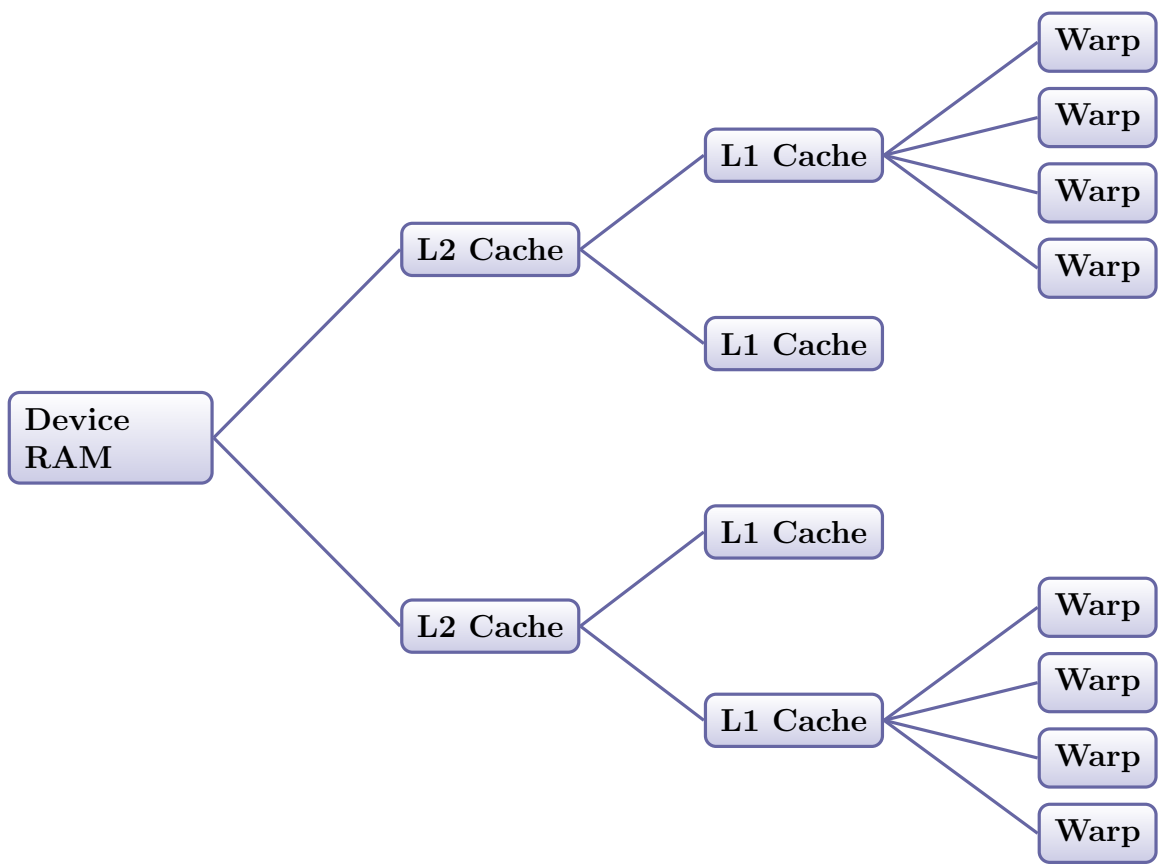
\begin{figure}
\begin{tikzpicture}[
    grow=right,
    level 1/.style={sibling distance=5.0cm,level distance=5.0cm},
    level 2/.style={sibling distance=2.5cm, level distance=4cm},
    level 3/.style={sibling distance=1.0cm, level distance=4cm},
    edge from parent/.style={very thick,draw=blue!40!black!60},
    edge from parent path={(\tikzparentnode.east) -- (\tikzchildnode.west)},
    kant/.style={text width=2cm, text centered, sloped},
    every node/.style={text ragged, inner sep=2mm},
    punkt/.style={rectangle, rounded corners, shade, top color=white,
    bottom color=blue!50!black!20, draw=blue!40!black!60, very
    thick }
    ]

\node[punkt, text width=5.5em] {\textbf{Device RAM}}
    child 
    {
        node [punkt]{\textbf{L2 Cache} \nodepart{second}}
        child 
	    {
	        node [punkt]{\textbf{L1 Cache} \nodepart{second}}
	        child 
		    {
		        node [punkt]{\textbf{Warp} \nodepart{second}}
		        edge from parent
		        node[kant, below, pos=.6] {}
		    }
		    child 
		    {
		        node [punkt]{\textbf{Warp} \nodepart{second}}
		        edge from parent
		        node[kant, below, pos=.6] {}
		    }
		    child 
		    {
		        node [punkt]{\textbf{Warp} \nodepart{second}}
		        edge from parent
		        node[kant, below, pos=.6] {}
		    }
		    child 
		    {
		        node [punkt]{\textbf{Warp} \nodepart{second}}
		        edge from parent
		        node[kant, below, pos=.6] {}
		    }
	        edge from parent
	        node[kant, below, pos=.6] {}
	    }
        child 
	    {
	        node [punkt]{\textbf{L1 Cache} \nodepart{second}}
	        edge from parent
	        node[kant, below, pos=.6] {}
	    }
        edge from parent
        node[kant, below, pos=.6] {}
    }
    child 
    {
        node [punkt]{\textbf{L2 Cache} \nodepart{second}}
        child 
	    {
	        node [punkt]{\textbf{L1 Cache} \nodepart{second}}
	        edge from parent
	        node[kant, below, pos=.6] {}
	    }
        child 
	    {
	        node [punkt]{\textbf{L1 Cache} \nodepart{second}}
	        child 
		    {
		        node [punkt]{\textbf{Warp} \nodepart{second}}
		        edge from parent
		        node[kant, below, pos=.6] {}
		    }
		    child 
		    {
		        node [punkt]{\textbf{Warp} \nodepart{second}}
		        edge from parent
		        node[kant, below, pos=.6] {}
		    }
		    child 
		    {
		        node [punkt]{\textbf{Warp} \nodepart{second}}
		        edge from parent
		        node[kant, below, pos=.6] {}
		    }
		    child 
		    {
		        node [punkt]{\textbf{Warp} \nodepart{second}}
		        edge from parent
		        node[kant, below, pos=.6] {}
		    }
	        edge from parent
	        node[kant, below, pos=.6] {}
	    }    
        edge from parent
        node[kant, below, pos=.6] {}
    };
\end{tikzpicture}
\caption{Diagram of memory hierarchy in an NVIDIA GPU. We omit nodes for the purposes of the diagram, the top and bottom branches are representative of the memory routing from Device Ram to warp. When every thread is accessing the same memory location warps which share the same SM will receive an L1 cache hit, SMs sharing an L2 cache will receive an L2 cache hit and if the data is not present in the L2 cache it will be retrieved from the Device RAM. Thus it can be seen here how the first warp to request a location in memory in a given group will be the slowest but accesses for following warps become faster and faster as the memory propagates down the tree into various caches.}
\label{fig5:memoryaccess}
\end{figure}

\section{Matrix Solver Methodology}
\label{sec5:method}
The methodologies implementing the tridiagonal and pentadiagonal algorithms we are concerned with in this chapter have to date relied on using interleaved data layouts in global memory.
Each thread handles a particular system solution and the system is solved by a forward sweep followed by a backward sweep.  
The interleaved data format is to ensure coalescence of data accesses during theses sweeps, with each thread retrieving neighbouring memory locations in global memory for various LHS and RHS matrix entries~\cite{cuThomasBatch}.
This maximises bandwidth but is limiting to the overall memory budget as each system requires its own copy of the LHS matrix.
Instead we propose to store globally just a single copy of the LHS matrix entries with all threads accessing the desired value at the same time, the RHS matrices will still be stored in the same interleaved data access pattern as before.
This will reduce the overall bandwidth of memory accesses to the LHS but this will not harm the solver's performance as we will show in later sections.
Critically also this approach of using just a single LHS matrix will save drastically on the amount of memory used in batch solvers allowing for larger batch numbers and greater matrix sizes to be solved on a single GPU, increasing hardware usage efficiency.

We now discuss the memory access pattern for the single LHS matrix.
Global memory access in GPUs is achieved via the L2 cache.
When a piece of memory is requested for by a warp this memory is retrieved from RAM and copied to the L2 cache followed by the L1 cache on the SM before then being used by the relevant warps being computed on that SM.
Each SM has their own pipe to the L2 cache. 
So when warps from different SMs all request the same memory they will all get a cache hit in at least the L2 cache except the first one to arrive which will be the warp to retrieve the data from the RAM on the device.
We can guarantee that this will occur as each warp will be computing the same instruction in either the forward or backward sweep of the solver but warps on different SMs will not arrive to the L2 cache at the same time.
In addition to this warps which share L1 caches will get cache hits here when they're not the first to request the piece of memory. 
A diagram of this memory access pattern discussion can also be seen in Figure~\ref{fig5:memoryaccess}.
Thus it is clear that given every piece of memory must follow the above path most of the threads solving a batched problem will benefit from speed--ups due to cache hits as they no longer need to retrieve their own copy of the data.

In the following two sections we present specific details for each of the tridiagonal and pentadiagonal schemes along with the previously discussed benchmarks.
The benchmarks are performed on 16GB Tesla V100 GPUs with the pre-factorisation step for the single LHS performed on the CPU.
We shall refer to the new versions of the algorithms as cuThomasConstantBatch and cuPentConstantBatch for tridiagonal and pentadiagonal respectively.
We use the nomenclature `Constant' denoting the fact that all systems have the same LHS. 
The RHS will be stored as usual in an interleaved data format and is computed using cuSten~\cite{gloster2019custen}.
For notational purposes we will use $N$ to describe the number of unknowns in our systems and $M$ to describe the batch size (number of systems being solved).


\section{Tridiagonal Systems}
\label{sec5:tri}
In this section we first present the Thomas Algorithm~\cite{numalgC} and how it is modified for batch solutions with multiple RHS matrices and with a single LHS. 
We then present our chosen benchmark problem of periodic diffusion equations and then the results.

\subsection{Tridiagonal Inversion Algorithm}
We begin with a generalised tridiagonal matrix system $\mxA \vecx=\vecrhsT$ with $\mxA$ given by
\begin{equation*}
\mxA = 
\begin{pmatrix}
b_1 & c_1 & 0 & \cdots &   \cdots & 0 \\
a_2 & b_2 & c_2 & 0 & \cdots   & \vdots \\
0 & a_3 & b_3 & c_3 & 0 &  \vdots \\
\vdots & \ddots & \ddots & \ddots & \ddots & 0  \\
\vdots & \cdots  & 0   & a_{N - 1}  & b_{N - 1} & c_{N - 1} \\
0 & \cdots  &   \cdots & 0 & a_{N}  & b_{N} 
\end{pmatrix}.
\end{equation*}
We then solve this system using a pre-factorisation step followed by a forwards and backwards sweep. 
The pre-factorisation is given by
\begin{equation}
\hat{c}_1 = \frac{c_1}{b_1}
\end{equation}
And then for $i = 2, \dots, N$
\begin{equation}
\hat{c}_i = \frac{c_i}{b_i - a_i \hat{c}_{i - 1}}
\end{equation}
For the forwards sweep we have
\begin{equation}
\hat{d}_1 = \frac{d_1}{b_1}
\end{equation}
\begin{equation}
\hat{d}_i = \frac{d_i - a_i \hat{d}_{i - 1}}{b_i - a_i \hat{c}_{i - 1}}
\end{equation}
While for the backwards sweep we have 
\begin{equation}
x_N = \hat{d}_N
\end{equation}
And for $i = N-1, \dots, 1$
\begin{equation}
x_i = \hat{d}_i - a_i \hat{c}_i
\end{equation}

Previous applications of this algorithm~\cite{cuThomasBatch} required that each thread had access to its own copy of $4$ vectors, the $3$ diagonals $a_i$, $b_i$ and $c_i$ along with the the RHS $d_i$.
These would then be overwritten in the pre-factorisation and solve steps to save memory, thus the total memory usage here is $O(4 \times M \times N)$.
We now limit the LHS to a single global case that will be accessed simultaneously using all threads as discussed in Section~\ref{sec5:method} and retain the individual RHS in interleaved format for each thread $f_i$.
This reduces the data storage to $O(3 \times N + M \times N)$, an approximate 75\% reduction.
We present benchmark methodology and results for this method in the following subsections.

\subsection{Benchmark Problem}
For a benchmark problem we solve the diffusion equation, a standard model equation in Computational Science and Engineering, its presence can be seen in most systems involving heat and mass transfer.
The solution as $t \rightarrow \infty$ is also a solution of a Poisson equation.
The equation in one dimension is given as 
\begin{equation}
\frac{\partial C}{\partial t} = \alpha \frac{\partial^2 C}{\partial x^2} 
\label{eq5:diffusion}
\end{equation} 
where $\alpha$ is the diffusion coefficient.
We solve this equation on a periodic domain of length $L$ such that $C(x + L) = C(x)$ with an initial condition $C(x, t = 0) = f(x)$ valid on the domain.
We rescale by setting $\alpha = 1$ and $L = 1$ and integrate Eq.~\eqref{eq5:diffusion} in time using a standard Crank--Nicholson scheme with central differences for space which is unconditionally stable.
Finite differencing is done using standard notation
\begin{equation}
C_i^n = C(x=i\Delta x ,t=n\Delta t)
\label{eq5:stdDiff}
\end{equation}
where $\Delta x = L / N$ and $i = 1 \dots N$.
Thus our numerical scheme can be written as 
\begin{equation}
- \sigma_x C_{i - 1}^{n+1} + (1 + 2 \sigma_x) C_{i}^{n+1} - \sigma_x C_{i+1}^{n+1}
=  \sigma_x C_{i - 1}^{n} + (1 - 2 \sigma_x)C_{i}^{n} + \sigma_x C_{i+1}^{n} 
\label{eq5:1ddiffscheme}
\end{equation} 
where
\begin{equation}
\sigma_x = \frac{\Delta t}{2 \Delta x^2}
\end{equation}
Thus we can relate these coefficients to matrix entries by
\begin{equation}
a_i  = - \sigma_x,\qquad b_i = 1 + 2 \sigma_x,\qquad
c_i = - \sigma_x
\end{equation}%

In the following section we present our method to deal with the periodicity of the matrix and then after that present the benchmark comparison with the existing state of the art.

\subsection{Periodic Tridiagonal Matrix}
As the system is periodic two extra entries will appear in the matrix, one in the top right corner and another in the bottom left, thus our matrix is now given by
\begin{equation*}
\mxA = 
\begin{pmatrix}
b & c & 0 & \cdots &  \cdots & 0 & a \\
a & b & c & 0 & \cdots & \cdots   & 0 \\
0 & a & b & c & 0 & \cdots  & \vdots \\
\vdots & \ddots & \ddots & \ddots & \ddots & \ddots &   \vdots\\
\vdots & \cdots  & 0  &  a  & b  & c & 0  \\
0 & \cdots & \cdots  & 0   & a  & b & c \\
c & 0 & \cdots  &   \cdots & 0 & a  & b 
\end{pmatrix}.
\end{equation*}
In order to deal with these we use the Sherman-Morrison formula.
We rewrite our system as 
\begin{equation}
\mxA\vecx  = (\mxA' + \mxU \otimes \mxV) \vecx = \vecrhsT 
\end{equation} 
Where 
\begin{subequations}
\begin{equation}
\mxU = 
\begin{pmatrix}
-b \\
0 \\
\vdots \\
\vdots \\
0 \\
c\\
\end{pmatrix},
\qquad
\mxV = 
\begin{pmatrix}
1 \\
0 \\
\vdots \\
\vdots \\
0 \\
- a / b\\
\end{pmatrix},
\qquad
\end{equation}
\Acomment{
\begin{equation}
\mxA' = 
\begin{pmatrix}
2b & c & 0 & \cdots &  \cdots & 0 & 0 \\
a & b & c & 0 & \cdots & \cdots   & 0 \\
0 & a & b & c & 0 & \cdots  & \vdots \\
\vdots & \ddots & \ddots & \ddots & \ddots & \ddots &   \vdots\\
\vdots & \cdots  & 0  &  a  & b  & c & 0  \\
0 & \cdots & \cdots  & 0   & a  & b & c \\
0 & 0 & \cdots  &   \cdots & 0 & a  & b + a c / b
\end{pmatrix}
\end{equation}%
}
\end{subequations}%
Thus two tridiagonal systems must now be solved
\begin{equation}
\mxA' \vecy = \vecrhsT  \qquad \mxA' \vecx = \mxU
\end{equation}
the second of which need only be performed once at the beginning of a given simulation.
Finally to recover $\vecx$ we substitute these results into 

\begin{equation}
\vecx = \vecy - \left(\frac{\mxV \cdot \vecy}{1 + (\mxV \cdot \vecz)}\right)\vecz
\end{equation}

\begin{figure}[h]
	\centering
		\includegraphics[width=0.8\textwidth]{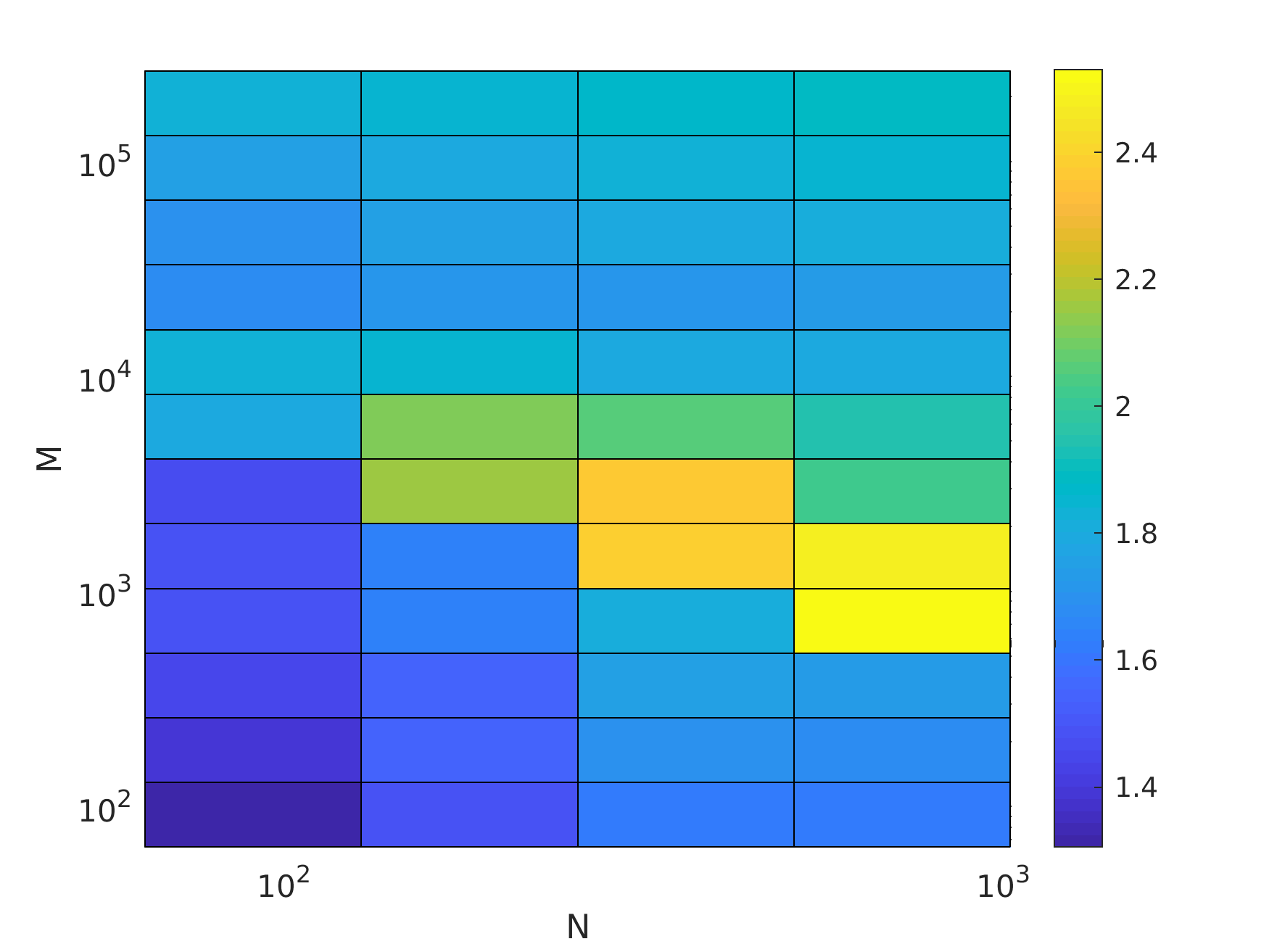}
		\caption{Speedup of cuThomasConstantBatch versus cuThomasBatch (gtsvInterleavedBatch).}
	\label{fig5:speedupInterleaved}
\end{figure}

\subsection{Benchmark Results}
The diffusion equation benchmark problem presented above is solved for 1000 time--steps in order to extract the relevant timing statistics and average out any effects of background processes on the benchmark.
Both codes are timed for just the time--stepping sections of the process, we omit start up costs such as memory allocation, setting of parameters etc.
The speed--up results can be seen in Figure~\ref{fig5:speedupInterleaved}, in all situations there is a clear speed--up over the existing state of the art cuThomasBatch. 
The largest speed--ups are for $N$ large and moderate $M$.
Significant speed--up is available for all of large $M$ which is the domain where GPUs would typically be deployed to solve the problem.

It should be noted that some of the speed--up seen here can be attributed to the pre--factorisation step which the cuThomasBatch implementation does not have. 
cuThomasBatch requires that the LHS matrix be reset at every time--step as the pre--factorisation and solve steps are carried out in the one function, leading to an overwrite of the data and making it unusable for repeated time--stepping.
This overwriting feature has been seen in previous studies~\cite{gloster2019cupentbatch} where the authors also carried out a rewrite to make the benchmarking conditions between their work and the state of the art.
It was shown that not all of the speed--up can be attributed to the lack of needing to reset the LHS matrix, thus some of the performance increase we are seeing in Figure~\ref{fig5:speedupInterleaved} can be attributed to the new data layout presented in this chapter.
Our second benchmark for the pentadiagonal case will also show this as there was no resetting of the matrix required.


\section{Pentadiagonal Systems}
\label{sec5:pent}
In this section we first present a modified version of the pentadiagonal inversion algorithm as presented in~\cite{gloster2019cupentbatch} with a single LHS. 
Then we present the benchmark problem and finally this is followed by the results comparing our new implementation with existing state of the art.

\subsection{Pentadiagonal Inversion Algorithm}
In this section we describe a standard numerical method~\cite{numalgC, gloster2019cupentbatch} for solving a pentadiagonal problem $\mxA\vecx=\vecrhs$.  We present the algorithm in a general context so we have a pentadiagonal matrix given by

\begin{equation*}
\mxA = 
\begin{pmatrix}
c_1 & d_1 & e_1 & 0 & \cdots &  0 & \cdots & 0 \\
b_2 & c_2 & d_2 & e_2 & 0 & \cdots & \cdots   & \vdots \\
a_3 & b_3 & c_3 & d_3 & e_3 & 0 & \cdots  & 0\\
0 & \ddots & \ddots & \ddots & \ddots & \ddots & \ddots &   \vdots\\
\vdots& \ddots & \ddots & \ddots & \ddots & \ddots & \ddots & 0 \\
0 & \cdots  & 0  &  a_{N - 2}  & b_{N - 2}  & c_{N - 2} & d_{N - 2} & e_{N - 2} \\
0 & \cdots & \cdots  & 0   & a_{N - 1}  & b_{N - 1} & c_{N - 1} & d _{N - 1}\\
0 & \cdots & \cdots  &   \cdots & 0 &  a_{N} & b_{N}  & c_{N} 
\end{pmatrix}.
\end{equation*}
Three steps are required to solve the system:
\begin{enumerate}
\item Factor $\mxA = \mxL\mxR$  to obtain $\mxL$ and $\mxR$.
\item Find $\vecg$ from $\vecf = \mxL\vecg$
\item Back-substitute to find $\vecx$ from $\mxR\vecx = \vecg$
\end{enumerate}
Here, $\mxL$, $\mxR$ and $\vecg$ are given by the following equations:
\begin{subequations}
\begin{equation}
\mxL = 
\begin{pmatrix}
\alpha_1 &  &  &  &  &   &   \\
\beta_2 & \alpha_2 &  &  &  &  &     \\
\epsilon_3 & \beta_3 & \alpha_3 &  &  &  &   \\
 & \ddots & \ddots & \ddots &  &  &     \\
 &  &  \epsilon_{N - 1}  & \beta_{N - 1}  & \alpha_{N - 2} &    \\
&  &    & \epsilon_{N - 1}  & \beta_{N - 1} & \alpha_{N }\\
\end{pmatrix},
\qquad
\vecg = \begin{pmatrix}
g_{1} \\
g_{2} \\
\vdots \\
\vdots \\
g_{N-1} \\
g_{N}
\end{pmatrix},
\end{equation}
\begin{equation}
\mxR = 
\begin{pmatrix}
1 & \gamma_1  & \delta_1  &  &  &   &   \\
 & 1 & \gamma_2  & \delta_2  &  &  &     \\
 &  & \ddots & \ddots & \ddots  &  &   \\
 & & & 1  & \gamma_{N-2}  & \delta_{N-2}    \\
 &  &  &  & 1 & \gamma_{N-1}    \\
&  &    & &  & 1\\
\end{pmatrix}
\end{equation}%
\end{subequations}%
(the other entries in $\mxL$ and $\mxR$ are zero).  
The explicit factorisation steps for the factorisation $\mxA=\mxL\mxR$ are as follows:
\begin{enumerate}
\item $\alpha_1 = c_1$
\item $\gamma_1 = \frac{d_1}{\alpha_1}$
\item $\delta_1 = \frac{e_1}{\alpha_1}$
\item $\beta_2 = b_2$
\item $\alpha_2 = c_2 - \beta_2\gamma_1$
\item $\gamma_2 = \frac{d_2 - \beta_2 \delta_1}{\alpha_2}$
\item $\delta_2 = \frac{e_2}{\alpha_2}$
\item For each $i = 3, \dots, N-2$
\begin{enumerate}
\item $\beta_i = b_i - a_i \gamma_{i-2}$
\item $\alpha_i = c_i - a_i\delta_{i-2} - \beta_i \gamma_{i-1}$
\item $\gamma_i = \frac{d_i - \beta_i \delta_{i-1}}{\alpha_i}$
\item $\delta_i = \frac{e_i}{\alpha_i}$
\end{enumerate}
\item $\beta_{N-1} = b_{N-1} - a_{N - 1}\gamma_{N-3}$
\item $\alpha_{N - 1} =  c_{N-1} - a_{N-1}\delta_{N-3} - \beta_{N-1}\gamma_{N-2}$
\item $\gamma_{N-1} = \frac{d_{N-1}-\beta_{N-1}\delta_{N-2}}{\alpha_{N-1}}$
\item $\beta_{N} = b_{N} - a_{N }\gamma_{N-2}$
\item $\alpha_{N} =  c_{N}- a_{N}\delta_{N-2} - \beta_{N}\gamma_{N-1}$
\item $\epsilon_i = a_i, \quad \forall i$
\end{enumerate}
The steps to find $\vecg$ are as follows:
\begin{enumerate}
\item $g_1 = \frac{f_1}{ \alpha_1}$
\item $g_2 = \frac{f_2 - \beta_2 g_1}{\alpha_2}$
\item  $g_i = \frac{f_i - \epsilon_i g_{i-2} - \beta_i g_{i - 1}}{\alpha_i} \quad \forall i = 3 \cdots N$
\end{enumerate}
Finally, the back-substitution steps  find $\vecx$ are as follows:
\begin{enumerate}
\item $x_N = g_N$
\item $x_{N-1} = g_{N-1} - \gamma_{N-1}x_N$
\item  $x_i = g_i - \gamma_i x_{i+1} - \delta_{i}x_{i+2} \quad \forall i = (N-2) \cdots 1$
\end{enumerate}

Previous applications of this algorithm~\cite{gloster2019cupentbatch} required that each thread had access to its own copy of $6$ vectors, the $5$ diagonals $a_i$, $b_i$, $c_i$, $d_i$ and $e_i$ along with the RHS $f_i$.
These would then be overwritten in the pre-factorisation and solve steps to save memory, thus the total memory usage here is $O(6 \times M \times N)$.
We now limit the LHS to a single global case that will be accessed simultaneously using all threads as discussed in Section~\ref{sec5:method} and retain the individual RHS in interleaved format for each thread $f_i$.
This reduces the data storage to $O(5 \times N + M \times N)$, this is an approximate 83\% reduction in data usage.
We present benchmark methodology and results for this method in the following subsections.
For completion we present results for an extra implementation where all the entries on respective diagonals are equal eliminating the need to store $a_i$ (which is also $\epsilon_i$), reducing the storage further to $O(4 \times N + M \times N)$, we shall refer to these results as cuPentUniformBatch.

\subsection{Benchmark Problem}
The benchmark system is the same as the one presented in Section~\ref{sec4:hyper}, it had been included here in the full paper but we now omit it as it would be merely repetition.

\begin{figure}[h]
	\centering
		\includegraphics[width=0.8\textwidth]{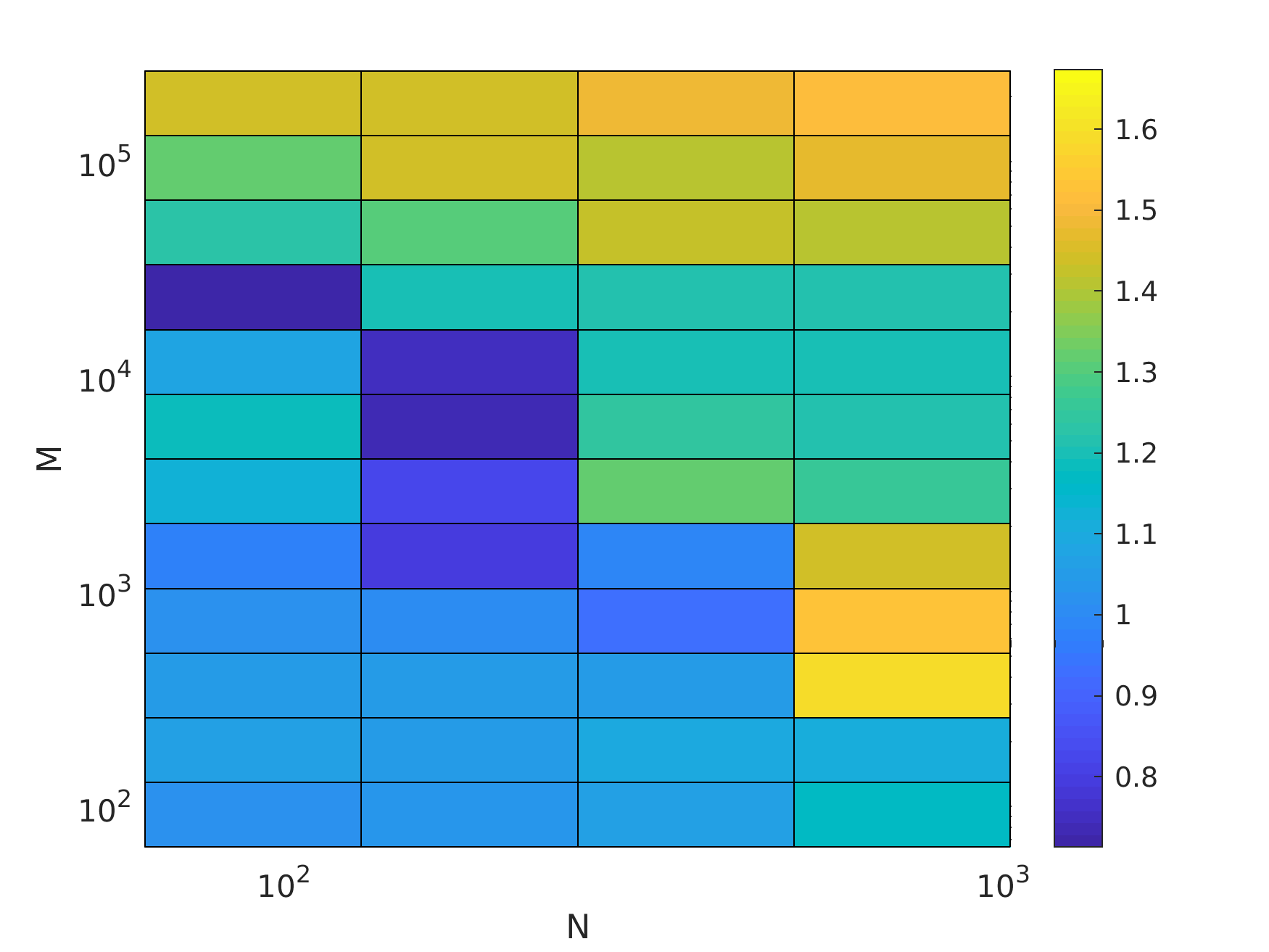}
		\caption{Speedup of cuPentConstantBatch versus cuPentBatch.}
	\label{fig5:pentConstantSpeedUp}
\end{figure}

\subsection{Benchmark Results}
We plot the speed--up of cuPentConstantBatch versus cuPentBatch solving batches of the above method for the hyperdiffusion equation in Figure~\ref{fig5:pentConstantSpeedUp}. 
In order to calculate the values for $\vecrhs$ we use cuSten \cite{gloster2019custen}.
It can be seen in the figure that cuPentBatchConstant outperforms cuPentBatch consistently for high values of both $M$ and $N$. 
At low $M$ and $N$ they are roughly equivalent and there are some areas where cuPentBatch performs better, generally a standard CPU implementation is more desirable at these low numbers when the movement over the PCI lane is taken into account along with other overheads so we can discount these. 
If we push the values out further than those plotted the difference becomes orders of magnitudes as the memory for cuPentBatch rapidly exceeds the available RAM on the GPU while cuPentBatchConstant can still fit.

Thus we conclude in situations where there is uniform LHS matrices with multiple RHS and the algorithm is suitably stable for the problem (symmetric positive definite is enough here) that cuPentBatchConstant is a better choice in terms of both memory usage (allowing for larger values of $M$ and $N$ on one GPU) and speed.
Similar results can be seen in Figure~\ref{fig5:pentUniformSpeedUp} for the case of cuPentUniformBatch with slight improvements in overall speed in certain locations due to the lack of access to the vector storing $\epsilon_i$.
Slight advantages of cuPentUniformBatch over cuPentConstantBatch are apparent where it can be used, if the functions were being repeatedly called enough times in a given simulation its use is certainly warranted as the savings on time will compound with each call. 

\begin{figure}[h]
	\centering
		\includegraphics[width=0.8\textwidth]{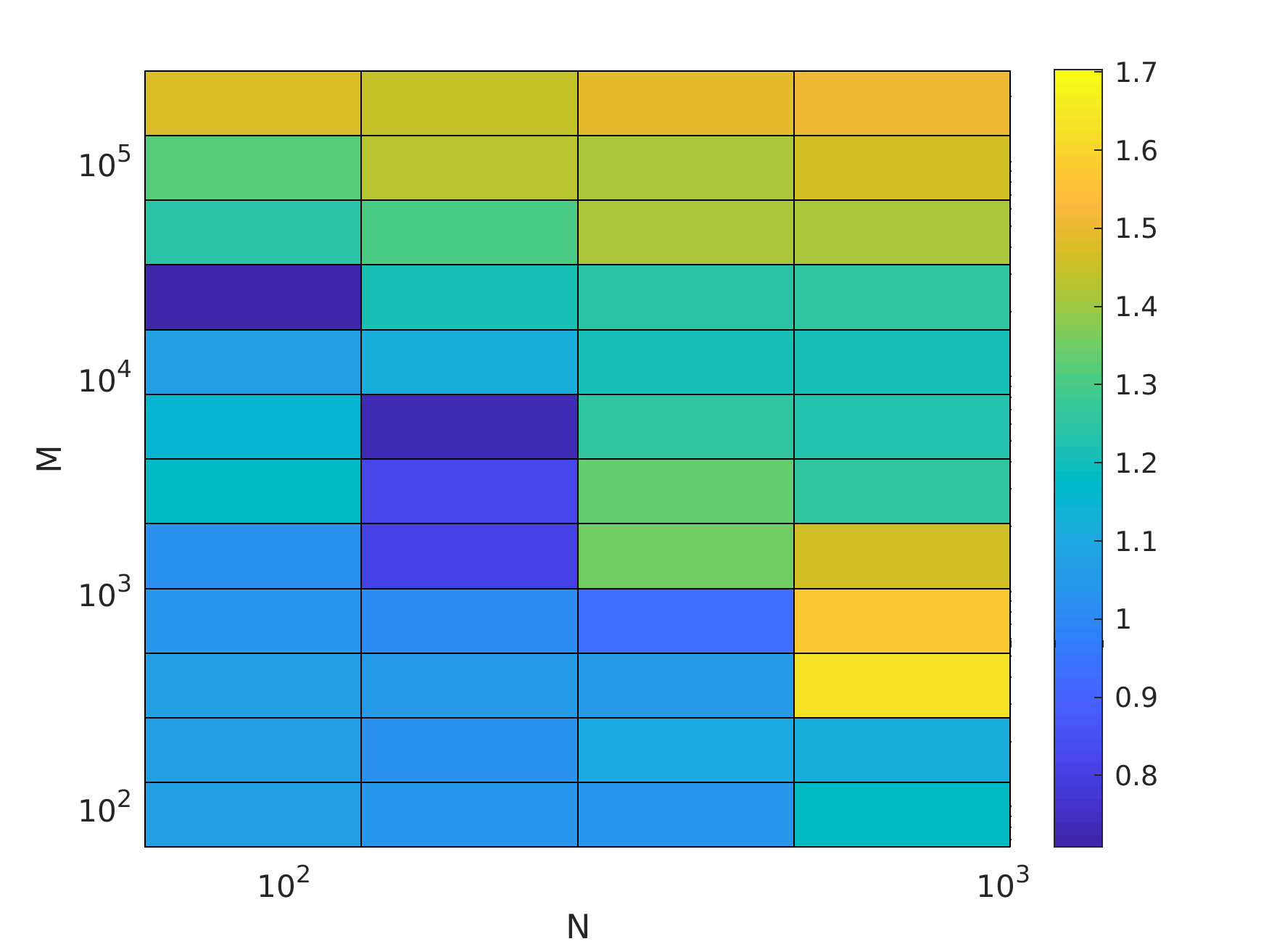}
		\caption{Speed--up of cuPentUnifromBatch versus cuPentBatch.}
	\label{fig5:pentUniformSpeedUp}
\end{figure}


\section{Conclusions}
\label{sec5:con}
We have shown that an interleaved batch of RHS matrices along with a single LHS achieves large reductions in data usage along with a substantial speed--up when compared with the existing state of the art.
The reduction in data usage allows for greater use of hardware resources to be made with significant extra space that can now be devoted to solving more systems of equations simultaneously rather than unnecessarily storing data.
The extra equations now being solved by one GPU along with the speed--ups provided by the new implementation are a significant improvement over the state of the art in applications where only one LHS matrix is required for the solution of all the systems in the batch.
In addition the reduction in the number of GPUs one would need when solving very large batches, coupled with the speed--ups achieved, leads to a reduction in electricity usage.
As HPC moves into the future, energy needs are becoming more and more prevalent, the increases in FLOPs/WATT provided by GPUs have both financial and environmental implications.
Thus there is an increasing need for the use of GPUs in HPC platforms and the need for efficient algorithms such as those presented in this chapter.
\lhead{\emph{Chapter 6}}  
\chapter{Batched Solutions of the 1D Cahn--Hilliard Equation}
\label{chapter:batched}
In this chapter we present results of batched 1D Cahn--Hilliard equations applying the GPU methodologies of the previous chapters.
In particular focusing on reproducing numerically the analytic 1D scaling results of~\cite{argentina2005coarsening} and presenting a methodology that can be used to automate the production of flow pattern maps of large parameter spaces.

\section{Introduction}
In this short chapter we present a methodology for examining a variety of parameter spaces of a given PDE.
In particular we focus on two cases relating to the Cahn--Hilliard equation in 1D which we solve numerically, running batches of simulations using a GPU methodology to fill the parameter space and yield the necessary parallelism.
GPUs offer increasingly improved parallel performance over standard CPU parallelisation methods and also offer improved energy efficiency with increased GFLOPs/Watt performance, thus reducing the environmental impact of large computer simulations.
While traditionally a lot of work has been devoted to parallelising single large problems a increasing trend has focused on decomposing large problems into batches of smaller problems and then solving independently before collecting the results or indeed focusing on solving batches of small problems to form a parameter study.
In this chapter we will present methods which focus on the later of these approaches by applying previous computational works by the same authors~\cite{gloster2019cupentbatch, gloster2019efficient, gloster2019custen}.
We solve batches of independent 1D Cahn--Hilliard equations within a single GPU program varying initial conditions and various forcing parameters to study averaged 1D scaling dynamics and to produce flow--pattern maps in an automated fashion, an improvement over the current by--hand/analytic methodologies.

In Section~\ref{sec6:1D} we present a numerical method for solving multiple 1D Cahn--Hilliard equations in a batch using an implicit scheme, a convergence study for the scheme and then a study of averaged 1D scaling. 
While in Section~\ref{sec6:flowpattern} we focus on the application of GPUs to produce a large parameter space of simulation results and then using k--means clustering extract a flow--pattern map.


\begin{figure}
    \centering
        \includegraphics[width=0.5\textwidth]{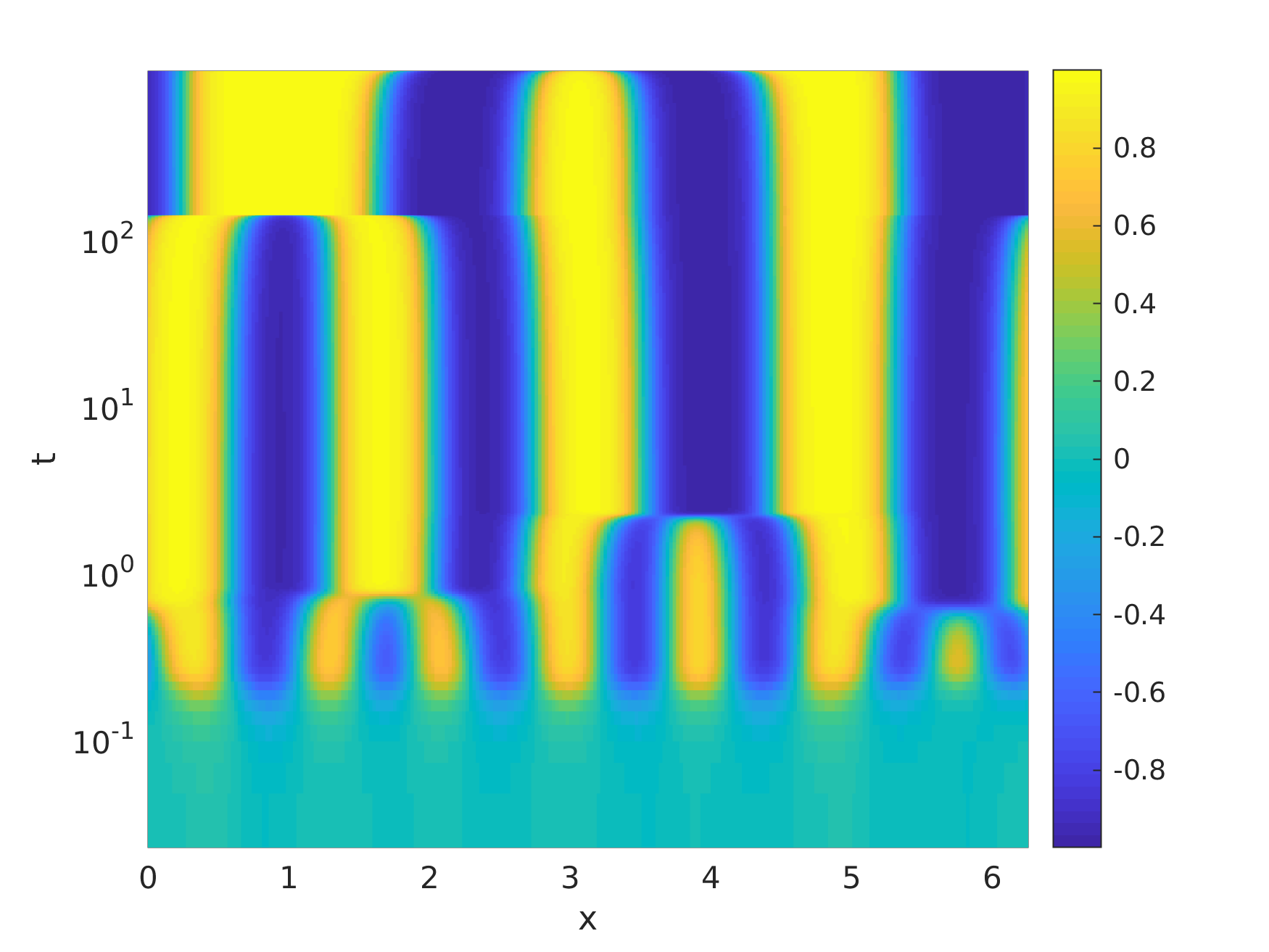}
        \caption{Space time plot showing the evolution of a 1D Cahn--Hilliard equation evolving in time, domain size is $2\pi$ with randomised initial conditions drawn from a uniform distribution.}
    \label{fig6:1Dspacetime}
\end{figure}

\section{1D Cahn--Hilliard Scaling}
\label{sec6:1D}
We now reduce equation~\eqref{eq2:ch} to its 1D form in order to study the averaged time--scale over which coarsening occurs. 
\Acomment{We also set the coefficient $D = 1$ for convenience as it has no effect on the dynamics of the equation, it simply scales the RHS.}

\begin{equation}
\frac{\partial C}{\partial t} = \frac{\partial^2}{\partial x^2} \left(C^3 - C - \gamma \frac{\partial^2 C}{\partial x^2} \right)
\label{eq6:1Dcahn}
\end{equation}

This equation is commonly solved for initial conditions where the binary mixture is well mixed.
There are then three time--scales involved in the problem. 
First is the fastest time--scale dominated by the hyperdiffusion term, the initial condition of the domain rapidly decays to a distribution where $|C(x,t)| < \epsilon$. 
This distribution then dominates the initial times of a given simulation but is unstable and leads to nucleation which occurs on the next fastest time--scale.
Bubbles begin to nucleate in the domain and subsequently grow to form regions of $C(x,t) = \pm 1$ joined together by $\approx \tanh(x)$ curves over a length scale given by $\sqrt{\gamma}$.
Finally there is the slowest time--scale of the problem where neighbouring bubble join together, reducing the number of regions of $C(x,t) = \pm 1$ as time increases, eventually reaching a steady state of two regions, one of $C(x, t) = 1$ for some region of $x$ and the other of $C(x, t) = 1$, these are again joined by $\tanh(x)$ like regions of width $\sqrt{\gamma}$. 
This final state is also know as a finite size effect as the smaller the domain the fewer bubbles and the quicker it occurs, thus how soon this feature emerges is dominated by the finite size of a domain.

It is this final time--scale that we will be studying in this section. 
In particular the coarsening events of individual 1D bubbles does not \Acomment{behave with any known analytic} expression but it has been shown in~\cite{argentina2005coarsening} that, on average, the domain coarsens with $\log(t)$, this is discussed further below in Section~\ref{sec6:1Dscale} where we solve a batch of individual equations to recover this average scaling.
In Figure~\ref{fig6:1Dspacetime} we can see the development of \Acomment{the different time--scales}, at the start of the simulation we can see the decay of the domain to $\approx 0$, followed by nucleation and finally coarsening.
For this simulation we took \Acomment{$\gamma = 0.01$} and ran the simulation to a final time of $T = 1000$ and used the numerical scheme presented in the following sub--section~\ref{sec6:1Dscheme}.
The initial condition is random with the numbers drawn from an uniform distribution of $-0.1 \leq C \leq 0.1$, the domain is $2\pi$ and we take $N = 256$ points to ensure $\Delta x \ll 2\pi \sqrt(\gamma)$ to properly resolve the interfaces.

\subsection{1D Numerical Scheme}
\label{sec6:1Dscheme}
The equation is solved implicitly on a uniform grid, we use $n$ to denote the time--step in the standard fashion.
Taking the hyperdiffusion term to the LHS, and discretising in time we get
\begin{equation}
C^{n+1} + \Delta t \gamma \frac{\partial^4 C^{n+1}}{\partial x^4} = C^n + \Delta t \left(\frac{\partial^2}{\partial x^2} \left(C^3 - C \right)^n \right)
\label{eq6:1Dnumerical}
\end{equation}
Here we have also set $D = 1.0$ as it has no effect on the dynamics of the problem and only scales the RHS.
We apply standard second order accurate differencing to the spatial derivatives
\begin{subequations}
\begin{equation}
\frac{\partial^2 C}{\partial x^2} \approx \delta_x^2 C = \frac{C_{i + 1} - 2 C_i + C_{i-1}}{\Delta x^2}
\end{equation}
\begin{equation}
\frac{\partial^4 C}{\partial x^4} \approx \delta_x^4 C = \frac{C_{i - 2} - 4 C_{i-1} + C_i - 4 C_{i+1} + C_{i + 2}}{\Delta x^4}
\end{equation}
\end{subequations}
This, along with the periodicity of the domain, yields a pentadiagonal matrix system of the form
\begin{subequations}
\begin{equation}
\underbrace{
\begin{pmatrix}
c & d & e & 0 & \cdots &  0 & a & b \\
b & c & d & e & 0 & \cdots &   0 & a \\
a & b & c & d & e & 0 & \cdots  & 0\\
0 & \ddots & \ddots & \ddots & \ddots & \ddots & \ddots &   \vdots\\
\vdots& \ddots & \ddots & \ddots & \ddots & \ddots & \ddots & 0 \\
0 & \cdots  & 0  &  a  & b  & c & d & e \\
e & 0 &  & 0   & a  & b & c & d \\
d & e & 0  &   \cdots & 0 &  a & b & c
\end{pmatrix}
}_{=\mxA}
\underbrace{
\begin{pmatrix}
C_{1} \\
C_{2} \\
\vdots \\
\vdots \\
\vdots \\
C_{N-2} \\
C_{N-1} \\
C_{N}
\end{pmatrix}
}_{=\vecx}
=
\underbrace{
\begin{pmatrix}
f_{1} \\
f_{2} \\
\vdots \\
\vdots \\
\vdots \\
f_{N-2} \\
f_{N-1} \\
f_{N}
\end{pmatrix}
}_{=\vecrhs}.
\label{eq6:matrix_sys1}
\end{equation}
Here, the coefficients of the matrix in Equation~\eqref{eq6:matrix_sys1} have the following meaning:
\begin{equation}
a  = \sigma_x,\qquad b = - 4 \sigma_x,\qquad
c = 1 + 6\sigma_x, \qquad
d= - 4 \sigma_x, \qquad e = \sigma_x
\end{equation}%
Similarly,
\begin{equation}
f_i = \alpha_x N_{i - 1}^{n} - 2 \alpha_xN_{i}^{n} + \alpha_x N_{i+1}^{n}
\end{equation}%
\Acomment{where} we have taken $\sigma_x = \gamma \Delta t / \Delta x^4$, $\alpha = \Delta t / \Delta x ^2$ and $N^n_i  = (C^3 - C)^n_i$ for brevity. 
\Acomment{This system can then be solved following the methodology presented for cyclic pentadiagonal matrices as found in \cite{navon_pent, gloster2019cupentbatch, gloster2019efficient} and in Chapters~\ref{chapter:cuPentBatch} and~\ref{chapter:efficient}, this methodology also influenced our choice of second order accuracy above.}
All of the necessary RHS matrix calculations were carried out using an in--house finite difference library developed by the authors~\cite{gloster2019custen}.
We use a conservative time step size of $\Delta t = 0.1 \Delta x$ to ensure stability of the scheme. 
In the following section we \Acomment{show} the convergence features of the scheme which is clearly second order accurate in space due to the choice of the central differences above.
\label{eq6:matrix_sys}%
\end{subequations}%

\subsection{1D Convergence Study}
\label{sec6:1Dcov}
For the convergence study we set $D = 1$, $\gamma = 0.01$ and the domain length $\Omega$ is set to $2 \pi$.
We set the time--step to a conservative $\Delta t = 0.1 \Delta x$ to ensure stability and we introduce a $\cos$ initial condition given by
\begin{equation}
C(x,0) = \epsilon \cos(5x)
\end{equation} 
with $\epsilon = 1 \times 10^{-6}$. 
It can be seen clearly from this table that the convergence rate is $\approx 2$ in keeping with the scheme's second order accuracy. 
The convergence results are presented in Figure~\ref{fig6:convergence1D} and Table~\ref{table5:converge1D}.
We can see that this scheme for intermediate grid points is second order accurate except for very high resolutions where the scheme converges with first order accuracy.
\Acomment{In all cases considering the diffuse nature of the Cahn--Hilliard equation and the lack of sharp festures to be resolved these orders of convergence are acceptable.}

\begin{figure}
	\centering
		\includegraphics[width=0.5\textwidth]{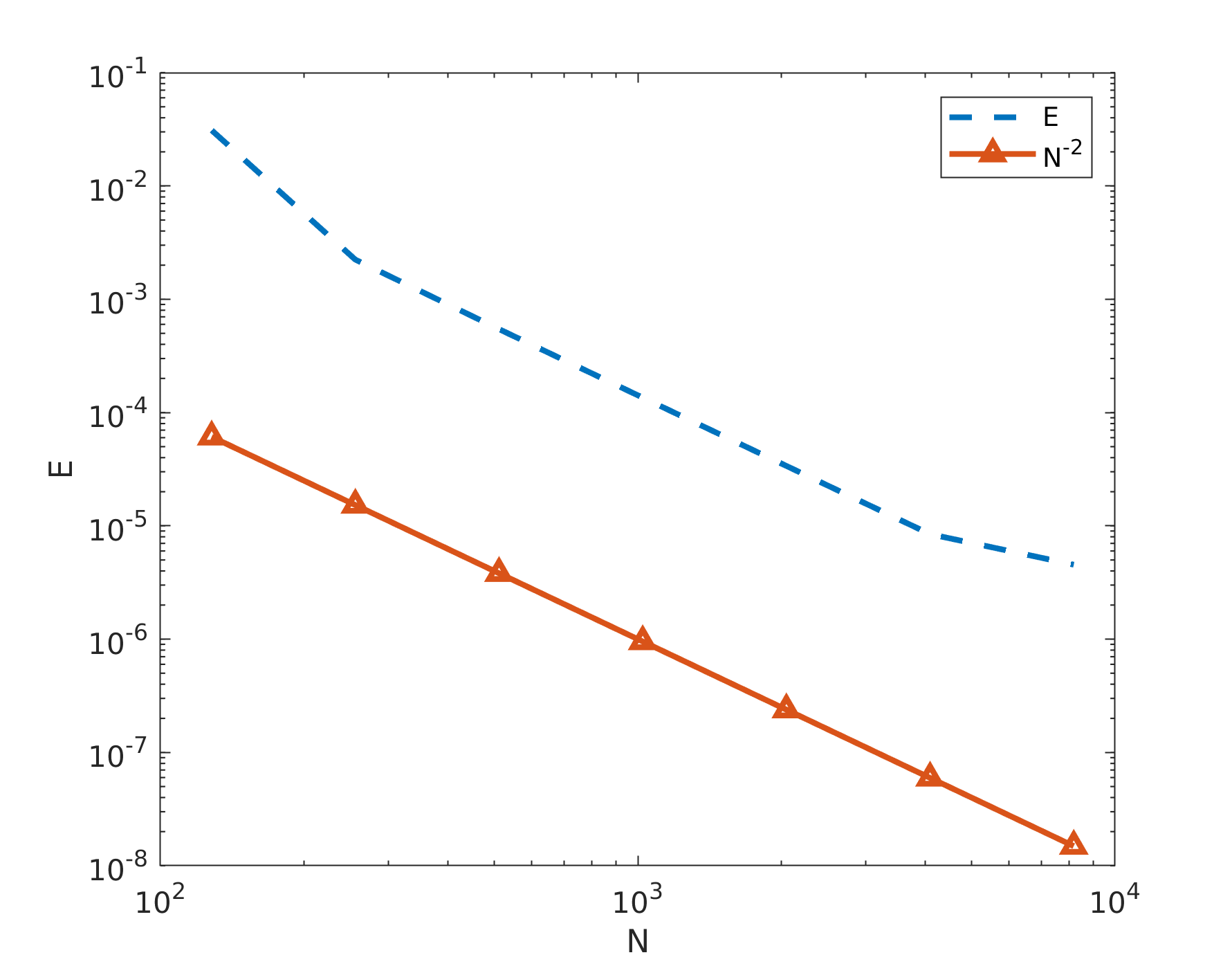}
		\caption{Plot showing the convergence of the scheme in equation~\ref{eq6:1Dnumerical}. The slope is $-2$ matching the spatial accuracy of the scheme.}
	\label{fig6:convergence1D}
\end{figure}

\begin{table}
\centering
\begin{tabular}{c|c|c} 
\hline
    {$N_x$} & {$E_N$} & {$log_2 (E_N / E_{2N})$} \\
    \hline
    128   & 0.0310      & 3.7932    \\
    256   & 0.0022      & 2.0376    \\
    512   & $5.45 \times 10^{-4}$    & 2.0089    \\
    1024  & $1.35 \times 10^{-4}$    & 2.0022    \\
    2048  & $3.38 \times 10^{-5}$    & 2.0005    \\
    4096  & $8.45 \times 10^{-6}$    & 0.8947	\\
    8192  & $4.54 \times 10^{-6}$    &           \\
\end{tabular}
\caption{Details of convergence study of semi-implicit scheme for solving the 1D Cahn--Hilliard equation.}
\label{table5:converge1D}
\end{table}

\subsection{Batched 1D Scaling}
\label{sec6:1Dscale}
We now examine the how the domain coarsening of the 1D Cahn--Hilliard equation scales with time. 
In~\cite{argentina2005coarsening} it is shown that, an average bubble, should scale in time as 
\begin{equation}
\langle S(t) \rangle \propto \log(t)
\label{eq6:1DscaleAvg}
\end{equation}
Thus in order to measure this quantity we must average across multiple independent simulations. 
We follow the computational methodology for solving batches of hyperdiffusion equations as presented in~\cite{gloster2019cupentbatch, gloster2019efficient} and combine it with the method for solving individual 1D Cahn--Hilliard equations presented above in Section~\ref{sec6:1Dscheme}.
\Acomment{We store each individual RHS of the system in interleaved format and the cuSten library~\cite{gloster2019custen} which was presented in Chapter~\ref{chapter:cuSten} is used to compute the required non--linear finite differences.}
Each system is given its own randomised initial condition $C(x, 0)$ where values are drawn from a uniform distribution between $-0.1$ and $0.1$.
The equations are then solved as a batch on a GPU, time--stepping till $T = 100$ and again we set $\gamma = 0.01$.
In order to measure $\langle S(t) \rangle$ we capture the quantity
\begin{equation}
S(t) = \frac{1}{1 - <C^2>}
\end{equation}
to measuring the coarsening rate~\cite{LennonAurore}.
This quantity is captured for each system at every time--step and then averaged across systems.

\begin{figure}
    \centering
    \begin{subfigure}[b]{0.49\textwidth}
        \centering
        \includegraphics[width=1.0\textwidth]{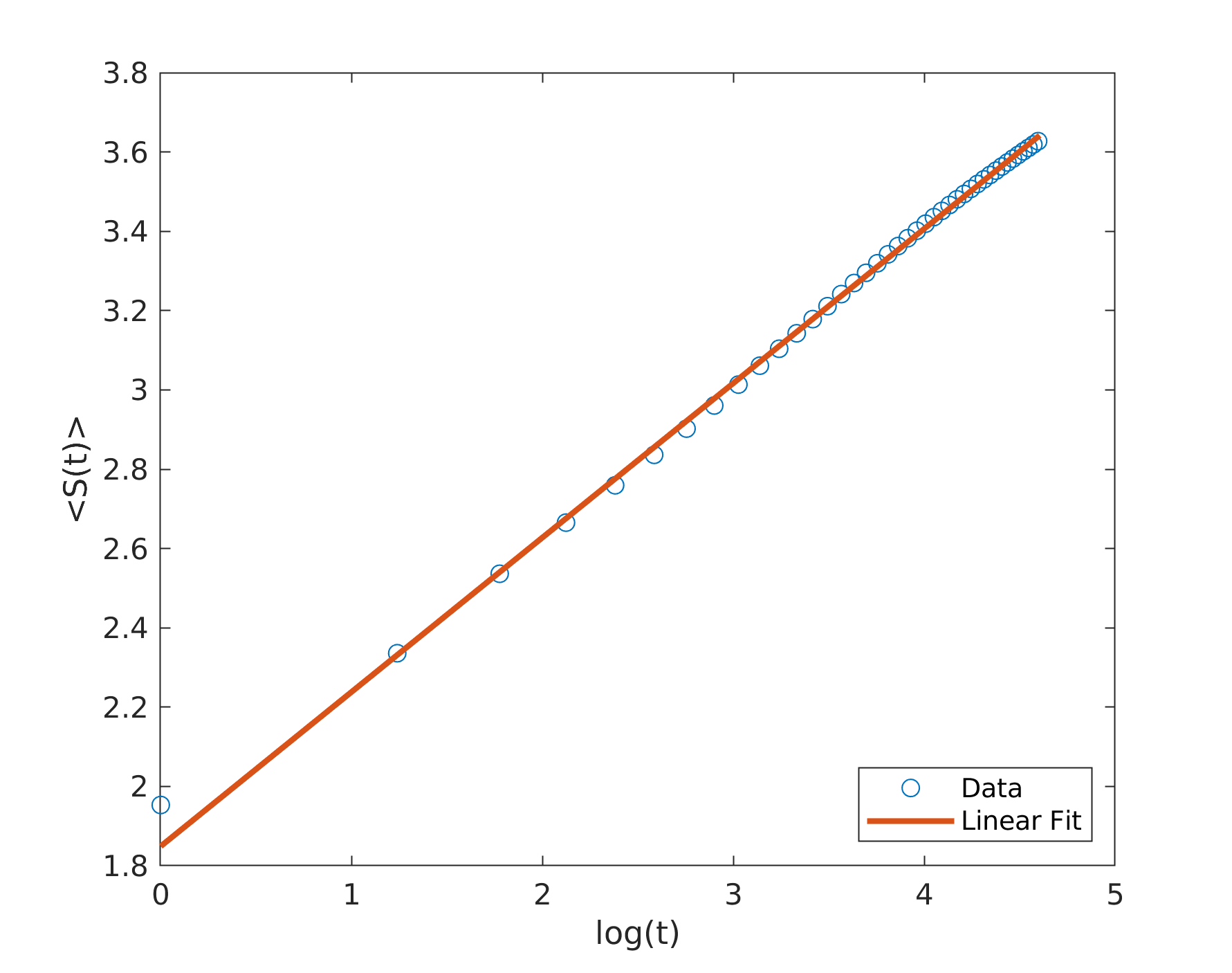}
    \caption{Scaling of domain size $2\pi$.}
    \label{fig6:1Dscale2piPlot}
    \end{subfigure}
    \hfill
    \begin{subfigure}[b]{0.49\textwidth}  
        \centering 
        \includegraphics[width=1.0\textwidth]{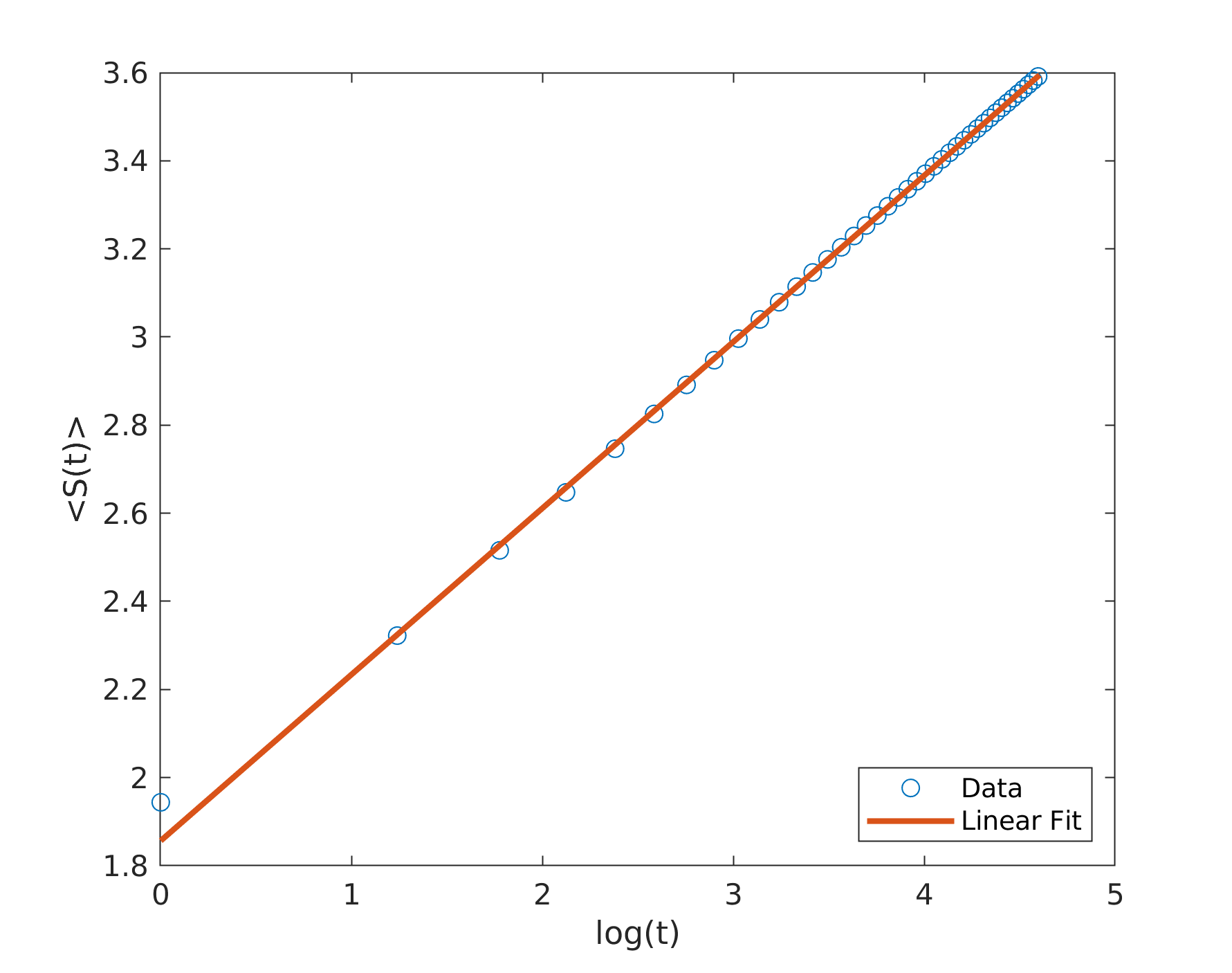}
    \caption{Scaling of domain size $4\pi$.}
    \label{fig6:1Dscale4piPlot}
    \end{subfigure}
        \caption{Plots showing the scaling in as a function of $\log{t}$.}
    \label{fig6:4pi1024}
\end{figure}

We plot the values for $\langle S(t) \rangle$ as a function of $\log(t)$ in Figures~\ref{fig6:1Dscale2piPlot} and~\ref{fig6:1Dscale4piPlot} for domains of size $2\pi$ and $4\pi$ respectively.
In both cases $\Delta x = 2 \pi / 256$ again to ensure $\Delta x \ll 2\pi \sqrt{\gamma}$ so that the interface is well resolved. 
In both cases we simulate $2^{20}$ independent systems in parallel on a single GPU in order to ensure the space is well \Acomment{sampled} and that we average out the statistic of interest as much as possible. 
The reader should note that not all of the data points are plotted in the graphs but were included in the analysis, this is done to ensure the graphs are readable.
In both cases \Acomment{we} fit lines with linear regression as we're testing that the quantities in equation~\eqref{eq6:1DscaleAvg} \Acomment{are} linearly proportional. 
For the $2\pi$ case in Figure~\ref{fig6:1Dscale2piPlot} we get a value of $r = 0.9989$ while for $4\pi$ in Figure~\ref{fig6:1Dscale4piPlot} we get a value of $r = 0.9996$.
Clearly these results confirm the theoretical findings in~\cite{argentina2005coarsening} and provide a good example of how a GPU can be used to efficiently solve a large batch of separate 1D equations.


\begin{figure}
    \centering
        \includegraphics[width=0.5\textwidth]{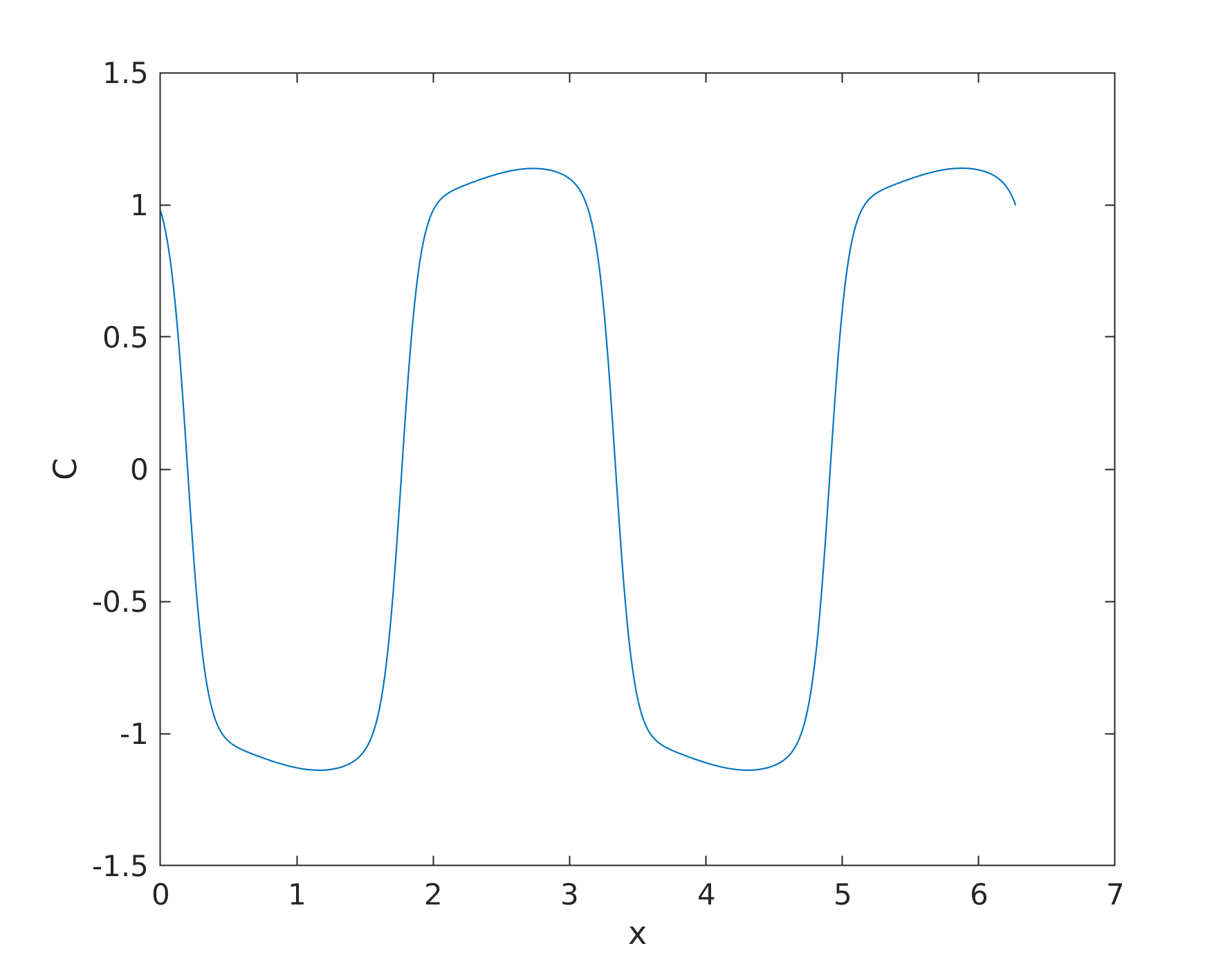}
        \caption{Plot of numerical solution to equation\eqref{eq6:1DcahnForcedTravel} showing regions of $C \approx \pm 1$.}
    \label{fig6:fullMinusPlus}
\end{figure}

\section{Forced 1D Cahn--Hilliard Flow Pattern Map}
\label{sec6:flowpattern}
In this section we focus on the 1D Cahn--Hilliard equation with an additional forcing term, specifically a travelling wave.
Thus we have a modified version of equation~\eqref{eq6:1Dcahn}, following~\cite{lennonWave} giving us the following expression
\begin{equation}
\frac{\partial C}{\partial t} = \frac{\partial^2}{\partial x^2} \left(C^3 - C - \gamma \frac{\partial^2 C}{\partial x^2} \right) + f_0 k \cos(k (x - vt))
\label{eq6:1DcahnForced}
\end{equation}
where $k$ is the forcing wave--number, $f_0 k$ the forcing amplitude and $v$ is the wave--speed.
We study this equation as \Acomment{a} simplification of the Ludwig--Sorret effect, this is where the concentration fluctuations in the Cahn--Hilliard equation are introduced by an external temperature gradient.
A common approach is to \Acomment{vary} parameters such as $k$, $v$ and $\langle C \rangle$ (where $\langle \cdot \rangle$ denotes a \Acomment{spatial} average) to produce a parameter study and generate flow--pattern maps~\cite{lennonWave}.

\begin{figure}
    \centering
        \includegraphics[width=0.5\textwidth]{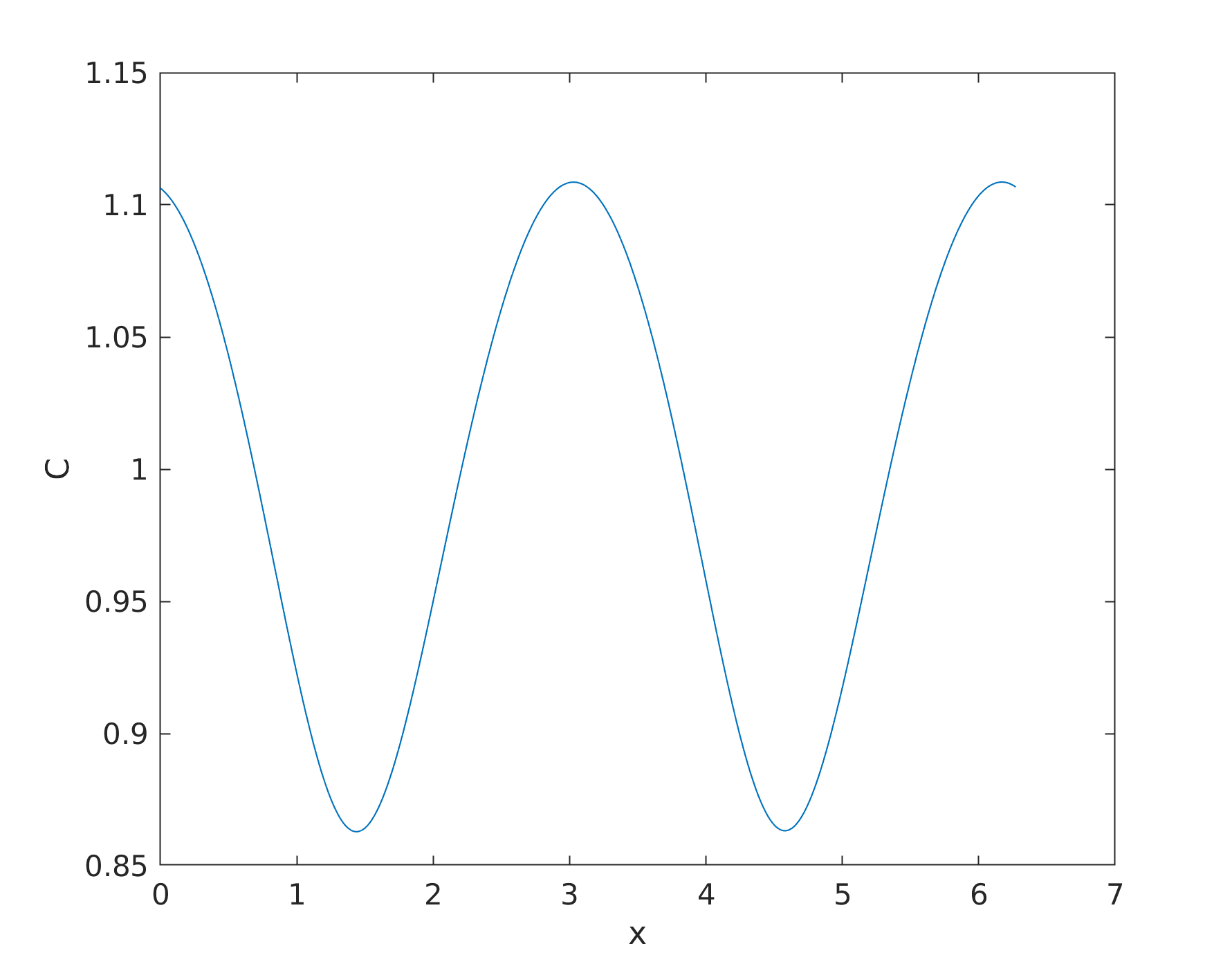}
        \caption{Plot of numerical solution to equation\eqref{eq6:1DcahnForcedTravel} showing regions oscillation around a mean value of $C \approx 1$.}
    \label{fig6:oscillation}
\end{figure}

In order to produce a good quality flow--pattern map with well--defined domains a large number of simulations is required followed by some `by--hand' techniques.
This approach is slow and inefficient as it leaves the researcher having to study each numerical solution to categorise the finding, particularly in cases where an analytic metric cannot be used to help determine the categorisation of a given solution in the flow--pattern map.
Another key drawback of this method is it can be hard to draw out the precise boundaries between two solution regions, particularly when there is a high number of simulations and a lack of an analytic expression to divide the parameter space.
We present in this section of the paper, first a method to produce the necessary large batches of simulation in parallel using a GPU, this allows for extremely large and diverse data--sets to be produced for the study, and then we present an application of k--means clustering to categorise the results automatically to produce the desired flow--pattern maps. 

Before proceeding any further we first need to rewrite this equation into the frame of the travelling wave, this ensures we have a consistent position relative to the wave when feeding it into our categorisation methods, the desired coordinate system is given by
\begin{equation}
\eta = x - vt
\end{equation}
Under this transformation equation~\eqref{eq6:1DcahnForced} becomes 
\begin{equation}
\frac{\partial C}{\partial t} - v \frac{\partial C}{\partial \eta} = D \frac{\partial^2}{\partial \eta^2}\left(C^3 - C - \gamma \frac{\partial^2 C}{\partial \eta^2} \right) + f_0 k \cos(k \eta)
\label{eq6:1DcahnForcedTravel}
\end{equation}
The forced 1D Cahn--Hilliard equation presented in equation~\eqref{eq6:1DcahnForcedTravel} is ideal for presenting this methodology as only three key solution types are present in the parameter space, we will label them in the same convention as~\cite{lennonWave}. 
The first possible solution type is one which consists of two regions of $C \approx \pm 1$ joined by a transition region, an example of this can be seen in Figure~\ref{fig6:1Dspacetime}, we refer to this solution type as A1.
In Figure~\ref{fig6:oscillation} we can see the second type of solution where there is an oscillation around some mean value of $C$, in this case $\approx 1$, we refer to this solution type as A2.
Any other solution in the parameter space will either be a combination of these two effects or a completely unforced standard Cahn--Hilliard solution, we label this solution as A0.
Thus we expect three regions in our parameter study to appear.
These three solution types provide a clear path for categorising the parameter space for the flow pattern map using k--means clustering, we introduce k-means clustering in Section~\ref{sec6:kmeans} and discuss the results in Section~\ref{sec6:kmeanResults}.
But first in Sections~\ref{sec6:schemeTravel} and~\ref{sec6:convergeTravel} we present the numerical scheme and convergence study for equation~\eqref{eq6:1DcahnForcedTravel}.

\begin{figure}
    \centering
        \includegraphics[width=0.5\textwidth]{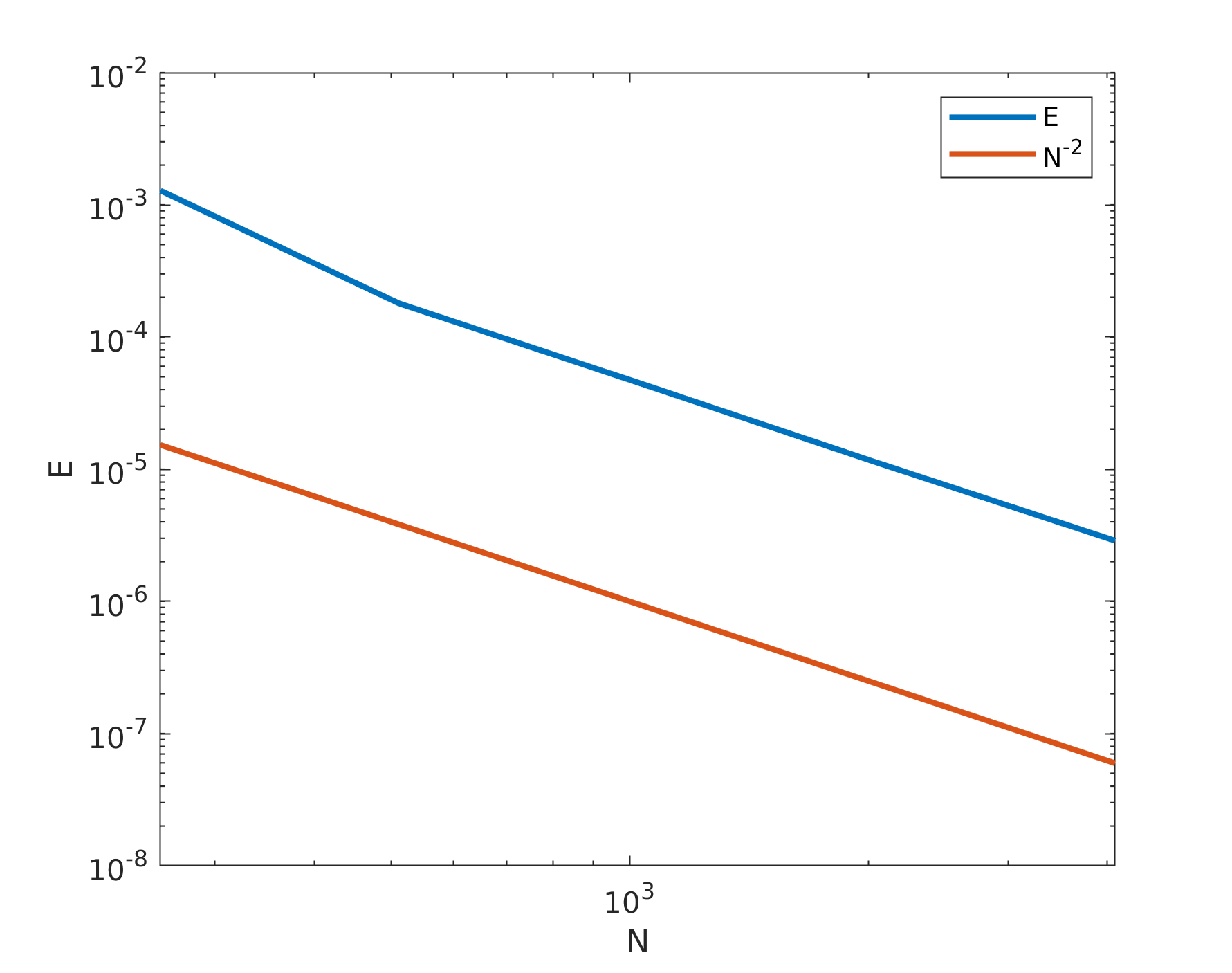}
        \caption{Plot showing the convergence of the forced Cahn--Hilliard scheme in equation~\ref{eq6:travellingNumerical}. The slope is $-2$ matching the spatial accuracy of the scheme.}
    \label{fig6:convergenceTravel1D}
\end{figure}

\subsection{Numerical Scheme}
\label{sec6:schemeTravel}
As previously we discretise the hyperdiffusion term implicitly and set $D = 1.0$ to give a version of equation~\eqref{eq6:1Dnumerical} with additional terms on the RHS
\begin{multline}
C^{n+1} + \Delta t \gamma \frac{\partial^4 C^{n+1}}{\partial \eta^4} = \\ C^n + \Delta t \left(\frac{\partial^2}{\partial \eta^2} \left(C^3 - C \right)^n + v \frac{\partial C^n}{\partial \eta} + f_0 k \cos(k \eta) \right)
\label{eq6:travellingNumerical}
\end{multline}
Thus the solution methodology with pentadiagonal inversions and central differences is the same as Section~\ref{sec6:1Dscheme} except we need to discretise the advection term $v \partial C / \partial \eta$.
\Acomment{In order to deal with this term we use a standard Hamilton--Jacobi WENO scheme presented in~\cite{fedkiwBook}.
We choose this scheme as it correctly differences the advection term in the direction of travel and is $O(\Delta x^5)$ accurate in smooth regions, this scheme has been implemented using the cuSten library~\cite{gloster2019custen}.}

\subsection{Convergence Study}
\label{sec6:convergeTravel}
We proceed with an identical convergence study to Section~\ref{sec6:1Dcov}, we have set the additional necessary parameters as follows $k = 1$, $v =0.5$ and $f_0 = 1.0$.
The convergence results are presented in Figure~\ref{fig6:convergenceTravel1D} and Table~\ref{table5:convergeTravel1D}.
In both cases we can see clear second order convergence as expected as we have been using at least second order accurate stencils in all of our calculations.

\begin{table}
\centering
\begin{tabular}{c|c|c} 
\hline
    {$N_x$} & {$E_N$} & {$log_2 (E_N / E_{2N})$} \\
    \hline
    256   & 0.0013                  & 2.8363    \\
    512   & $1.8 \times 10^{-4}$      & 1.9869    \\
    1024  & $4.54 \times 10^{-5}$  & 2.0086    \\
    2048  & $1.13 \times 10^{-5}$  & 1.9673    \\
    4096  & $2.88 \times 10^{-5}$  &           \\
\end{tabular}
\caption{Details of convergence study of semi-implicit scheme for solving the 1D forced Cahn--Hilliard equation.}
\label{table5:convergeTravel1D}
\end{table}

\subsection{K-Means Clustering}
\label{sec6:kmeans}
k--means clustering is a standard method for partitioning observations into clusters based on distance to the nearest centroid.
Distance in this sense is calculated using Euclidean distance in a space of suitable dimension for the dataset. 
In this work we use MATLAB's implementation of the k--means clustering algorithm which is itself based on Lloyd's algorithm~\cite{kmeans}.
This algorithm is built on two steps which alternate, the first step is to assign each observation to a cluster based on its Euclidean distance to each of the centroids, the assigned cluster is the closest centroid.
Then the second step is to calculate a new set of centroids by calculating the mean position of all the observations within each assigned cluster. 
These steps alternate until convergence has been reached.
The algorithm is initialised by randomly creating the desired number of centroid (3 in our case).

\subsection{Clustering Results}
\label{sec6:kmeanResults}
In this section we present the results of running batches of the Cahn--Hilliard equation with a travelling wave and categorising the solutions using the k--means algorithm.
The methodology will be to solve batches of equations where we fix the wave--velocity $v$ and wave--number $k$ for all systems within a given batch and then vary $\langle C \rangle$ and $f_0$ within the batch.
$\langle C \rangle$ will be varied between values of $0$ and $1.5$ and $f_0$ between $2$.
In order to accurately capture the travelling wave we set the domain size to $2\pi$ and set the number of points in the domain to $N = 512$, it was found that this high number of $N$ was required in order to accurately resolve the travelling wave at high values of $\langle C \rangle$ and $f_0$.
This choice of $N$ also gives us the maximum batch size of the simulations that can be run on a single GPU, in this study our GPU has 12GB of space, thus we determine that we can have a total number of 589824 simulations in each batch.
Thus the resolution of our flow pattern maps will be $768 \times 768$ dividing the ranges of $f_0$ and $\langle C \rangle$ appropriately.
The initial conditions in this section are drawn from a random uniform distribution of values between $-0.1$ and $0.1$ with $\gamma$ set to a value of $0.01$.
We simulate all of the systems up to a final time of $T = 50$.

\begin{figure}
    \centering
        \includegraphics[width=0.5\textwidth]{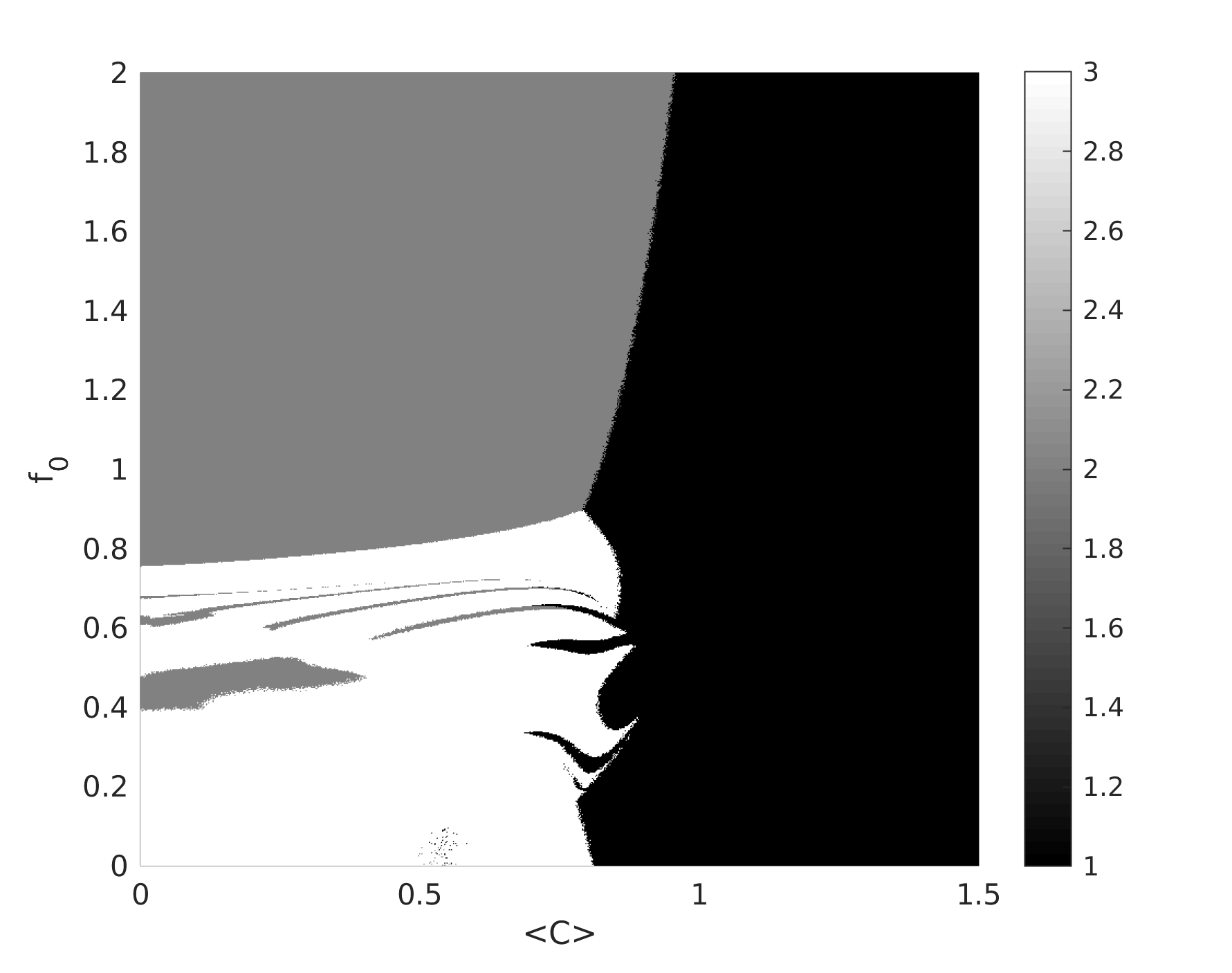}
        \caption{Flow--pattern map of equation~\eqref{eq6:travellingNumerical} with  $k = 1.0$ and $v = 0.5$, the white area corresponds to A0 solutions, grey to A1 solutions and black to A2 solutions.}
    \label{fig6:plot1}
\end{figure}

We begin by examining the case $k = 1.0$ and $v = 0.5$, the results of which are presented in Figure~\ref{fig6:plot1}.
In this plot we can see the clear extraction of regions of different solutions, the top left grey region corresponds to solution type A1 and the right side black region corresponds to solution type A2.
A mix of solutions can be seen in the bottom left white region of the domain corresponding to region A0 along with some areas of A1 and A2.
Between these two regions k--means has been able to pick out an exact boundary extending from $f_0 \approx 0.9$ to the top of the domain.
Some noise can be seen along this boundary where the k--means algorithm has struggled to differentiate between very similar solution types as the behaviour transitions,
This also explains the less well behaved bottom left region where the algorithm has been unable to deal with areas near the boundaries.

The results presented in Figure~\ref{fig6:plot1} are in--keeping with those presented in~\cite{lennonWave}; where the classification was performed by visual inspection.
Three regions in approximately the same locations were found, solution type A0 in the bottom left, A1 on the top left and then A2 on the right side of the parameter space. 
We now increase the value of v to see if the boundary between A0 and A1 moves higher up the $f_0$ axis.

\begin{figure}
    \centering
        \includegraphics[width=0.5\textwidth]{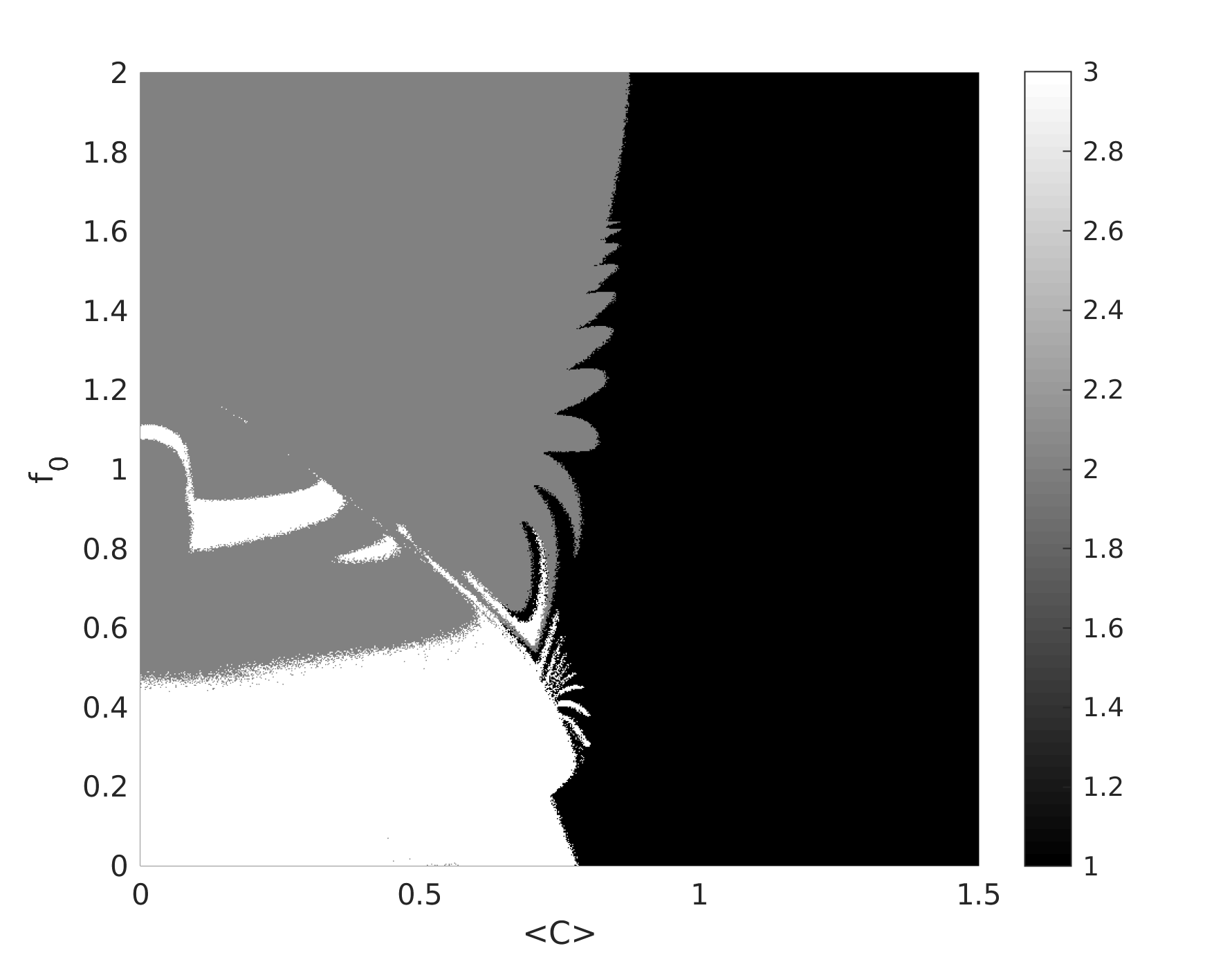}
        \caption{Flow--pattern map of equation~\eqref{eq6:travellingNumerical} with  $k = 1.0$ and $v = 1.0$, the white area corresponds to A0 solutions, grey to A1 solutions and black to A2 solutions.}
    \label{fig6:plot2}
\end{figure}

We now examine the case $k = 1.0$ and $v = 1.0$, the results of which are presented in Figure~\ref{fig6:plot2}.
The k--means algorithm has struggled here to extract clean boundaries between the A0 region and the other two. 
This can be attributed to the fact that these are hard to distinguish, even by eye.
We can see that the algorithm has successfully classified the regions A1 and A2 broadly quite well, with a less well defined boundary than previously.
Thus we can conclude that k--means is a useful approach for broadly classifying large parameter spaces in terms of flow pattern maps yet will struggle with more subtle transitions betweens regions.

Instead k--means could be used as a first step in building a model to classify simulation data for a flow pattern map.
\Acomment{Indeed one could take the well defined regions from a large number of simulations to then train a logistic regression classifier or neural network, these may produce better results as they may detect features that are not as obvious to the eye or representable using a Euclidean distance as in the k-means algorithm.} 
Also additional features beyond the raw solution could be included in the dataset such as a time series of the free energy or an increased number of time--steps of given solutions.


\section{Conclusion}
We have shown in this chapter how the methodologies for solving batches of 1D equations in previous chapters can be applied to confirm both theoretical results and used to produce data sets for flow pattern maps.
In particular the batch solving has enabled us to produce a large enough sample space to apply k--means clustering and explore its usefulness in producing flow pattern maps. 
In the following chapter we use our methodology to produce a large number of 2D simulations using GPUs in order to explore the scaling law of the 2D Cahn--Hilliard equation.

\lhead{\emph{Chapter 7}}  
\graphicspath{{chapter6/Figures/}}
\chapter{Statistical Study of Interface Growth in the Cahn--Hilliard Equation}
\label{chapter:statistical}
\Acomment{
In this chapter we examine the scaling of interface growth in the Cahn--Hilliard equation. 
Here we focus on interface growth only, so results including mergers of existing regions in the domain are omitted, how these mergers appear in the simulation data is discussed in Section~\ref{sec7:outliers}.
An extended version of the theory presented here, including a further discussion regarding a point--wise bound on $\beta$, and the same dataset including mergers was included in the paper "A large-scale statistical study of the coarsening rate in models of Ostwald-Ripening"~\cite{nraigh2019largescale}.
The specific work on a point--wise bound was conducted by my thesis supervisor Lennon {\'O} N{\'a}raigh and hence omitted from this thesis.
The work presented here and in the paper overlap in areas but each have distinct results presented. 
}

\section{Introduction}
\label{sec7:intro}
When a binary fluid in an initially well-mixed state experiences rapid cooling below a critical temperature, both phases spontaneously separate to form regions rich in the fluid's component parts.  These regions expand over time in a phenomenon known as coarsening.  
In the case of an asymmetric mixture where the volume fraction of one of the fluid components is very small (the `minority phase'), phase separation takes place via Ostwald Ripening -- in this case, `droplets' of the minority phase form, immersed in a matrix of the majority phase.  
The system rapidly reaches equilibrium in the matrix, in which case the late-stage evolution of the system is governed by the energetics of the interface and limited by inter--facial diffusion.  
In this late stage, there is the tendency for the large droplets of the minority phase to grow at the expense of the smaller droplets, which eventually vanish.  
This then leads to a decrease in the number density of droplets and an increase in the average size of the surviving droplets.  
The size of a typical droplet grows as $\langle R\rangle \sim t^{1/3}$, which is a manifestation of the coarsening phenomenon.
In this work, we revisit the theory of Ostwald Ripening in the case where there is a large-but-finite number of droplets present.  
There is ample physical motivation for such a study, as any real system will consist of only a finite number of droplets.  
The presence of only a finite number of droplets in our model system will alter the statistics of the coarsening.  
Understanding these statistical finite-size effects is the main motivation for this work.

There is a convenient asymptotic theory for Ostwald Ripening, valid in an infinitely large domain with infinitely many droplets, but formulated in such a way that the volume fraction occupied by the droplets is finite (and small).  
This theory was developed by Lifshitz and Slyzov in Reference~\cite{lifshitz1961kinetics} simultaneously, by Wagner in Reference~\cite{wagner1961}.  
The eponymous Lifshitz--Slyozov--Wagner (LSW) theory provides an analytical expression for the late-time drop--size distribution in Ostwald Ripening, denoted here by $p(r,t)$.  
A continuity equation for $p(r,t)$ is derived, where the probability flux depends on the droplet velocity -- the velocity is obtained by energetic arguments based on droplet inter--facial area.  
The continuity equation admits a self-similar solution $p(r,t)\propto f(x)$, where $x=r/\langle R\rangle$; the scaling $\langle R\rangle\propto t^{1/3}$ readily drops out of this calculation.  
It can be also noted that the functional form of $f(x)$  has compact support.  
This theory can be used as a starting-point for the present investigations: in Section~\ref{sec7:LSW} we formulate a droplet-population model for $N$ droplets in a dilute mixture, the limiting case of which ($N\rightarrow \infty$) is the LSW theory.
\Acomment{We note that the use of $N$ in this chapter is to denote numbers of bubbles/droplets, not matrix sizes as in previous chapters.}

The LSW theory is valid at late times effectively for very dilute systems.  
In this case, the LSW theory represents an approximate solution to so-called Mullins--Sekerka (MS) Dynamics, introduced first by Mullins and Sekerka to model particle growth in a supersaturated matrix~\cite{mullins1963morphological}, but then re--purposed as an effective model for Ostwald Ripening more generally~\cite{niethammer2008effective}.  
The MS dynamics describe the motion of extended regions $\{B_1,\cdots,B_N\}$ in a domain $\Omega$ and are expressible in terms of a generic chemical potential $\mu$:

\begin{subequations}
\begin{eqnarray}
\nabla^2\mu&=&0,\qquad \text{in }\Omega - \cup_{i=1}^N B_i,\\
\mu&=&\kappa,\qquad \text{in } \cup_{i=1}^N \partial B_i,\\
V&=&\left[\widehat{\bm{n}}\cdot\nabla \mu\right],\qquad \text{in } \cup_{i=1}^N \partial B_i.\label{eq7:ms3}
\end{eqnarray}
\label{eq7:ms}
\end{subequations}

Here, $\kappa$ denotes the mean \Acomment{interfacial} curvature, $V$ denotes the normal velocity of the interface, $\widehat{\bm{n}}$ denotes the normal vector to the interface, and $\left[\widehat{\bm{n}}\cdot\nabla \mu\right]$ denotes the jump in the normal derivative of the chemical potential across the interface.  
The fact that the interfaces move (with velocity $V$, via a mismatch in the chemical potential across the interfaces), means that this is a dynamical problem.  
\Acomment{LSW theory amounts to a solution of Equation~\eqref{eq7:ms} in the mean-field approximation, for spherical domains $B_i = R_i$, where $R_i$ is the radius of a given sphere.}

In practice, it is difficult to solve Equation~\eqref{eq7:ms} numerically, as it involves tracking a moving, disconnected interface, as it evolves to minimize the total system free energy.  
A viable theoretical and numerical alternative is to solve the Cahn--Hilliard Equation (equation~\eqref{eq2:ch}) instead with an initial condition of an asymmetric mixture.
In other words, a minority phase immersed in a matrix of the majority phase. 
Small `droplets' of the minority phase disappear only to be reabsorbed into larger droplets of the same phase in precisely the same process as Ostwald Ripening.  
Indeed, it has been shown by Pego~\cite{pego1989} that solutions of Equation~\eqref{eq2:ch} (under certain assumptions) converge to solutions of Equation~\eqref{eq7:ms}, in the limit as $\gamma\rightarrow 0$.  Hence, under the limit of a very dilute minority phase, solutions of the Cahn--Hilliard equation are characterized by LSW theory, in the limit as $\gamma\rightarrow 0$.  

In this section, we revisit the Cahn--Hilliard Equation~\eqref{eq2:ch}.  
We study the effect of a finite-sized domain $\Omega$ on the coarsening.  
Specifically, this is done by exploring the free-energy decay rate $\beta_\Omega=-(t/F)(\mathd F/\mathd t)$ emanating from Equation~\eqref{eq2:dFdt_ch}.  
Looking at the free-energy decay rate is equivalent to the coarsening rate, as there the system is statistically self-similar and admits only a single characteristic length scale at late times. Specifically, we look at the probability distribution function of $\beta_\Omega$.  
The effects of finite $\Omega$ are analogous to the effects of finite $N$ in the droplet-population model: we therefore use that model to provide a theoretical explanation for the $\beta_\Omega$ distribution in the Cahn--Hilliard equation.

The Cahn--Hilliard equation is valid for arbitrary configurations of the phases.  
Therefore, as a final study (Section~\ref{sec7:CH_symm}), we investigate finite-size effects and coarsening in the case of symmetric binary mixtures, where both phases are present in equal amounts.  
In such a scenario, the coarsening is characterized by an interconnected domain structure comprising both phases; the `typical length--scale' in this scenario is known to grow in time again as $t^{1/3}$.  
Using an ensemble of numerical simulations of Equation~\eqref{eq2:ch} (enabled by GPU computing), we characterize the coarsening phenomenon statistically, again using the coarsening rate $\beta_\Omega=-(t/F)(\mathd F/\mathd t)$ as a key random variable whose distribution we uncover.

This chapter is organized as follows.   
In Section~\ref{sec7:LSW} we  revisit  LSW theory: this is a convenient starting-point for our investigations.  We formulate a theory of finite-size effects in Ostwald Ripening via the introduction of a finite droplet-population model inspired by LSW theory.
In Section~\ref{sec7:methodology} we introduce the computational methodologies required for numerical simulations of the droplet-population model, as well as the necessary numerical methods for later computations.
In Section~\ref{sec7:CH} we solve an ensemble of Cahn--Hilliard Partial Differential Equations numerically with asymmetric mixtures, thereby developing a statistical picture of the decay rate of the free energy, $\beta_\Omega=-(t/F)(\mathd F/\mathd t)$, which can be compared back to the droplet-population model.  
In Section~\ref{sec7:CH_symm} we use the same approach to investigate the statistics of $\beta$ for a \textit{symmetric} Cahn--Hilliard mixture.  
In Section~\ref{sec7:cooke} we also extend the study to the Cahn--Hilliard--Cooke equation which is identical to the standard equation except the dynamics are driven by a noise term which represents a system driven by thermal fluctuations.
Discussion and final concluding remarks are presented in Section~\ref{sec7:disc}. 

\section{Theoretical Framework}
\label{sec7:LSW}
We begin by reviewing the mean-field solution of the Mullins--Sekerka dynamics~\eqref{eq7:ms}, valid for the case of a very dilute droplet population: here, the aim is to find a highly simplified expression for the chemical potential, which will be constant (in space) in the far field, and encode the effect of all other droplets on a particular droplet $B_i$ (with $i\in\{1,\cdots,N\}$).  
As such, we solve for $\mu$ with the following constraints: 
\begin{equation}
\mu\,\begin{cases} \text{is harmonic}, & \text{for} R_i\neq |\vecx-\vecx_i|\ll d,\\
                   =1/R_i,             & |\vecx-\vecx_i|=R_i,\\
                                     \approx  \overline{u}, & \text{for}R_i\ll |\vecx-\vecx_i|\ll d.
                                    \end{cases}
\label{eq7:bc}
\end{equation}
\Acomment{
Here, $d$ is the typical distance between droplets; this is assumed to be large in comparison with $R_i$ the radius of droplet $i$ -- this assumption is valid in the limit of very dilute systems.  
Also we define $\overline{u}$ as the mean field.
}
The fundamental solution is therefore given by
\[
\mu=\frac{a}{|\vecx-\vecx_i|}+b,
\]
where $a$ and $b$ are constants of integration.  
These are chosen so as to satisfy the boundary conditions~\eqref{eq7:bc}, hence
\[
\mu=\frac{1}{|\vecx-\vecx_i|}R_i\left(\frac{1}{R_i}-\overline{u}\right)+\overline{u}.
\]
We also compute 
\[
\left(\frac{\partial \mu}{\partial r}\right)_{|\vecx-\vecx_i|=R_i}=-\frac{1}{R_i}\left(\frac{1}{R_i}-\overline{u}\right).
\]
Using~\eqref{eq7:ms3} for spheres, this becomes
\Acomment{
\begin{equation}
\frac{\mathd R_i}{\mathd t}=H(R_i)\left(-\frac{1}{R_i^2}+\frac{\overline{u}}{R_i}\right)
\label{eq7:dRi}
\end{equation}
}
The Heaviside step function $H(R_i)$ is added as a pre--factor in Equation~\eqref{eq7:dRi} -- this is both a regularization and a book-keeping procedure to take account of particles whose radius shrinks to zero.
For the avoidance of doubt, we recall that $H(R_i)=0$ if $R_i\leq 0$ and $H(R_i)=1$ otherwise.

The value of the mean field $\overline{u}$ can now be obtained by imposing the constancy of the mass fraction, hence, the constancy of the total volume $\sum_{i=1}^N (4/3)\pi R_i^3$, hence
\begin{equation}
\overline{u}=\frac{\sum_{i=1}^N H(R_i)}{\sum_{i=1}^N H(R_i)R_i}.
\label{eq7:uval}
\end{equation}


\subsection{LSW theory}

We further review the LSW theory.  
This can be thought of as the limiting case of the mean-field MS theory where the number of droplets $N$ goes to infinity, while at the same time, the volume fraction
\[
\epsilon=\lim_{\substack{ N\rightarrow\infty\\|\Omega|\rightarrow\infty}}\left(\frac{\sum_{i=1}^N (4/3)\pi R_i^3}{|\Omega|}\right)
\] 
remains finite.  
\Acomment{The number density $P(r,t)$ of droplets of radius $r$ is introduced, such that $P(r,t)\mathd V$ is the number of droplets whose volume is in a range from $V$ to $V + \mathd V$.
Where $V = (4/3)\pi r^3$ and $\mathd V=r^2\mathd r\,\mathd \omega$  (here, $\mathd\omega$ is the differential solid-angle element)}.
Using standard conservation-type arguments, the evolution equation for $P(r,t)$ is just
\[
\frac{\partial P}{\partial t}+\nabla\cdot\left(\bm{v} P\right)=0,
\]
where $\bm{v}$ is the velocity of one of the droplets.  
The problem is radially symmetric, hence only the radial velocity is required.  
This is known from Equation~\eqref{eq7:dRi}, hence
\[
v_r(r,t)=-\frac{1}{r^2}+\frac{\overline{u}(t)}{r}.
\]
Using the expression for divergence in spherical polar coordinates, for a radially-symmetric configuration, the evolution equation for $P$ becomes:
\[
\frac{\partial P}{\partial t}+\frac{1}{r^2}\frac{\partial }{\partial r}\left(r^2 v_r P\right)=0.
\]
If we define
\[
P(r,t)r^2=p(r,t), \text{ such that }\int_0^\infty P(r,t)r^2\mathd r=\int_0^\infty p(r)\mathd r,
\]
then the required evolution equation is
\Acomment{
\begin{equation}
\frac{\partial p}{\partial t}+\frac{\partial }{\partial r}\left( v_r p\right) = 0,\qquad v_r(r,t)=-\frac{1}{r^2}+\frac{\overline{u}}{r}.
\label{eq7:fp}
\end{equation}
}
Hence, $p(r,t)$ has the interpretation as the number of droplets with a radius between $r$ and $r+\mathd r$, at time $t$.
In analogy with Equation~\eqref{eq7:uval}, Equation~\eqref{eq7:fp} is closed by requiring:
\[
\overline{u}=\frac{\int_0^\infty p(r,t)\mathd r}{\int_0^\infty rp(r,t)\mathd r}.
\]
We now seek a similarity solution of Equation~\eqref{eq7:fp}.  We write
\begin{equation}
p=t^a f(x),\qquad x=\frac{r}{ct^b}.
\label{eq7:sim}
\end{equation}
We fix $a$ in the first instance.  We use the fact that the volume fraction $\epsilon$ is constant, to compute 
\Acomment{
\begin{eqnarray*}
\epsilon&=&\frac{1}{|\Omega|}\iiint_{\Omega}(4\pi/3)r^3 P(r)\mathd V,\\
        &=&\frac{1}{|\Omega|}\iiint_{\Omega}(4\pi/3)r^3 p(r)\,\mathd r\,\mathd\omega,\\
                &=&\tfrac{1}{3}|\Omega|^{-1}(4\pi)^2\int_0^{R_{\mathrm{max}}} r^3 p(r)\mathd r.
\end{eqnarray*}
}
Here, $R_\mathrm{max}$ is a notional cutoff, with $R_{\mathrm{max}}\rightarrow\infty$ along with $|\Omega|\rightarrow\infty$, in such a way that $\epsilon$ remains finite.
Hence,
\[
\epsilon=\tfrac{1}{3}|\Omega|^{-1}(4\pi)^2 c^4 t^{a+4b}\int_0^{x_{\mathrm{max}}} x^3 f(x)\mathd x,
\]
where again, $x_\mathrm{max}\rightarrow\infty$ is a notional cut--off, chosen such that $\epsilon$ remains finite as $|\Omega|\rightarrow\infty$.
Thus, in order for $\epsilon$ to remain constant, it is required that $a=-4b$.
We now substitute the similarity solution~\eqref{eq7:sim} into Equation~\eqref{eq7:fp}.  After manipulations, we obtain:
\begin{equation}
-\tfrac{1}{3}c^3\left[3f(x)+ f'(x)x\right]+\frac{\partial}{\partial x}\left[\left(-\frac{1}{x^2}+\frac{\hat{u}}{x}\right)f\right]=0,\qquad
\hat{u}=\frac{\int_0^\infty f(x)\mathd x}{\int_0^\infty xf(x)\mathd x}.
\label{eq7:ls_sim}
\end{equation}
Following convention, we write $(1/3)c^3=\gamma_L$, to give:
\begin{equation}
-\gamma_L\left[3f(x)+ f'(x)x\right]+\frac{\partial}{\partial x}\left[\left(-\frac{1}{x^2}+\frac{\hat{u}}{x}\right)f\right]=0,\qquad
\hat{u}=\frac{\int_0^\infty f(x)\mathd x}{\int_0^\infty xf(x)\mathd x}.
\label{eq7:ls_sim1}
\end{equation}
This can then be integrated to give~\cite{bray2002theory}:
\begin{equation}
\ln [f(x)]=\int^x \frac{\mathd y}{y}\frac{2-y-3\gamma_L y^3}{\gamma_L y^3-y+1}.
\label{eq7:ls_sim2}
\end{equation}
Equation~\eqref{eq7:ls_sim1} gives a family of potential solutions, all parametrized by $\gamma_L$.  
The equation is also potentially without a normalizable solution with $f(x)\rightarrow 0$ as $x\rightarrow \infty$.  
These problems are solved by imposing two conditions on Equation~\eqref{eq7:ls_sim2}:
\begin{itemize}
\item The solution $f(x)$ should have compact support;
\item The value $\gamma_L=4/27$ must be selected.
\end{itemize}
The rationale for the second condition is related to the fixed points of Equation~\eqref{eq7:ls_sim} and was determined by Lifshitz and Slyzov~\cite{lifshitz1961kinetics} and summarized by Bray~\cite{bray2002theory}.
\Acomment{As such, the following general solution for $f(x)$ is found in for $D \geq 3$ dimensions (the results presented following this expression are for $D = 3$ to limit the solution to that of spheres as discussed above), by integration of Equation~\eqref{eq7:ls_sim2}:}
\begin{equation}
f(x)=\begin{cases} \text{Const.}\times x^2(3+x)^{-1-4D/9}\left(\tfrac{3}{2}-x\right)^{-2-5D/9}\exp\left(-\frac{D}{3-2x}\right),& 0\leq x<(3/2),\\
0,&\text{otherwise}.
\end{cases}
\label{eq7:fx_final}
\end{equation}
In LSW theory, the expected mean radius is computed as follows:
\begin{equation}
\langle R\rangle 
= \frac{\int_0^\infty r p(r)\,\mathd r}{\int_0^\infty p(r)\,\mathd r}
= c t^{1/3}\frac{\int_0^\infty x f(x)\,\mathd x}{\int_0^\infty f(x)\,\mathd x}
=(3\gamma_L t)^{1/3},
\label{eq7:R_av}
\end{equation}
where the last equation follows since $c=(3\gamma_L)^{1/3}$ and since the distribution $f(x)$ in Equation~\eqref{eq7:fx_final} has the property
$\left[\int x f(x)\mathd x\right]/\left[\int f(x)\mathd x\right]=1$.  
We note also that
\begin{equation}
\overline{u}=1/\langle R\rangle, \text{hence }\overline{u}=(3\gamma_L t)^{-1/3}.
\label{eq7:u_av}
\end{equation}


\subsection{LSW theory -- Droplet growth rates}


As a point of departure, we explore not only the distribution of droplet radii in Ostwald Ripening, but also, the distribution of droplet growth rates.
For this purpose, we introduce
\begin{equation}
\alpha_i= \frac{t}{R_i}\frac{\mathd R_i}{\mathd t}= \frac{t}{R_i}\left(-\frac{1}{R_i^2}+\frac{\overline{u}}{R_i}\right).
\label{eq7:alphai_def}
\end{equation}
Thus, either $\alpha_i\rightarrow 0$ as $t\rightarrow \infty$ (for evaporating drops), or $\alpha_i\rightarrow 1/3$ as $t\rightarrow \infty$ (for growing drops).  We now want to characterize the distribution of the $\alpha_i$'s, over an ensemble of droplets, which we denote by $p(\alpha,t)$:
\[
p(\alpha,t)=\text{Number of droplets with exponent between }\alpha\text{ and }\alpha+\mathd\alpha.
\]  
Formally, we can calculate $p(\alpha,t)$ using the standard change-of-variable formula for probability theory:
\[
p(\alpha)=p(r(\alpha),t)\left|\frac{\partial r}{\partial \alpha}\right|,
\]
where $\alpha$ and $r$ are connected by
\[
\alpha=\frac{t}{r}\left(-\frac{1}{r^2}+\frac{\overline{u}}{r}\right).
\]
As such, we have the following formal identity (the $t$-dependence is  suppressed for now):
%
%
\begin{equation}
p(\alpha)=p(r(\alpha))\left|\frac{\partial r}{\partial \alpha}\right|
         =p(r(\alpha))\left|\frac{\partial \alpha}{\partial r}\right|^{-1}.
\label{eq2:change}
\end{equation}
It is not straightforward to implement the substitutions in Equation~\eqref{eq2:change}, because $\alpha$ is not a monotone function of $r$ (e.g. Figure~\ref{fig7:r1} on the entire range of the function $\alpha(r)$).  
\Acomment{
Indeed, the derivative of $\alpha(r)$ changes sign at $r=(3/2)\overline{u}^{-1}$, i.e. $\partial \alpha/\partial r=0$ at $r=(3/2)\langle R\rangle$.  
}
Before solving this problem, we remark that the existence of the local maximum $\partial \alpha/\partial r=0$ gives rise to the following useful results:
\begin{proposition}
There is a maximum coarsening rate
\begin{equation}
\alpha_{max}=\tfrac{1}{3}\left(\frac{4}{9}\frac{t}{\langle R\rangle^3}\right),
\label{eq7:alpha_max1}
\end{equation}
where $\langle R\rangle$ is computed via Equation~\eqref{eq7:R_av}.
\end{proposition}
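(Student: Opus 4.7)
The plan is to treat $\alpha$ as a function of the single variable $r$ (with $\overline{u}$ and $t$ fixed), locate its interior maximum by elementary calculus, and then substitute back to obtain the stated expression. The text has already observed that $\partial\alpha/\partial r$ changes sign at $r=(3/2)\overline{u}^{-1}=(3/2)\langle R\rangle$, so the bulk of the work is simply to verify this critical point analytically and evaluate $\alpha$ there.

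First, I would rewrite Equation~\eqref{eq7:alphai_def} as
\begin{equation*}
\alpha(r) = t\left(-\frac{1}{r^3} + \frac{\overline{u}}{r^2}\right),
\end{equation*}
which is smooth for $r>0$. Differentiating once with respect to $r$ gives
\begin{equation*}
\frac{\partial\alpha}{\partial r} = \frac{t}{r^4}\bigl(3 - 2\overline{u}\,r\bigr),
\end{equation*}
so the unique positive critical point is $r_\ast = 3/(2\overline{u})$. A sign check on the bracket $(3-2\overline{u} r)$ confirms this is a maximum (positive to the left, negative to the right). Using $\overline{u} = 1/\langle R\rangle$ from Equation~\eqref{eq7:u_av}, the critical point becomes $r_\ast = \tfrac{3}{2}\langle R\rangle$, exactly as flagged in the preceding discussion.

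Next I would substitute $r_\ast$ back into $\alpha(r)$. A direct computation yields
\begin{equation*}
\alpha(r_\ast) = t\,\overline{u}^{\,3}\!\left(-\frac{8}{27}+\frac{4}{9}\right) = \frac{4}{27}\,t\,\overline{u}^{\,3} = \frac{4}{27}\,\frac{t}{\langle R\rangle^{3}},
\end{equation*}
which is precisely $\tfrac{1}{3}\bigl(\tfrac{4}{9}\,t/\langle R\rangle^{3}\bigr)$, establishing Equation~\eqref{eq7:alpha_max1}.

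There is no real obstacle here — the argument is one line of calculus. The only subtlety worth remarking on explicitly is that $\overline{u}$ and $\langle R\rangle$ depend on $t$ through the mean-field closure~\eqref{eq7:u_av}, so the maximum is taken over $r$ at a fixed instant $t$, not jointly over $(r,t)$; this should be stated clearly so that the reader understands $\alpha_{\max}$ is itself a function of $t$ (indeed, using $\langle R\rangle\propto t^{1/3}$, one sees that $\alpha_{\max}$ is independent of $t$ in the self-similar regime, a fact that will likely be exploited immediately after the proposition).
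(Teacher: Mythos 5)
Your proof is correct and is essentially the paper's argument: the paper likewise identifies the critical point $r=(3/2)\overline{u}^{-1}=(3/2)\langle R\rangle$ from the sign change of $\partial\alpha/\partial r$ and then obtains $\alpha_{\max}=\tfrac{4}{27}\,t/\langle R\rangle^{3}$ by direct substitution. You merely make explicit the one-line derivative computation that the paper leaves as a remark, and your closing observation about the $t$-dependence is exactly what the paper's second proposition goes on to exploit.
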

The proof of this statement by straightforward substitution of $r=(3/2)\langle R\rangle$ into the equation for $\alpha(r)$.  
Furthermore,
\begin{proposition}
In the regime where the self-similar distribution function~\eqref{eq7:fx_final} applies, the maximum coarsening rate simplifies:
\[
\alpha_{max}=1/3.
\]
\end{proposition}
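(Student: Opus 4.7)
The plan is to plug the LSW similarity result for $\langle R\rangle$ directly into the expression~\eqref{eq7:alpha_max1} for $\alpha_{\max}$ from the preceding proposition and check that the explicit time-dependence cancels. The work of the previous proposition has already located the critical point $r_\ast = (3/2)\langle R\rangle$ at which $\partial\alpha/\partial r = 0$ and evaluated $\alpha(r_\ast)$, so nothing further from the variational argument is needed here.

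First I would recall from Equation~\eqref{eq7:R_av} that under the self-similar distribution $f(x)$ in~\eqref{eq7:fx_final} one has
\[
\langle R\rangle^3 = 3\gamma_L\, t,
\]
which is the whole content of LSW needed for this step. The second (and only other) ingredient is the Lifshitz--Slyozov selection rule $\gamma_L = 4/27$, justified earlier in the excerpt by the fixed-point analysis of Equation~\eqref{eq7:ls_sim1}.

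Substituting these two facts into~\eqref{eq7:alpha_max1} gives
\[
\alpha_{\max}=\tfrac{1}{3}\left(\frac{4}{9}\cdot\frac{t}{3\gamma_L\, t}\right)=\tfrac{1}{3}\cdot\frac{4}{9}\cdot\frac{1}{3(4/27)}=\tfrac{1}{3},
\]
which is the claim. The $t$ drops out precisely because the LSW self-similar scaling $\langle R\rangle\propto t^{1/3}$ forces $t/\langle R\rangle^3$ to be a pure constant, and the numerical coefficient $\gamma_L = 4/27$ is exactly what is needed to cancel the prefactor $4/9$ arising in~\eqref{eq7:alpha_max1}.

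There is essentially no obstacle here; the proposition is a one-line corollary of the two earlier results, and the only care required is to make clear that the LSW regime is invoked twice, once to justify replacing $\langle R\rangle^3$ by $3\gamma_L t$ (so that $\alpha_{\max}$ becomes time-independent) and once to fix the admissible value of $\gamma_L$ (so that the resulting constant equals $1/3$). It is worth flagging that the same calculation also tells us $\alpha_{\max}$ coincides with the late-time growth exponent of any surviving drop under LSW, consistent with the remark following~\eqref{eq7:alphai_def}.
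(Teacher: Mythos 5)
Your proposal is correct and is essentially the paper's own argument: the paper likewise proves this by substituting $\langle R\rangle=(4t/9)^{1/3}$ (i.e.\ $\langle R\rangle^3=3\gamma_L t$ with $\gamma_L=4/27$) into Equation~\eqref{eq7:alpha_max1} and observing that the $t$-dependence cancels. Your version merely makes the two uses of the LSW regime explicit, which is a harmless elaboration.
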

This can be shown by direct computation, specifically by substituting $\langle R\rangle=(4t/9)^{1/3}$ into Equation~\eqref{eq7:alpha_max1}.

Having now established the existence of the maximum coarsening rate, it follows that that $\alpha(r)$ is non-monotonic, and hence, the formal change-of-variables law~\eqref{eq2:change} needs clarification.  
Therefore, to calculate $p(\alpha)$ properly, we refer to Figure~\ref{fig7:r1}. 
\begin{figure}
    \centering
        \includegraphics[width=0.6\textwidth]{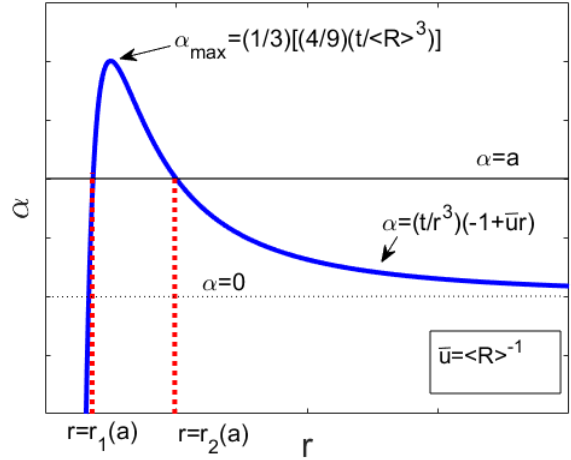}
        \caption{Definition sketch for the change of variable $\alpha=(t/r^3)(-1+\overline{u}r)$.}
    \label{fig7:r1}
\end{figure}
We look at a definite fixed value of $\alpha$, denoted by $a$.  
For $a>0$, we read off the definitions of $r_1(a)$ and $r_2(a)$ from the figure.  
We look at the cumulative probability function for $\alpha$,
\begin{eqnarray*}
F_\alpha(a)&=&\mathbb{P}(\alpha \leq a),\\
           &=&\mathbb{P}(r\leq r_1(a))+\mathbb{P}(r\geq r_2(a)),\\
                     &=&F_r(r_1(a))+\left[1-F_r(r_2(a))\right].
\end{eqnarray*}
We differentiate to compute the probability distribution function:
\begin{eqnarray*}
p_\alpha(a)&=&\frac{\mathd F_\alpha(a)}{\mathd a},\\
           &=&\frac{\partial F_r}{\partial r}\bigg|_{r_1(a)}\frac{\mathd r_1}{\mathd a}
                    -\frac{\partial F_r}{\partial r}\bigg|_{r_2(a)}\frac{\mathd r_2}{\mathd a},\\
                     &=&p_r(r_1(a))\frac{\mathd r_1}{\mathd a}-
                    p_r(r_2(a))\frac{\mathd r_2}{\mathd a},\\
                     &=&p_r(r_1(a))\frac{\mathd r_1}{\mathd a}+
                    p_r(r_2(a))\left|\frac{\mathd r_2}{\mathd a}\right|.
\end{eqnarray*}
Thus, the distribution of exponents $\alpha$ is established for $a>0$:
\begin{equation}
p_\alpha(a)=p_r(r_1(a))\left|\frac{\mathd r_1}{\mathd a}\right|+
                    p_r(r_2(a))\left|\frac{\mathd r_2}{\mathd a}\right|,\qquad a>0,
\label{eq2:change_of_var_final}
\end{equation}
where the first instance of $\left|\cdot\right|$ is added just to make the formula appear more symmetric.  
Referring back to Figure~\ref{fig7:r1}, at $a=0$, the two roots $r_1(a)$ and $r_2(a)$ coincide, and for $a<0$ only one root (denoted by $r_1(a)$ survives).  
As such, the following final form of $p_\alpha$ applies,
\begin{equation}
p_\alpha(a,t)=\begin{cases}p_r(r_1(a),t)\left|\frac{\mathd r_1}{\mathd a}\right|+
                    p_r(r_2(a),t)\left|\frac{\mathd r_2}{\mathd a}\right|,& a>0,\\
                     p_r(r_1(a),t)\left|\frac{\mathd r_1}{\mathd a}\right|,&a\leq 0.
                    \end{cases}
\label{eq2:change_of_var_final_final}
\end{equation}
where we have restored the time-dependence of the distributions.

In the case where $p_r(r,t)$ satisfies the LSW distribution~\eqref{eq7:fx_final}, it is possible to compute the corresponding growth-rate distribution $p_\alpha(a,t)$.  
This  is shown in Figure~\ref{fig7:p_alpha}.  
\Acomment{It is verified that in this instance, the distribution of growth rates is time-independent.}
\begin{figure}
    \centering
        \includegraphics[width=0.6\textwidth]{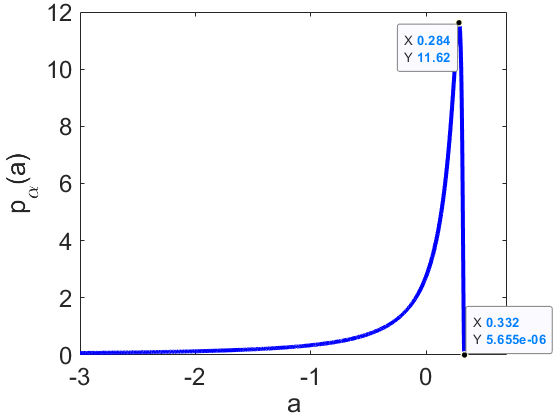}
        \caption{The PDF $p_\alpha(a)$ (with a slight abuse of notation on the axis labels in the figure).  The distribution is time-independent so the notation $p_\alpha(a,t)$ can be replaced with $p_\alpha(a)$.}
    \label{fig7:p_alpha}
\end{figure}
The distribution $p_\alpha(a)$ goes to zero at $a=1/3$, \Acomment{corresponding to the fact that this is the maximum possible growth rate of a bubble in the system.}
Otherwise, the distribution is strongly peaked at $a\approx 0.284$, corresponding to the mode or the most probable coarsening rate in the problem.  
Notably, the distribution is strongly skewed to the left, with a long tail of negative growth rates extending to $a=-\infty$.  
The negative growth rates correspond to the evaporating droplets.  
The maximum growth rate corresponds to a winner-takes-all scenario where a single droplet is growing to the maximal extent possible, at the expense of all other droplets in the system.  
As such, the distribution in Figure~\ref{fig7:p_alpha} makes sense physically.


\subsection{System energy}
\label{sec7:systemEnergy}
We further introduce an energy function to characterize the dynamics of Equation~\eqref{eq7:dRi}.  
Implicit in that equation is the presence of a surface-tension coefficient which is set to $1/2$.  
As such, the energy of a single droplet corresponds to its surface energy, and is given by $(1/2)\times (4\pi R_i^2)$.  
The total surface energy is obtained by summation over all droplets.  
\Acomment{
However, the proper surface energy contains the constraint which enforces $\sum_{i=1}^N (4\pi /3)R_i^3=V_0$, where $V_0$ is constant.}  
Thus, the surface energy reads:
\begin{equation}
F= \sum_{i=1}^N \tfrac{1}{2} R_i^2-\lambda\left(\sum_{i=1}^3 \tfrac{1}{3} R_i^3-\frac{V_0}{4\pi}\right),
\end{equation}
where $\lambda$ is the possibly time-dependent Lagrange multiplier which enforces the constancy of $\sum_{i=1}^3 (4\pi/3)R_i^3$, and where we have omitted an overall factor of $4\pi$ in the definition of $F$ -- this is done for convenience. 
\begin{equation}
\frac{\mathd F}{\mathd t}=\sum_{i=1}^N \left(R_i-\lambda R_i^2\right)\dot R_i+(\mathd \lambda/\mathd t)\left(\sum_{i=1}^3 \tfrac{1}{3} R_i^3-\frac{V_0}{4\pi}\right)
\end{equation}
The last term proportional to $\mathd \lambda/\mathd t$ vanishes on enforcing the constraint on $\sum_{i=1}^3 (4\pi/3)R_i^3$.  
Thus,
\begin{equation}
\frac{\mathd F}{\mathd t}
\stackrel{\text{Eq.~\eqref{eq7:dRi}}}{=}\sum_{i=1}^N H(R_i)\left(R_i-\lambda R_i^2\right)\left(-\frac{1}{R_i^2}+\frac{\overline{u}}{R_i}\right)
\stackrel{\lambda=\overline{u}}{=}-\sum_{i=1}^N \frac{H(R_i)}{R_i}\left(1-\overline{u}R_i\right)^2,
\label{eq7:dFdt}
\end{equation}
hence $\mathd F/\mathd t\leq 0$.
Here, the equation $\lambda=\overline{u}$ can be made, since $\lambda$ and $\overline{u}$ are associated with the same constraint.  
Using the identity $\lambda=\overline{u}$, we can write   $\partial F/\partial R_i=R_i-\overline{u} R_i^2$, and hence, from Equation~\eqref{eq7:dRi},

\begin{equation}
\frac{\mathd R_i}{\mathd t}=-m(R_i)\frac{\partial F}{\partial R_i},\qquad m(R_i)=H(R_i)R_i^3.
\label{eq7:dRi_m}
\end{equation}

Thus, the dynamics of the droplets take the form of a gradient flow, with mobility $m(R_i)=H(R_i)R_i^3$.  Furthermore, we can therefore write

\begin{equation}
\frac{\mathd F}{\mathd t}=-\sum_{i=1}^N m(R_i)\left(\frac{\partial F}{\partial R_i}\right)^2,
\label{eq7:dFdt1}
\end{equation}
which makes the relation $\mathd F/\mathd t\leq 0$ more manifest.
In analogy to the growth rate $\alpha_i$ (Equation~\eqref{eq7:alphai_def}) for individual droplets, we introduce an energy decay rate, applicable to the entire system of $N$ droplets:

\begin{equation}
\beta_N=-\frac{t}{F}\frac{\mathd F}{\mathd t}.
\label{eq7:betaN}
\end{equation}

In Section~\ref{sec7:drop_pop}, the system of equations~\eqref{eq7:dRi}--\eqref{eq7:uval} will be solved repeatedly for a fixed number of droplets $N$; each simulation will have different random initial conditions.  
In this way, a probability distribution function $p_\beta(b,t)$ will be constructed, such that $p_\beta(b,t)$ is the probability that a given simulation will produce a decay rate $\beta_N$ in the range from $b$ to $b+\mathd b$, at time $t$.  
The probability distribution function will be explored numerically.  
The distribution of $\beta_N$ can however already be extracted straightforwardly in the LSW limit with $N\rightarrow\infty$:

\begin{proposition}
At late times, $p_\beta(b,t)\rightarrow \delta(b-(1/3))$, for the LSW limit with $N\rightarrow \infty$ and $\epsilon$ small but finite.
\end{proposition}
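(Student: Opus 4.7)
My plan is to compute $\beta_N$ directly in the LSW limit by exploiting the self-similar solution of Equation~\eqref{eq7:fp}, and then argue that all statistical fluctuations vanish in this limit. First, I would pass to the continuum description: in the limit $N\rightarrow\infty$ with $\epsilon$ fixed and small, the discrete free energy from Section~\ref{sec7:systemEnergy} goes over to
\begin{equation}
F(t)=\tfrac{1}{2}\int_0^\infty r^2\, p(r,t)\,\mathd r,
\end{equation}
up to an overall multiplicative constant from the omitted factor of $4\pi$ (which will not affect $\beta$, since it cancels in the ratio $(t/F)(\mathd F/\mathd t)$).

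Next, I would substitute the self-similar ansatz $p(r,t)=t^{a}f(x)$ with $x=r/(ct^b)$, using the values $b=1/3$ and $a=-4b=-4/3$ established in the derivation leading to Equation~\eqref{eq7:ls_sim1}, together with the compactly-supported profile $f(x)$ from Equation~\eqref{eq7:fx_final}. A change of variables $r\mapsto x$ yields
\begin{equation}
F(t)=\tfrac{1}{2}c^{3}\,t^{-1/3}\int_0^{3/2} x^2 f(x)\,\mathd x \;=\; A\, t^{-1/3},
\end{equation}
where $A>0$ is a finite, time-independent constant because $f(x)$ has compact support on $[0,3/2]$. From this expression it is immediate that
\begin{equation}
\frac{\mathd F}{\mathd t}=-\tfrac{1}{3}A\, t^{-4/3}, \qquad
\beta_\infty \;=\; -\frac{t}{F}\frac{\mathd F}{\mathd t}\;=\;\tfrac{1}{3},
\end{equation}
exactly. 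Thus in the LSW limit the decay exponent is not a random variable at all but takes the deterministic value $1/3$ at late times.

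Finally, to promote this to the claimed distributional statement, I would invoke the fact that the self-similar solution~\eqref{eq7:fx_final} is the unique attractor for Equation~\eqref{eq7:fp} in the $N\rightarrow\infty$ limit (this is the classical LSW selection result already used in the excerpt to fix $\gamma_L=4/27$). Consequently, for almost every realization of the initial droplet configuration, the rescaled empirical distribution converges to the same deterministic $f(x)$, and so $\beta_N\rightarrow 1/3$ almost surely as $N\rightarrow\infty$ and $t\rightarrow\infty$. Almost-sure convergence to a constant implies convergence in distribution to a Dirac mass, which gives $p_\beta(b,t)\rightarrow\delta(b-\tfrac{1}{3})$ as required. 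The main obstacle I foresee is this last step: the LSW uniqueness/attractor property is assumed rather than proved in the text, and a fully rigorous argument would need to quantify the rate at which an initially random, finite-$N$ ensemble concentrates on the self-similar profile; for the level of the present statement, however, it suffices to cite this as an established feature of LSW theory and combine it with the explicit computation of $F(t)$ above.
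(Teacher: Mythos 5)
Your proposal is correct and follows essentially the same route as the paper: compute $F=\tfrac{1}{2}\int_0^\infty r^2 p(r,t)\,\mathd r$, insert the similarity solution with $a=-4b$ and $b=1/3$ to obtain $F\propto t^{-1/3}$, and conclude $\beta=1/3$ so that the distribution collapses to $\delta(b-\tfrac{1}{3})$. Your closing remarks about the uniqueness of the self-similar attractor and almost-sure convergence make explicit a step the paper leaves implicit, but the substance of the argument is identical.
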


\begin{proof}
Once the volume-constraint $\sum_{i=1}^N (4\pi/3)R_i^3=V_0$ has been implemented, the energy is just $F=(1/2)\sum_{i=1}^N R_i^2$. 
In the LSW limit, this can be computed explicitly, via Equation~\eqref{eq7:sim}

\begin{equation}
F=
\tfrac{1}{2}\int_0^\infty r^2 p(r,t)\mathd r.
\label{eq7:F_expected}
\end{equation}

We therefore have:
\[
F=\tfrac{1}{2}c^3 t^{3b+a}\int_0^\infty x^2 f(x)\mathd x,\qquad \text{as }t\rightarrow\infty.
\]
The late-time limit is required here as the LSW theory is valid only asymptotically, as $t\rightarrow\infty$.  
We also use $a=-4b$, hence
\[
F=\tfrac{1}{2}c^3 t^{-b}\int_0^\infty x^2 f(x)\mathd x.
\]
We now use $b=1/3$ to conclude that $F\propto t^{-1/3}$, and hence, $\beta=-(t/F)(\mathd F/\mathd t)=1/3$.  
Thus, $\beta$ takes only a single value in the LSW theory, hence $p_\beta(b,t)=\delta(b-(1/3))$ as $t\rightarrow\infty$.
\end{proof}


\section{Methodology}
\label{sec7:methodology}
For the purposes of producing the statistical results required in this work we rely on two methods, parallelisation of batches of solutions of 1D coupled ODEs in MATLAB and parallelisation of batches of 2D Cahn--Hilliard equations using a GPU.

\subsection{ODE Methodology}
We begin by discussing the implementation of the ODEs for the LSW scaling theory, we are interested in solving the following set of ODEs:
\begin{subequations}
\begin{equation}
\frac{\mathd R_i}{\mathd t}=H(R_i)\left(-\frac{1}{R_i^2}+\frac{\overline{u}}{R_i}\right),\qquad i=1,\cdots,N,\qquad t>0,
\label{eq7:dRi1}
\end{equation}
where $H(\cdot)$ is the Heaviside step function, and where $\overline{u}$ is a constraint which forces $\sum_{i=1}^N (4/3)R_i^3=\mathrm{Const.}$, hence $\sum_{i=1}^N R_i^2(\mathd R_i/\mathd t)=0$, hence finally,
\begin{equation}
\overline{u}=\frac{\sum_{i=1}^N H(R_i)}{\sum_{i=1}^N H(R_i)R_i}.
\end{equation}
Equation~\eqref{eq7:dRi1} is solved with the initial condition
\begin{equation}
R_i(t=0)=r_i,
\end{equation}%
\label{eq7:dRi1_model}%
\end{subequations}
where $r_i$ is a random variable drawn from a uniform distribution between $0$ and $1$.  
The use of the Heaviside step function also allows us to regularise the ODE, in the numerical scheme we implement the step function as $H(R_i + \epsilon)$ where we take $\epsilon = 1 \times 10^{-3}$.
This regularisation correctly reduces the radius of any small bubble to $0$ and allows us to deal with the coordinate singularity present in equation~\eqref{eq7:dRi1}.
The system is solved using MATLAB's \Acomment{standard ode45 solver}, the systems are solved in parallel with one system per CPU core. 
A batch $100$ of independent systems for varying sizes of $N$ is then used to extract a $\beta_{i,N}(t)$ from each simulation where $i$ denotes a simulation index and $\beta_N$ is given by equation~\eqref{eq7:betaN}.
Exact simulation details are results are given in Section~\ref{sec7:drop_pop}.

\subsection{2D Cahn--Hilliard Methodology}
For the studies of the Cahn--Hilliard equation we solve~\eqref{eq2:ch} on a uniform periodic $2D$ grid using a finite difference scheme with an ADI method for time-stepping, specifically with the hyperdiffusion operator solved implicitly~\cite{gloster2019custen, gloster2019cupentbatch, gloster2019efficient}, details can also be found in Chapter~\ref{chapter:cuSten} of this thesis.
The initial conditions are randomised using a uniform distribution with an average depending on the desired mixture, symmetric and asymmetric, of the simulations, a representative value of $\gamma = 0.01$ is used in the simulations along with a final simulation time of $T = 100$, the time--steps are of size $0.1 \Delta x$ to ensure stability.
This numerical scheme is then implemented on a GPU using CUDA and we take $\Delta x = 2 \pi / 256$ to ensure $\Delta x < 2 \pi \sqrt{\gamma}$ to resolve the bubble interfaces.

Batches of 2D simulations are then run using a CUDA MPS server which allows multiple simulations, each with its own process, to be run on a given GPU at the same time.
From these simulations we then recover the necessary statistics such that a statistical picture of the growth rate $\beta = -(t/N)(\mathd F/\mathd t)$ can be built up.
For the case of the Cahn--Hilliard equation the free energy necessary to study this system is given by equation~\eqref{eq2:dFdt_ch}, we will extract the necessary time derivative numerically.
In each case we produce a batch of $1024$ simulations for domains of size $2\pi \times 2\pi$, $4\pi \times 4\pi$, $8\pi \times 8\pi$ and $16\pi \times 16\pi$.
The results for asymmetric mixtures are presented in Section~\ref{sec7:CH}, for symmetric mixtures in Section~\ref{sec7:CH_symm} and finally the Cahn--Hilliard--Cooke equation in Section~\ref{sec7:cooke}.
In all cases we limit the statistical windows from these simulations to $10 < t < 100$ as we are interested in the period of the simulation covering late stage coarsening of the domain but before the final steady state appears, all histograms presented will have had their areas normalised to $1$.
We also only sample for values of $\beta < 1.0$, this is to remove statistical outliers caused by the merging of separate bubbles or interconnected regions, further discussion of these outliers in the context of the simulation results and literature is given in the following Sub-Section~\ref{sec7:outliers}.


\begin{figure}
    \centering
\begin{subfigure}[b]{0.49\textwidth}
    \centering
        \includegraphics[width=1.0\textwidth]{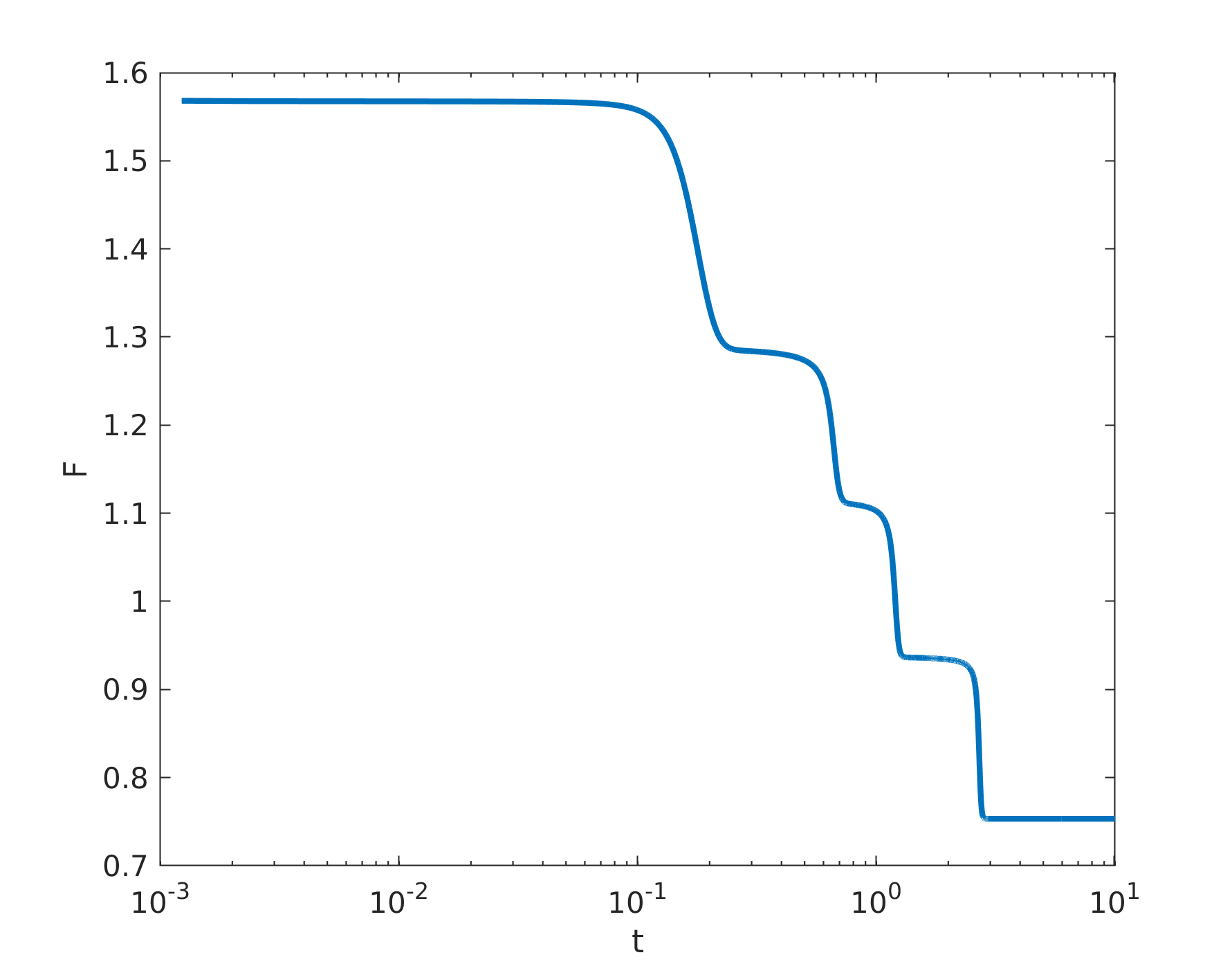}
        \caption{Free energy $F$}
    \label{fig7:1Dcoarsening}
\end{subfigure}
    \hfill
\begin{subfigure}[b]{0.49\textwidth}
    \centering
        \includegraphics[width=1.0\textwidth]{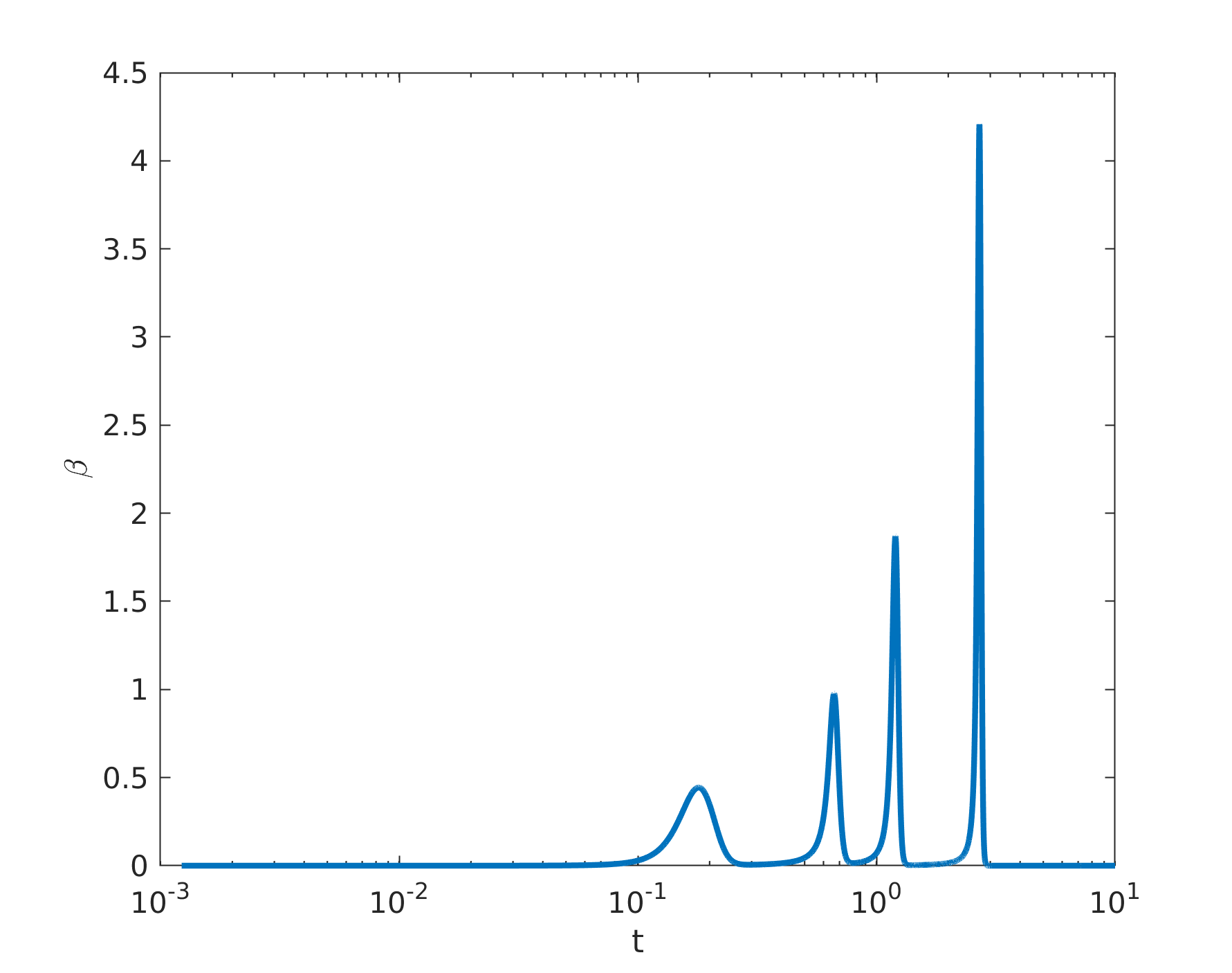}
        \caption{Growth rate $\beta$}
    \label{fig7:beta1D}
\end{subfigure}
        \caption{Plots of (a) the free energy of the 1D Cahn--Hilliard equation and (b) the corresponding value of $\beta$.}
    \label{fig7:dynamics1D}
\end{figure}

\subsection{Merging Regions and Statistical Outliers}
\label{sec7:outliers}
We begin this discussion by briefly examining the evolution of $F$ in a 1D Cahn--Hilliard system, an example of which can be seen in Figure~\ref{fig7:1Dcoarsening}.
It can be seen that a steady state of dynamics exists for intervals of time broken by rapid drops in the value of the free energy, these drops correspond to the sudden merging of two regions to form one larger region.
Plotting the corresponding values of $\beta$ for this data in Figure~\ref{fig7:beta1D} we see corresponding peaks, far greater in amplitude than $1/3$.
Thus when two regions merge the value of $\beta$ spikes and forms a value that can be considered an outlier.

The exact same feature occurs in 2D simulations where two bubbles or previously unconnected regions suddenly join and merge to form larger bubbles/regions.
\Acomment{We can see this behaviour in Figure~\ref{fig7:dynamics2D} where we have extracted one simulation from the asymmetric mixture Cahn--Hilliard equation $2\pi \times 2\pi$ batch.}
Thus we're correct to omit large outliers from our statistics discussed throughout this chapter, the theory from LSW and our new stochastic model account for steady growth with no merging bubbles/regions yet we now have a clear physical understanding as to why they occur.

\begin{figure}
    \centering
\begin{subfigure}[b]{0.49\textwidth}
    \centering
        \includegraphics[width=1.0\textwidth]{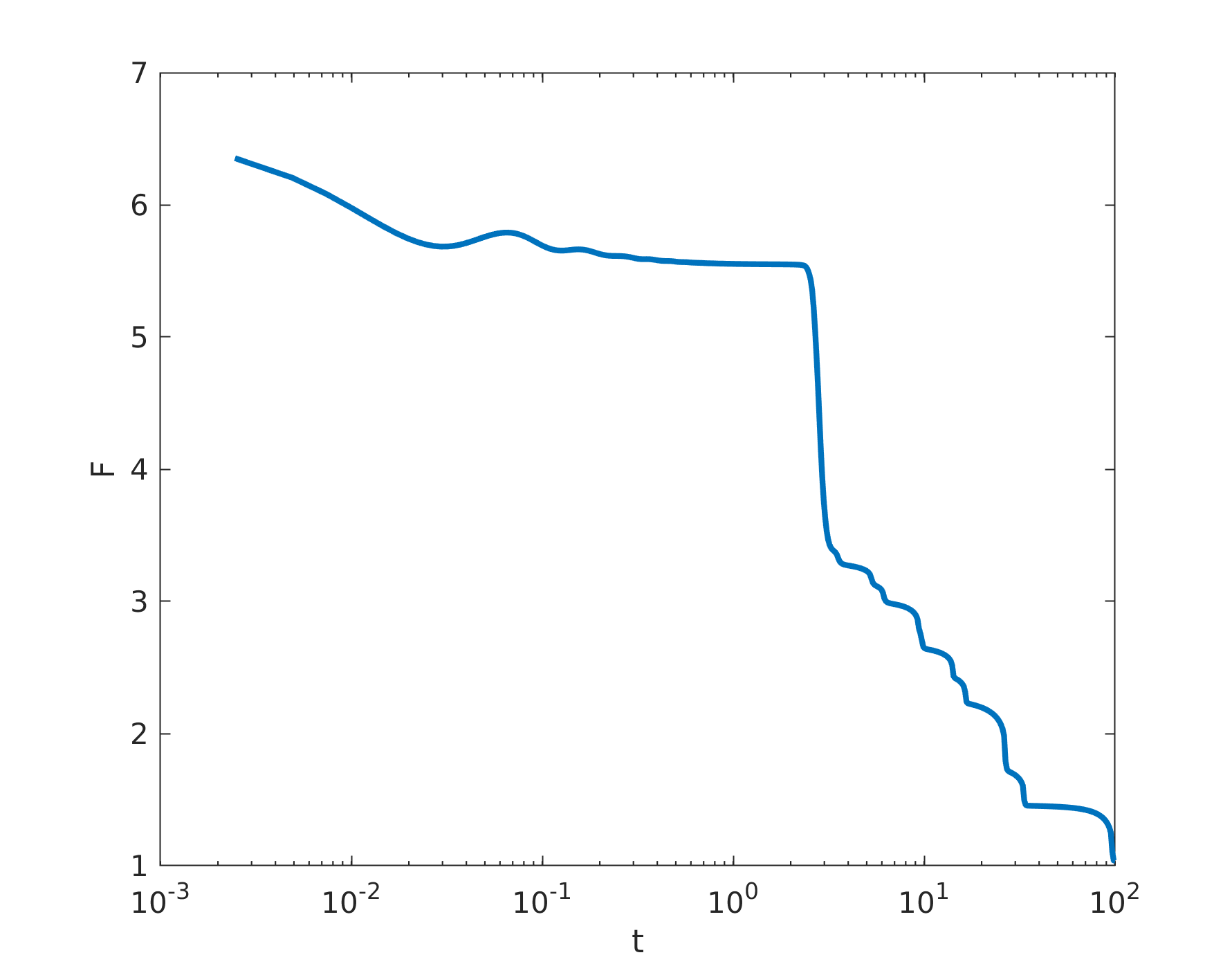}
        \caption{Free energy $F$}
    \label{fig7:2DFreeEng}
\end{subfigure}
    \hfill
\begin{subfigure}[b]{0.49\textwidth}
    \centering
        \includegraphics[width=1.0\textwidth]{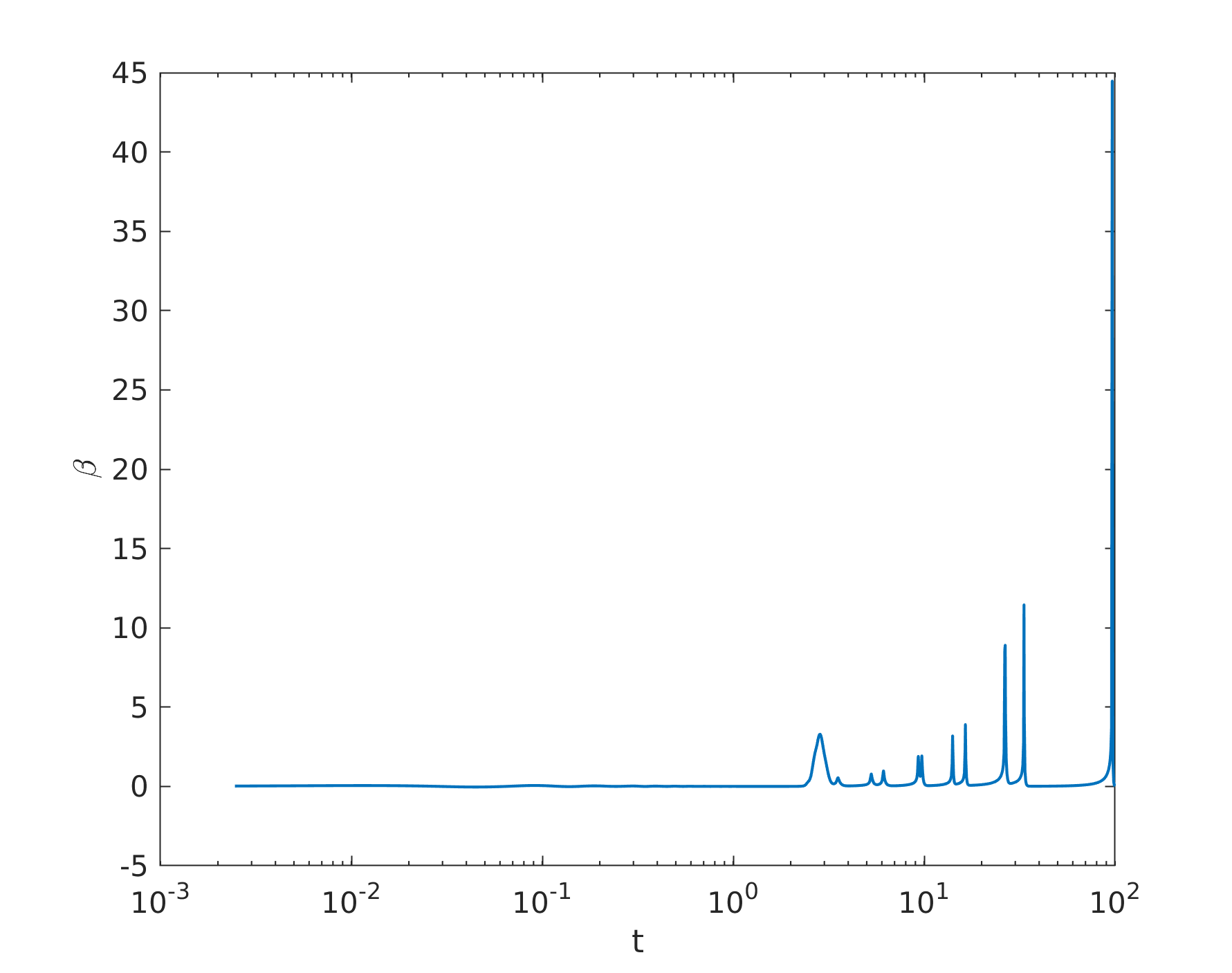}
        \caption{Growth rate $\beta$}
    \label{fig7:2Dbeta}
\end{subfigure}
        \caption{Plots of (a) the free energy of the 2D Cahn--Hilliard equation with asymmetric mixture and (b) the corresponding value of $\beta$.}
    \label{fig7:dynamics2D}
\end{figure}


\begin{figure}
    \centering
\begin{subfigure}[b]{0.49\textwidth}
    \centering
        \includegraphics[width=1.0\textwidth]{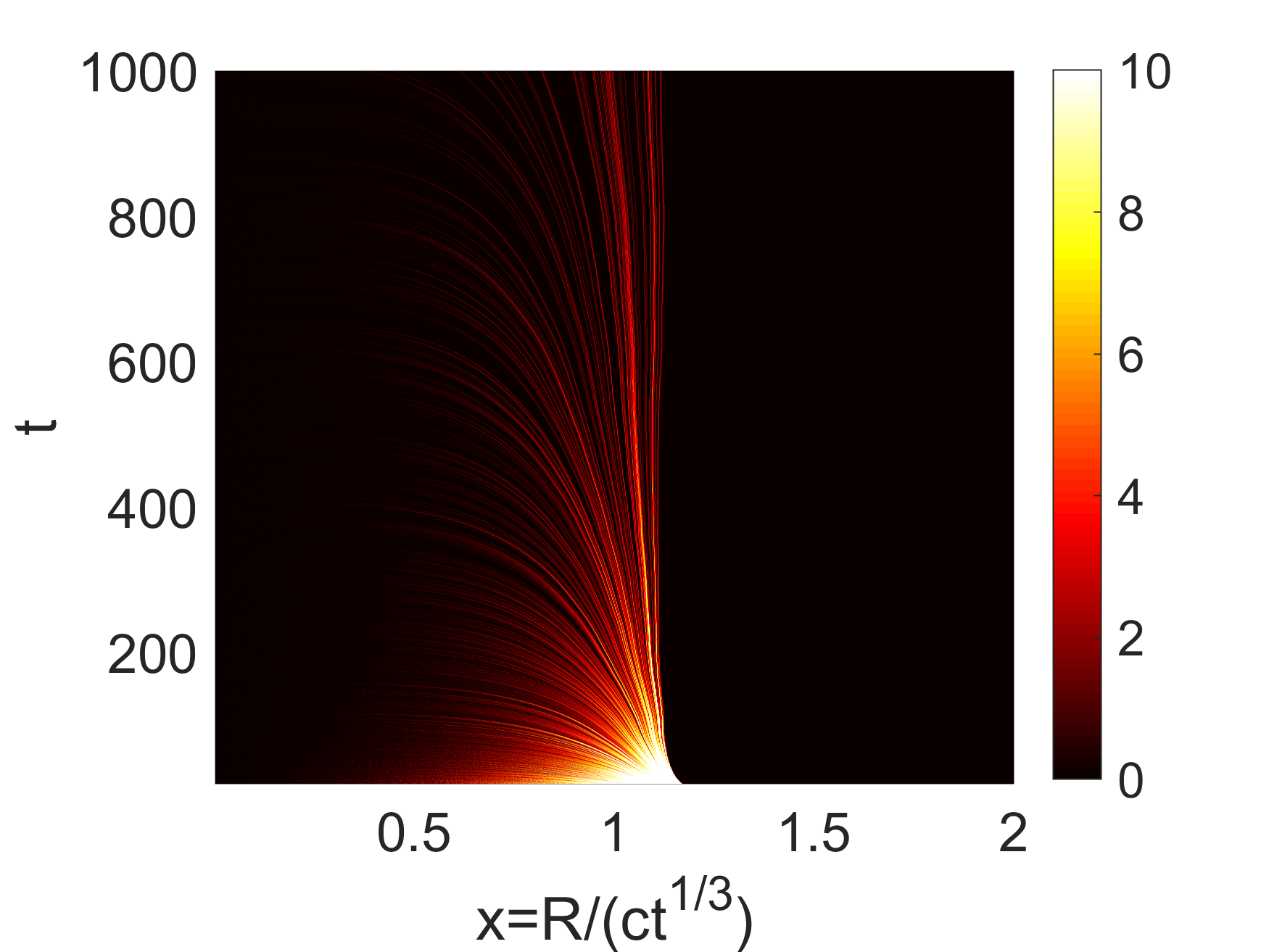}
        \caption{\Acomment{Distribution of bubble--radii with time.}}
    \label{fig7:histogramR}
\end{subfigure}
    \hfill
\begin{subfigure}[b]{0.49\textwidth}
    \centering
        \includegraphics[width=1.0\textwidth]{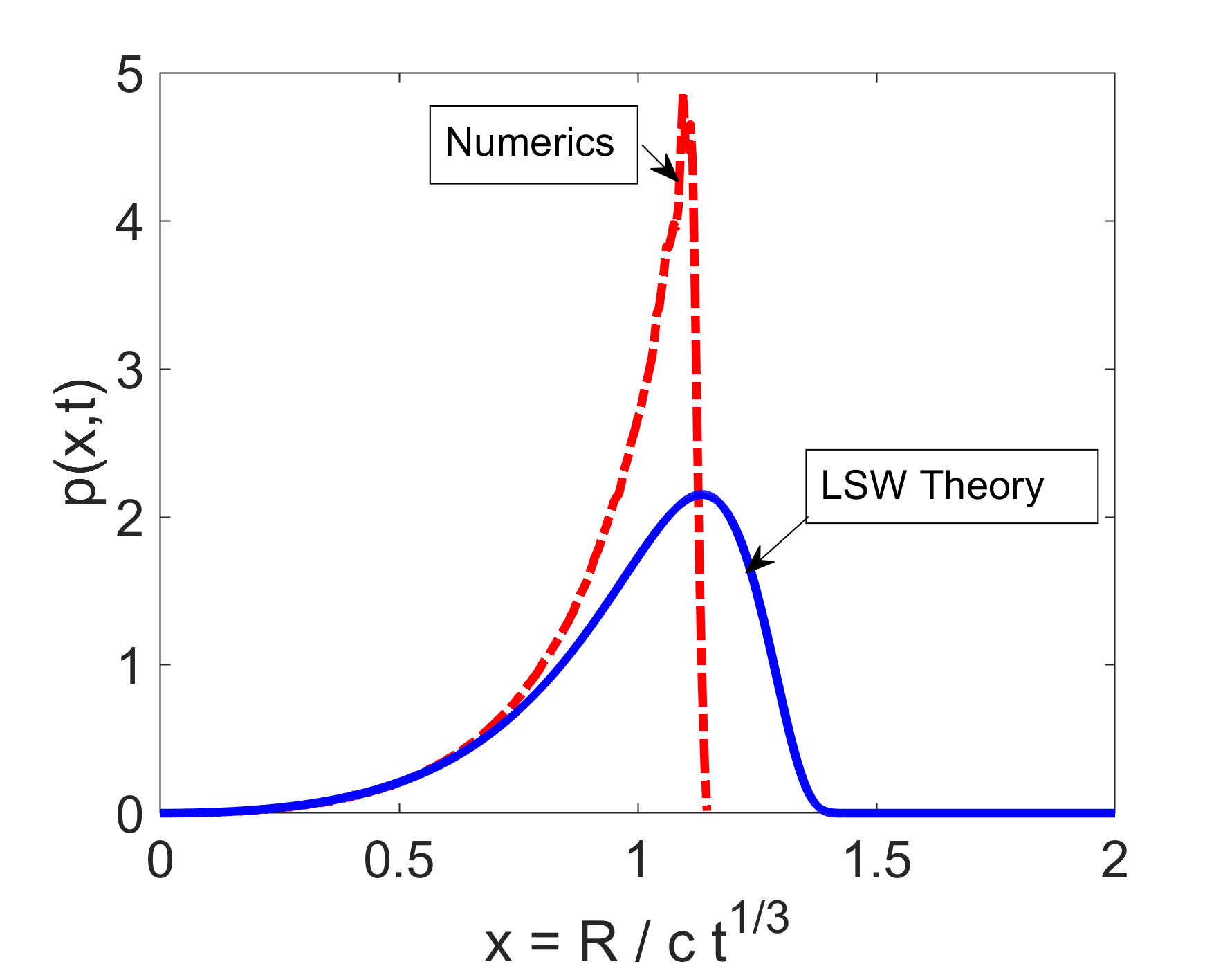}
        \caption{$N = 100,000$ vs. Theory.}
    \label{fig7:bubbleRadVsTheory}
\end{subfigure}
    \vskip\baselineskip
\begin{subfigure}[b]{0.49\textwidth}
    \centering
        \includegraphics[width=1.0\textwidth]{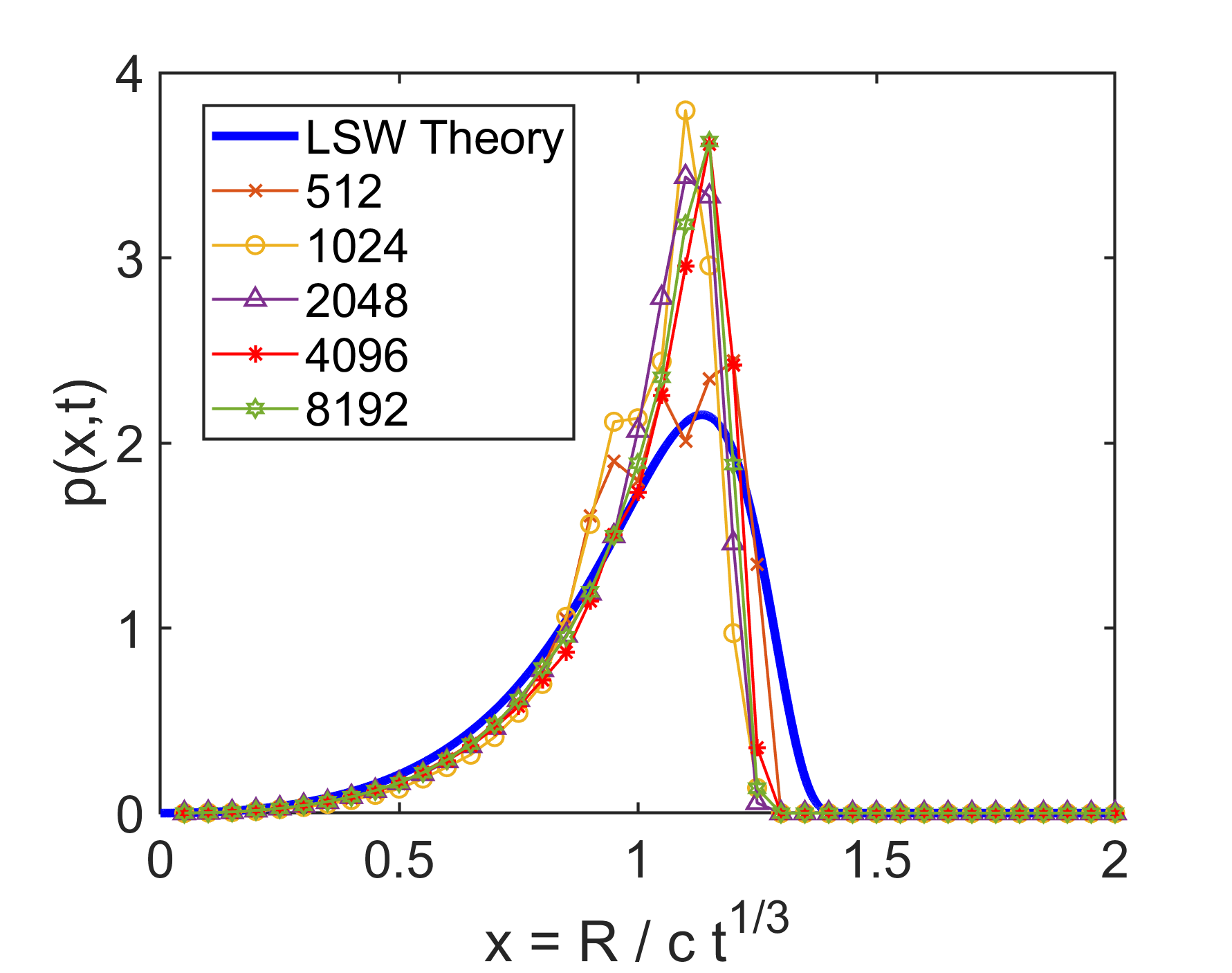}
        \caption{$x$ for various $N$ vs. Theory.}
    \label{fig7:radConverge}
\end{subfigure}
        \caption{Plots examining the distribution of bubble radii in time and comparisons with theory, in both cases the distribution have been normalised to have an area of $1$: (a) \Acomment{Plot of distribution of bubble radii in the modified coordinate system $x = R / c t^{1/3}$ as a function of time} (b) Cumulative histogram of the late time--steps, the presence of the effect of the finite number of bubbles is clearly seen with the truncation of the distribution before the possible maximum and the good match on the lower tail for small bubbles unaffected by finite size effects. (c) The truncation of large bubble sizes is clearer in this plot, we can see convergence of the tail for increasing $N$ with a peak falling before the theoretical prediction for the largest value of $x$.}
    \label{fig7:bubbleRad}
\end{figure}

\section{Droplet population model -- numerical results}
\label{sec7:drop_pop}
We begin with a direct comparison of the droplet radius distribution from the simulations to the theory given by equation~\eqref{eq7:fx_final}, the results for this discussion can be seen in Figure~\ref{fig7:bubbleRad}.
We present a \Acomment{distribution--time} plot of the distribution of the quantity $x = R(t) / (c t^{1/3})$ using data recovered from simulation of equation~\eqref{eq7:dRi1_model} in Figure~\ref{fig7:histogramR}.
In this case we have set $N = 100,000$ for the initial bubble count and simulated the ODE up to \Acomment{$T = 1000$.  
We} present a snapshot of this simulation and omit initial times as we want to examine only late stage bubble growth while equally we omit very late times in order to not skew our statistics by regions where there is a low number of remaining bubbles. 
We can see a clear movement towards a steady state distribution.
\Acomment{Due} to the finite number of the bubble of the domain we can also see that the very largest of bubbles which should have a size of $x \approx 1.5$ have not formed.

Taking this \Acomment{distribution--time} plot and plotting a cumulative histogram of late stage time--steps, in this case we take $40 < t < 200$, allows us to compare directly with the bubble distribution from equation~\eqref{eq7:fx_final}, this can be seen in Figure~\ref{fig7:bubbleRadVsTheory}.
In this plot the effects of the finite number of the bubbles is clearer, with the maximum bubble \Acomment{size cut} off long before the theoretical maximum value of $x$.
A higher peak at lower values of $x$ also accounts for this.
There is a good match for small bubble radii (small $x$) between theory and numerics, this is expected as larger bubbles are only formed by consumption of smaller bubbles in the domain and there is only a finite number of smaller bubbles available to produce the large bubbles.
\Acomment{Smaller} bubbles themselves and their subsequent distribution are not \Acomment{affected} by this behaviour.
This behaviour is similarly seen in Figure~\ref{fig7:radConverge} where good convergence to the left tail can be seen with increasing $N$ yet we see the histogram falling short of the possible maximum $x$ predicted by the theory.

It is of interest to look into the lack of agreement between the numerics and the LSW theory in Figure~\ref{fig7:bubbleRadVsTheory}.  
The number of droplets $N$ present initially in the simulation can be ruled out as the cause of the disagreement: the dependence of the cumulative histogram on $N$ is shown in Figure~\ref{fig7:radConverge}, there is little or no difference between all of the considered $N$ values.
Therefore, the cause of the disagreement in Figure~\ref{fig7:bubbleRadVsTheory} can be attributed to the shape of the initial drop--size distribution: the initial drop--size distribution is compactly supported (the uniform distribution with initial radii between $0$ and $1$); however, this distribution is not smooth at the points where it touches down to zero.  
\Acomment{
In addition, further criteria must be satisfied, these are known as the weak selection rules~\cite{giron1998weak}, and these are not satisfied by our chosen initial distribution. 
Hence convergence to the LSW statistics is not guaranteed as the late time behaviour may become non-self-similar~\cite{niethammer1999non}.
An overview of this behaviour can also be found in Reference~\cite{mielke2006analysis}.
It can be emphasized that in other works on droplets (e.g. Reference~\cite{yao1993theory}), the initial drop--size distribution was carefully selected such that late-time convergence to LSW statistics was obtained.  
From these results, the convergence to the LSW statistics is demonstrated not to be robust.} 

\begin{figure}
    \centering
\begin{subfigure}[b]{0.49\textwidth}
    \centering
        \includegraphics[width=1.0\textwidth]{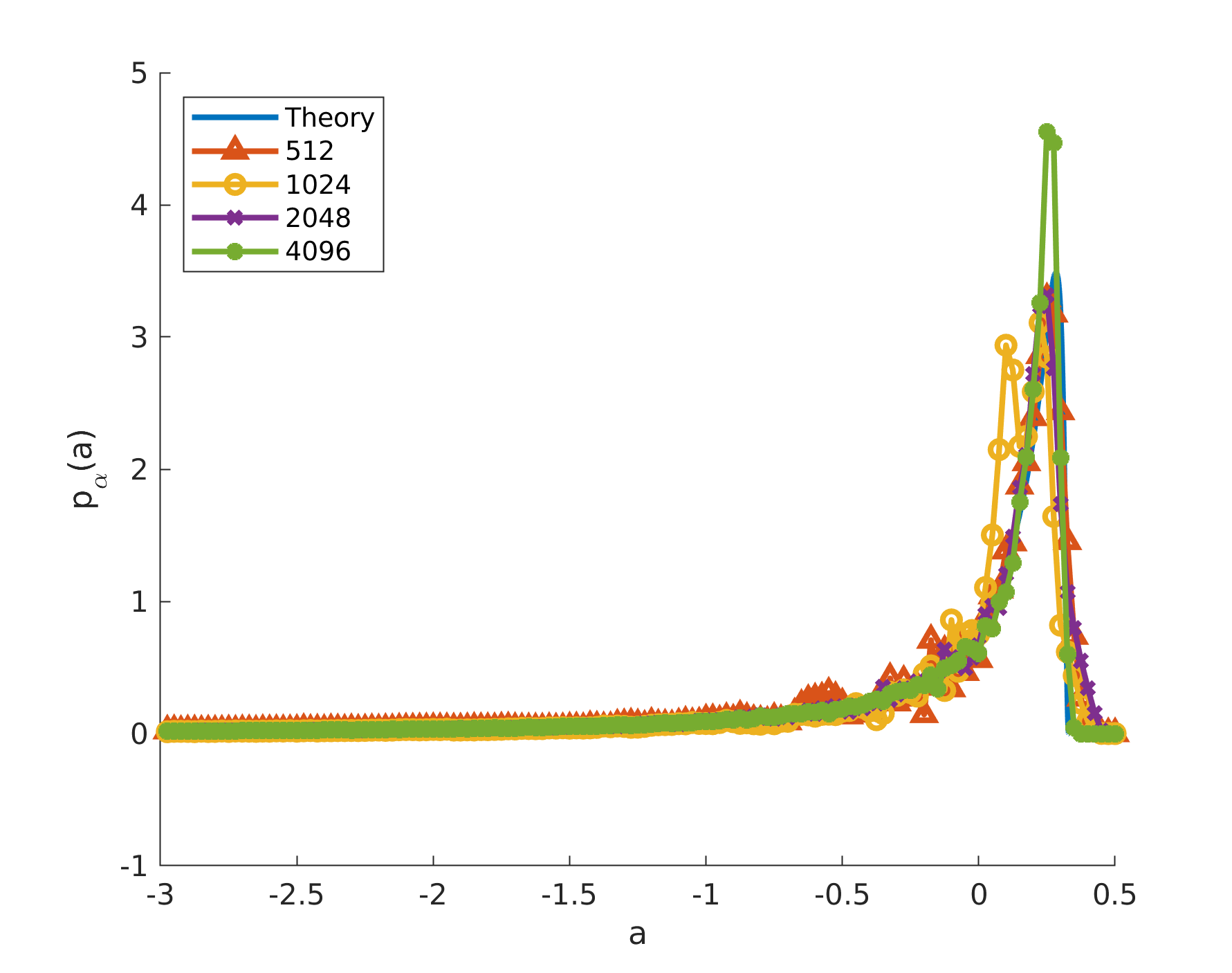}
        \caption{Convergence to theory.}
    \label{fig7:alphaConverge}
\end{subfigure}
    \hfill
\begin{subfigure}[b]{0.49\textwidth}
    \centering
        \includegraphics[width=1.0\textwidth]{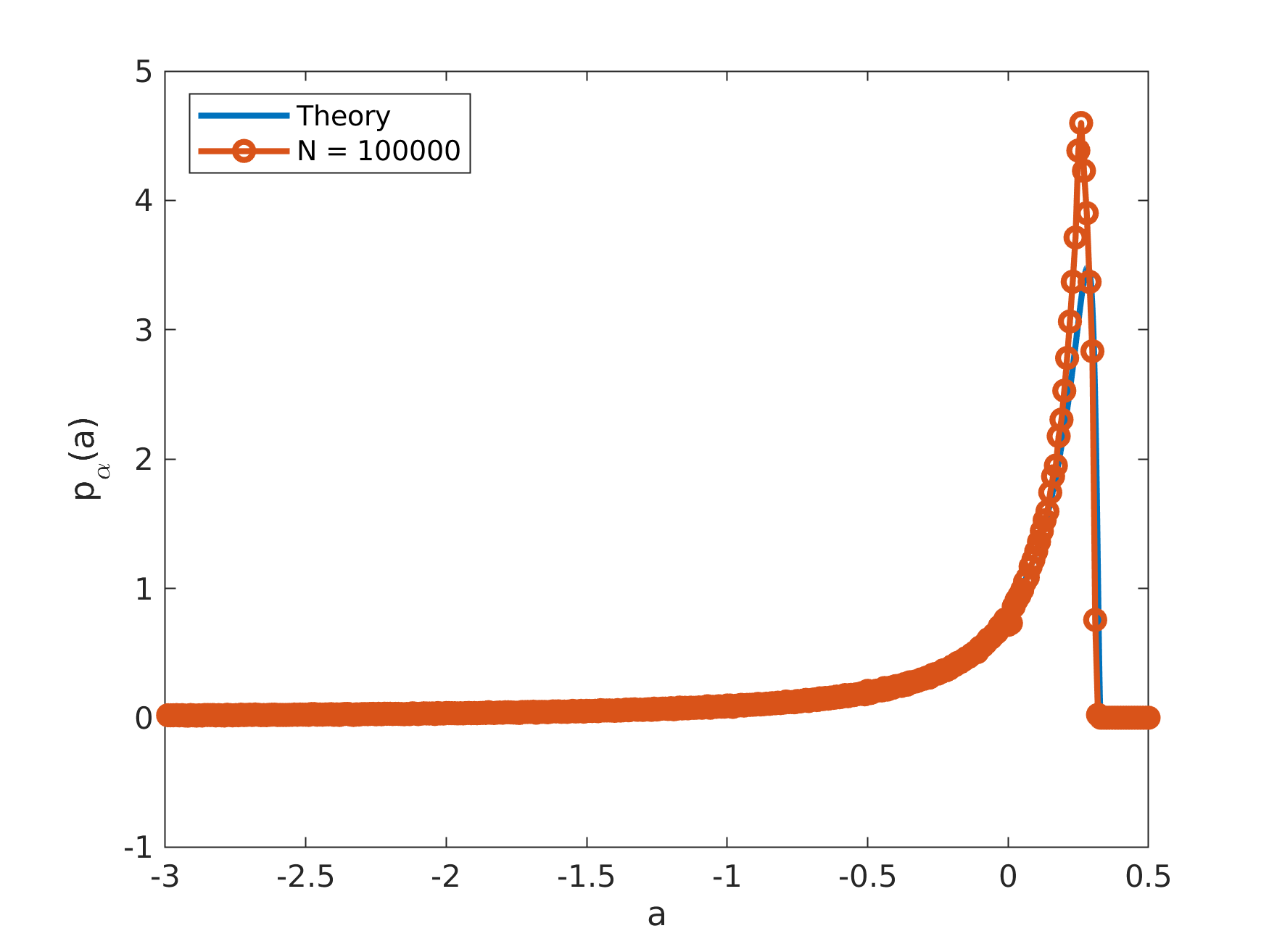}
        \caption{$N = 100,000$ vs. Theory.}
    \label{fig7:n100000}
\end{subfigure}
        \caption{\Acomment{Simulations of domains with different initial values of $N$ and showing how the distributions of the bubble growth rates in each domain do not converge to the theory as $N$ increases due to the choice of initial condition. All distributions normalised to have area of $1$: (a) Plot showing how the numerics become better with increasing $N$, the lack of matching around the peak is a key indication of the finite number of bubbles in the domain as there is not an infinite supply of bubbles. (b) A comparison between a relatively large $N = 100,000$ with theory, a clear improvement can be seen but still there is an overshoot at the peak. This can be attributed to the initial choice of the distribution of droplet radii and its lack of adherence to the weak selection criteria.}}
    \label{fig7:alphaTheory}
\end{figure}

Focusing now the convergence of the numerics, with increasing $N$, to the distribution of droplet growth rates $\alpha$ given in equation~\eqref{eq2:change_of_var_final_final}.
We use equation~\eqref{eq7:alphai_def} to recover $\alpha$ for each bubble in a given system and then extract the distribution of $\alpha$ in these systems by taking a histogram over the time range $10 < t < 40$.
In Figure~\ref{fig7:alphaConverge} we can see the convergence of the histograms to the theoretical distribution given by equation~\eqref{eq2:change_of_var_final_final} as $N$ increases.
The peaks of the distribution slowly converge to the theoretical peak and are still short of the maximum possible values, again due to the finite number of bubbles present in the ODE simulations.
As before we see a good match at smaller growth rates corresponding to the smaller droplets which are less affected by the finite number of droplets in the system.
For definiteness a plot comparing $\alpha$ for $N =100,000$ with theory can be seen in Figure~\ref{fig7:n100000}, a significantly better match can be seen between theory and numerics.
The match is again good for smaller growth rates, but we can see mismatches particularly at the peak.
\Acomment{We can attribute the mismatches seen here again to the choice of the initial drop--size distribution and its lack of conformity with the weak selection criteria.}

We now move to examine $\beta$. 
\Acomment{We have shown} earlier in Section~\ref{sec7:systemEnergy} in a system with infinite $N$ that $p_\beta(\beta,t)=\delta(\beta-(1/3))$ as $t\rightarrow\infty$, we compare this with numerical results for the distribution of $\beta$ in Figure~\ref{fig7:betaVaryN}.
\Acomment{A growth and narrowing of the distribution can be seen as $N$ gets larger appearing to converge slowly to the expected theoretical prediction of a $\delta$ distribution.}
The histograms here are collected across $100$ simulations for each value of $N$, as discussed in Section~\ref{sec7:methodology}, with a sampling window $1 < t < 40$, at larger times the bubble numbers again become too low for the statistics to be included over the range of simulated $N$.
Indeed at very large $t$ one bubble forms in the domain, and $\beta \rightarrow 0$ as there are no bubbles left to consume.
Slow convergence to the predicted $\delta(\beta-(1/3))$ distribution can be seen in Figure~\ref{fig7:betaMax} where the values for $\beta$ where the peaks occur in Figure~\ref{fig7:betaVaryN} are plotted as a function of $N$, even with $\log$ axes the convergence levels out with increasing $N$.
This points to extremely slow convergence to the desired $\delta$ distribution emphasising the effect the finite size of $N$ has on the distributions.

\begin{figure}
    \centering
\begin{subfigure}[b]{0.49\textwidth}
    \centering
        \includegraphics[width=1.0\textwidth]{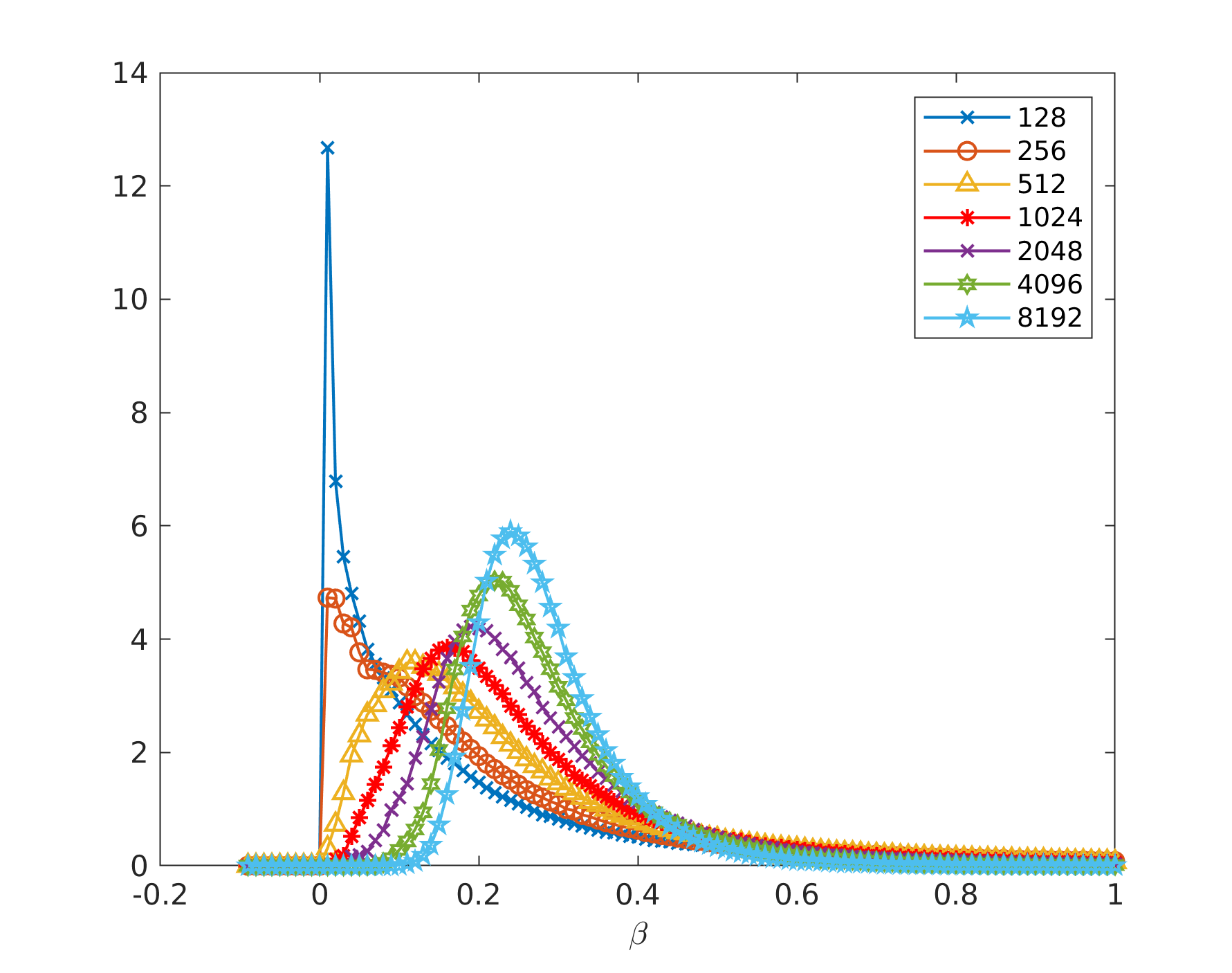}
        \caption{$\beta$ distribution with increasing $N$.}
    \label{fig7:betaVaryN}
\end{subfigure}
    \hfill
\begin{subfigure}[b]{0.49\textwidth}
    \centering
        \includegraphics[width=1.0\textwidth]{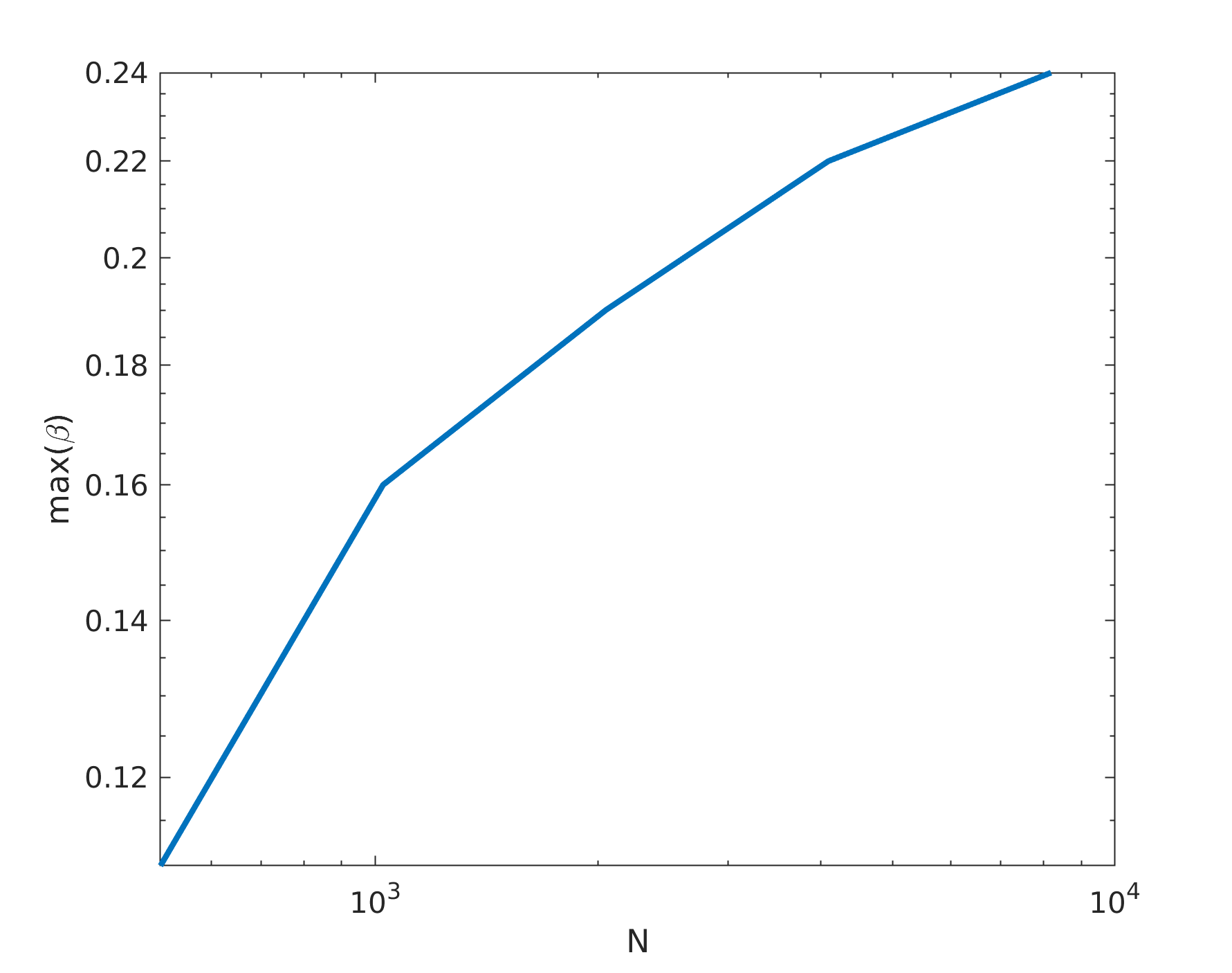}
        \caption{$\max(\beta)$ with increasing $N$.}
    \label{fig7:betaMax}
\end{subfigure}
        \caption{Plots showing how the distribution of $\beta$ develops with increasing $N$, the histograms have been normalised to have area $1$: (a) We can see in this plot that as $N$ increases that the distribution moves to the left and grows in height, in particular the smearing of the distribution due to finite bubble numbers can be seen. (b) Plot showing how the maximum point of the distribution develops with increasing $N$, the levelling out of the growth gives an indication of how slowly the distribution will converge to a $\delta$ function.}
    \label{fig7:betaConverge}
\end{figure}

In summary we have shown in this section that due to the finite number of bubbles in a numerical, and indeed physical, domain leads to a smearing out of the statistics predicted by LSW theory. 
In particular the radii of the bubbles do not grow to the largest size possible as seen in Figure~\ref{fig7:bubbleRad} due to the lack of available bubbles to consume.
The distributions of smaller bubbles match well with the theory, these are unaffected by finite size effects.
The growth rates for these bubbles also match well as shown in Figure~\ref{fig7:alphaTheory}.
Finally a slow convergence to the true growth rate of the free energy was shown in Figure~\ref{fig7:betaConverge}.

We will now focus in particular on the free energy growth rate as it is here we can make connections with the Cahn--Hilliard equation, it is a common quantity we can measure and compare to the LSW theory.
Instead of a finite number of bubbles though we will have a finite domain size.
The key feature of comparison will be how the finite size of the domain smears the distribution of $\beta$ away from a $\delta$ function into a strictly positive, skewed and statistically stationary distribution of possible values around a mean.
\Acomment{It should be emphasised that while the droplet dynamics we have discussed here for LSW theory are for a three dimensional system and the underlying connections to the Cahn--Hilliard equation are defined in three dimensions there exists an analogous quantitative theory for two dimensions~\cite{rogers1989numerical}. 
The outcome of that quantitative theory is again a self--similar distribution whose form is similar to that explored above.  
Therefore, the results we have discussed here for three dimensions will carry over in a suitable qualitative sense to asymmetric mixtures presented in the following section.}

We begin in the next section with asymmetric mixtures of the fluids in the Cahn--Hilliard equation which are of direct applicability as Ostwald Ripening occurs in these cases, in the section following this we then present a statistical model for symmetric mixtures and their distributions which can then be compared with simulation results.
In both cases we relate our findings back to those in this section showing how finite domain/number effects smear the theoretical distributions.


\begin{figure}
    \centering
    \begin{subfigure}[b]{0.49\textwidth}
        \centering
        \includegraphics[width=1.0\textwidth]{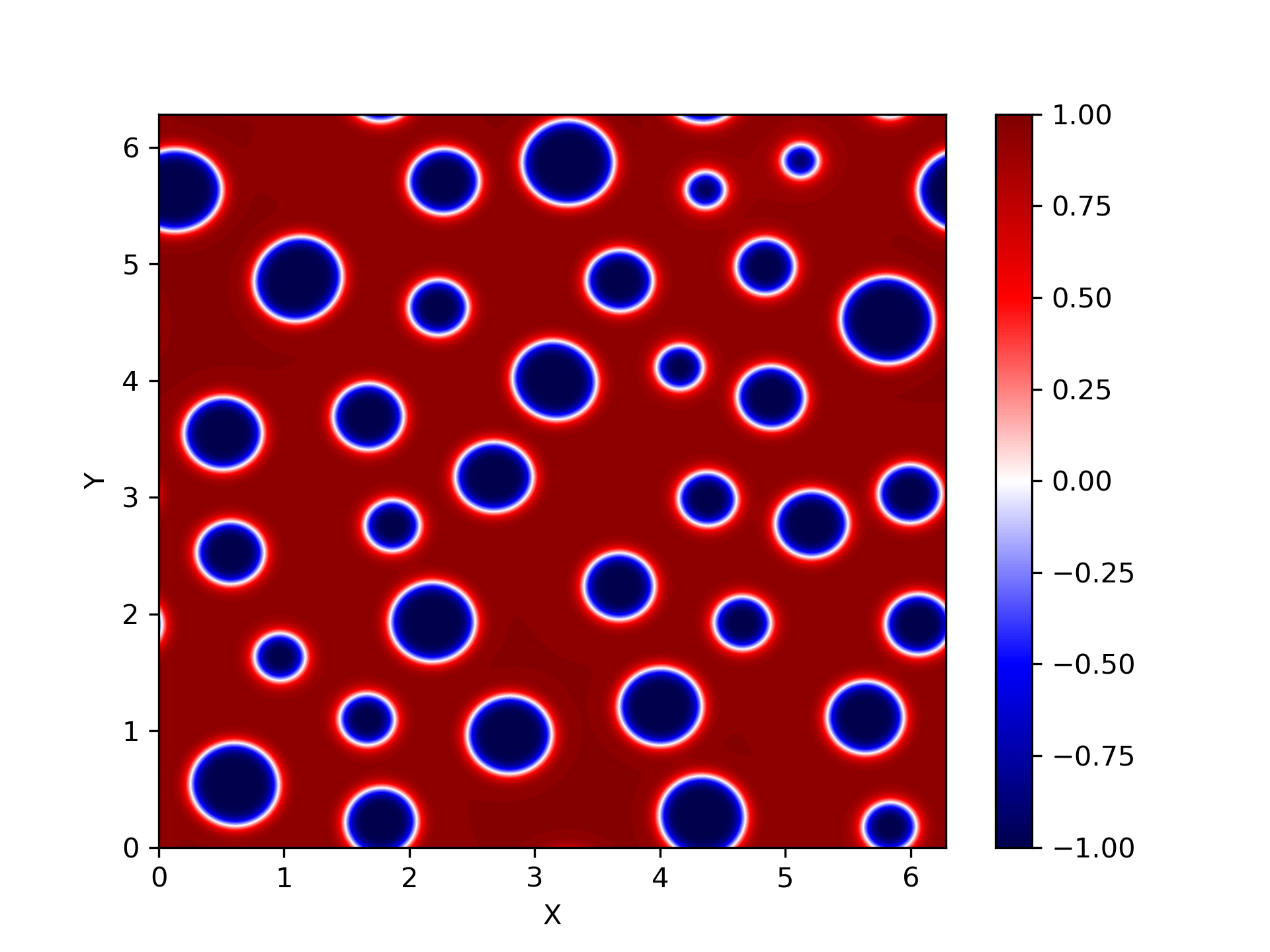}
    \caption{$t = 6.1359231515$}
    \label{fig7:contour24oswald}
    \end{subfigure}
    \hfill
    \begin{subfigure}[b]{0.49\textwidth}  
        \centering
        \includegraphics[width=1.0\textwidth]{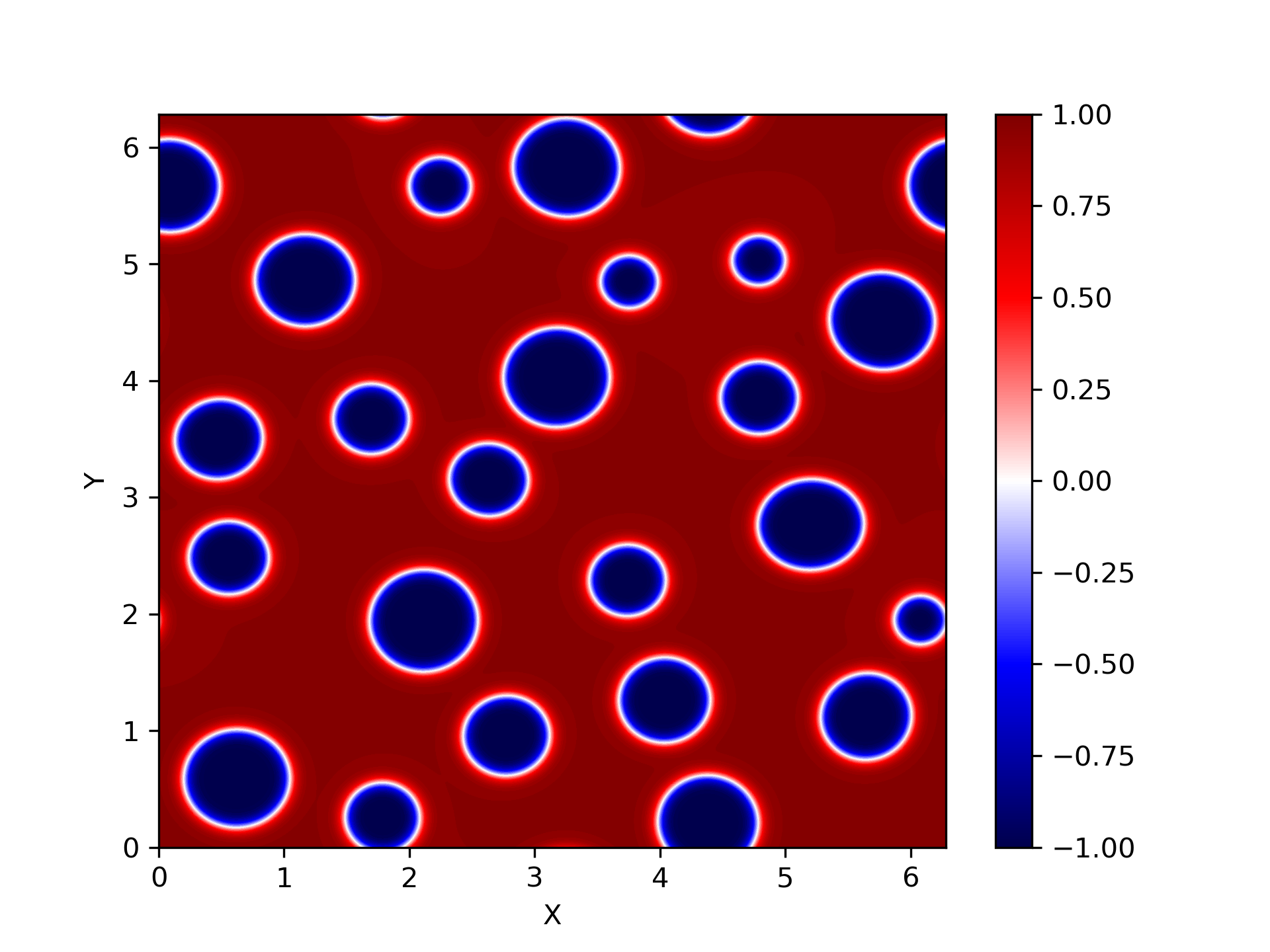}
    \caption{$t = 12.5172832291$}
    \label{fig7:contour50oswald}
    \end{subfigure}
    \vskip\baselineskip
    \begin{subfigure}[b]{0.49\textwidth}   
        \centering
        \includegraphics[width=1.0\textwidth]{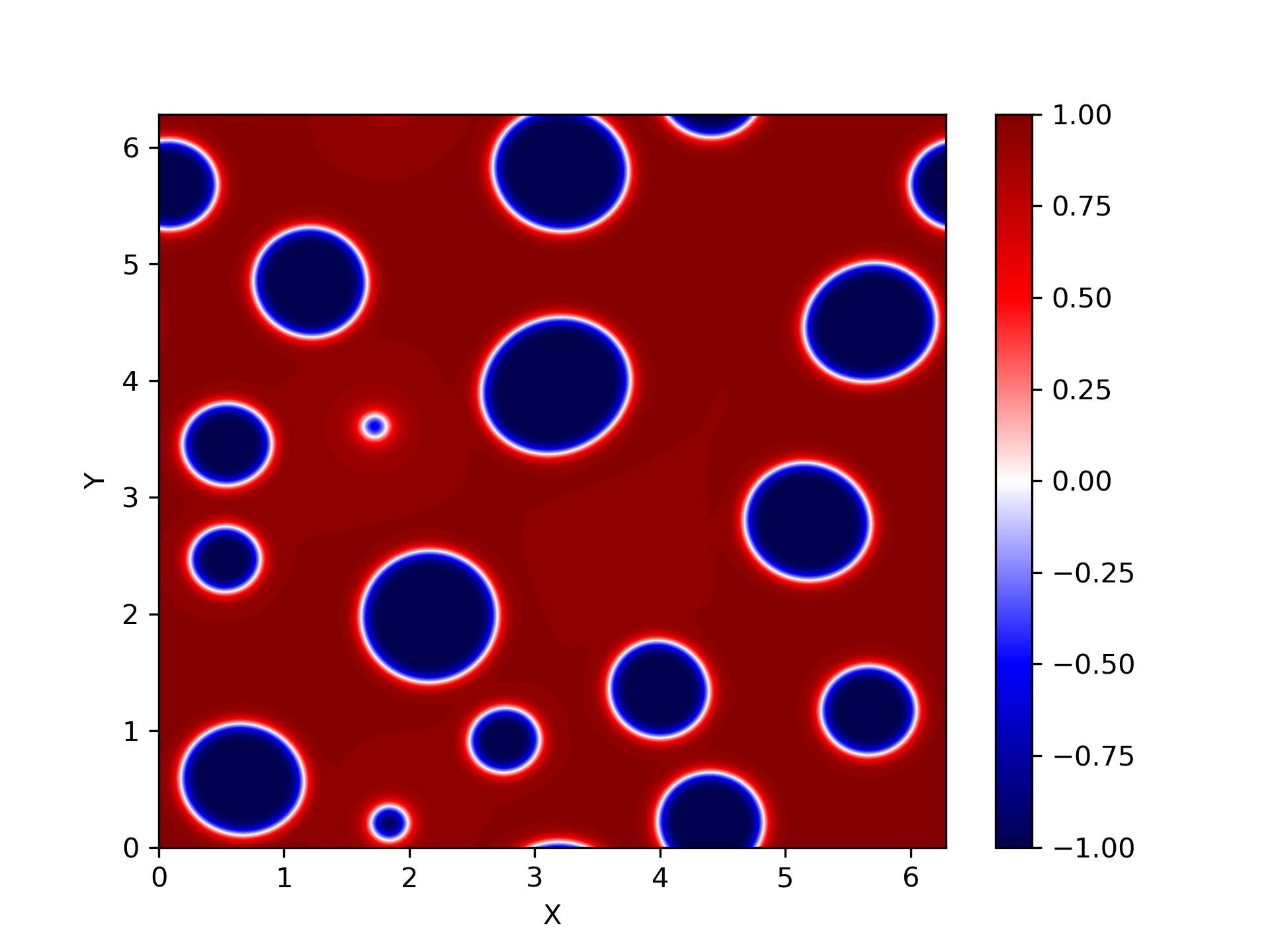}
    \caption{$t = 24.7891295322$}
    \label{fig7:contour100oswald}
    \end{subfigure}
    \begin{subfigure}[b]{0.49\textwidth}   
        \centering
        \includegraphics[width=1.0\textwidth]{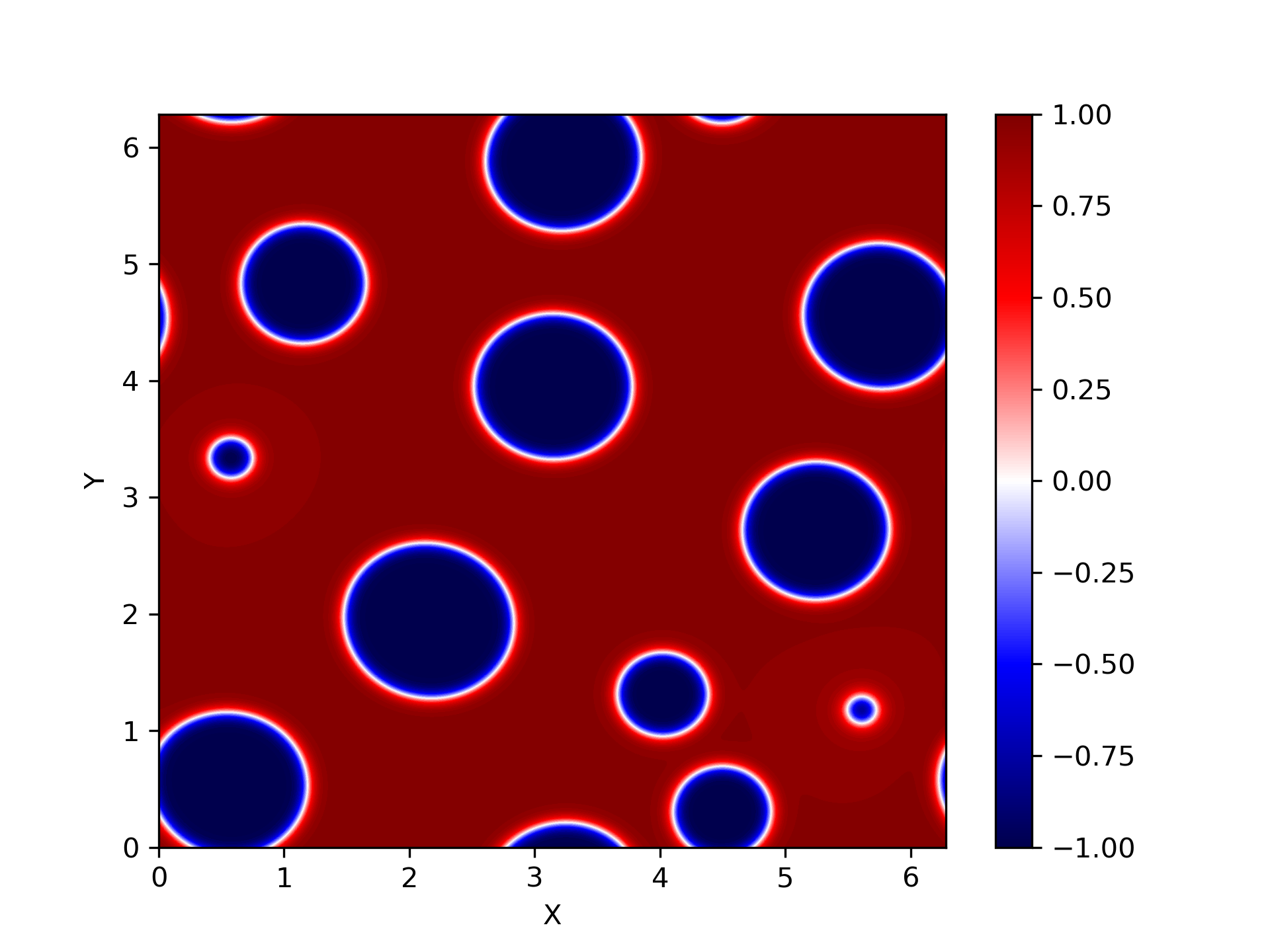}
    \caption{$t = 49.3328221384$}
    \label{fig7:contour200oswald}
    \end{subfigure}
        \caption{Contour plots of the Cahn--Hilliard equation with asymmetric mixture showing Ostwald Ripening on a domain of size $4\pi \times 4\pi$.}
    \label{fig7:oswaldRipening}
\end{figure}

\section{Cahn--Hilliard Equation -- Asymmetric Mixtures}
\label{sec7:CH}
In this section we apply the second methodology discussed in Section~\ref{sec7:methodology}, here we solve batches of the Cahn--Hilliard equation using the ADI method on a GPU, each simulation is initialised with a randomised asymmetric mixture.
In order to achieve an asymmetric mixture we initialise the simulations using a uniform distribution of values between $0.4$ and $0.6$, an example evolution of one such simulation showing the presence of Ostwald Ripening is shown in Figure~\ref{fig7:oswaldRipening}.
It can be seen in this illustrative example how the domain develops from a series of small bubbles into just a few large bubbles, eventually the domain will be characterised by the presence of just one bubble which will cease growing and thus the system will enter a steady state, the size of the final bubble is dictated by the finite domain in which it is contained \Acomment{and the initial distribution.}

\begin{figure}
    \centering
    \begin{subfigure}[b]{0.49\textwidth}
        \centering
        \includegraphics[width=1.0\textwidth]{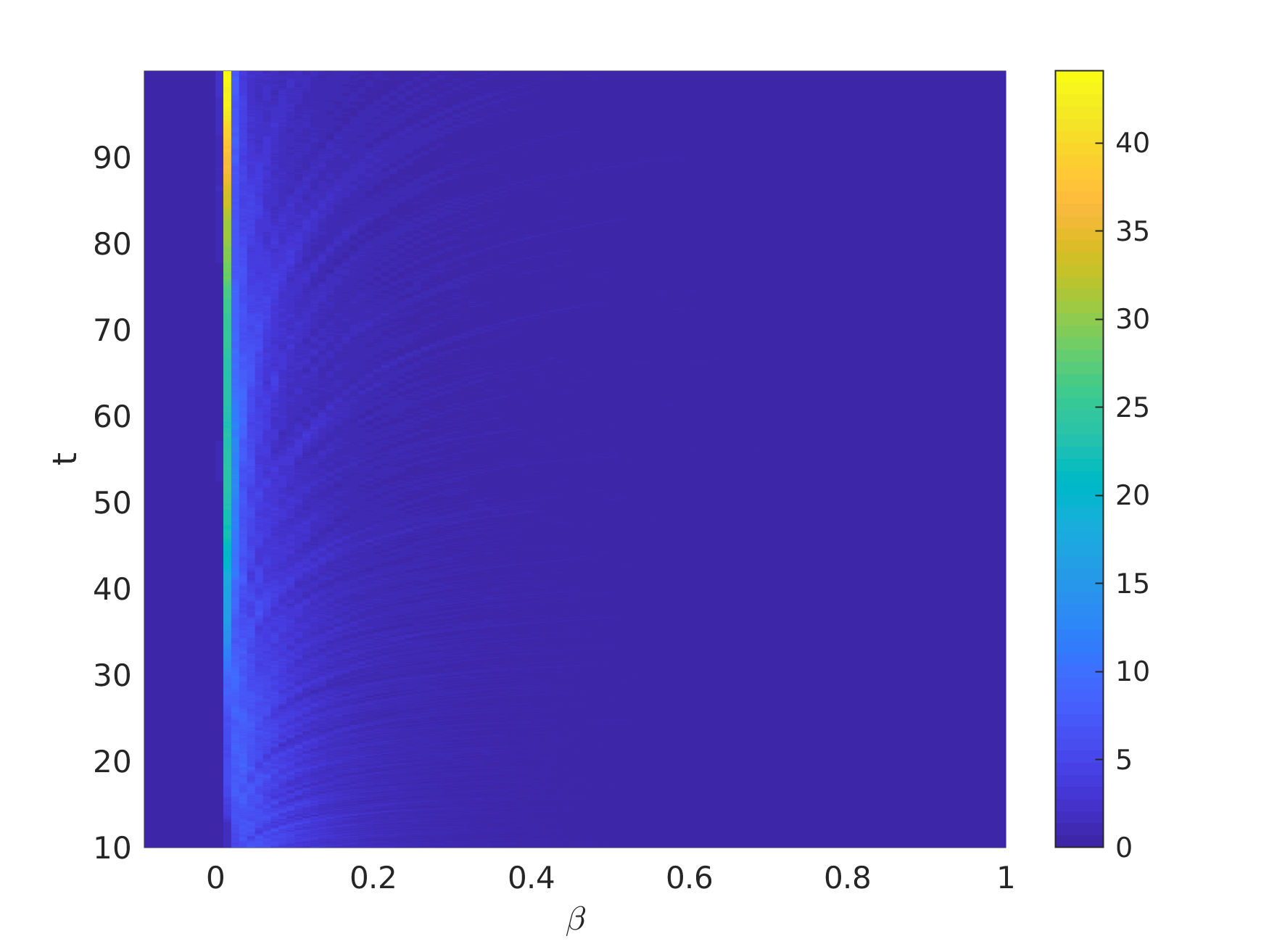}
    \caption{$2\pi \times 2\pi$}
    \label{fig7:spacetime2pi256Avg}
    \end{subfigure}
    \hfill
    \begin{subfigure}[b]{0.49\textwidth}
        \centering
        \includegraphics[width=1.0\textwidth]{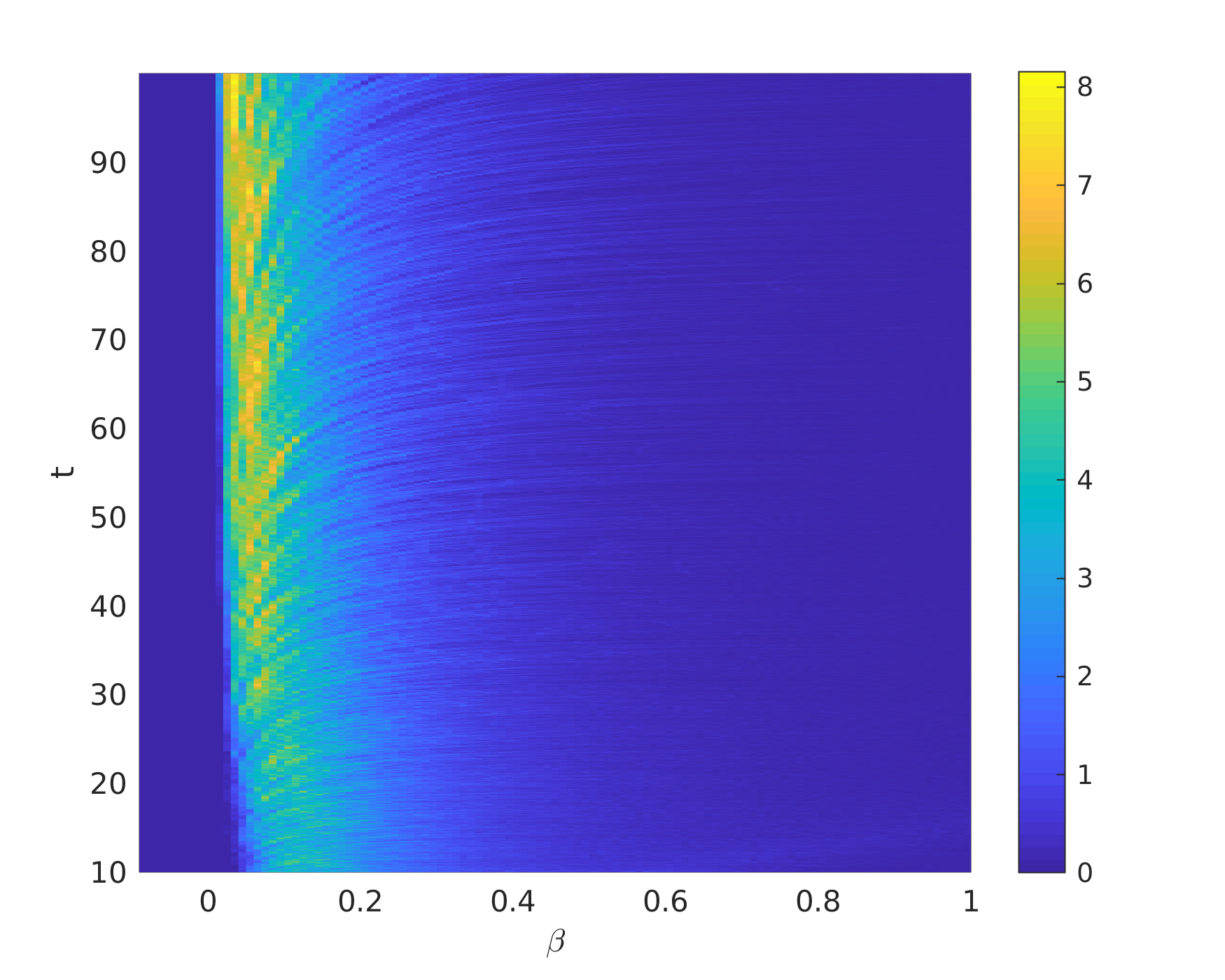}
    \caption{$4\pi \times 4\pi$}
    \label{fig7:spacetime4pi512Avg}
    \end{subfigure}
    \vskip\baselineskip
    \begin{subfigure}[b]{0.49\textwidth}
        \centering
        \includegraphics[width=1.0\textwidth]{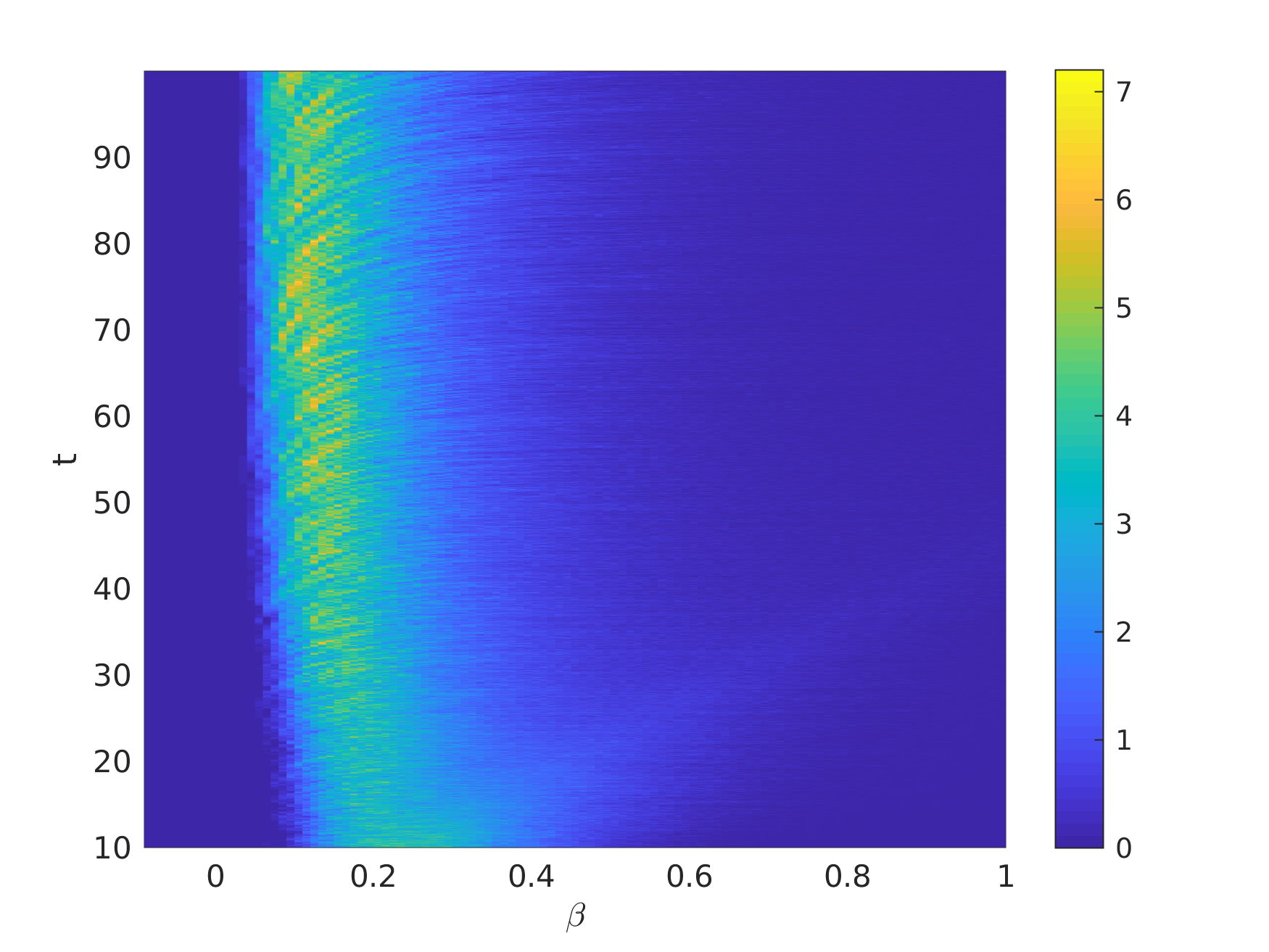}
    \caption{$8\pi \times 8\pi$}
    \label{fig7:spacetime8pi1024Avg}
    \end{subfigure}
    \begin{subfigure}[b]{0.49\textwidth}
        \centering
        \includegraphics[width=1.0\textwidth]{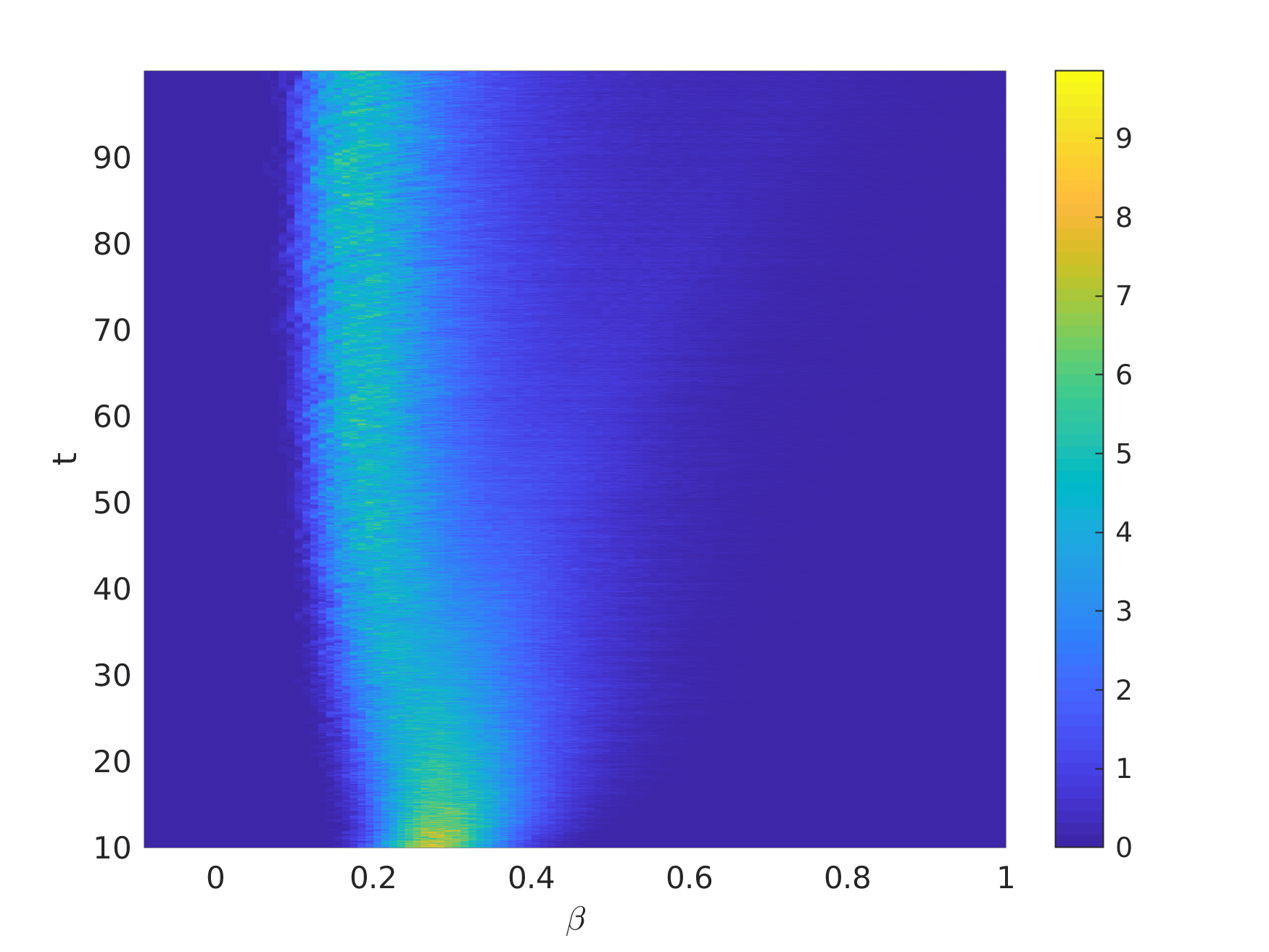}
    \caption{$16\pi \times 16\pi$}
    \label{fig7:spacetime16pi2048Avg}
    \end{subfigure}
        \caption{\Acomment{Distributions of $\beta$ as a function of time for the Cahn--Hilliard equation with an asymmetric mixture over varying domain sizes.}}
    \label{fig7:spacetimeAsymmetric}
\end{figure}

\Acomment{We begin our study of the batch results for an asymmetric mixture by examining the plots of the distribution of $\beta$ as a function of time, these results can be seen in Figure~\ref{fig7:spacetimeAsymmetric}.}
For the $2\pi \times 2\pi$ domain shown in Figure~\ref{fig7:spacetime2pi256Avg} the presence of finite size effects on the domain can clearly be seen, as time progresses most of the simulations for this case have reached a steady state, dominated by a few large bubbles which has ceased growing and thus the distribution is centred around $0$ almost entirely.
As the domain size increases we can see the development of steady states around values of $\beta$ that are non--zero, indeed in the $16\pi \times 16\pi$ domain shown in Figure~\ref{fig7:spacetimeAsymmetric} we can see how the distribution is beginning to centre itself closer to $1/3$.
Based on the development of these results we can expect to see this trend of the distribution moving towards $1/3$ continue as the domain size is increased further.

\begin{figure}
    \centering
    \begin{subfigure}[b]{0.49\textwidth}
        \centering
        \includegraphics[width=1.0\textwidth]{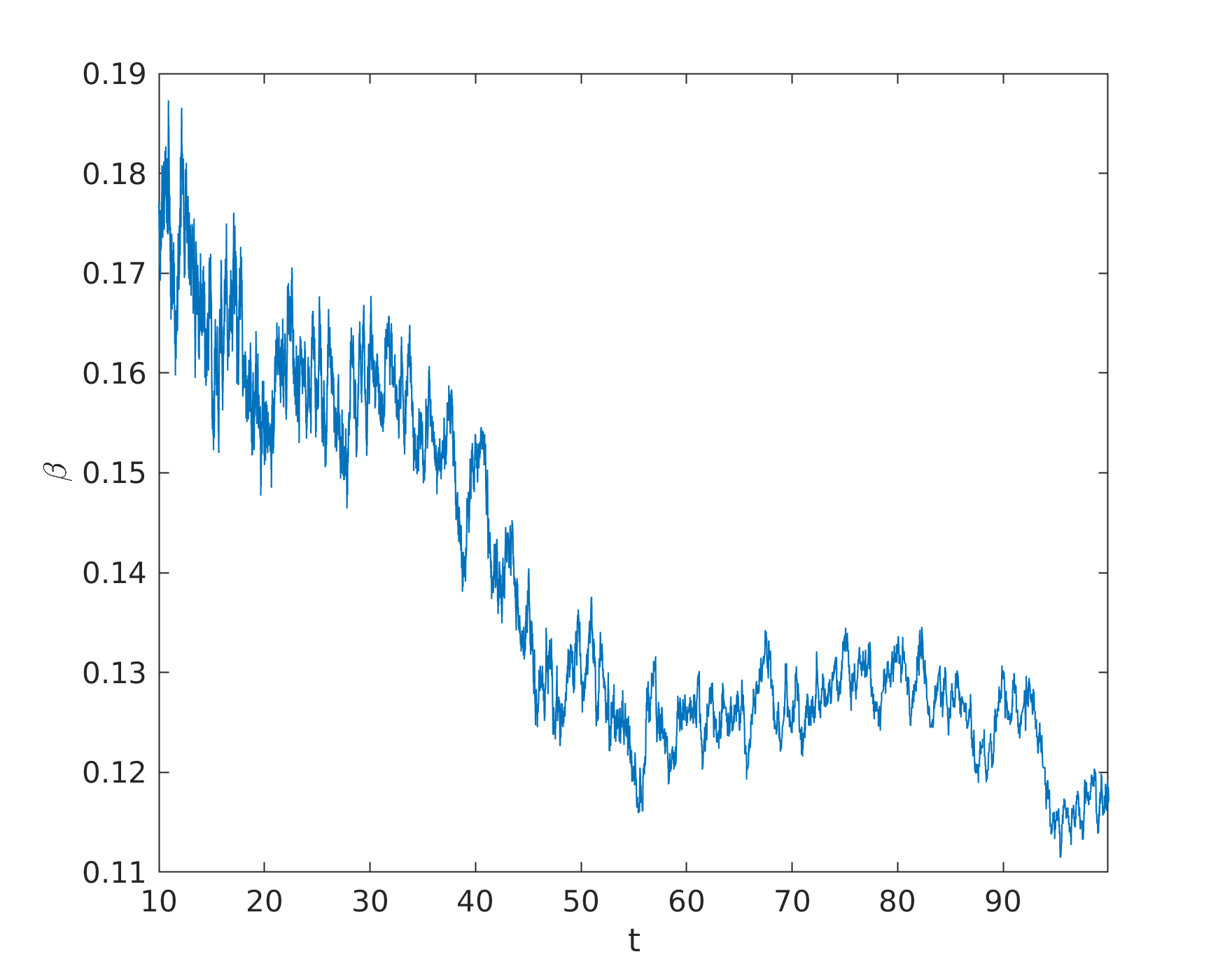}
    \caption{$2\pi \times 2\pi$}
    \label{fig7:mean2pi256Avg}
    \end{subfigure}
    \hfill
    \begin{subfigure}[b]{0.49\textwidth}
        \centering
        \includegraphics[width=1.0\textwidth]{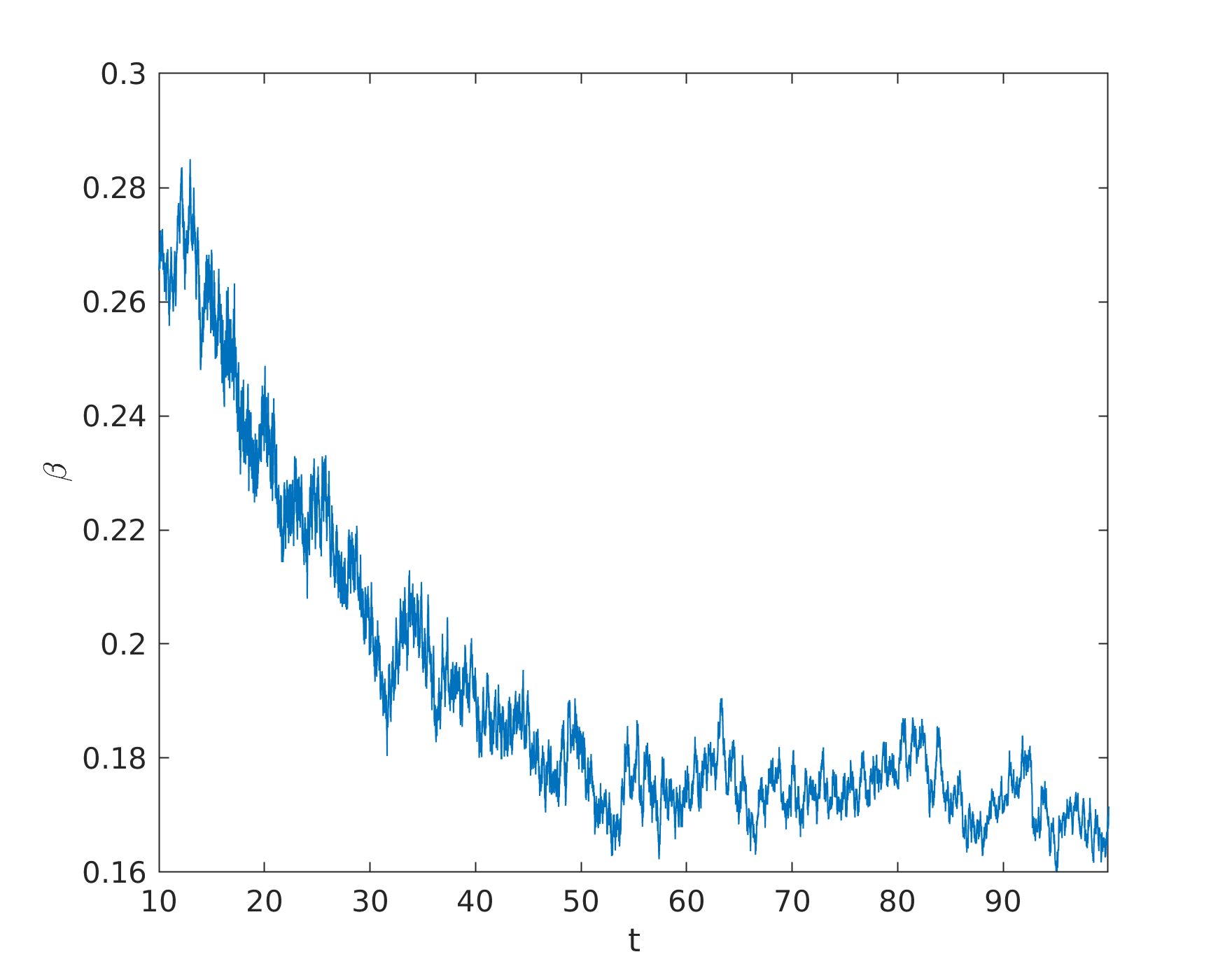}
    \caption{$4\pi \times 4\pi$}
    \label{fig7:mean4pi512Avg}
    \end{subfigure}
    \vskip\baselineskip
    \begin{subfigure}[b]{0.49\textwidth}
        \centering
        \includegraphics[width=1.0\textwidth]{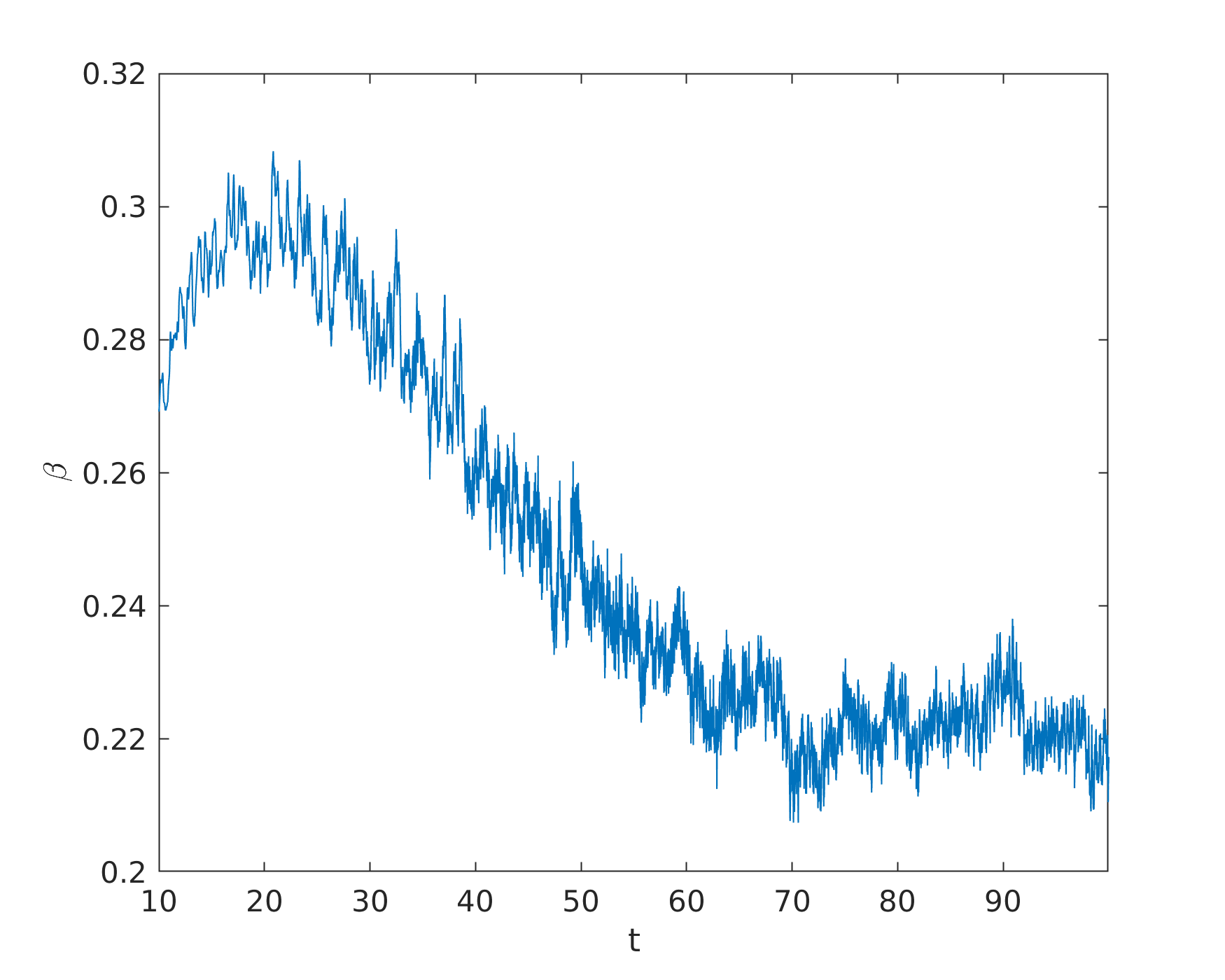}
    \caption{$8\pi \times 8\pi$}
    \label{fig7:mean8pi1024Avg}
    \end{subfigure}
    \begin{subfigure}[b]{0.49\textwidth}
        \centering
        \includegraphics[width=1.0\textwidth]{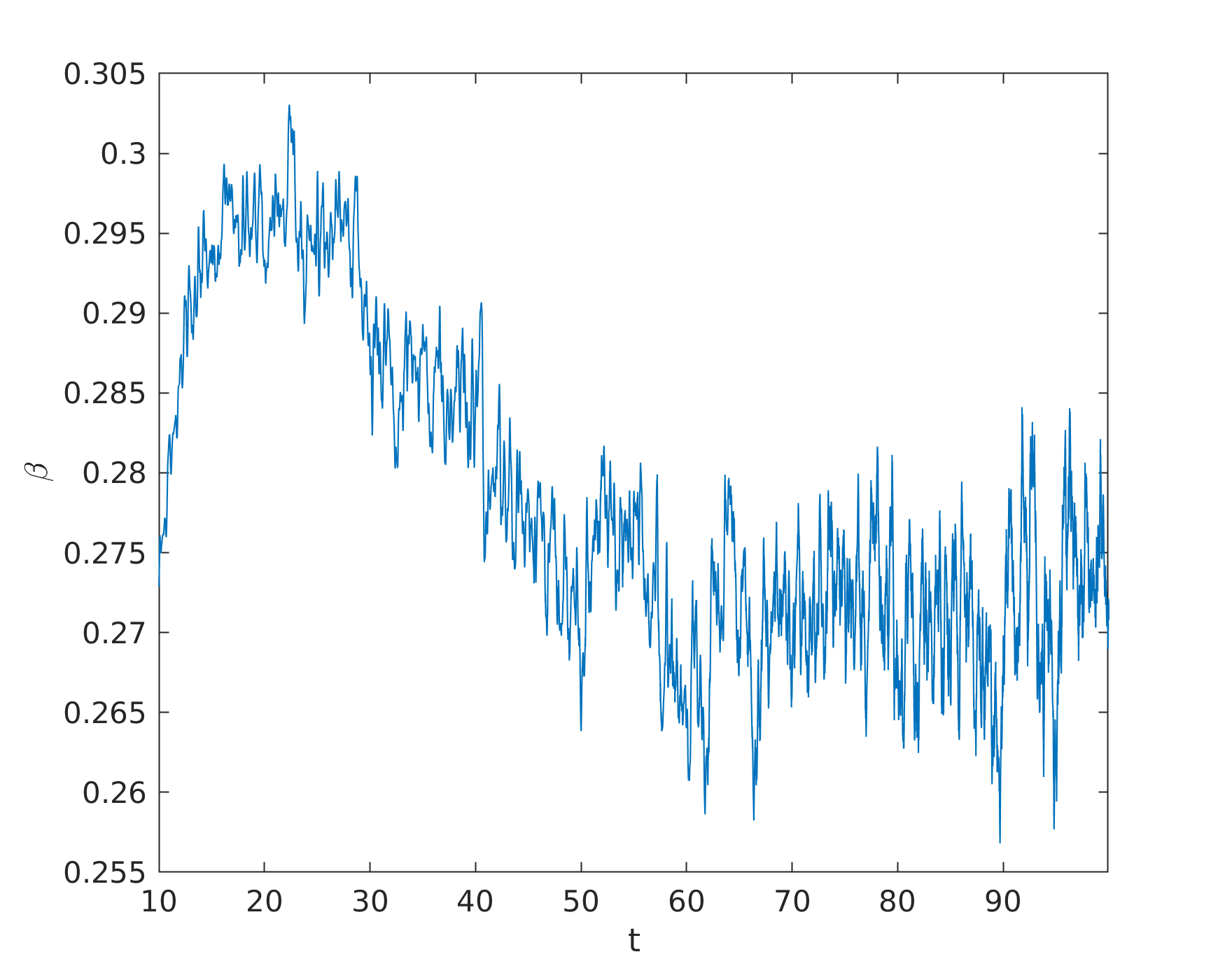}
    \caption{$16\pi \times 16\pi$}
    \label{fig7:mean16pi2048Avg}
    \end{subfigure}
        \caption{Plots of the mean value of $\beta$ as a function of time for the Cahn--Hilliard equation with an asymmetric mixture with varying domain sizes.}
    \label{fig7:meanAsymmetric}
\end{figure}

In the next set of Figures~\ref{fig7:meanAsymmetric} we can see the mean values of $\beta$ as a function of time.
Again the presence in each of finite size effects can be seen as the values fall below the LSW prediction of $1/3$, there are not enough bubbles present in the domain to drive the average growth rate to its possible value of $1/3$.
The finite domain size effect on the results is again seen in the $2\pi \times 2\pi$ domain as the mean of the distribution drops significantly to values around $0$, reinforcing how the bubbles in these domains have stopped growing due to the finite numbers present in the finite domain. 
In all cases we can see the development of a steady state at late times as the mean settles into an oscillation around a given value.
We can also see that the mean value around which this steady state is occurring is increasing with domain size towards the predicted $1/3$ value for an infinite domain.

\begin{figure}
    \centering
    \begin{subfigure}[b]{0.49\textwidth}
        \centering
        \includegraphics[width=1.0\textwidth]{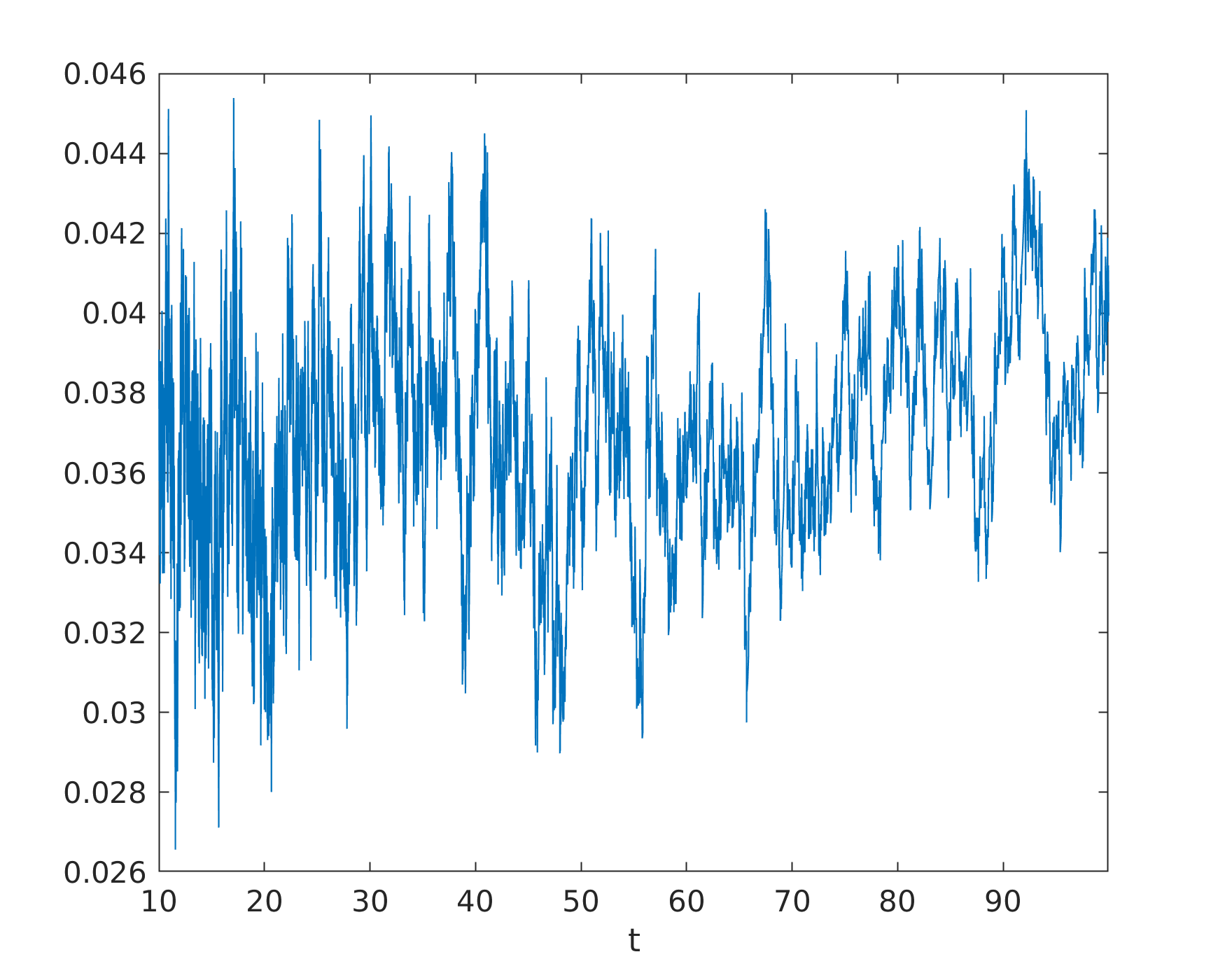}
    \caption{$2\pi \times 2\pi$}
    \label{fig7:variance2pi256Avg}
    \end{subfigure}
    \hfill
    \begin{subfigure}[b]{0.49\textwidth}
        \centering
        \includegraphics[width=1.0\textwidth]{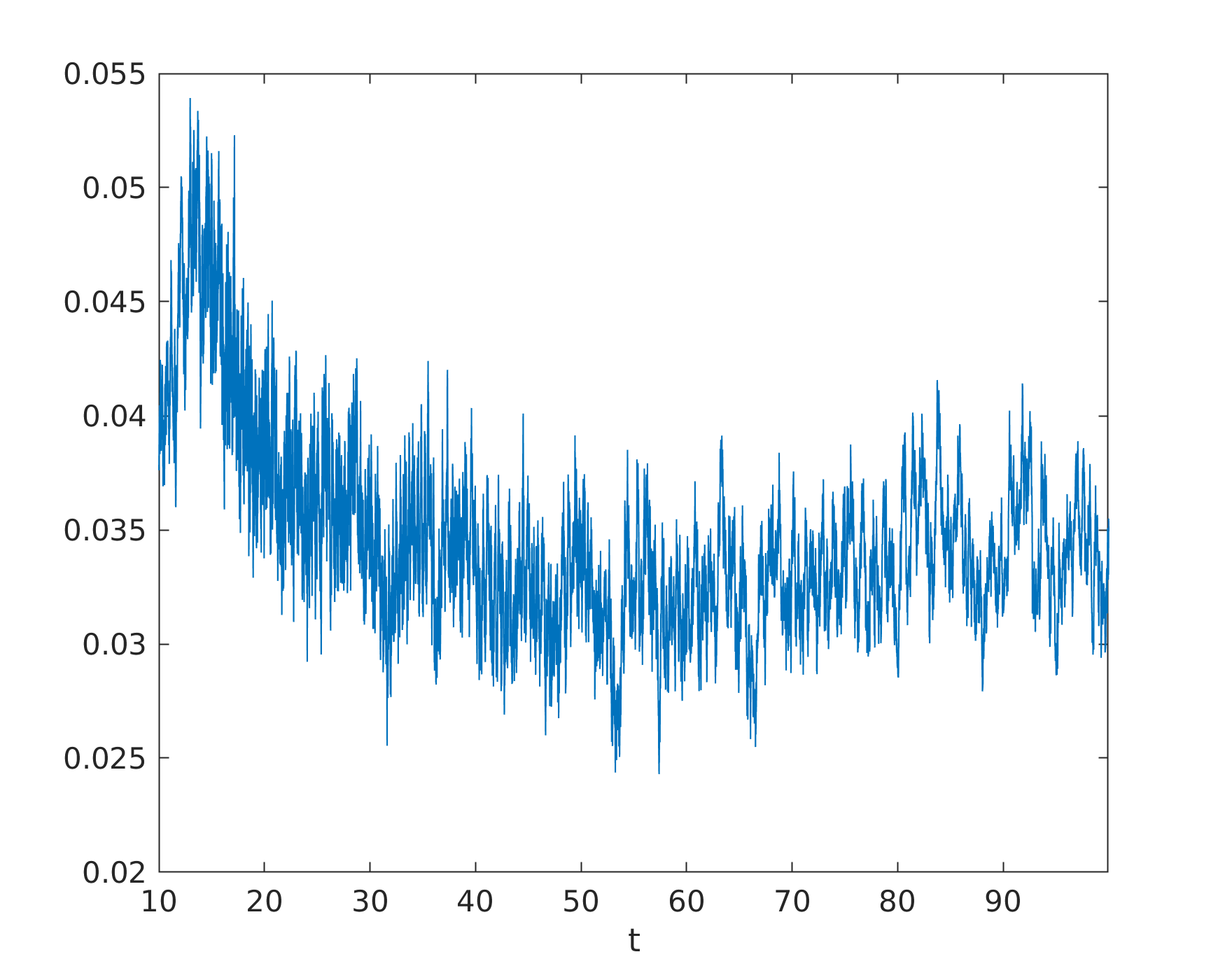}
    \caption{$4\pi \times 4\pi$}
    \label{fig7:variance4pi512Avg}
    \end{subfigure}
    \vskip\baselineskip
    \begin{subfigure}[b]{0.49\textwidth}
        \centering
        \includegraphics[width=1.0\textwidth]{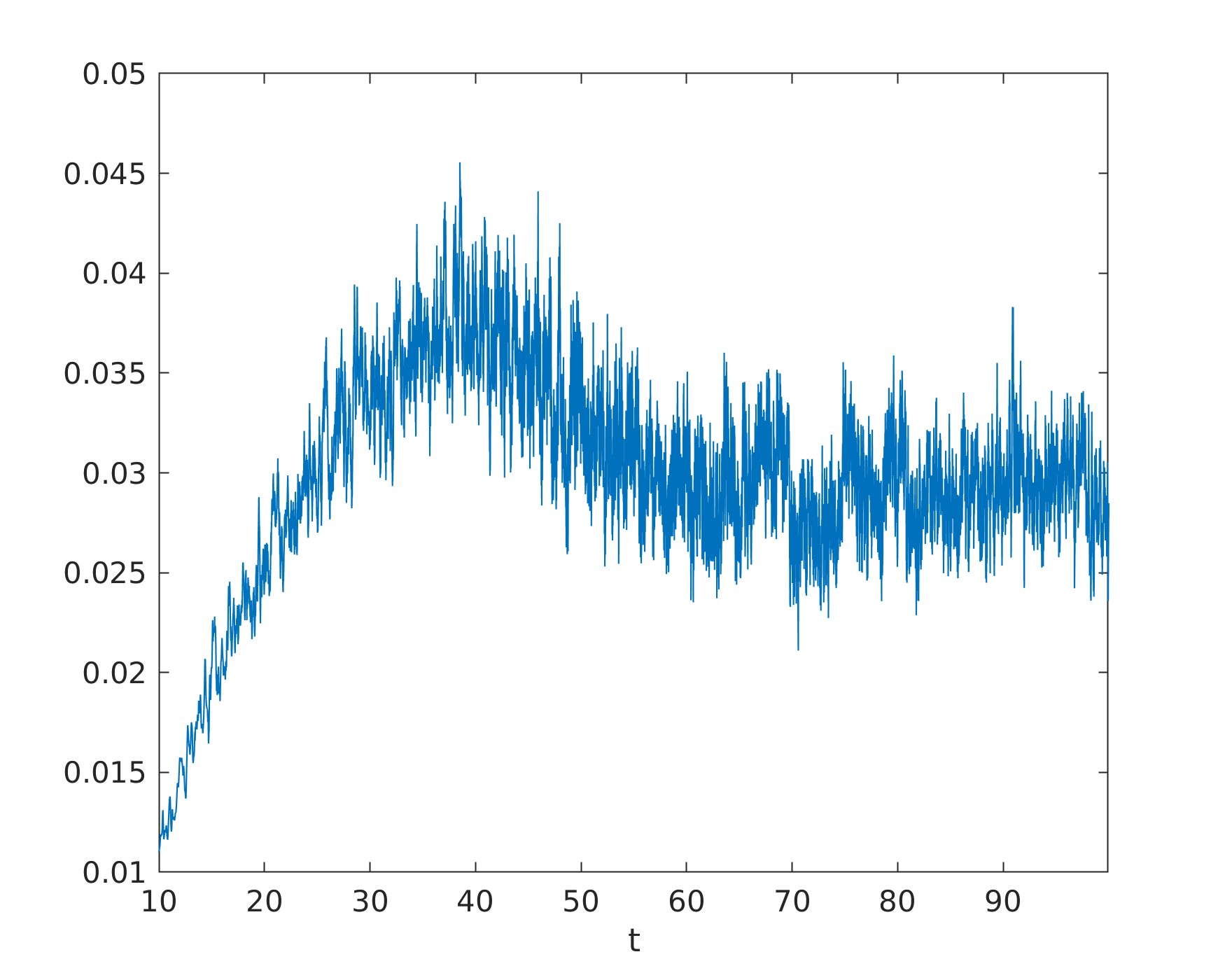}
    \caption{$8\pi \times 8\pi$}
    \label{fig7:variance8pi1024Avg}
    \end{subfigure}
    \begin{subfigure}[b]{0.49\textwidth}
        \centering
        \includegraphics[width=1.0\textwidth]{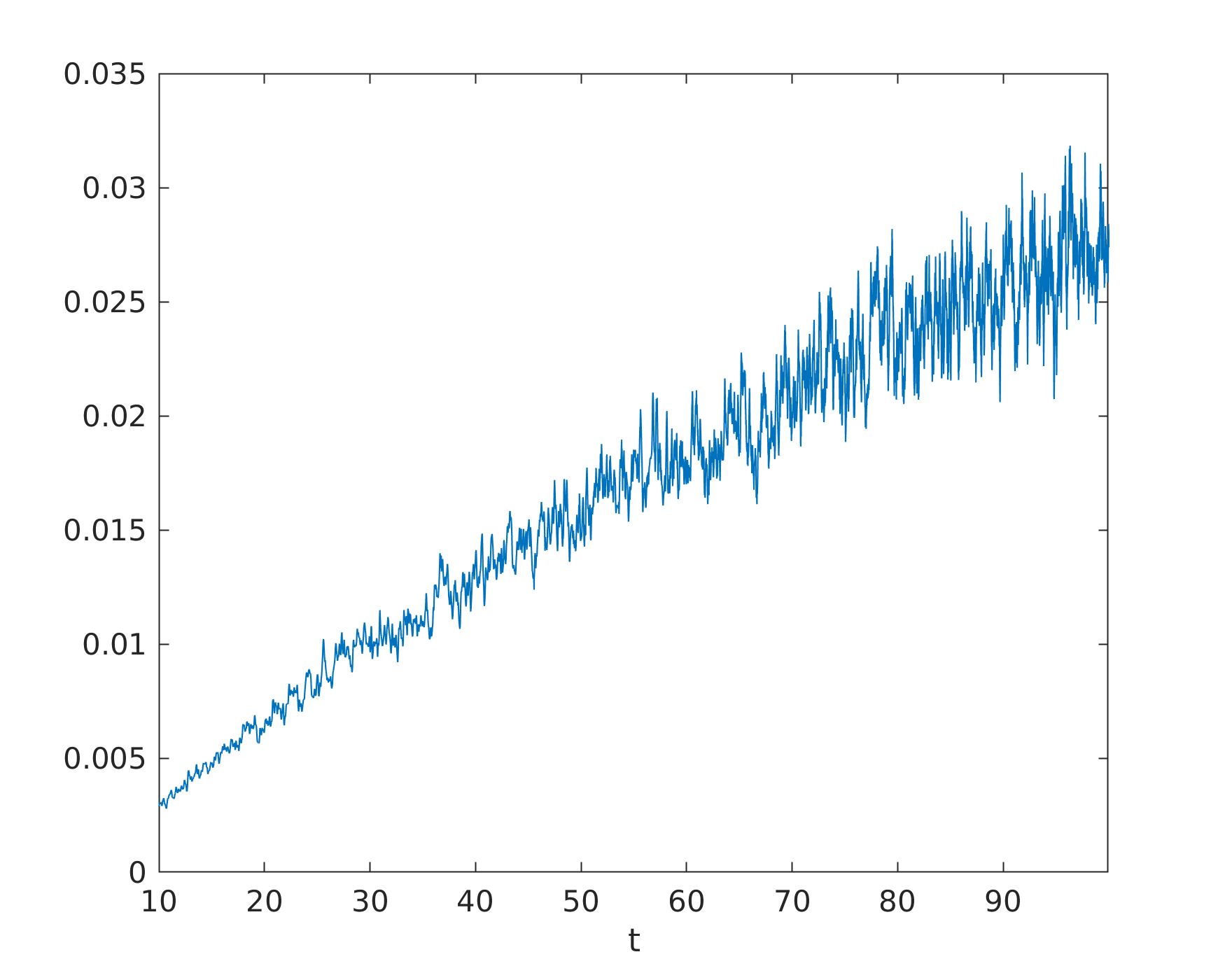}
    \caption{$16\pi \times 16\pi$}
    \label{fig7:variance16pi2048Avg}
    \end{subfigure}
        \caption{Plots of the variance of $\beta$ as a function of time for the Cahn--Hilliard equation with an asymmetric mixture over varying domain sizes.}
    \label{fig7:varianceAsymmetric}
\end{figure}

The variance of $\beta$ as a function of time for different domain sizes is shown in Figure~\ref{fig7:varianceAsymmetric}.
For the case of $2\pi \times 2\pi$ shown in Figure~\ref{fig7:variance2pi256Avg} we can see the presence of a steady state as the value oscillates around $\approx 0.37$, this is in--keeping with the steady position of the distribution in the \Acomment{distribution--time} plot shown previously in Figure~\ref{fig7:spacetime2pi256Avg}
For domains of size $4\pi \times 4\pi$ and $8\pi \times 8\pi$ we can see variance settling to statistically stationary values at later times while for a $16\pi \times 16\pi$ domain we can see that the variance is still growing which then, based on the smaller domain sizes, will settle to a steady state at a later time.
The time dependence of the variance in all cases, certainly at early times, suggests that the diffusion here is anomalous before settling down to a constant rate of diffusion during the statically stationary portion of the domain evolution.

\begin{figure}
    \centering
    \begin{subfigure}[b]{0.49\textwidth}
        \centering
        \includegraphics[width=1.0\textwidth]{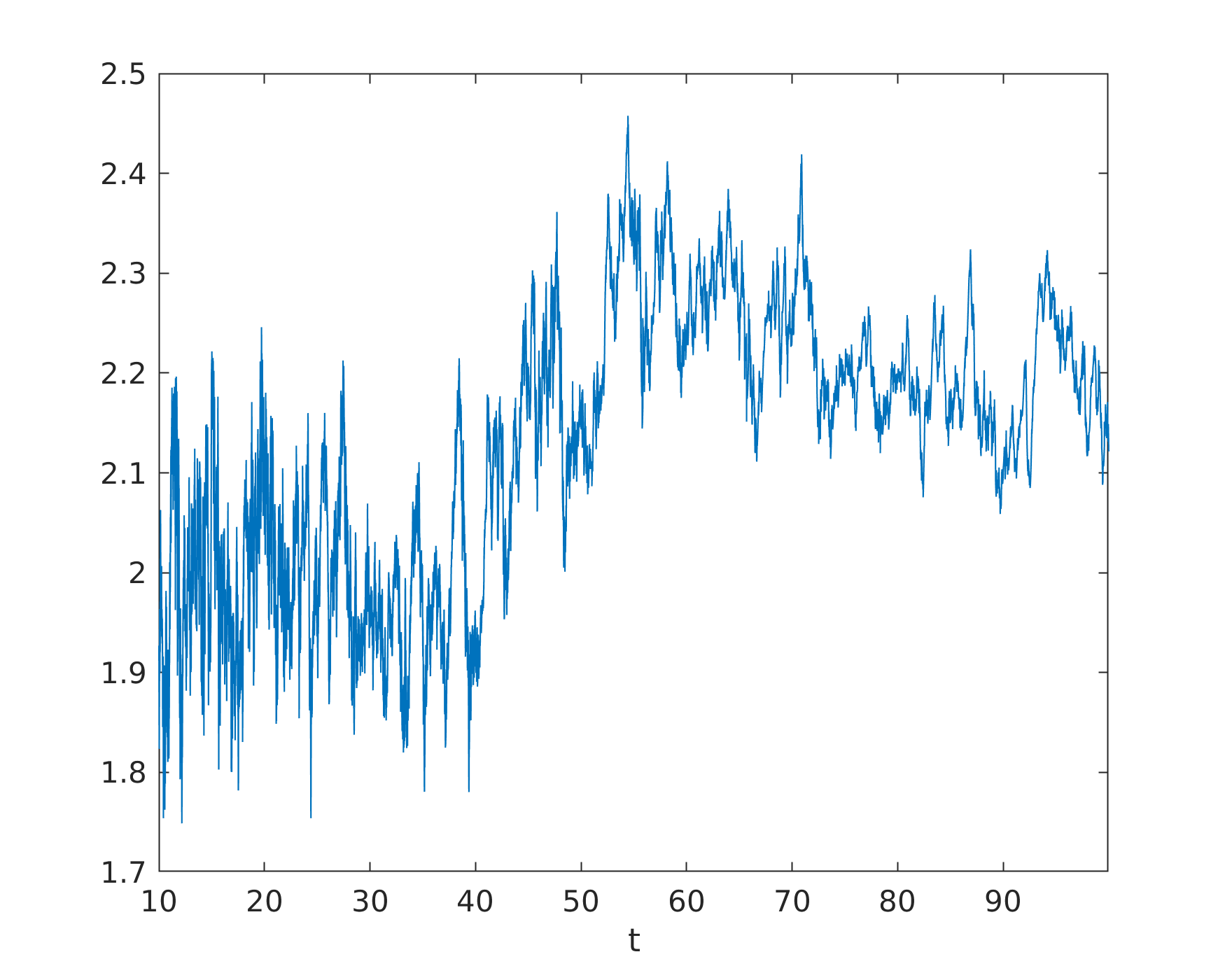}
    \caption{$2\pi \times 2\pi$}
    \label{fig7:skew2pi256Avg}
    \end{subfigure}
    \hfill
    \begin{subfigure}[b]{0.49\textwidth}
        \centering
        \includegraphics[width=1.0\textwidth]{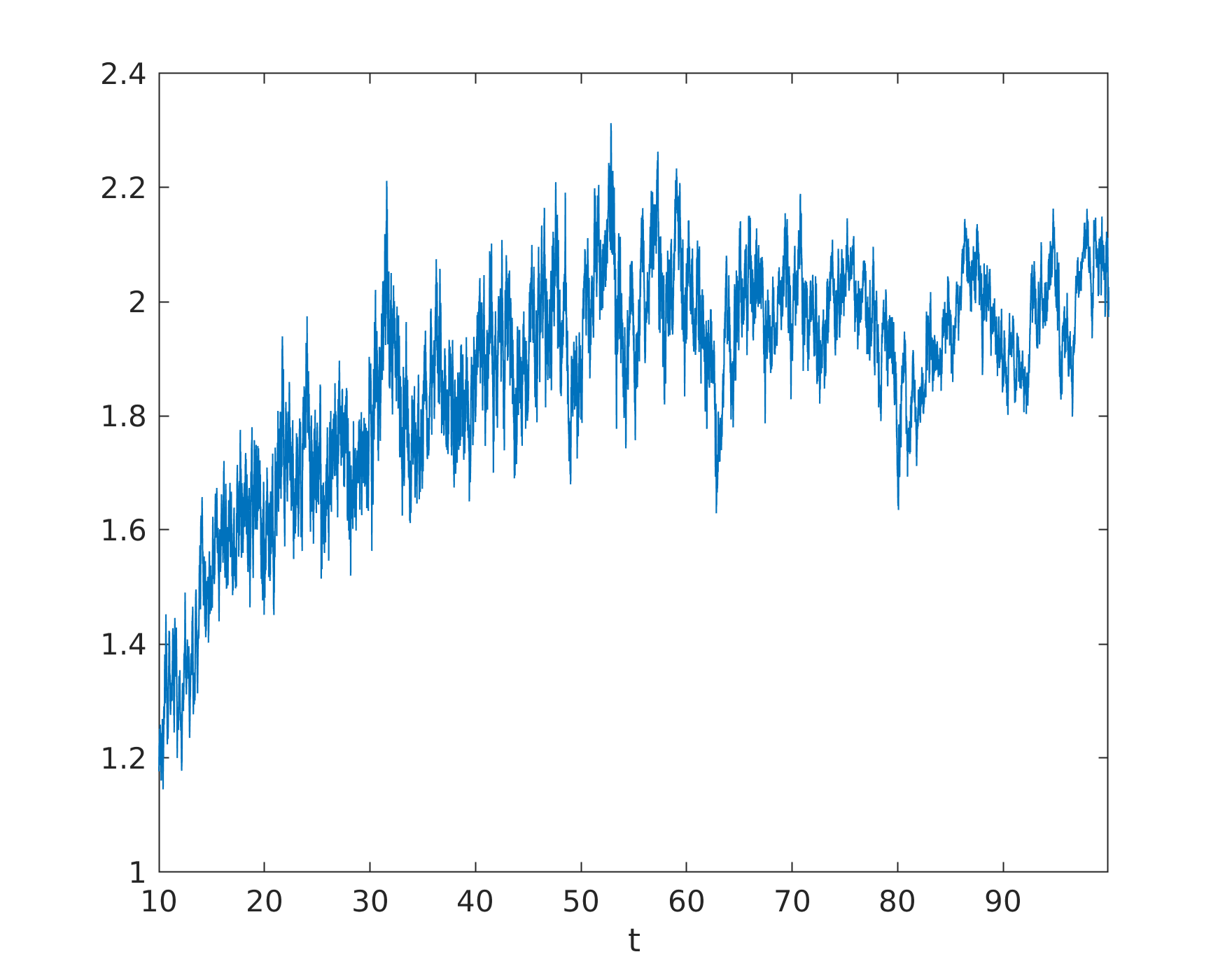}
    \caption{$4\pi \times 4\pi$}
    \label{fig7:skew4pi512Avg}
    \end{subfigure}
    \vskip\baselineskip
    \begin{subfigure}[b]{0.49\textwidth}
        \centering
        \includegraphics[width=1.0\textwidth]{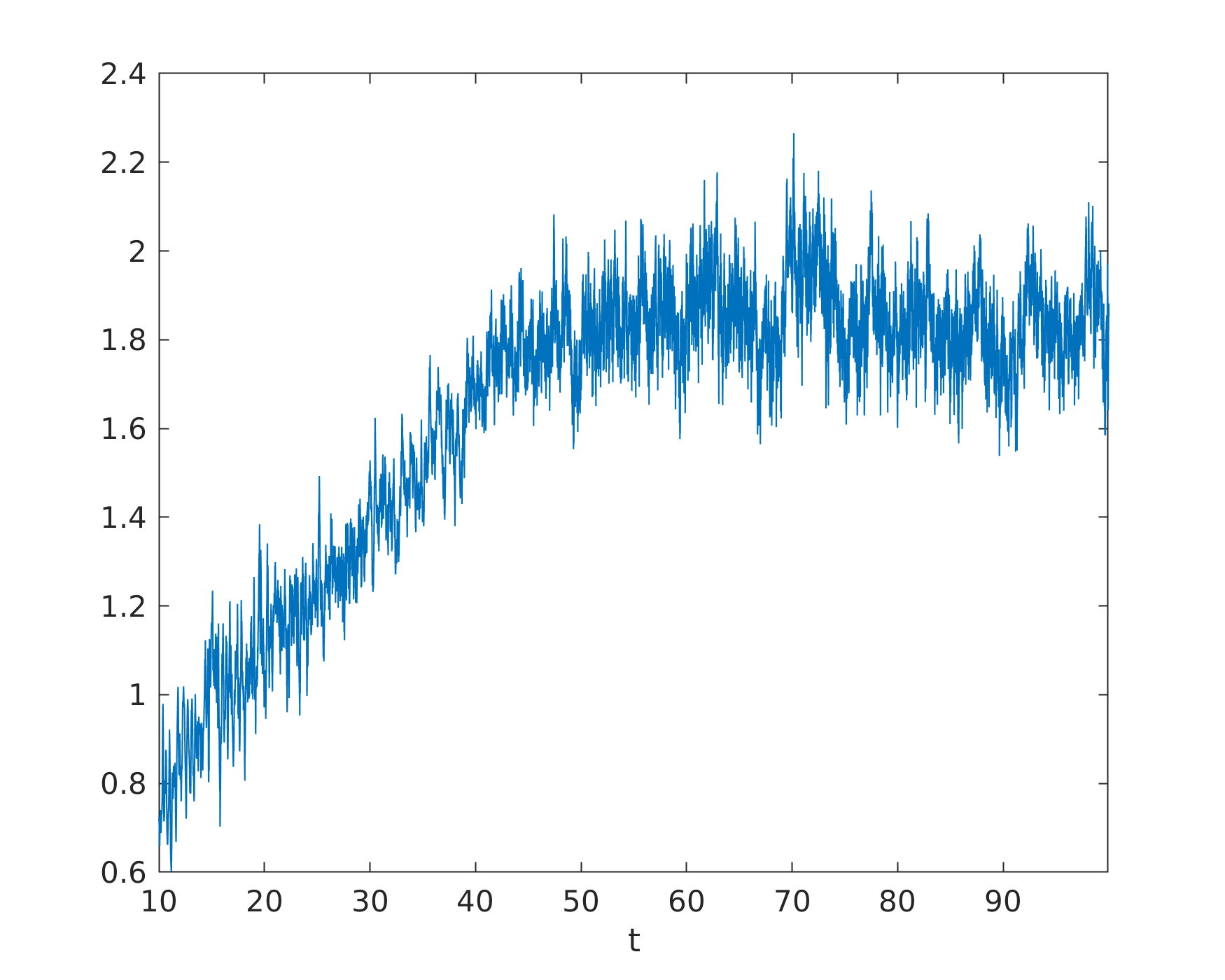}
    \caption{$8\pi \times 8\pi$}
    \label{fig7:skew8pi1024Avg}
    \end{subfigure}
    \begin{subfigure}[b]{0.49\textwidth}
        \centering
        \includegraphics[width=1.0\textwidth]{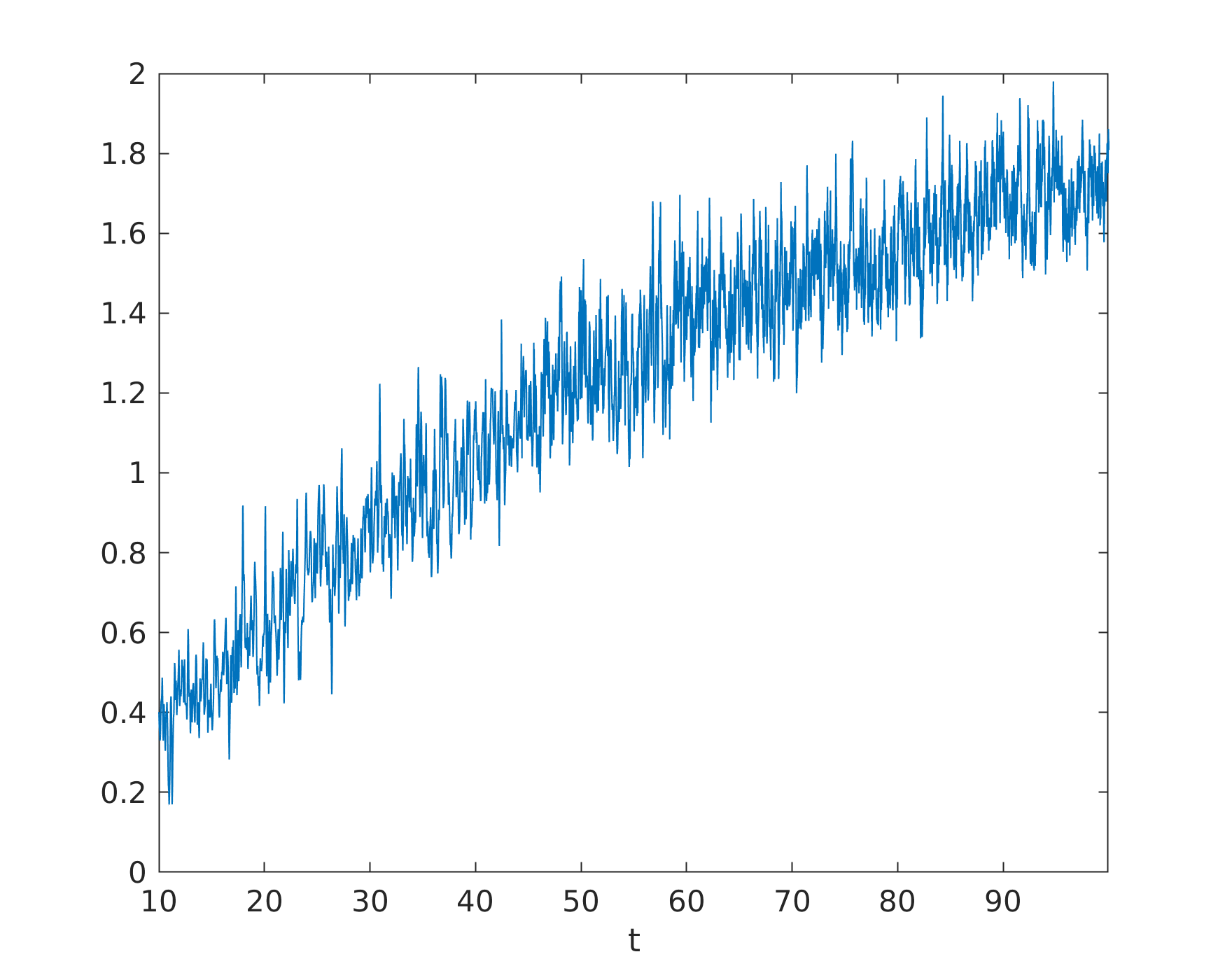}
    \caption{$16\pi \times 16\pi$}
    \label{fig7:skew16pi2048Avg}
    \end{subfigure}
        \caption{Plots of the skewness of $\beta$ as a function of time for the Cahn--Hilliard equation with an asymmetric mixture over varying domain sizes.}
    \label{fig7:skewAsymmetric}
\end{figure}

Next we examine the skewness of the distributions for $\beta$ in Figure~\ref{fig7:skewAsymmetric}. 
It is very clear in all cases that the skew is positive in all cases, this is in--keeping with the physics of the problems where the overall domain length scale should always be increasing and thus the values of $\beta$ should be positive and skewed away from negative values.
Similar findings to what we saw above with the results for variance are also the case here where a steady state develops in each of the domains and the time at which it occurs is dependent on the domain size. 
The $2\pi \times 2\pi$ domain is already in its steady regime at the times shown, $4\pi$ and $8\pi$ settle into theirs as time progresses and $16\pi$ has not quite reached its steady state within the simulation time for this set of results.

\begin{figure}
    \centering
        \includegraphics[width=0.6\textwidth]{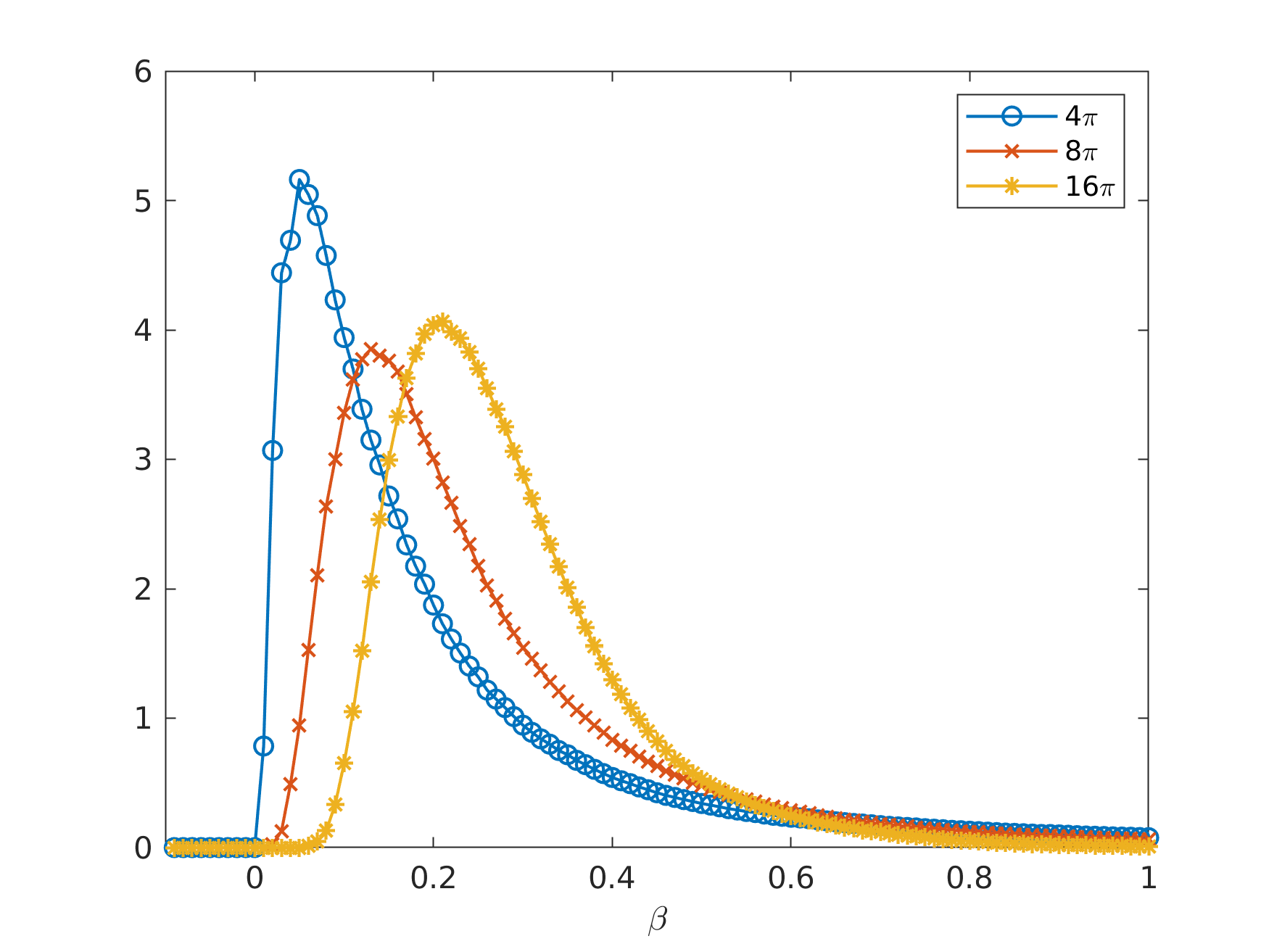}
        \caption{Plot showing the evolution of the distribution of $\beta$ with increasing domain size for an asymmetric mixture, we can see a clear similarity between these results and those extracted from the ODE model in Figure~\ref{fig7:betaVaryN}. We omit $2\pi$ from this plot as it is too heavily skewed towards $0$, making it too hard to see the trend for the other values due to its magnitude.}
    \label{fig7:asymmetricBeta}
\end{figure}

In Figure~\ref{fig7:asymmetricBeta} we present histograms for $\beta$ sampled over the complete time range $10 < t < 100$ presented in this section.
We plot these histograms for each domain size, $2\pi$ is omitted as the distribution is too heavily centred around $0$ driving the peak too high to be able to see the desired trend of the other results.
As the domain size is increased the distribution moves away from $0$ and closer to $1/3$. 
We can also see a growth in the height when moving from $8\pi$ to $16\pi$, this is expected from our study of LSW theory in previous sections; as the number of bubbles in the domain is increased the distribution should approach a $\delta$ function at $\beta = 1/3$.
The trend and behaviour of $\beta$ is identical here to the results presented for the ODE simulation shown in Figure~\ref{fig7:betaVaryN}.
This shows consistency between our ODE model for bubble growth and the full Cahn--Hilliard equation with an asymmetric mixture, both of which are models for Ostwald Ripening. 

Finally we summarise our findings for the Cahn--Hilliard equation with asymmetric mixture.
As the domain size increases the distribution of $\beta$ moves away from $0$ and towards $1/3$ with the overall peak height increasing as it moves across, this matches our finite bubble ODE statistics and discussion from Section~\ref{sec7:drop_pop}.
The point in time at which the distributions \Acomment{become} steady is domain size dependent, the larger the domain the longer it takes for this steady behaviour to appear, this is physical as more nucleation and initial bubble mergers occur in larger domains as there is greater space for bubble to form.
\Acomment{Thus the positions of the distributions and their steadiness are all finite domain size effects, as the domain becomes larger and larger the distribution of $\beta$ appears to develop into a steady distribution around $1/3$. 
With a big enough domain this distribution will continue to narrow (as shown by the drop in variance with increasing domain size as seen in Figure~\ref{fig7:varianceAsymmetric}) and grow in height to form a $\delta$ function at $\beta = 1/3$ as predicted by LSW theory for Ostwald ripening which a model for the Cahn--Hilliard equation with an asymmetric mixture.}
The finite size of the domain and thus the finite number of bubbles present in a asymmetric Cahn--Hilliard mixture effectively smear the LSW theory predicted $\delta$ distribution for an infinite number of bubbles/domain size to a distribution like those presented in Figure~\ref{fig7:asymmetricBeta}.


\begin{figure}
    \centering
    \begin{subfigure}[b]{0.49\textwidth}
        \centering
        \includegraphics[width=1.0\textwidth]{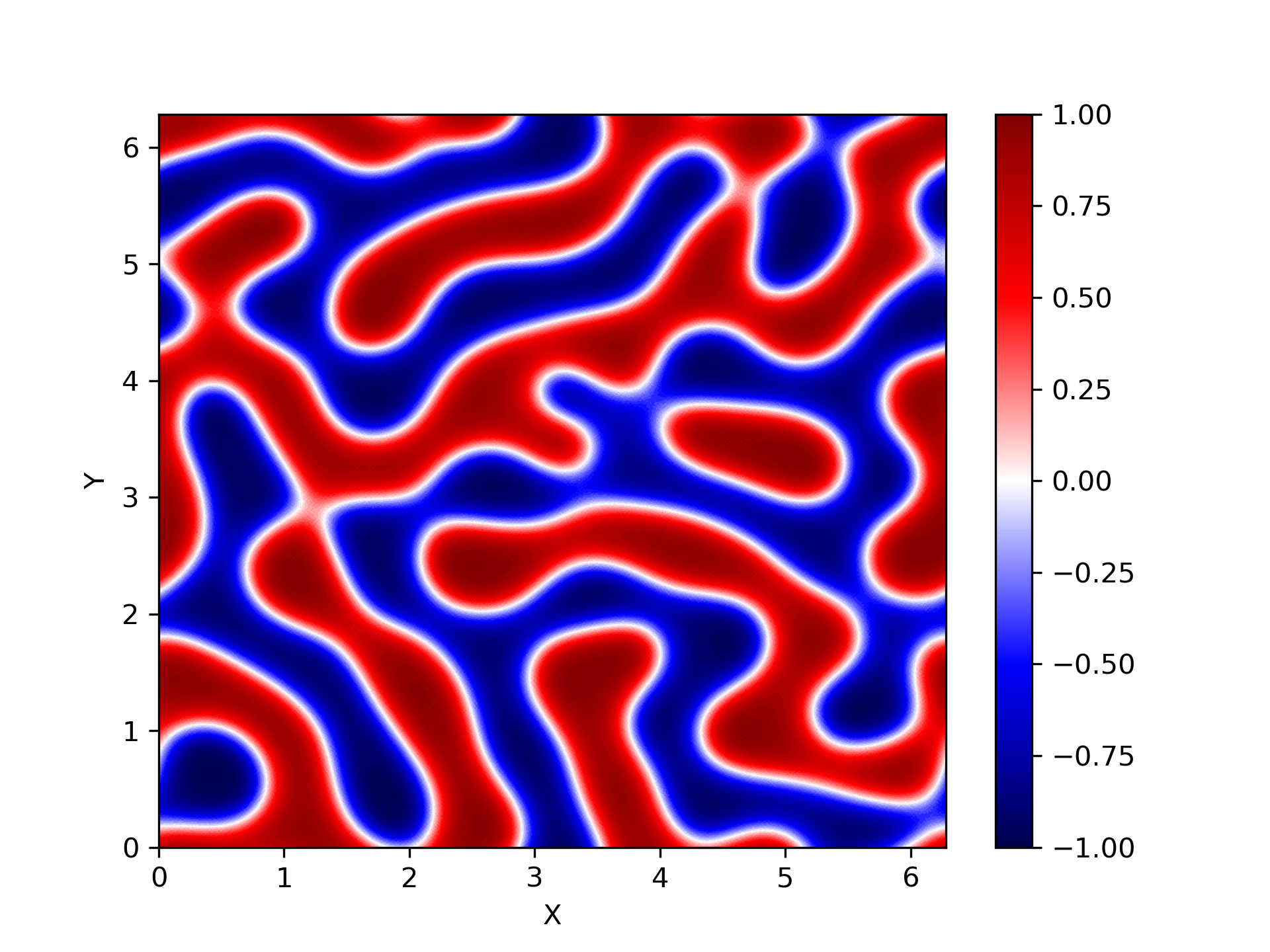}
    \caption{$t = 0.6135923152$}
    \label{fig7:contour4cahn}
    \end{subfigure}
    \hfill
    \begin{subfigure}[b]{0.49\textwidth}  
        \centering
        \includegraphics[width=1.0\textwidth]{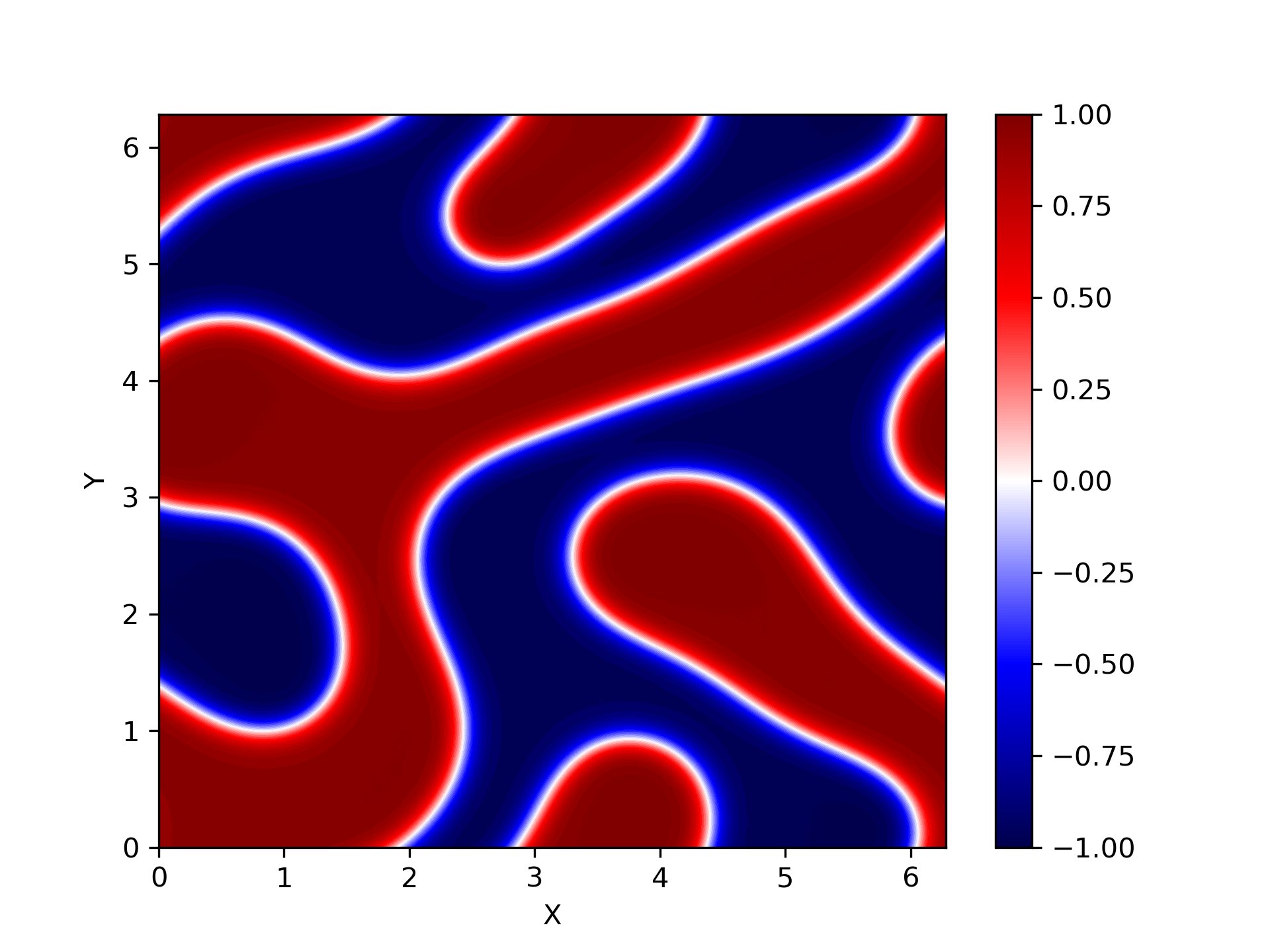}
    \caption{$t = 3.8042723540$}
    \label{fig7:contour30cahn}
    \end{subfigure}
    \vskip\baselineskip
    \begin{subfigure}[b]{0.49\textwidth}   
        \centering
        \includegraphics[width=1.0\textwidth]{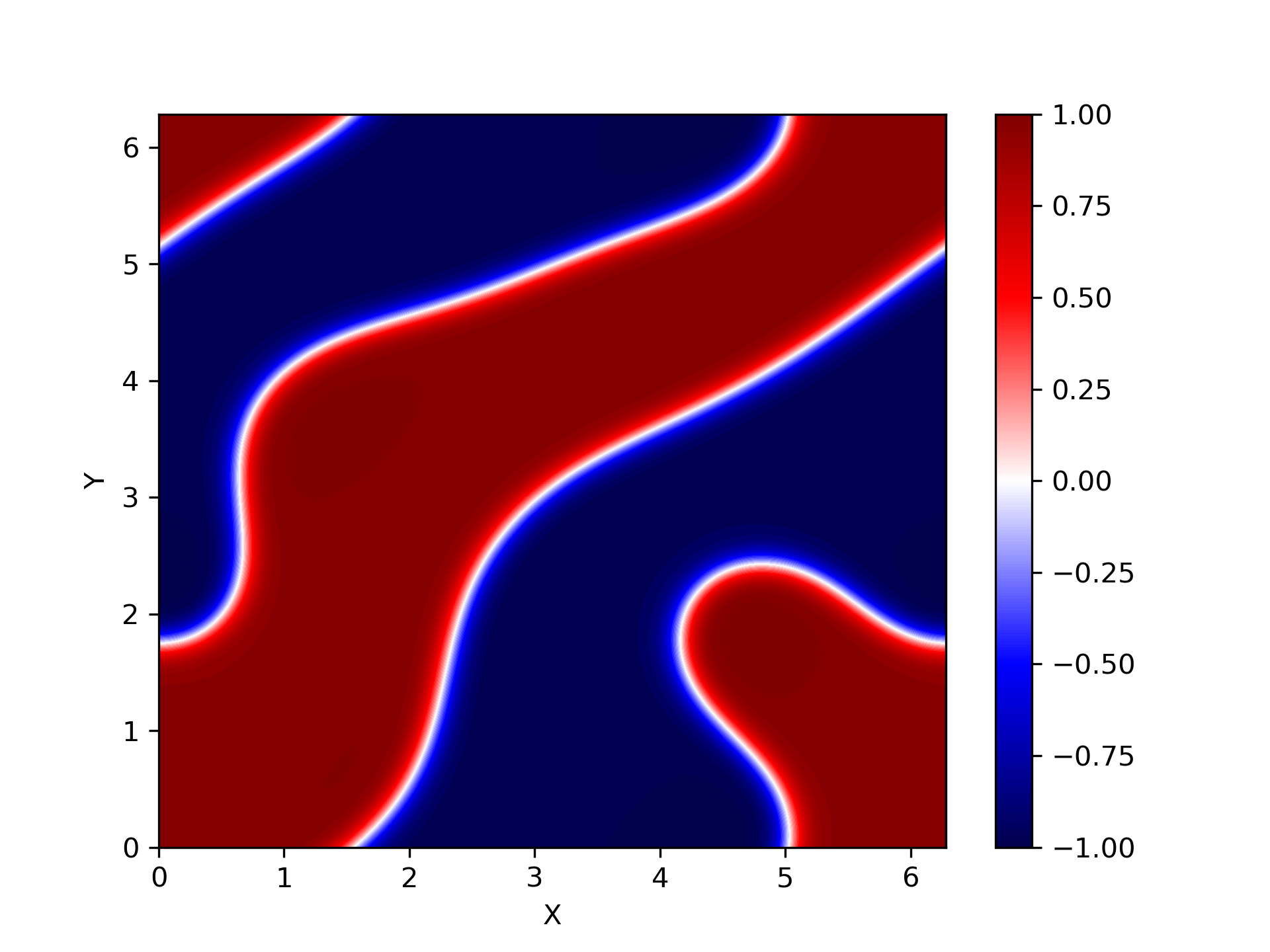}
    \caption{$t = 14.8489340267$}
    \label{fig7:contour120cahn}
    \end{subfigure}
    \begin{subfigure}[b]{0.49\textwidth}   
        \centering
        \includegraphics[width=1.0\textwidth]{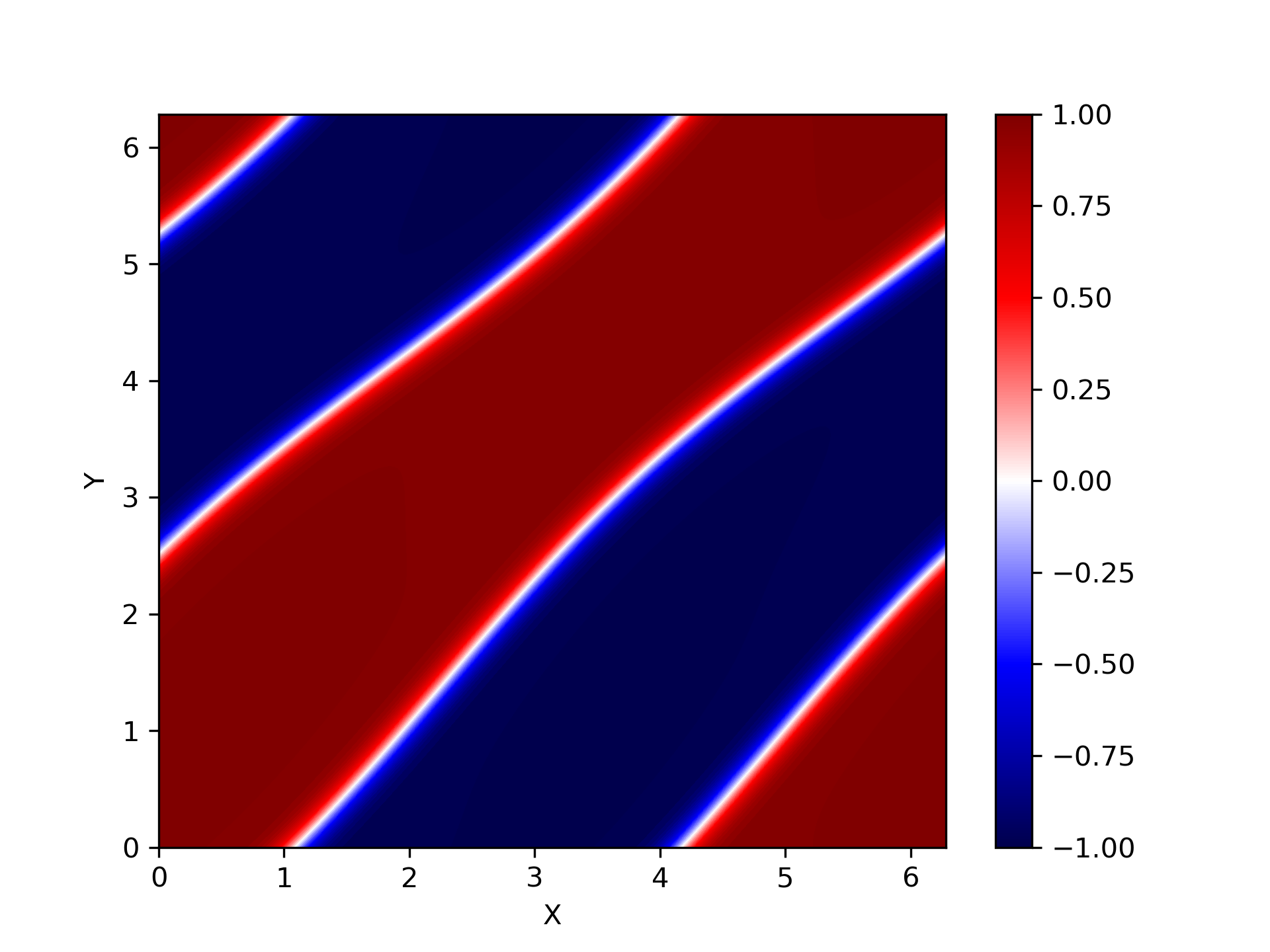}
    \caption{$t = 61.4819499785$}
    \label{fig7:contour500cahn}
    \end{subfigure}
        \caption{\Acomment{Contour plots of the Cahn--Hilliard equation with asymmetric mixture showing interconnected regions on a domain of size $2\pi \times 2\pi$ develop as a function of time. We can see the finite size features beginning to form in the final panel where the domain has reduced to two large binary regions (one blue, one red).}}
    \label{fig7:interconnected}
\end{figure}

\section{Cahn--Hilliard Equation -- Symmetric Mixtures}
\label{sec7:CH_symm}
The Cahn--Hilliard equation initialised with a symmetric mixture develops into a domain of interconnected regions, an example of a simulation initialised with a symmetric mixture showing the development of these interconnected regions can be seen in Figure~\ref{fig7:interconnected}.
These interconnected regions coarsen over time and eventually coalesce to form two distinct regions of $C = \pm 1$ joined together by an interface whose width is characterised by $\gamma$, once the two regions form the domain begins to reach a steady state and growth stops.
The length of time taken for this to occur and the size of the two domains is dictated by the finite size of the domain in which the dynamics takes place.
Again in this section we are applying the methodology described in the second part of Section~\ref{sec7:methodology}.
The initial conditions are all randomised and drawn from a uniform distributions between $-0.1$ and $0.1$ to ensure a symmetric mixture develops.

\begin{figure}
    \centering
    \begin{subfigure}[b]{0.49\textwidth}
        \centering
        \includegraphics[width=1.0\textwidth]{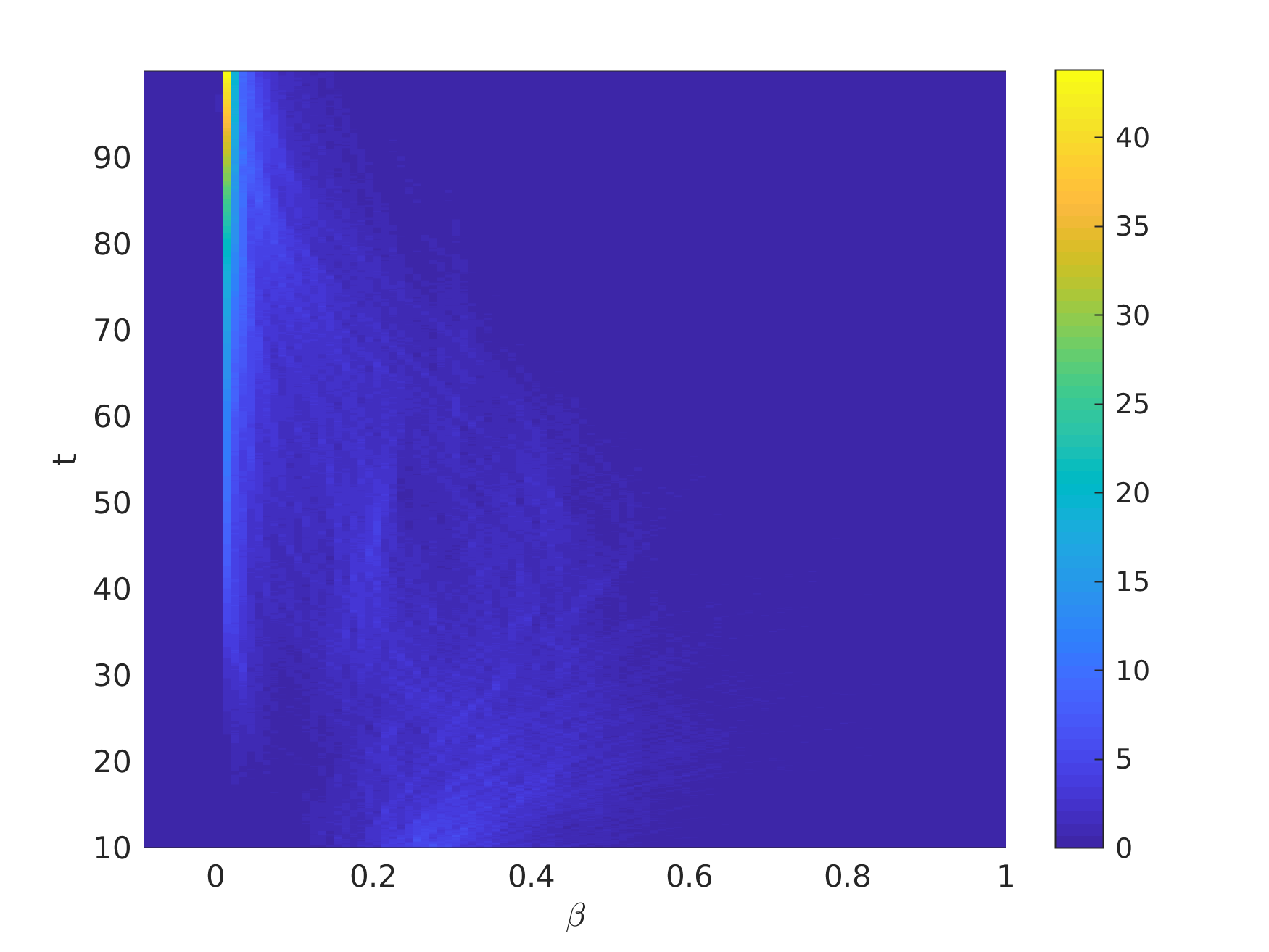}
    \caption{$2\pi \times 2\pi$}
    \label{fig7:spacetime2pi256}
    \end{subfigure}
    \hfill
    \begin{subfigure}[b]{0.49\textwidth}
        \centering
        \includegraphics[width=1.0\textwidth]{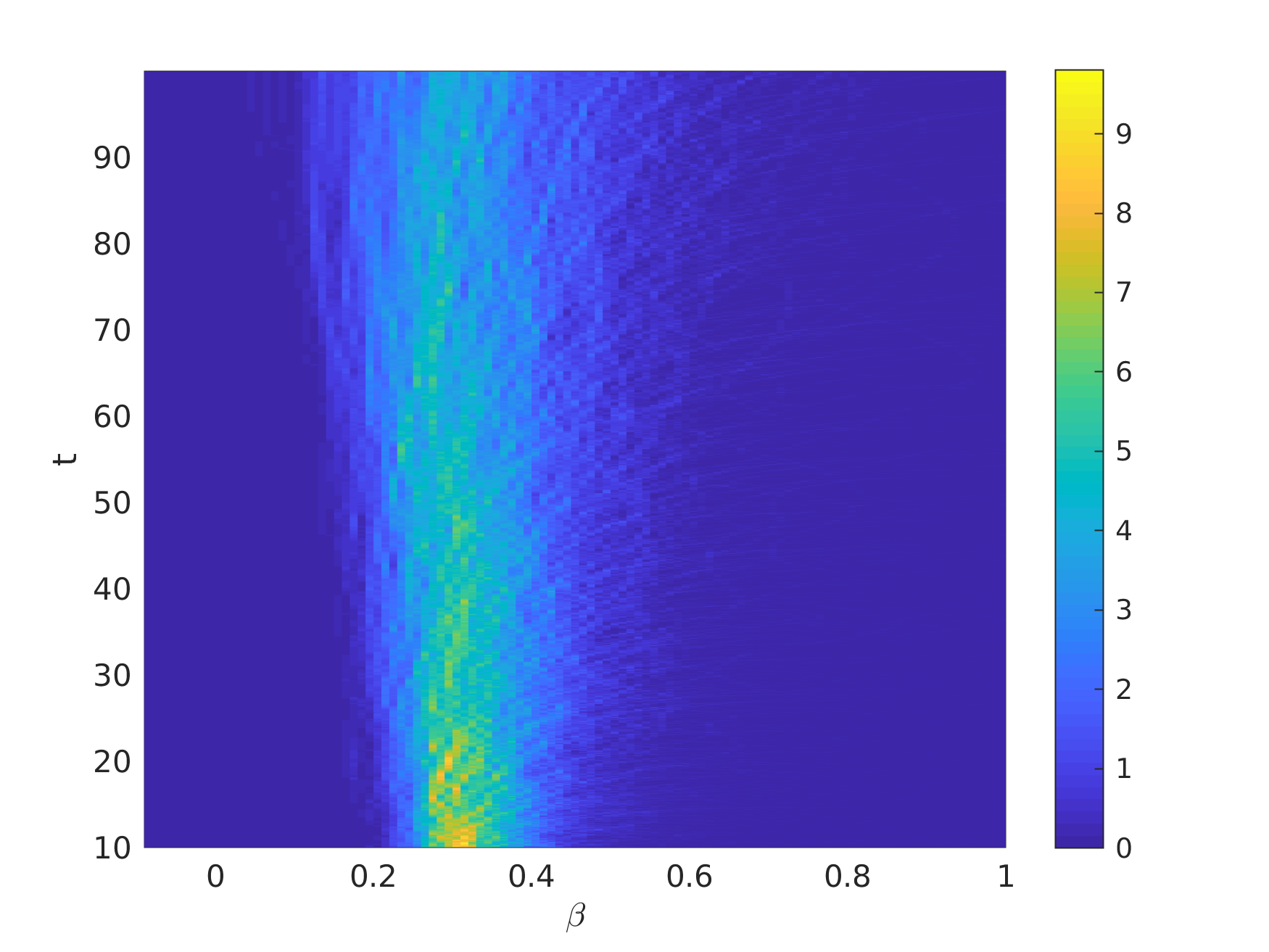}
    \caption{$4\pi \times 4\pi$}
    \label{fig7:spacetime4pi512}
    \end{subfigure}
    \vskip\baselineskip
    \begin{subfigure}[b]{0.49\textwidth}
        \centering
        \includegraphics[width=1.0\textwidth]{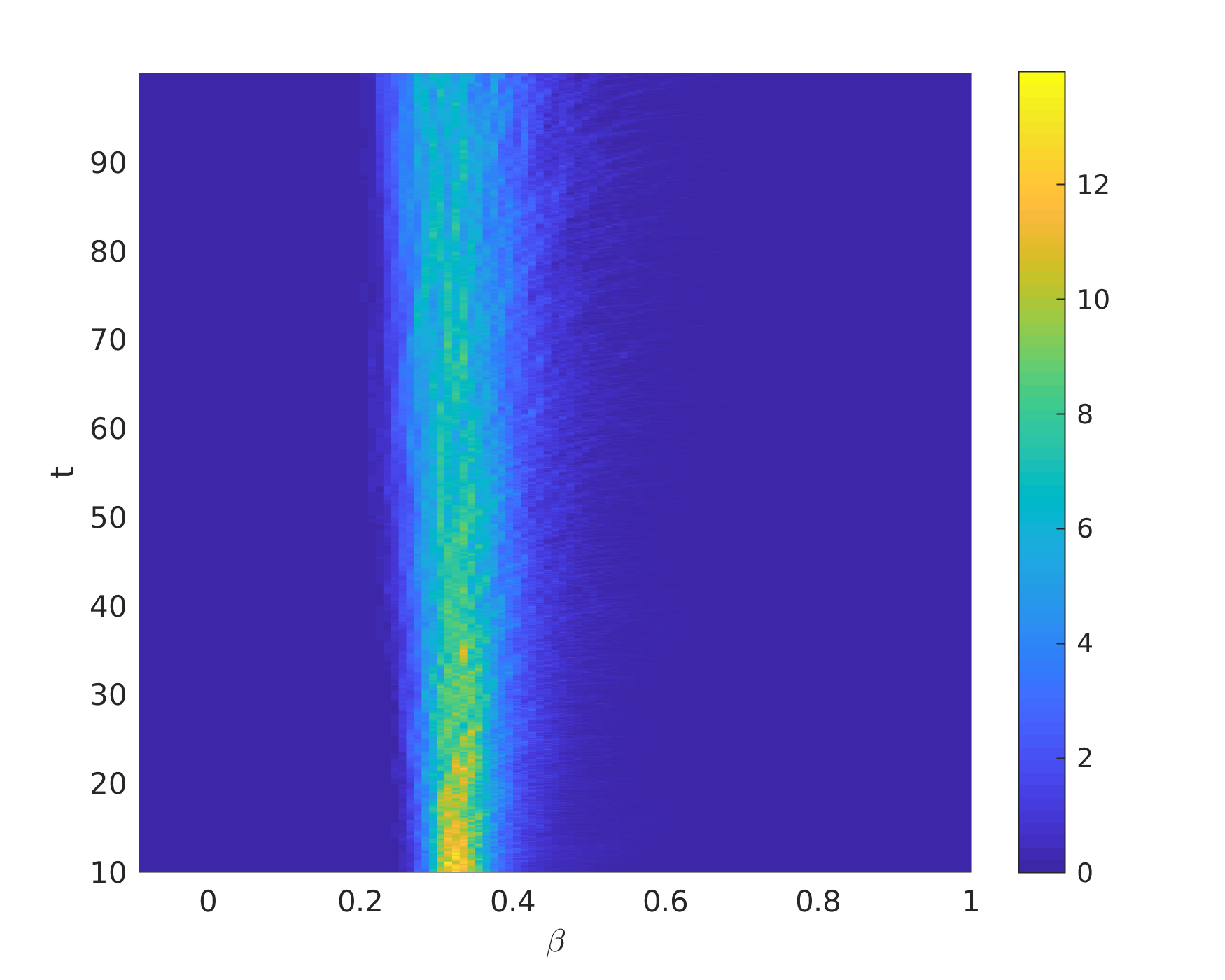}
    \caption{$8\pi \times 8\pi$}
    \label{fig7:spacetime8pi1024}
    \end{subfigure}
    \begin{subfigure}[b]{0.49\textwidth}
        \centering
        \includegraphics[width=1.0\textwidth]{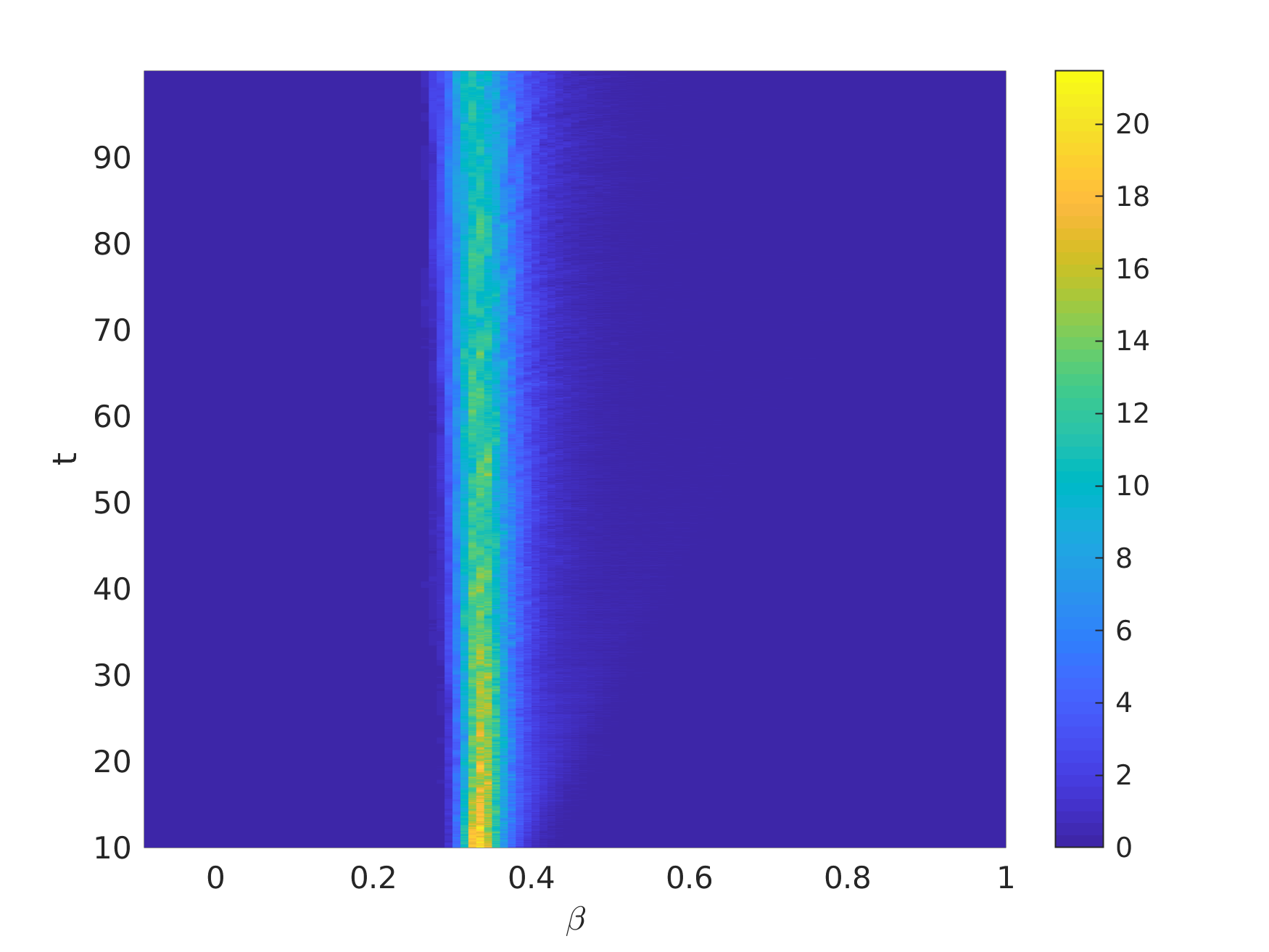}
    \caption{$16\pi \times 16\pi$}
    \label{fig7:spacetime16pi2048}
    \end{subfigure}
        \caption{\Acomment{Plots of the distribution of $\beta$ as a function of time for the Cahn--Hilliard equation with a symmetric mixture with varying domain sizes.}}
    \label{fig7:spacetimeSymmetric}
\end{figure}

\Acomment{We begin by studying the distribution--time plots of the simulations in Figure~\ref{fig7:spacetimeSymmetric}.}
The effect of the finite size of the domain is particularly prevalent in a $2\pi \times 2\pi$ domain where most of the values for $\beta$ go to $0$ as the domains stop growing due to hitting late stage, steady state, finite size effects such as those developing in Figure~\ref{fig7:contour500cahn}.
It can be seen clearly that as the domain size is increased the distribution moves towards the expected value of $1/3$ and narrows.
The narrowing of the distributions and the growth of the peaks is emphasised in the growth of the maximum values in the \Acomment{distribution--time} plots, captured in the colour--bars where between $4\pi$ and $16\pi$ the peak increases by a factor of $\approx 2$. 
We expect this trend to continue as the domain size is increased.

\begin{figure}
    \centering
    \begin{subfigure}[b]{0.49\textwidth}
        \centering
        \includegraphics[width=1.0\textwidth]{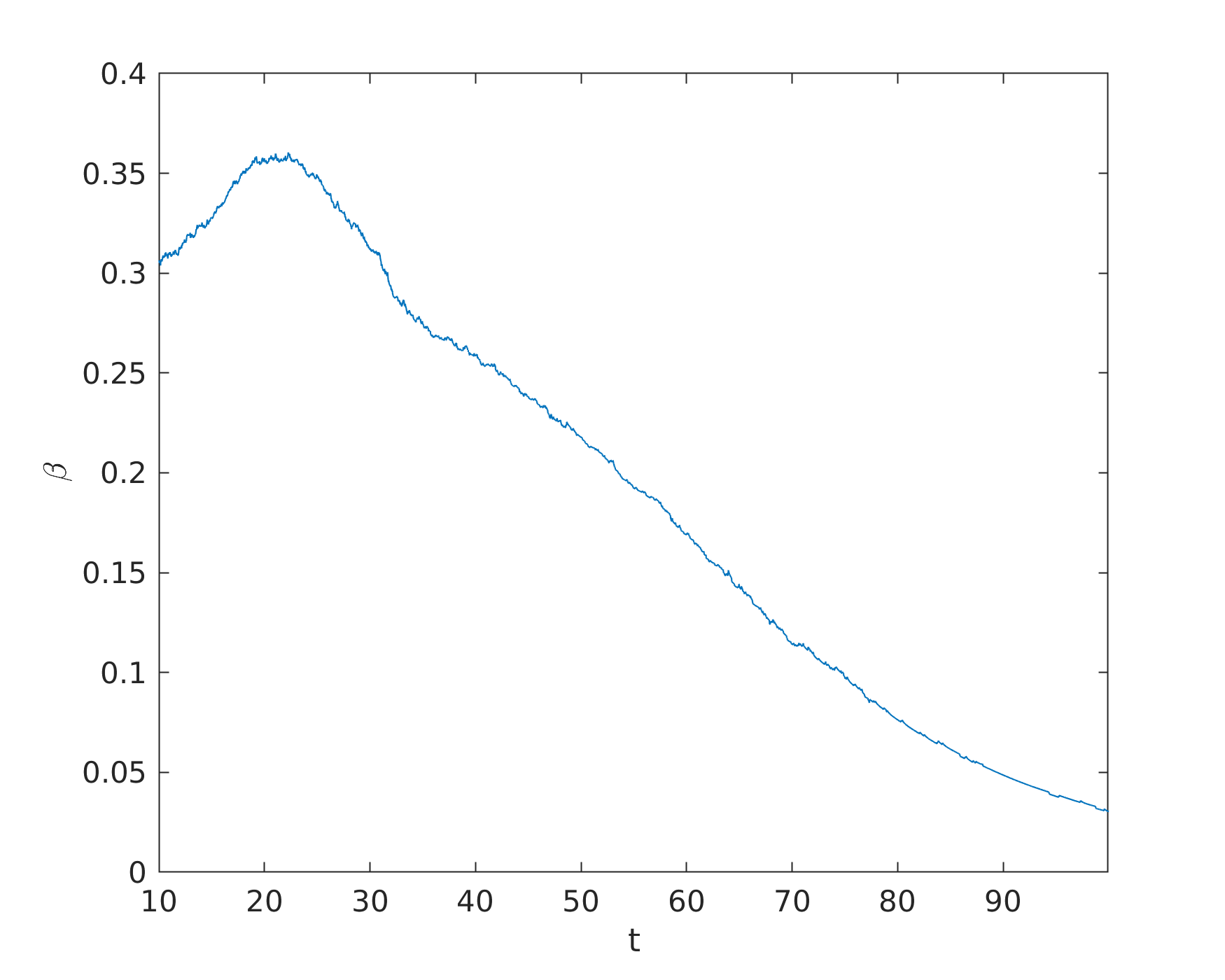}
    \caption{$2\pi \times 2\pi$}
    \label{fig7:mean2pi256}
    \end{subfigure}
    \hfill
    \begin{subfigure}[b]{0.49\textwidth}
        \centering
        \includegraphics[width=1.0\textwidth]{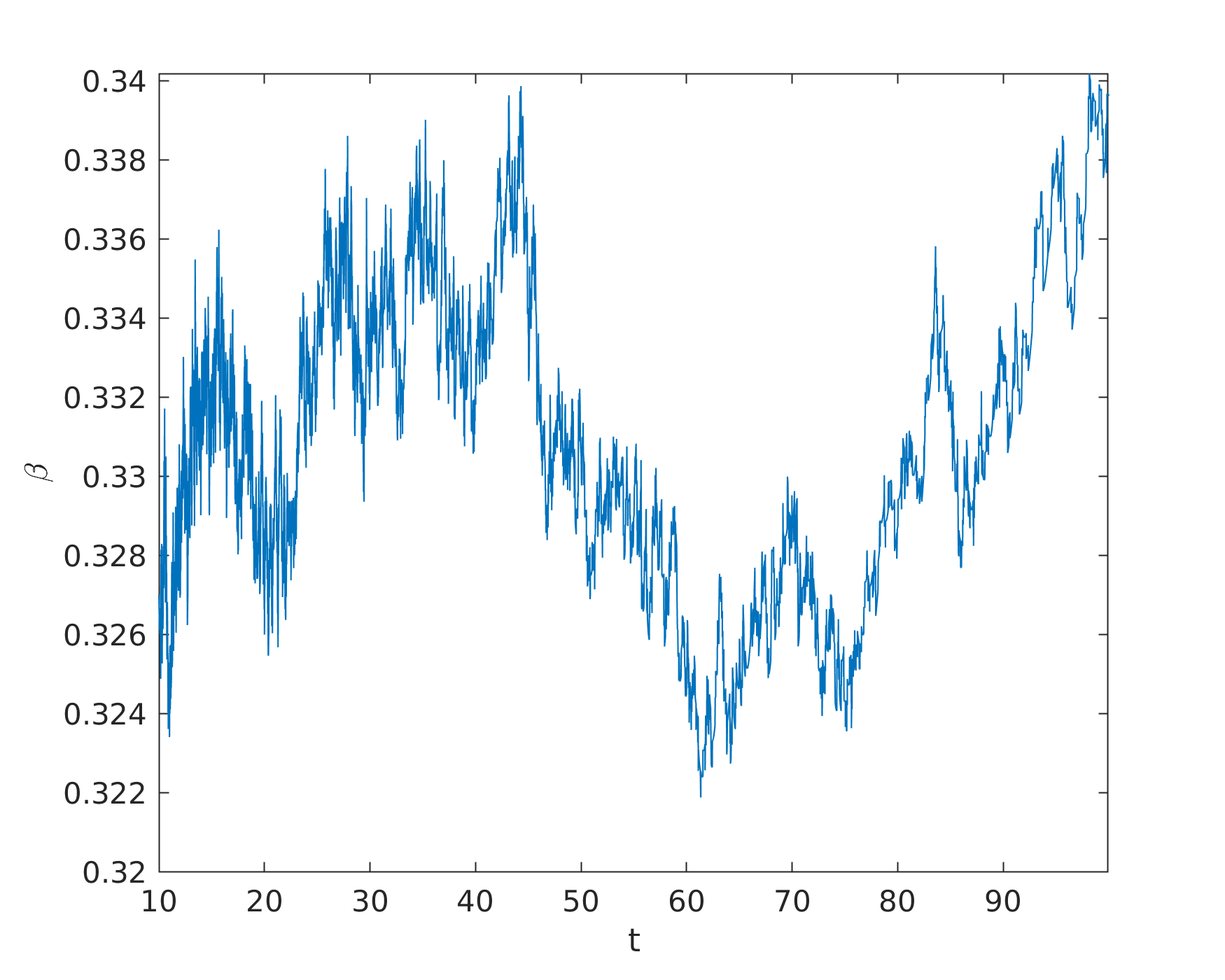}
    \caption{$4\pi \times 4\pi$}
    \label{fig7:mean4pi512}
    \end{subfigure}
    \vskip\baselineskip
    \begin{subfigure}[b]{0.49\textwidth}
        \centering
        \includegraphics[width=1.0\textwidth]{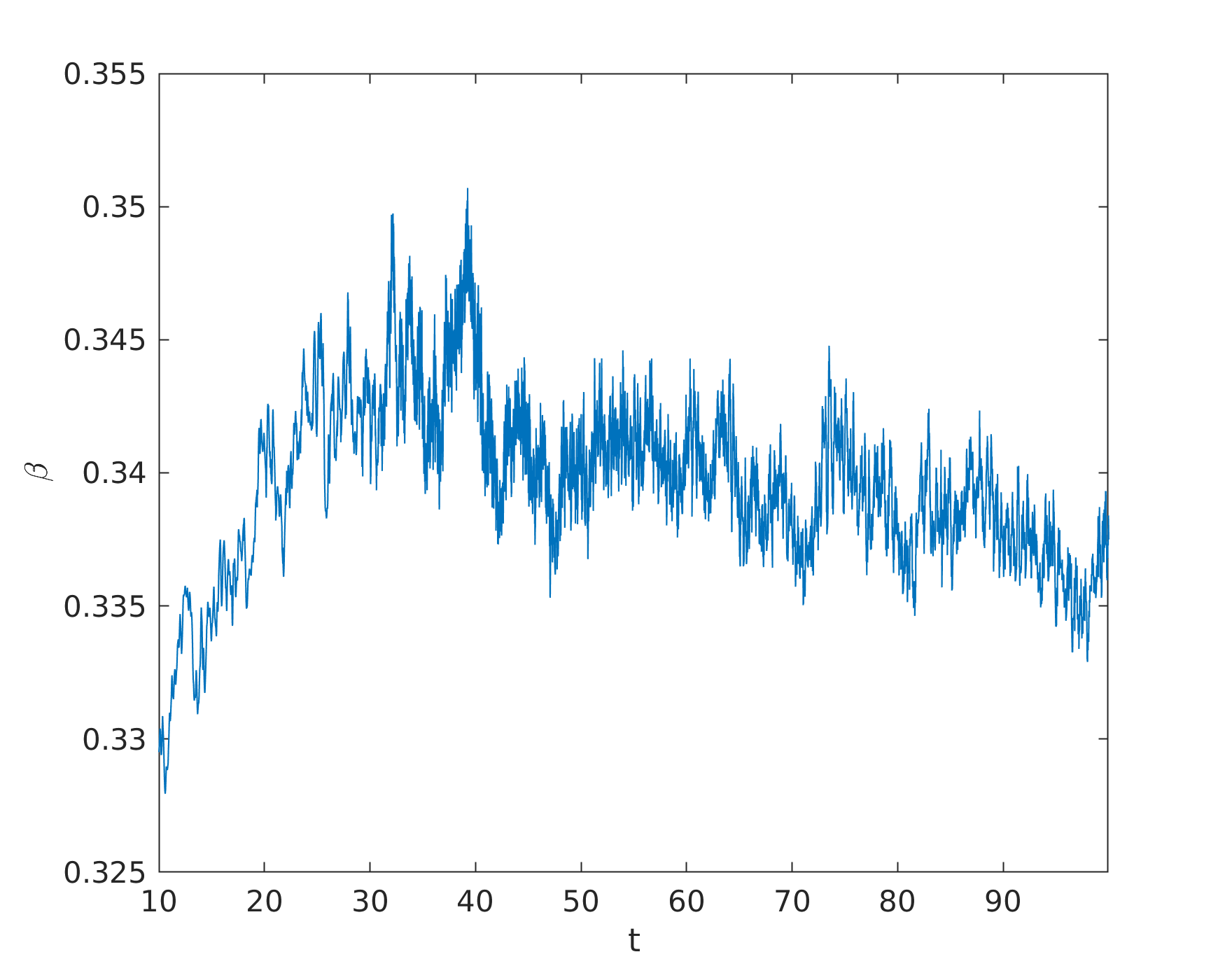}
    \caption{$8\pi \times 8\pi$}
    \label{fig7:mean8pi1024}
    \end{subfigure}
    \begin{subfigure}[b]{0.49\textwidth}
        \centering
        \includegraphics[width=1.0\textwidth]{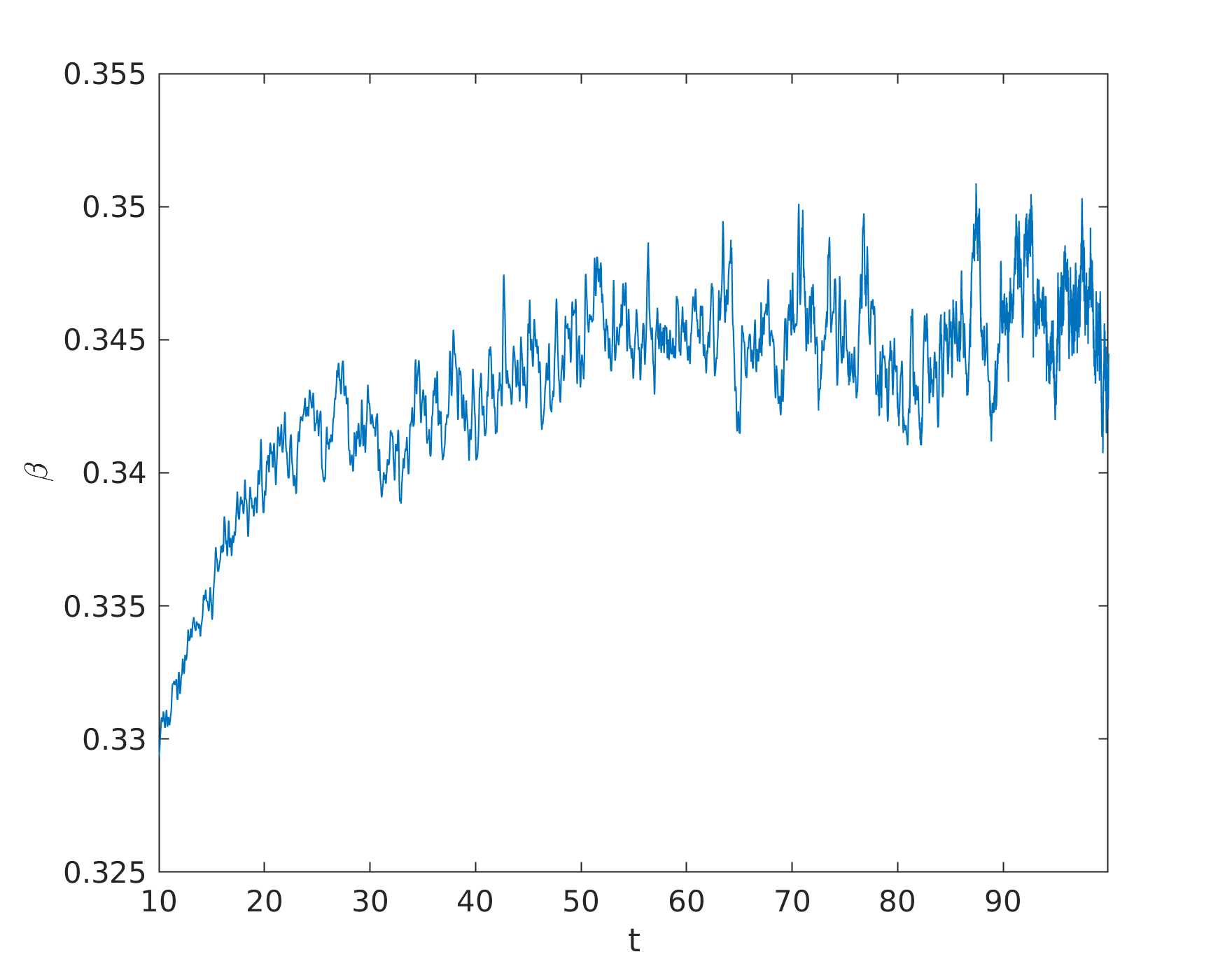}
    \caption{$16\pi \times 16\pi$}
    \label{fig7:mean16pi2048}
    \end{subfigure}
        \caption{Plots of the mean value of $\beta$ as a function of time for the Cahn--Hilliard equation with a symmetric mixture over varying domain sizes.}
    \label{fig7:meanSymmetric}
\end{figure}

Our discussion of how the domain of size $2\pi \times 2\pi$ has its growth rate reduced to a distribution around $0$ are again confirmed in Figure~\ref{fig7:meanSymmetric}.
The values of the mean rapidly drops off in time, for the larger domains we can see much clearer trends around $1/3$ and above.
The values above $1/3$ are due to the positive skew which we shall discuss later in this section.
As the domain size is increased the various plots of the mean also make clear how the distribution of $\beta$ becomes steadier, levelling out at large values of $t$.
The steadiness of the distribution improves with increasing domain size.
Thus not only are the position and height functions of the finite size effects but also the steadiness of the distribution.

\begin{figure}
    \centering
    \begin{subfigure}[b]{0.49\textwidth}
        \centering
        \includegraphics[width=1.0\textwidth]{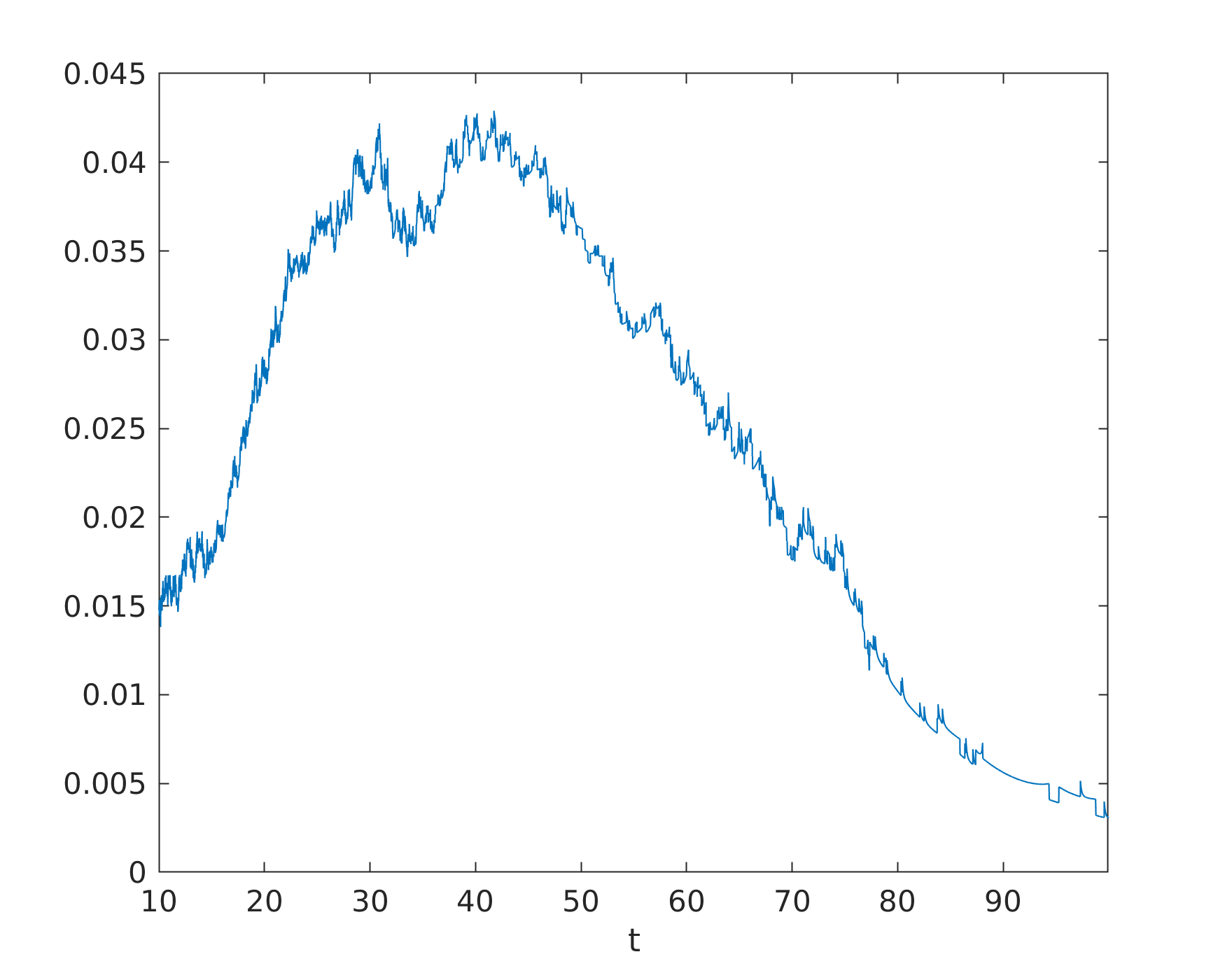}
    \caption{$2\pi \times 2\pi$}
    \label{fig7:variance2pi256}
    \end{subfigure}
    \hfill
    \begin{subfigure}[b]{0.49\textwidth}
        \centering
        \includegraphics[width=1.0\textwidth]{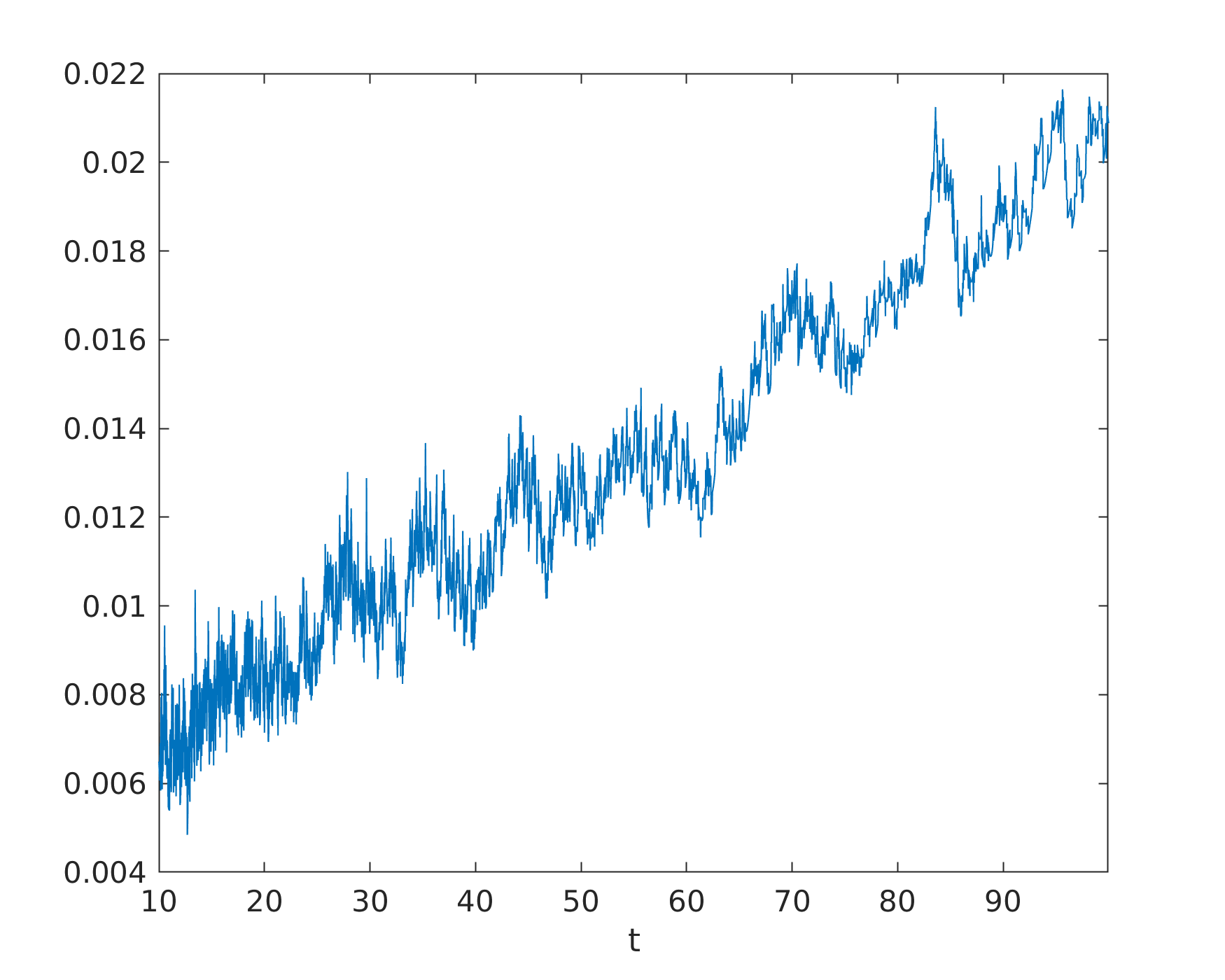}
    \caption{$4\pi \times 4\pi$}
    \label{fig7:variance4pi512}
    \end{subfigure}
    \vskip\baselineskip
    \begin{subfigure}[b]{0.49\textwidth}
        \centering
        \includegraphics[width=1.0\textwidth]{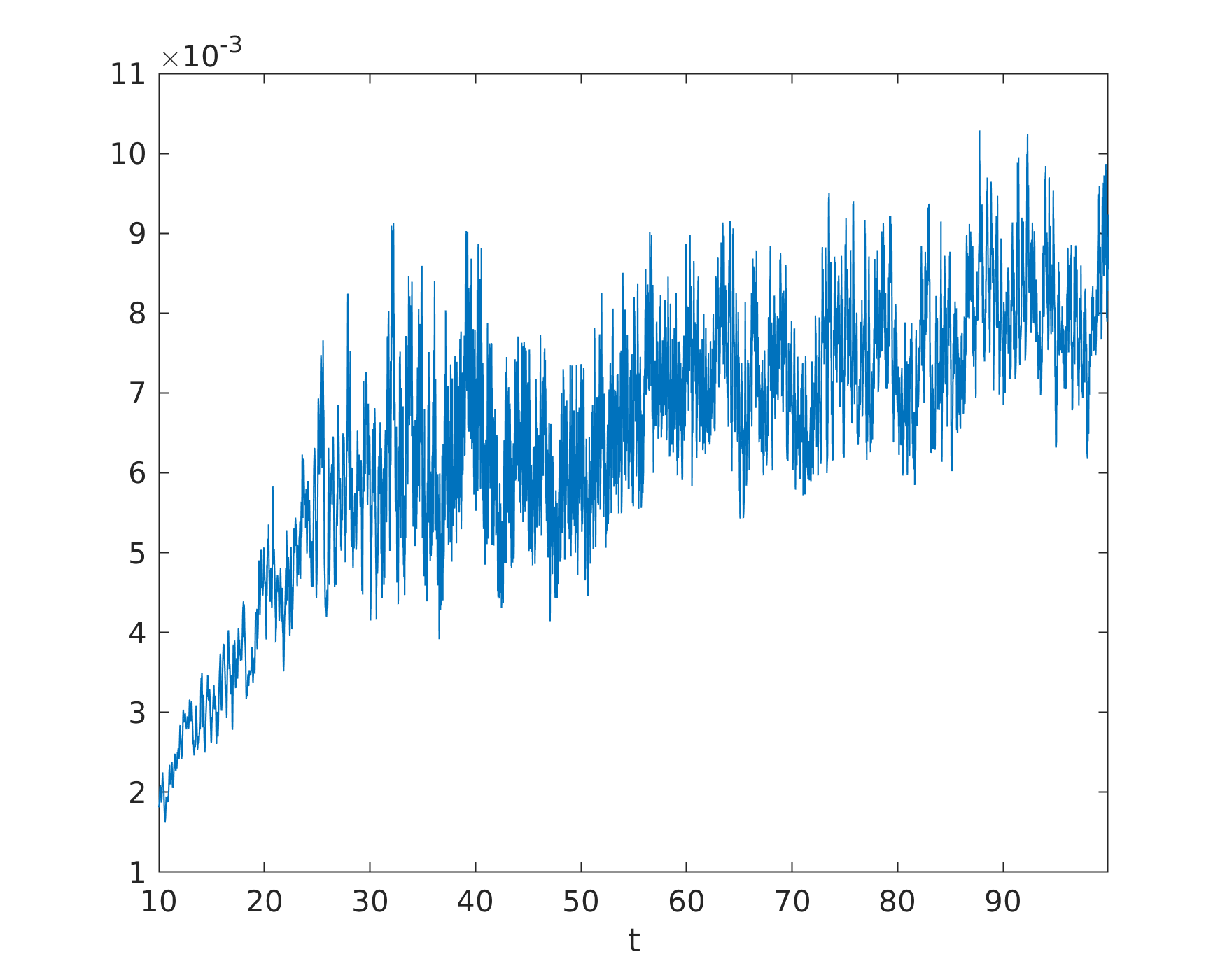}
    \caption{$8\pi \times 8\pi$}
    \label{fig7:variance8pi1024}
    \end{subfigure}
    \begin{subfigure}[b]{0.49\textwidth}
        \centering
        \includegraphics[width=1.0\textwidth]{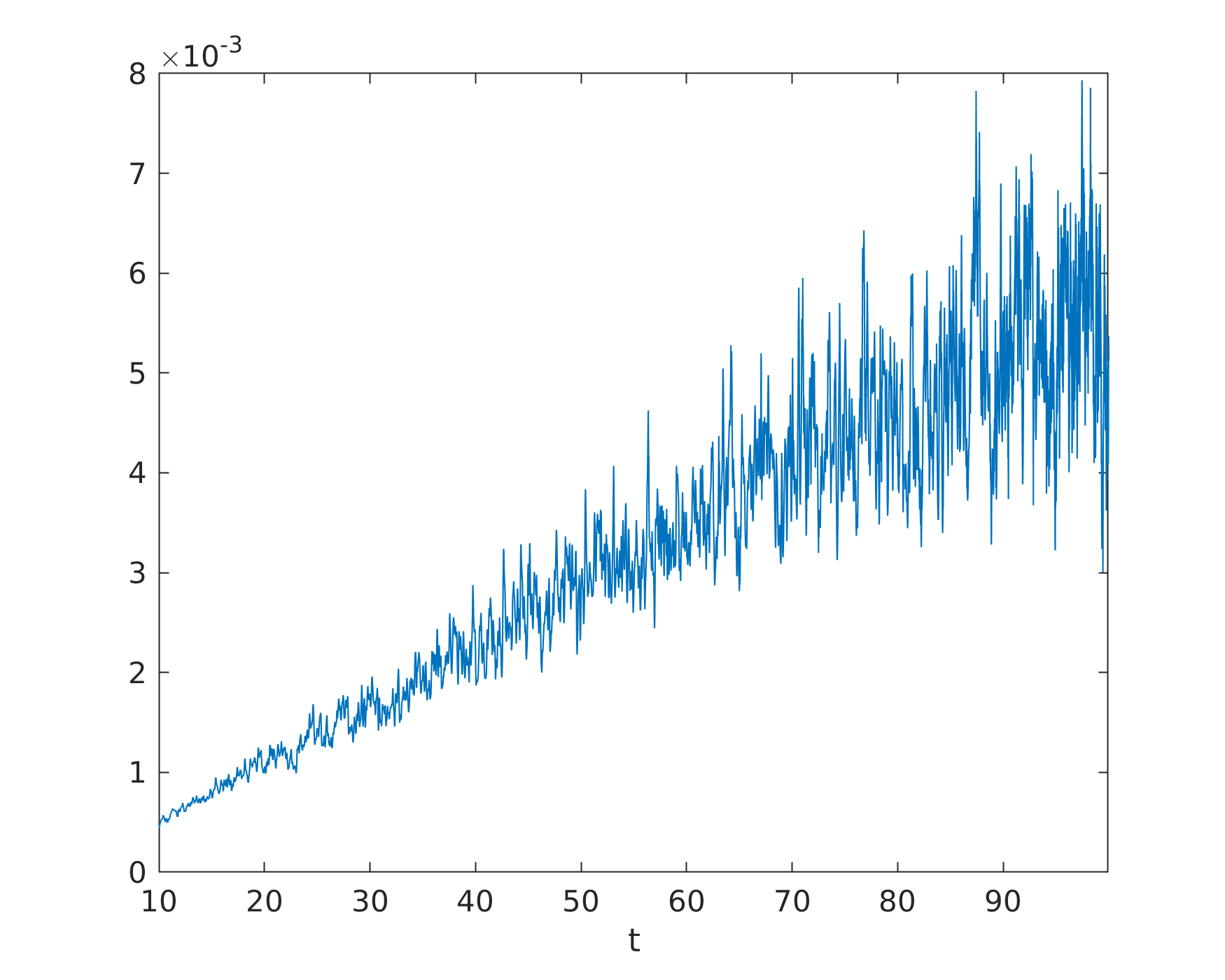}
    \caption{$16\pi \times 16\pi$}
    \label{fig7:variance16pi2048}
    \end{subfigure}
        \caption{Plots of the variance of $\beta$ as a function of time for the Cahn--Hilliard equation with a symmetric mixture over varying domain sizes.}
    \label{fig7:varianceSymmetric}
\end{figure}

Plot of the variance of $\beta$ for the case of a symmetric Cahn--Hilliard mixture can be seen in Figure~\ref{fig7:varianceSymmetric}.
It is clear in all of these plots that the variance is time dependent, particularly at early times, thus we can say that the underlying dynamics of the system is driven by anomalous diffusion. 
The development of a steady state with respect to the variance can be seen in the $8\pi \times 8\pi$ plot in Figure~\ref{fig7:variance8pi1024}, this steady state suggests that at late times the diffusion is constant rather than anomalous. 
We can see that the $16\pi \times 16 \pi$ domain has yet to reach this steady state, while the two smaller domains $2\pi$ and $4\pi$ are experiencing greater movement in variance due to experiencing considerably earlier stage finite size effects in some of their simulations.
This is particularity clear in Figure~\ref{fig7:spacetime4pi512} where the width of the distribution is extremely wide due a diverse range of simulation behaviours, some approaching finite size effects like those in Figure~\ref{fig7:interconnected} while others are still growing.
A broad range of behaviours is also evident in the earlier development of the $2\pi$ case shown in Figure~\ref{fig7:spacetime2pi256}.

\begin{figure}
    \centering
    \begin{subfigure}[b]{0.49\textwidth}
        \centering
        \includegraphics[width=1.0\textwidth]{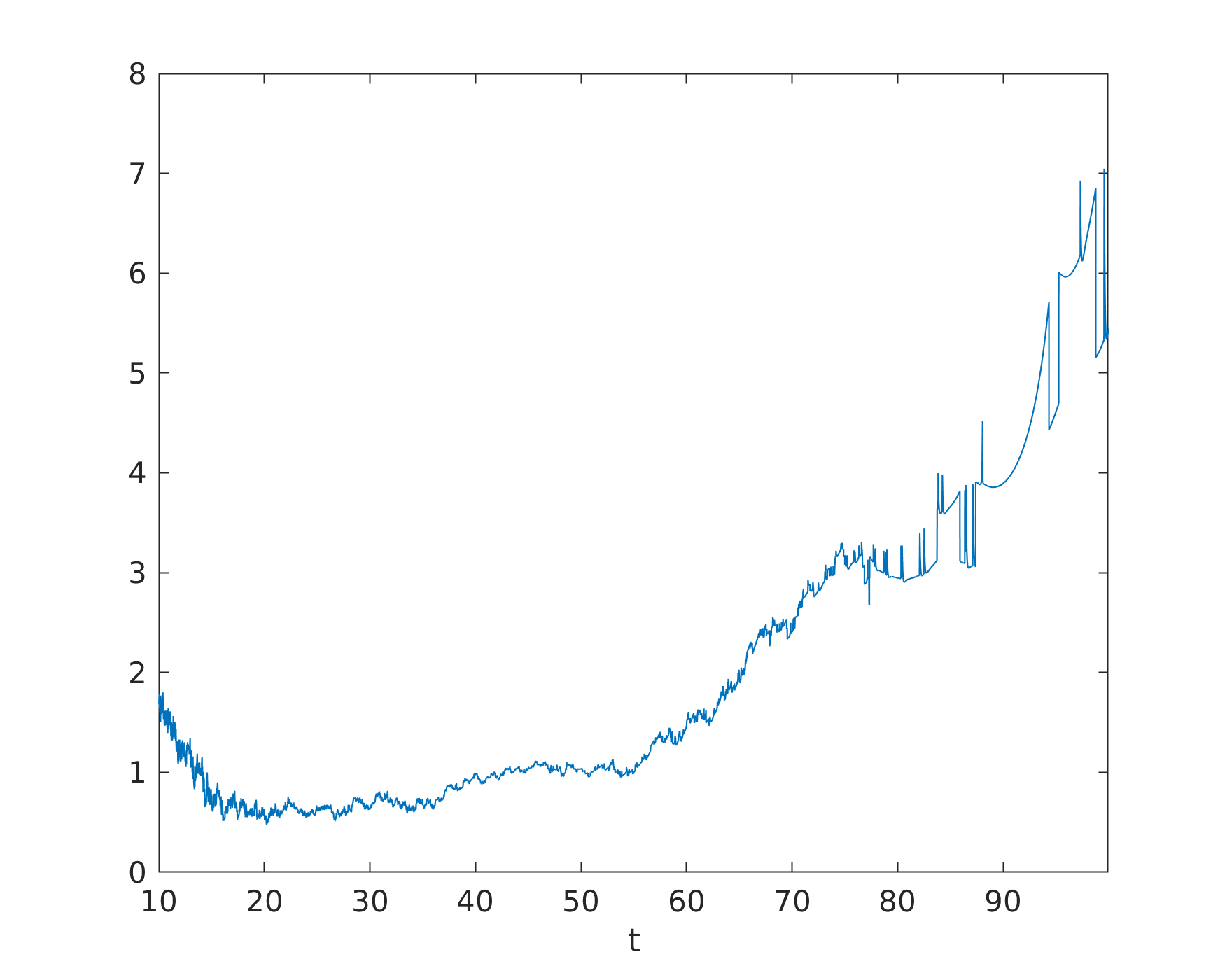}
    \caption{$2\pi \times 2\pi$}
    \label{fig7:skew2pi256}
    \end{subfigure}
    \hfill
    \begin{subfigure}[b]{0.49\textwidth}
        \centering
        \includegraphics[width=1.0\textwidth]{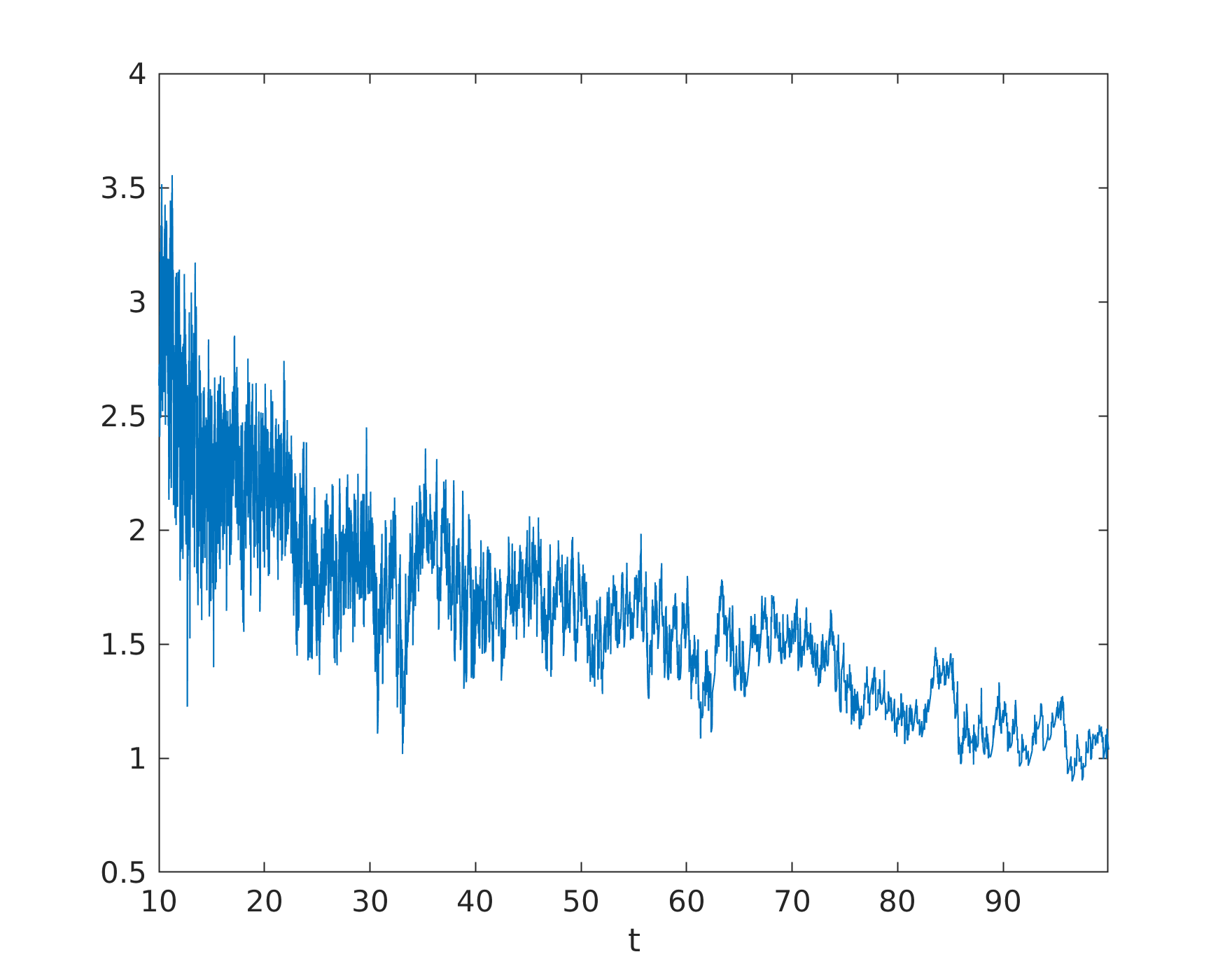}
    \caption{$4\pi \times 4\pi$}
    \label{fig7:skew4pi512}
    \end{subfigure}
    \vskip\baselineskip
    \begin{subfigure}[b]{0.49\textwidth}
        \centering
        \includegraphics[width=1.0\textwidth]{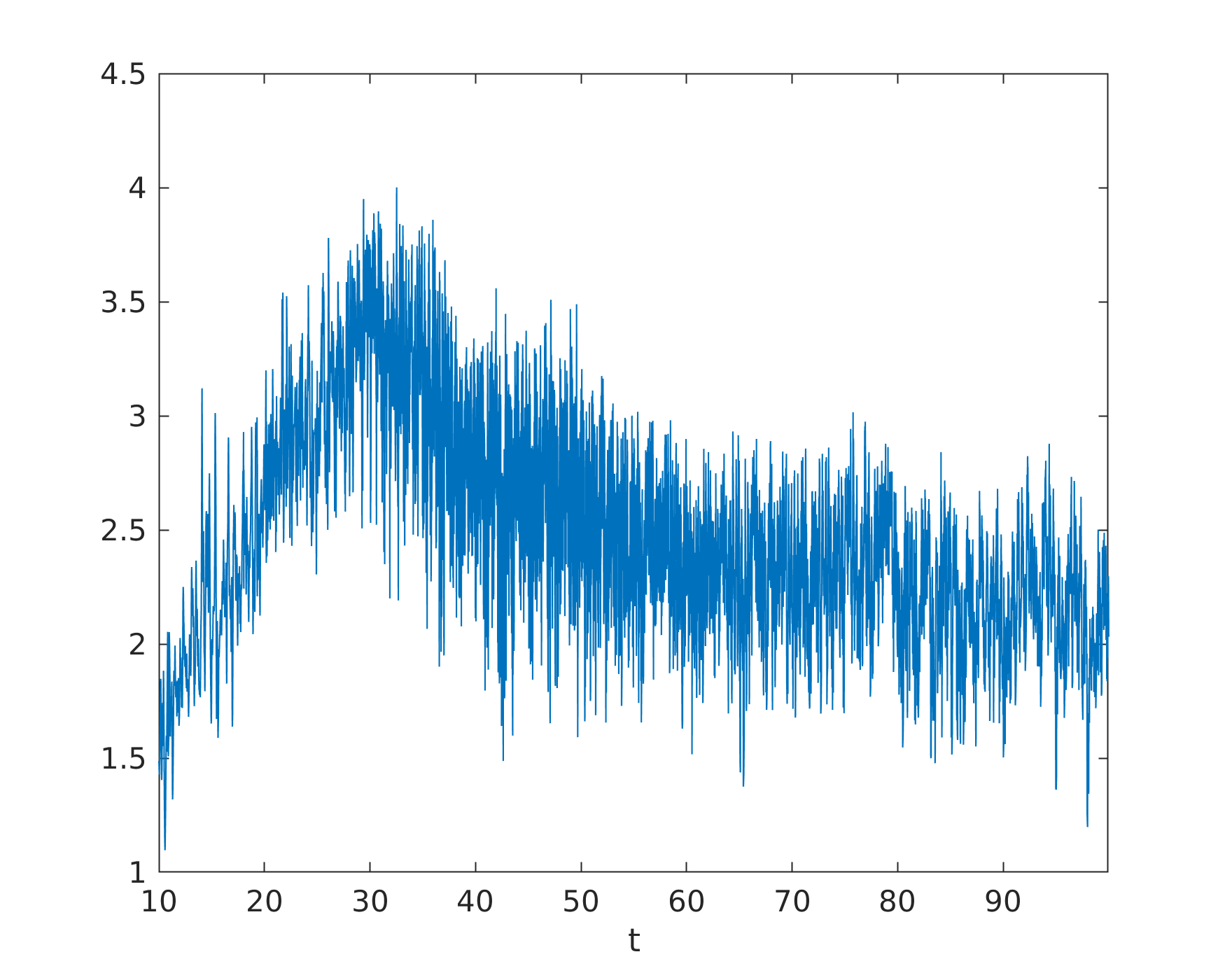}
    \caption{$8\pi \times 8\pi$}
    \label{fig7:skew8pi1024}
    \end{subfigure}
    \begin{subfigure}[b]{0.49\textwidth}
        \centering
        \includegraphics[width=1.0\textwidth]{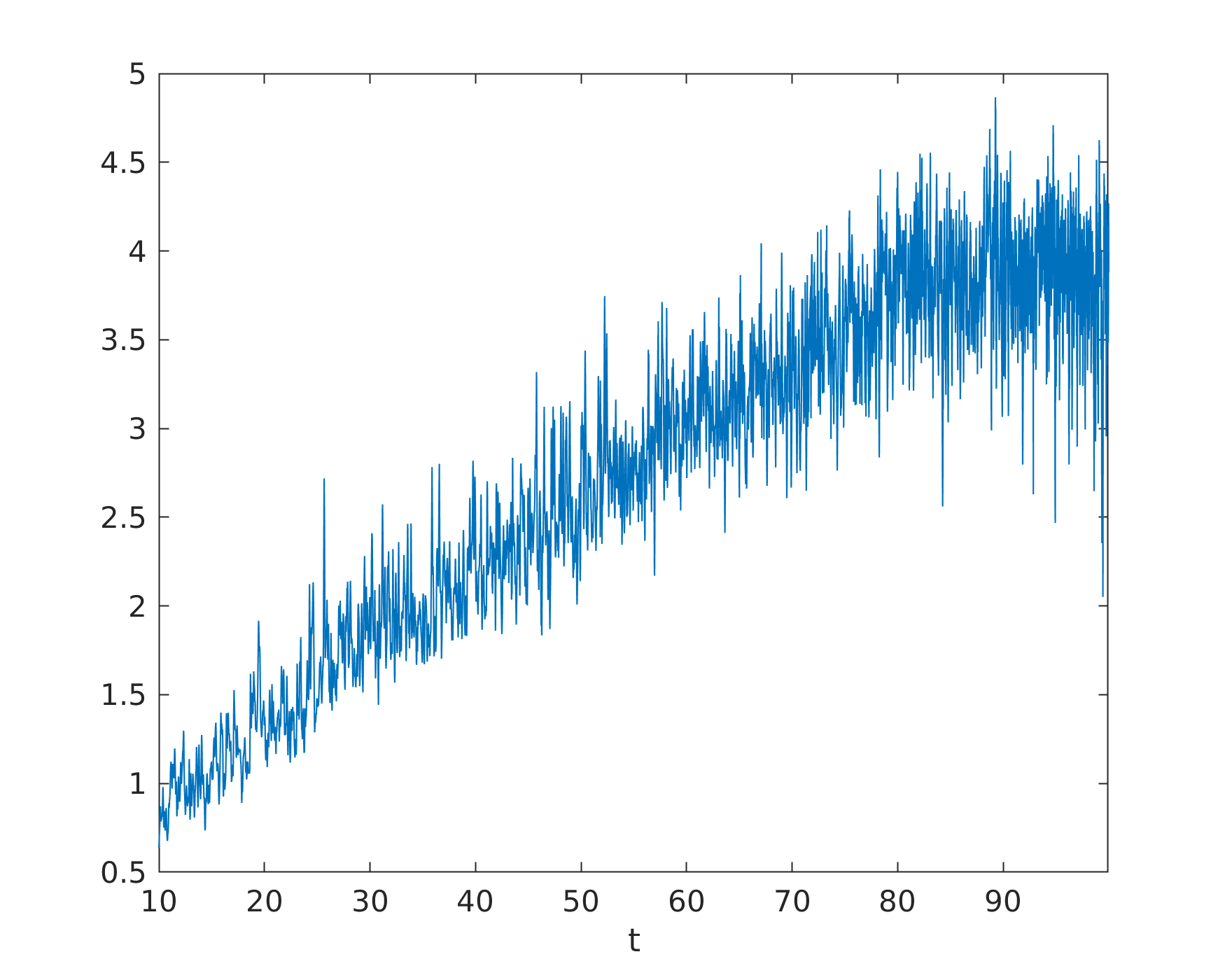}
    \caption{$16\pi \times 16\pi$}
    \label{fig7:skew16pi2048}
    \end{subfigure}
        \caption{Plots of the skewness of $\beta$ as a function of time for the Cahn--Hilliard equation with a symmetric mixture over varying domain sizes.}
    \label{fig7:skewSymmetric}
\end{figure}

In Figure~\ref{fig7:skewSymmetric} we can see the skewness of the distributions of $\beta$ as a function of time.
It is clear in all cases that the distribution is indeed positively skewed, this ensures that no negative growth rates appear which is an expected result, there should be no reduction is domain length scales in any of the simulations.
Like the variance the skewness is time dependent and again we can see the development of a steady state in the $8\pi \times 8\pi$ case shown in Figure~\ref{fig7:skew8pi1024}.
Again the $2\pi$ and $4\pi$ results are dominated by early stage finite size effects and the $16\pi$ domain has yet to reach its steady phase of growth.

\begin{figure}
    \centering
        \includegraphics[width=0.6\textwidth]{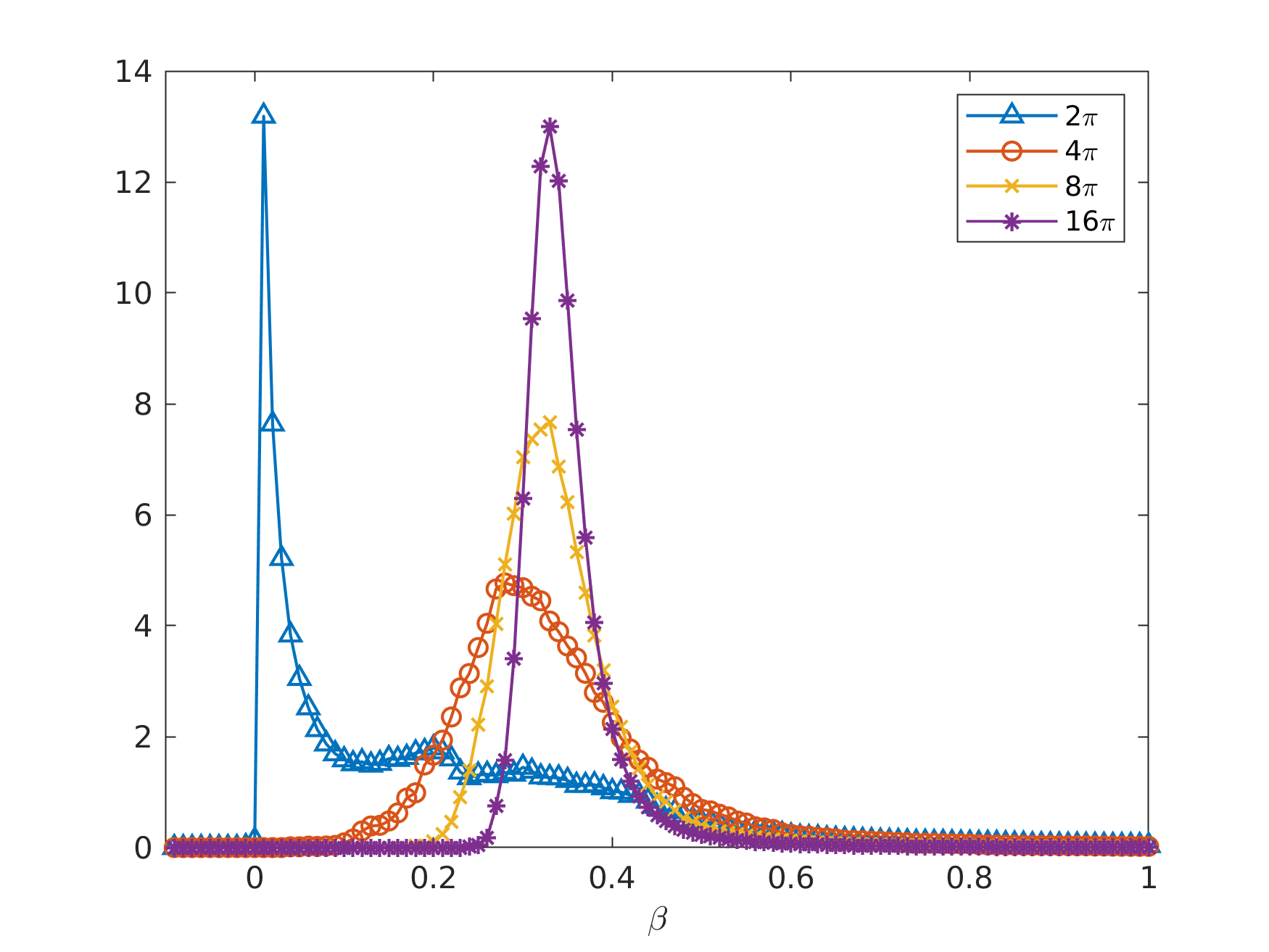}
        \caption{Plot showing the evolution of the distribution of $\beta$ for the Cahn--Hilliard equation with increasing domain size for a symmetric mixture, we can see a clear similarity between these results and those extracted from the ODE model in Figure~\ref{fig7:betaVaryN}.}
    \label{fig7:symmetricBeta}
\end{figure}

Finally in Figure~\ref{fig7:symmetricBeta} we plot the histograms of all values of $\beta$ presented in this section for various domain sizes, the histograms are sampled across all values of $\beta$ between $10 < t < 100$ shown in this section.
We can see a clear comparison between the results presented here and those for the ODE simulations of the bubble model presented in Figure~\ref{fig7:betaVaryN}.
The smallest domain/number of bubbles has a distribution centred at $0$ and then as the number of bubbles/size of the domain is increased the distribution moves toward $1/3$.
In this study of the symmetric Cahn--Hilliard we can see a clear trend of increasing height of the distribution and narrowing around $1/3$ as domain size increases, this results agrees with our ODE solutions and the predictions of LSW theory.
\Acomment{It is indicated here that as the domain size increases the distribution will indeed approach a $\delta$ function at $\beta = 1/3$.
Thus we have shown that the LSW theory and results we developed for Ostwald Ripening on a finite domain and its predictions of the behaviour of $\beta$ can be extended to symmetric system with interconnected domains.}

Summarising our results for the evolution of symmetric mixtures simulated with the Cahn--Hilliard equation we have shown that our results for how $\beta$ develops with increasing domain size are in--keeping with the results for $\beta$ when increasing bubble numbers in the ODE model.
The positions and steadiness of the distributions are dependent on the finite domain size and in particular the steadiness improves with increasing domain size. 
The finite size of the domain leads to a smearing of the $\delta$ function at $1/3$ predicted by LSW theory and in the limit of an infinite domain they will indeed approach the LSW predictions.


\begin{figure}
    \centering
    \begin{subfigure}[b]{0.49\textwidth}
        \centering
        \includegraphics[width=1.0\textwidth]{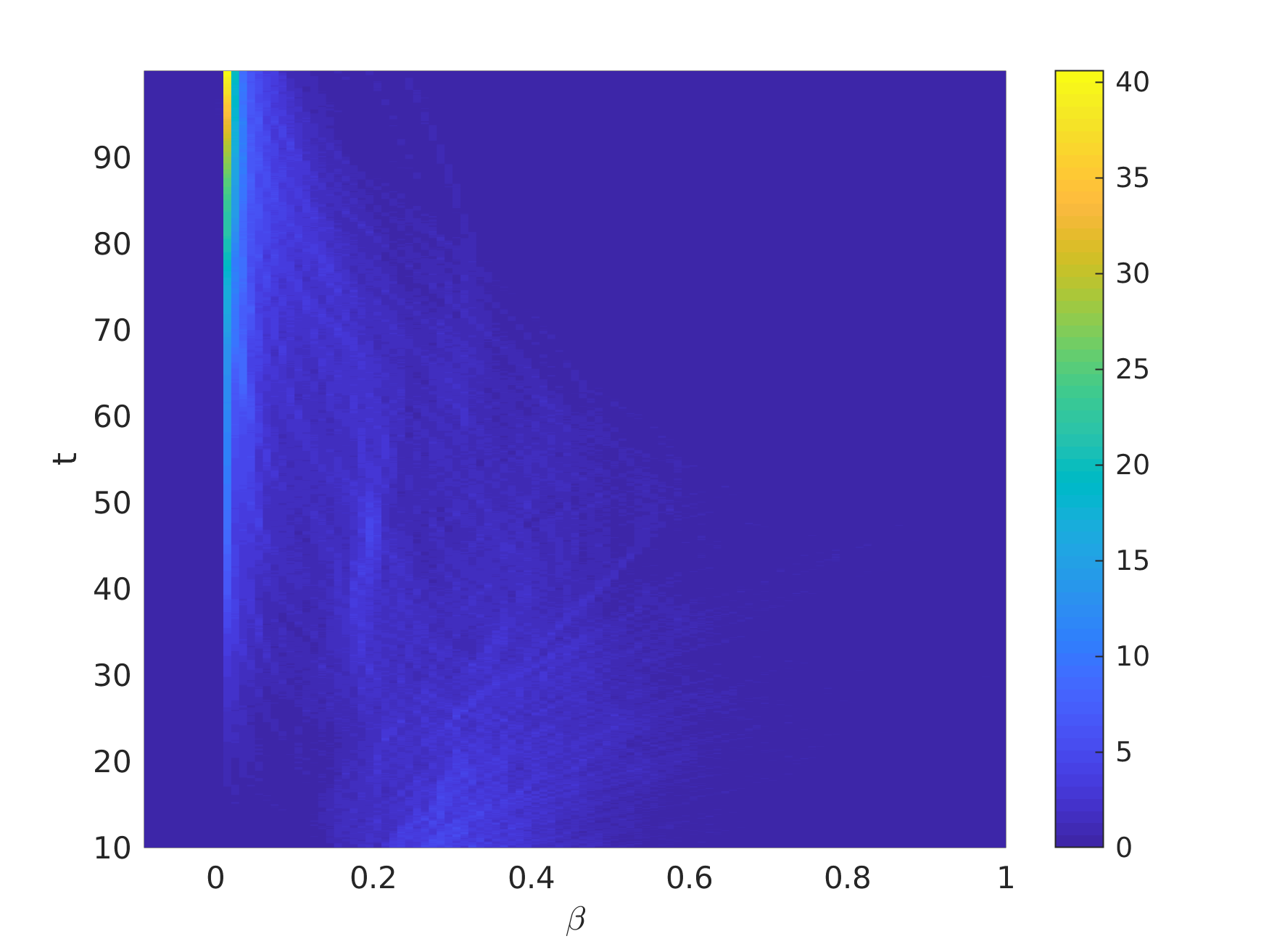}
    \caption{$2\pi \times 2\pi$}
    \label{fig7:spacetime2pi256Cooke}
    \end{subfigure}
    \hfill
    \begin{subfigure}[b]{0.49\textwidth}
        \centering
        \includegraphics[width=1.0\textwidth]{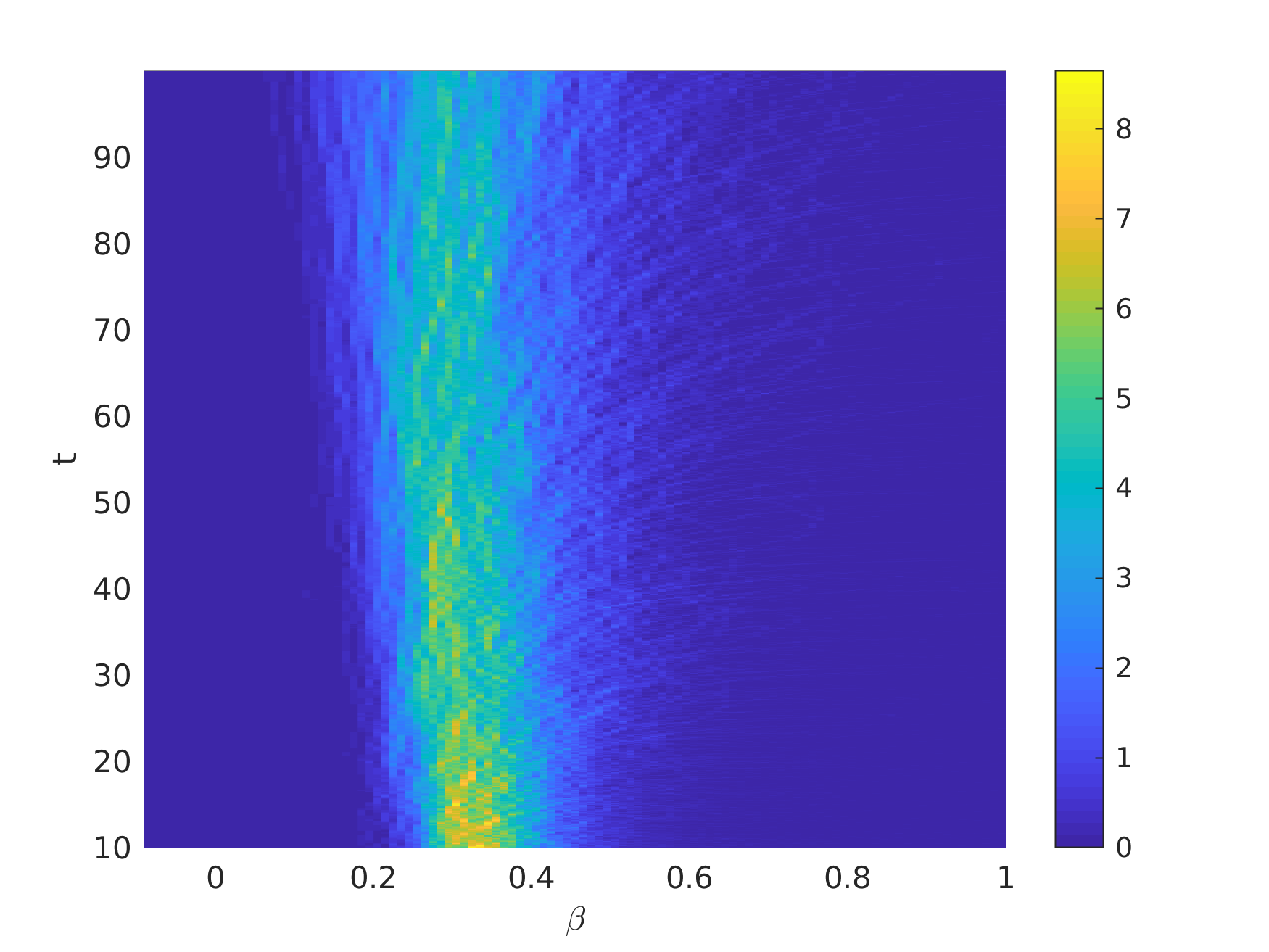}
    \caption{$4\pi \times 4\pi$}
    \label{fig7:spacetime4pi512Cooke}
    \end{subfigure}
    \vskip\baselineskip
    \begin{subfigure}[b]{0.49\textwidth}
        \centering
        \includegraphics[width=1.0\textwidth]{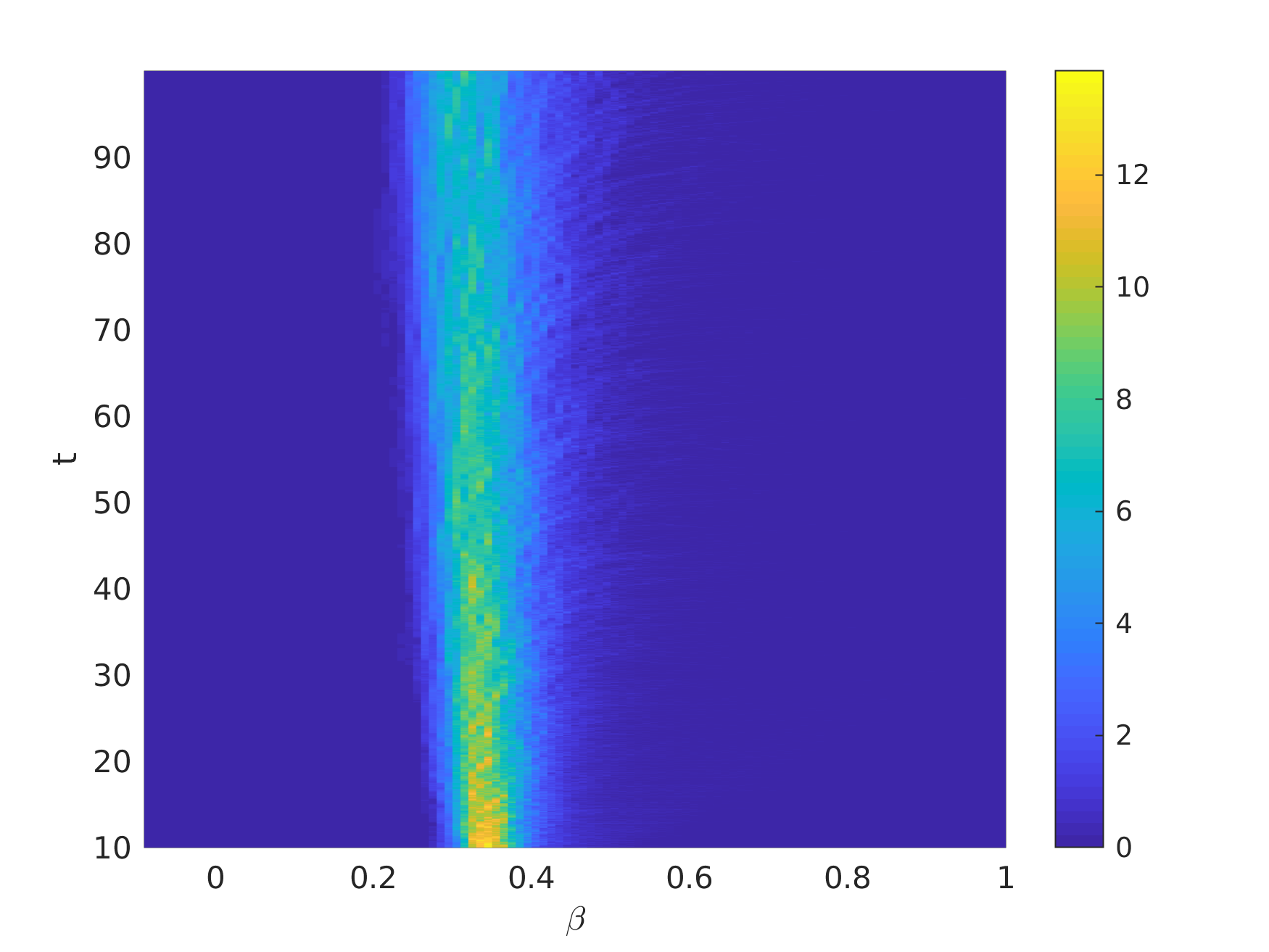}
    \caption{$8\pi \times 8\pi$}
    \label{fig7:spacetime8pi1024Cooke}
    \end{subfigure}
    \begin{subfigure}[b]{0.49\textwidth}
        \centering
        \includegraphics[width=1.0\textwidth]{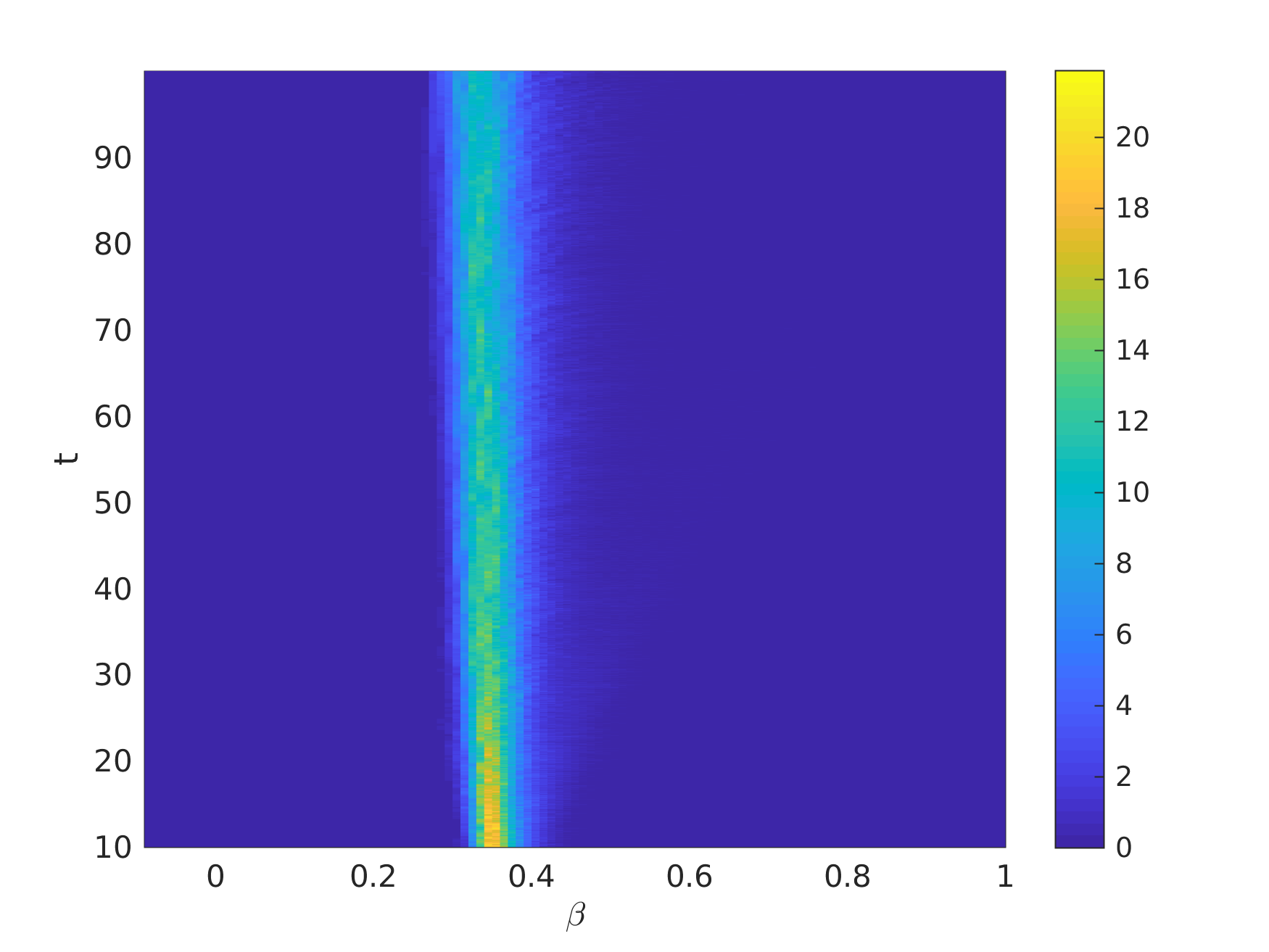}
    \caption{$16\pi \times 16\pi$}
    \label{fig7:spacetime16pi2048Cooke}
    \end{subfigure}
        \caption{\Acomment{Distribution--time} plots of $\beta$ for the Cahn--Hilliard--Cook equation with a symmetric mixture with varying domain sizes.}
    \label{fig7:spacetimeCooke}
\end{figure}

\begin{figure}
    \centering
    \begin{subfigure}[b]{0.49\textwidth}
        \centering
        \includegraphics[width=1.0\textwidth]{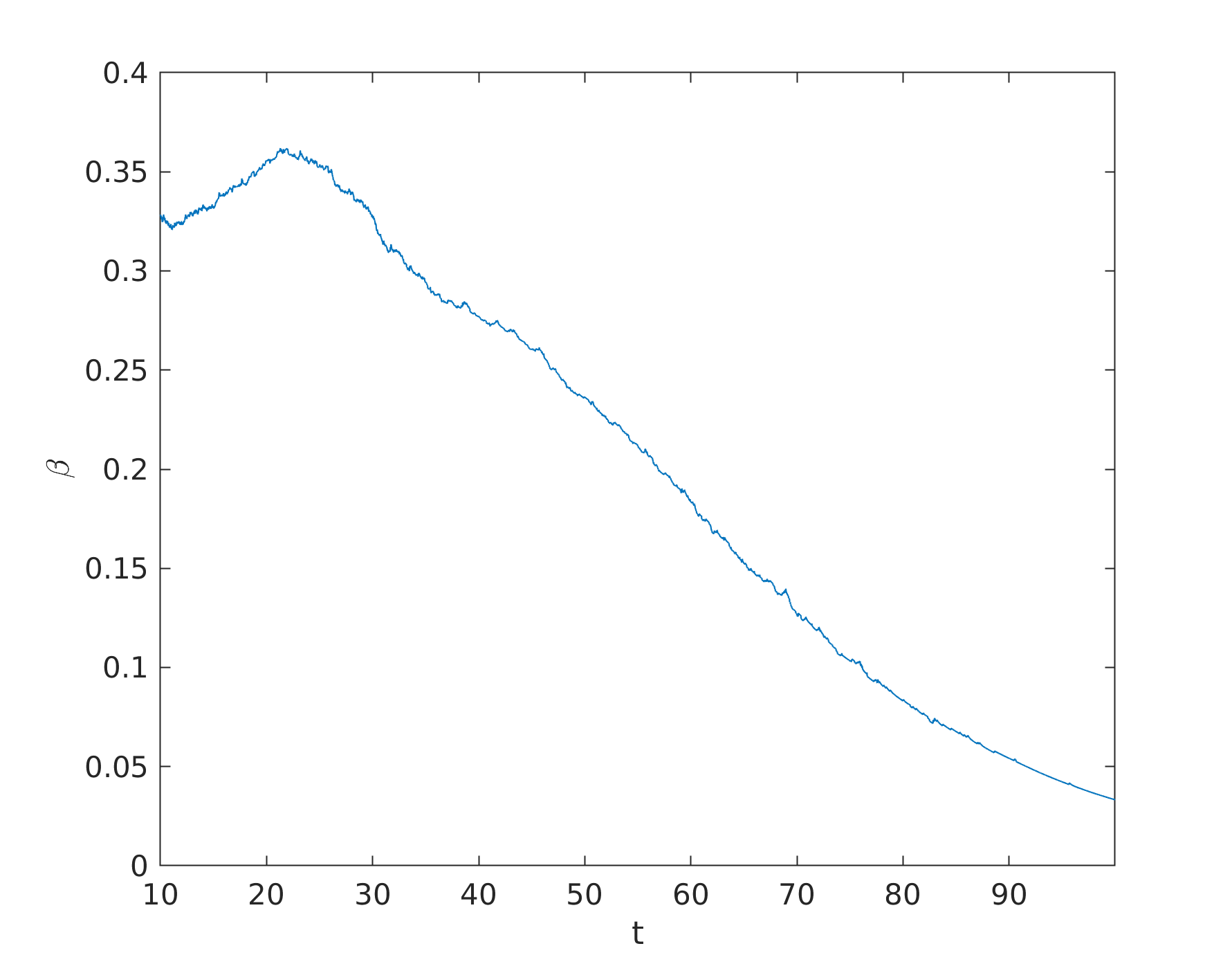}
    \caption{$2\pi \times 2\pi$}
    \label{fig7:mean2pi256Cooke}
    \end{subfigure}
    \hfill
    \begin{subfigure}[b]{0.49\textwidth}
        \centering
        \includegraphics[width=1.0\textwidth]{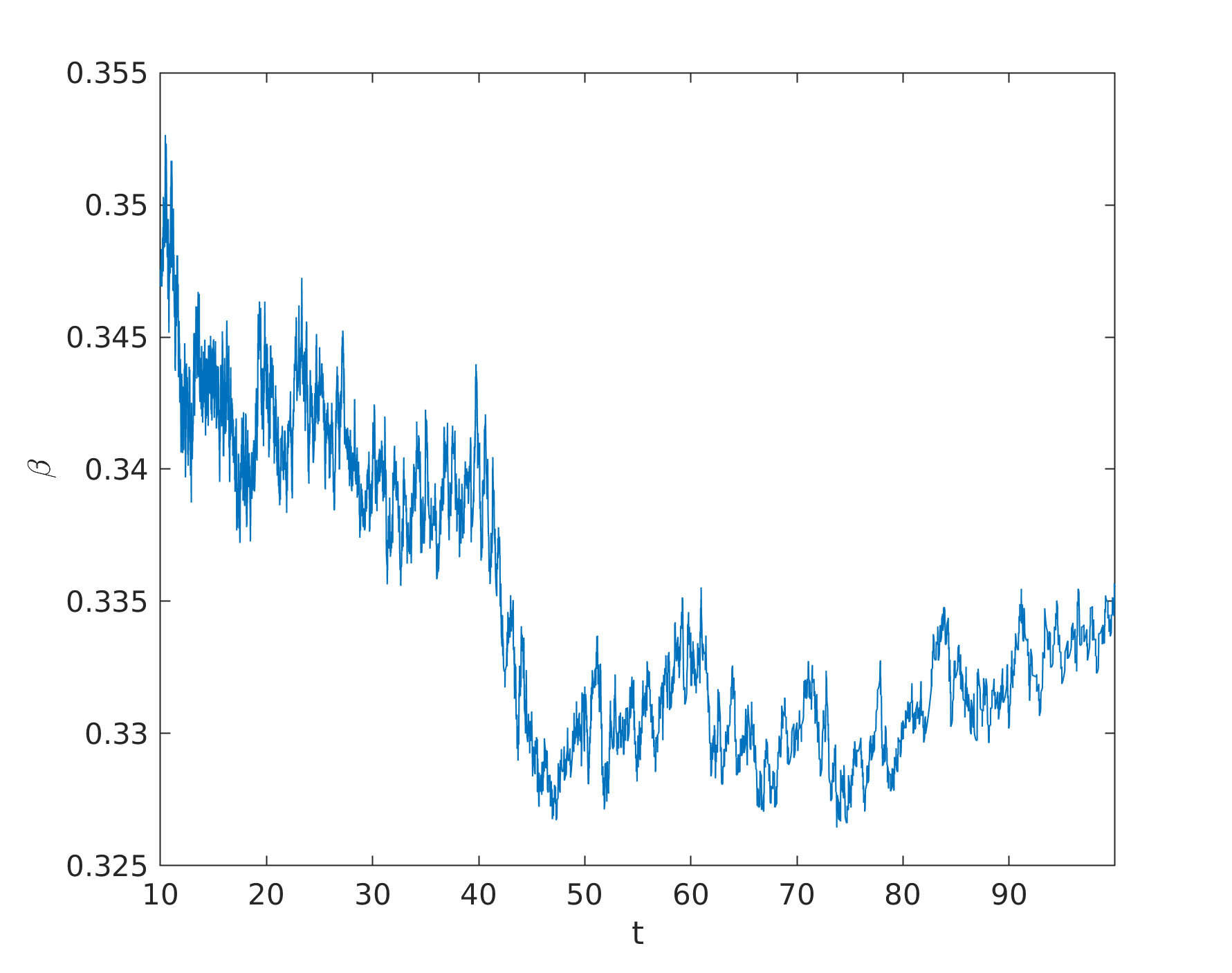}
    \caption{$4\pi \times 4\pi$}
    \label{fig7:mean4pi512Cooke}
    \end{subfigure}
    \vskip\baselineskip
    \begin{subfigure}[b]{0.49\textwidth}
        \centering
        \includegraphics[width=1.0\textwidth]{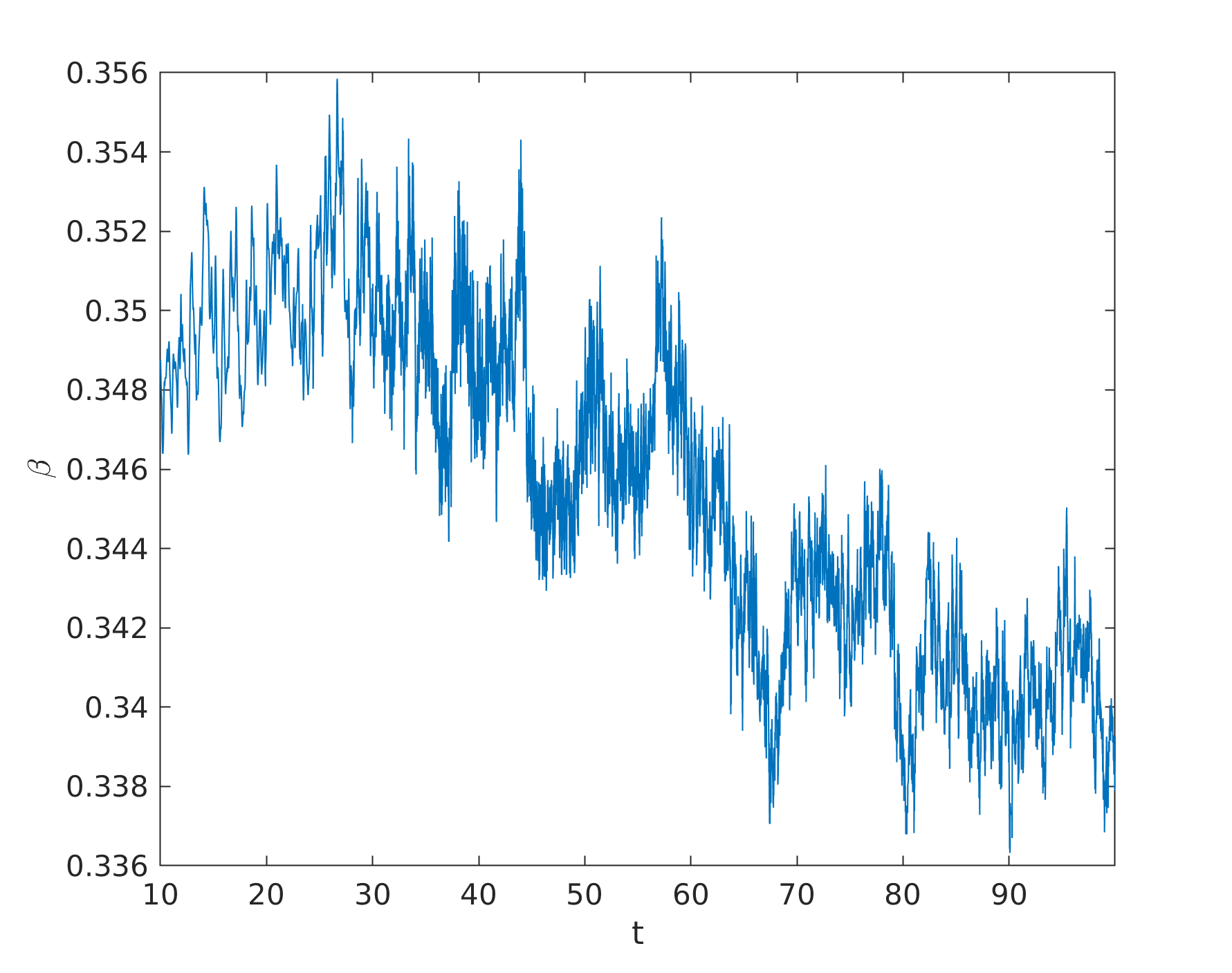}
    \caption{$8\pi \times 8\pi$}
    \label{fig7:mean8pi1024Cooke}
    \end{subfigure}
    \begin{subfigure}[b]{0.49\textwidth}
        \centering
        \includegraphics[width=1.0\textwidth]{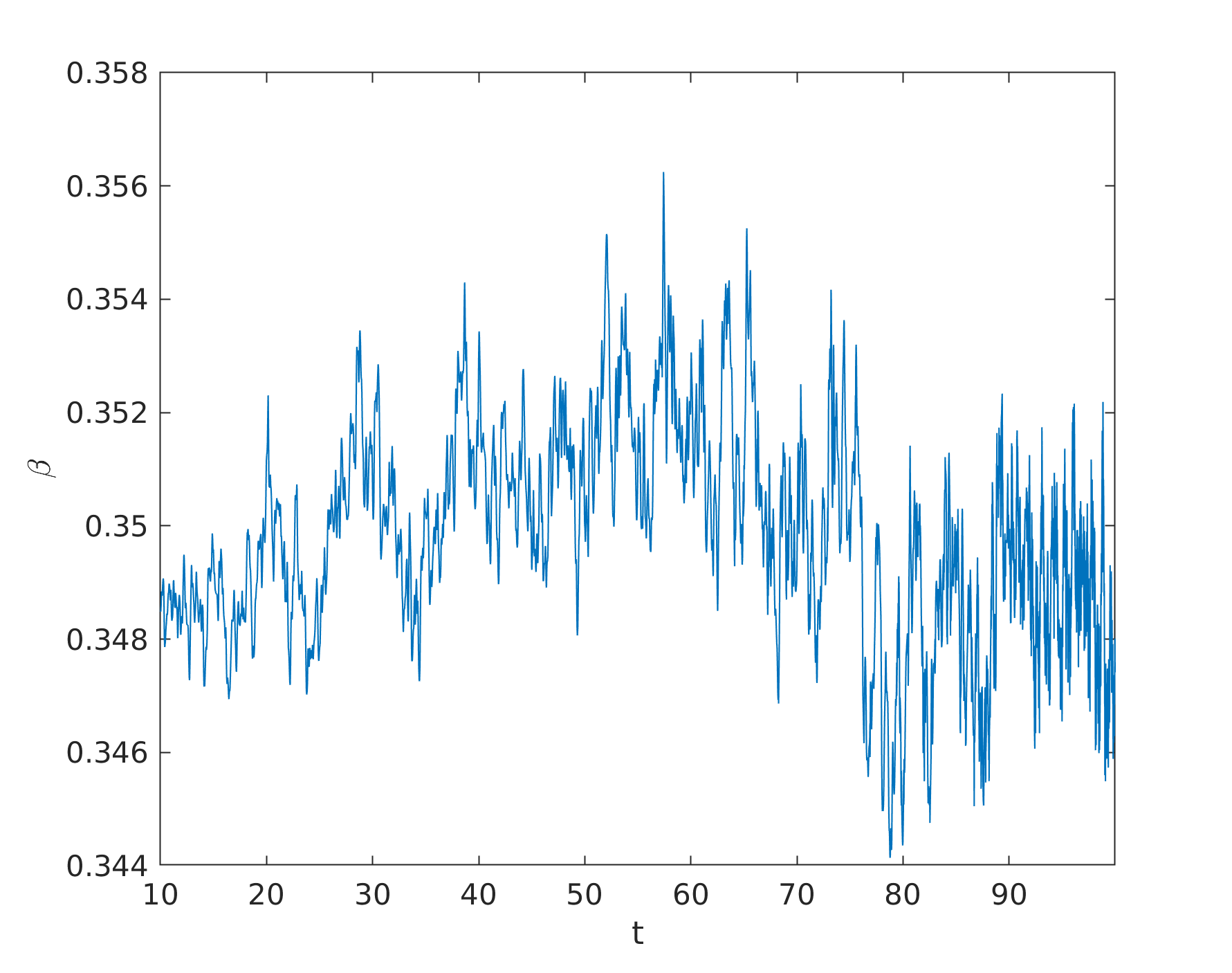}
    \caption{$16\pi \times 16\pi$}
    \label{fig7:mean16pi2048Cooke}
    \end{subfigure}
        \caption{Plots of the mean value of $\beta$ as a function of time for the Cahn--Hilliard--Cook equation with a symmetric mixture over varying domain sizes.}
    \label{fig7:meanCooke}
\end{figure}

\section{Cahn--Hilliard--Cook Equation}
\label{sec7:cooke}
We now extend our analysis to the Cahn--Hilliard--Cook equation, this equation is essentially the same as in equation~\eqref{eq2:ch} except we now have an additional forcing term 
\begin{equation}
\frac{\partial C}{\partial t} = D \nabla^2  \left(C^3-C-\gamma\nabla^2 C\right) + \eta(x, t)
\end{equation}
Here $\eta(x,t)$, the forcing, is a Gaussian noise term to capture the effect of thermal fluctuations within the mixture.
The expectation value of the noise is 
\begin{equation}
\left< \eta(x,t) \right> = 0
\end{equation}
while it is correlated in space by the expression
\begin{equation}
\left< \eta(x_1,t_1) \eta(x_2,t_2) \right> = - \sigma M \delta(t_1 - t_2) \Delta \delta(x_1 - x_2)
\end{equation}
where $\sqrt{\sigma}$ is the intensity of the thermal fluctuation and $M$ is the mobility which we set to $1$.
These conditions imply no overall drift force, a partial correlation of noise in space but no correlation in time and finally the Laplacian operator guarantees the forcing satisfies the overall conservation law at play in the Cahn--Hilliard equation~\cite{cookParallel}.
The Cahn--Hilliard--Cook equation allows us to set the initial condition to $0$ everywhere and repeat our previous analysis except we allow the coarsening be introduced to the system by thermal fluctuations rather than an artificial initial condition, a more physically realistic setting. 
We extend the same GPU methodology and scheme to the Cahn--Hilliard--Cook equation as we have in Section~\ref{sec7:methodology}.
In order to numerically deal with the $\eta(x,t)$ term we follow the methodology as presented in~\cite{cookParallel, cookNuc}, thus the expression of $\eta$ becomes
\begin{equation}
\eta_{i,j} = \sqrt{\frac{\sigma}{\Delta x^2 \Delta t}} \nabla \cdot {\bf{\rho_{i,j}}}
\end{equation}
Where ${\bf{\rho_{i,j}}}$ is a vector field of dimension $2$ made up of numbers drawn from a $N(0,1)$ distribution, this is achieved in practice by drawing numbers from a uniform distribution and using the Box--Muller transformation to populate the vector field at each time step. For the purposes of this chapter we set the value of $\sigma = 1 \times 10^{-14}$ to ensure the forcing is small scale and does not dominate the results, the initial condition is $0$ everywhere and all other parameters are the same as those in the standard Cahn--Hilliard case. The development of the interconnected regions of the domain is the same as those in Figure~\ref{fig7:interconnected} with the dynamics producing a symmetric mixture, we now proceed immediately to the statistical analysis.

\begin{figure}
    \centering
    \begin{subfigure}[b]{0.49\textwidth}
        \centering
        \includegraphics[width=1.0\textwidth]{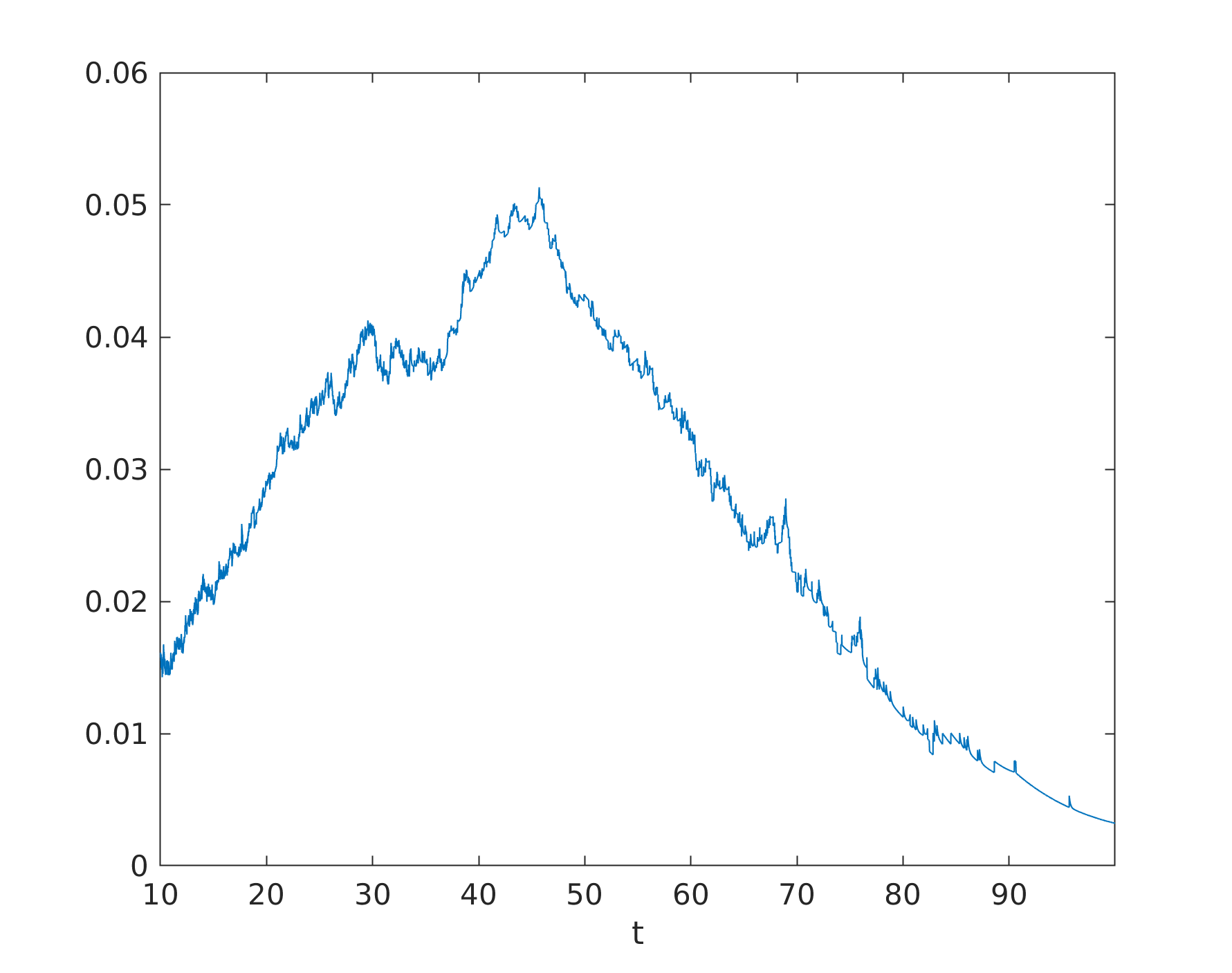}
    \caption{$2\pi \times 2\pi$}
    \label{fig7:variance2pi256Cooke}
    \end{subfigure}
    \hfill
    \begin{subfigure}[b]{0.49\textwidth}
        \centering
        \includegraphics[width=1.0\textwidth]{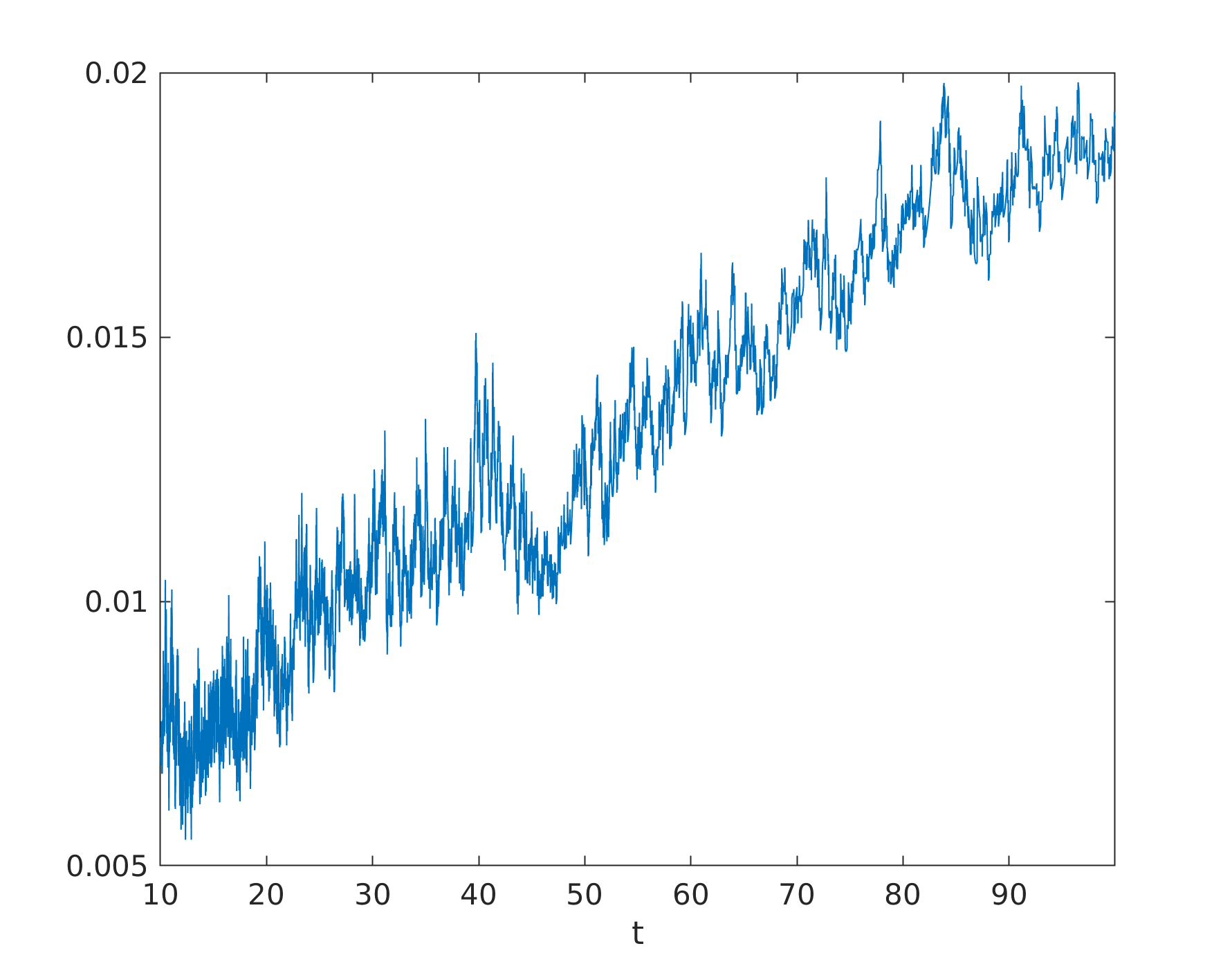}
    \caption{$4\pi \times 4\pi$}
    \label{fig7:variance4pi512Cooke}
    \end{subfigure}
    \vskip\baselineskip
    \begin{subfigure}[b]{0.49\textwidth}
        \centering
        \includegraphics[width=1.0\textwidth]{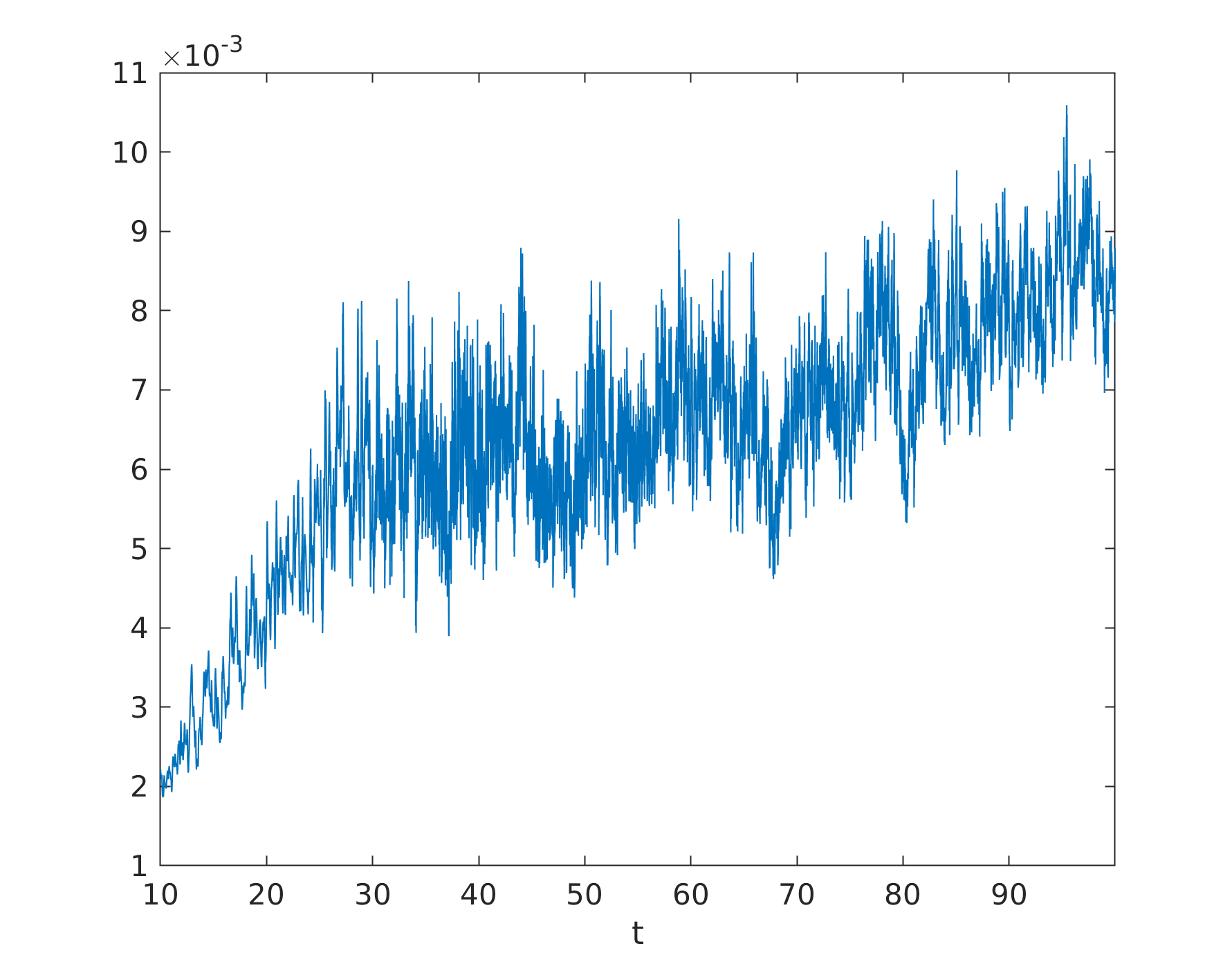}
    \caption{$8\pi \times 8\pi$}
    \label{fig7:variance8pi1024Cooke}
    \end{subfigure}
    \begin{subfigure}[b]{0.49\textwidth}
        \centering
        \includegraphics[width=1.0\textwidth]{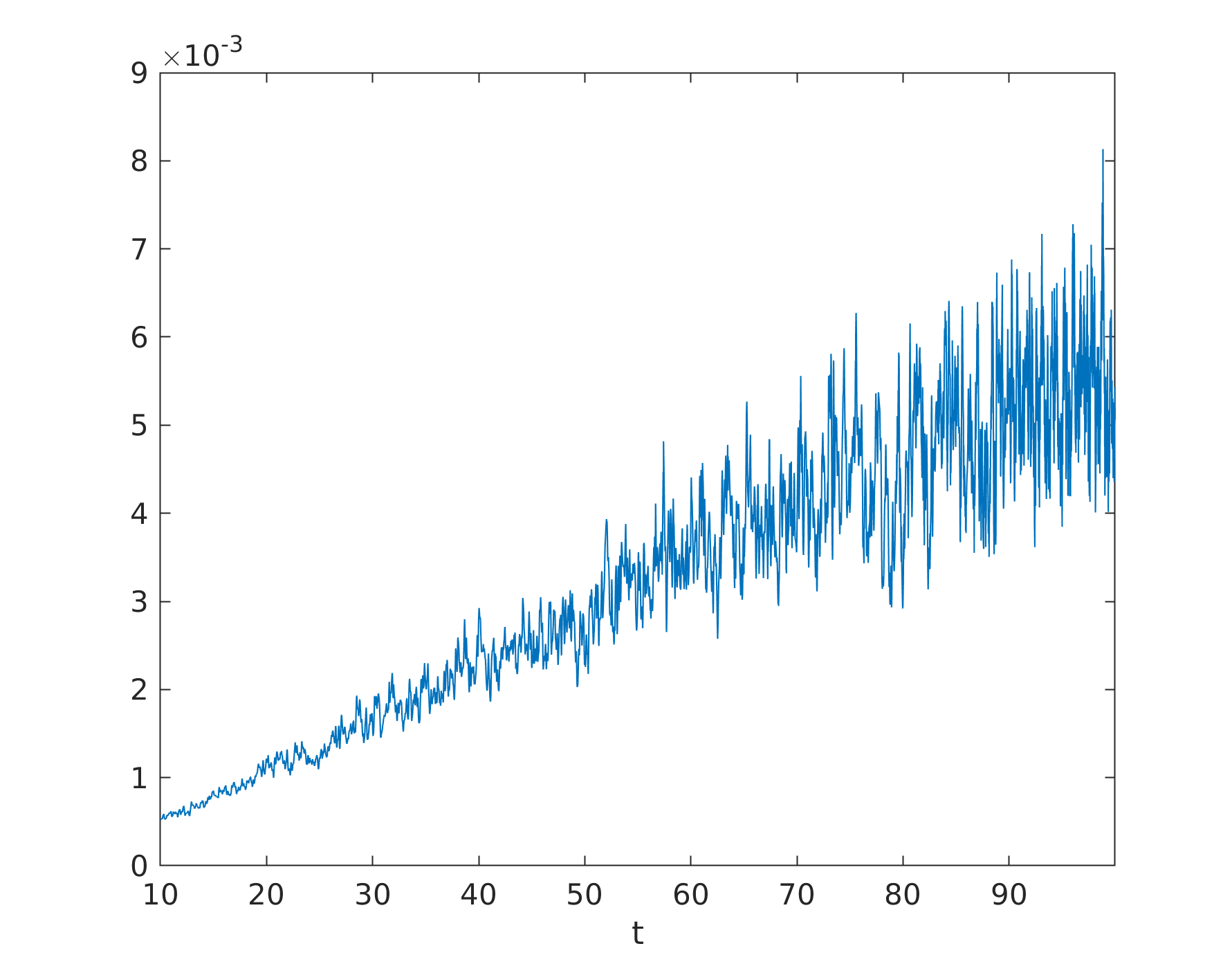}
    \caption{$16\pi \times 16\pi$}
    \label{fig7:variance16pi2048Cooke}
    \end{subfigure}
        \caption{Plots of the variance of $\beta$ as a function of time for the Cahn--Hilliard--Cook equation with a symmetric mixture over varying domain sizes.}
    \label{fig7:varianceCooke}
\end{figure}

We begin by studying the \Acomment{distribution--time} plots of the simulations in Figure~\ref{fig7:spacetimeCooke}.
The effect of the finite size of the domain can be seen particularly the $2\pi \times 2\pi$ domain where the majority of the simulations experience their growth rate $\beta$ going to $0$ as the domains stop growing due to hitting late stage, steady state, finite size effects such as those developing in Figure~\ref{fig7:contour500cahn}.
It can be seen clearly that as the domain size is increased the distribution moves towards the expected value of $\beta = 1/3$ and the distribution narrows.
The narrowing of the distributions and the growth of the peaks is emphasised in the growth of the maximum values in the \Acomment{distribution--time} plots, captured in the colour--bars where between $4\pi$ and $16\pi$ the peak increases by a factor of $\approx 2.5$. 
We expect this trend to continue as the domain size is increased.

We can see again how the domain of size $2\pi \times 2\pi$ has its growth rate reduced to a distribution around $0$ in Figure~\ref{fig7:meanCooke}.
The values of the mean rapidly drops off in time, for the larger domains we can see much clearer trends around $1/3$ and above. 
The values above $1/3$ are due to the positive skew which is discussed below.
As the domain size is increased the various plots of the mean also make clear how the distribution of $\beta$ becomes steadier, levelling out at large values of $t$.
The steadiness of the distribution improves with increasing domain size.
Thus not only are the position and height functions of the finite size effects but also the steadiness of the distribution.

\begin{figure}
    \centering
    \begin{subfigure}[b]{0.49\textwidth}
        \centering
        \includegraphics[width=1.0\textwidth]{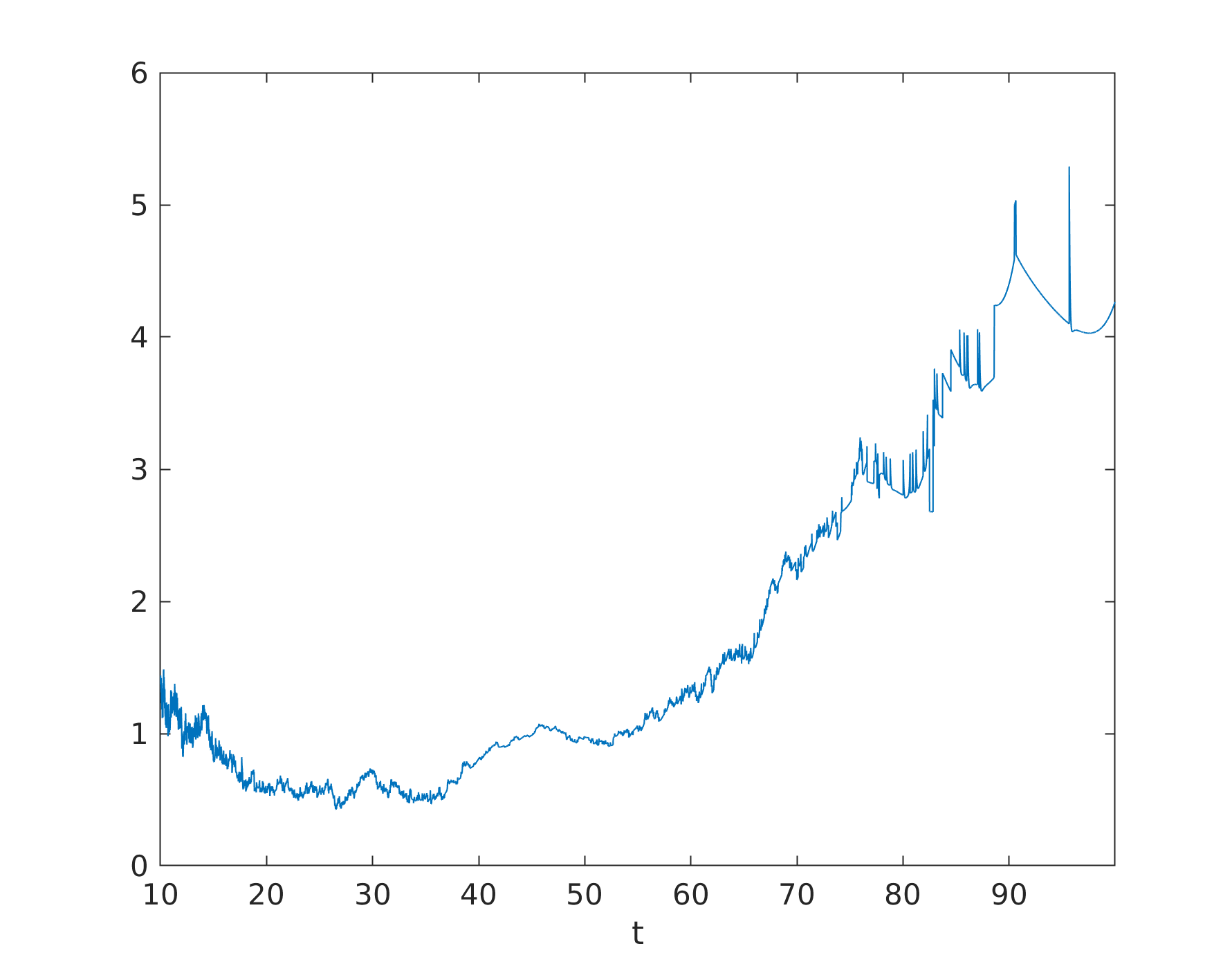}
    \caption{$2\pi \times 2\pi$}
    \label{fig7:skew2pi256Cooke}
    \end{subfigure}
    \hfill
    \begin{subfigure}[b]{0.49\textwidth}
        \centering
        \includegraphics[width=1.0\textwidth]{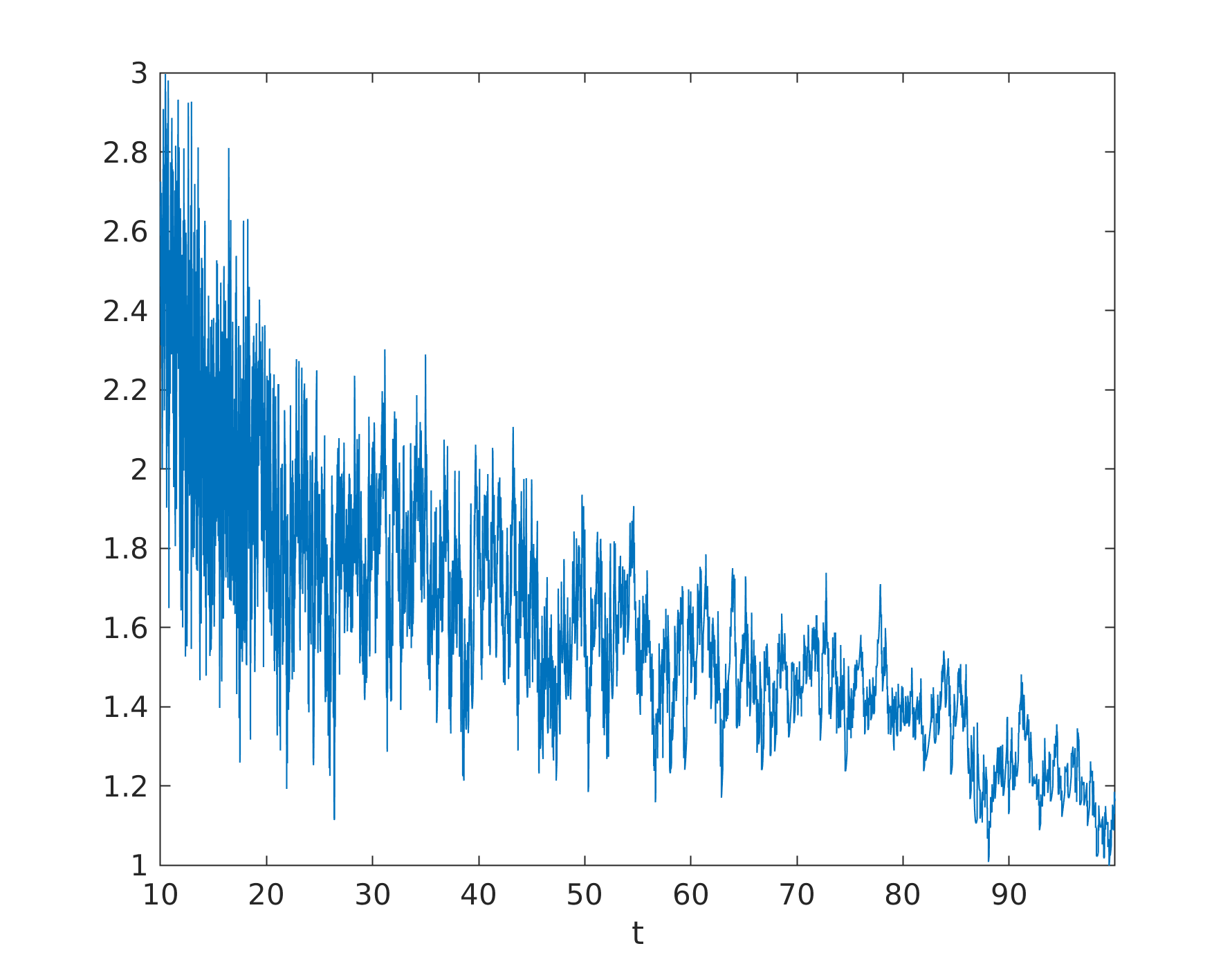}
    \caption{$4\pi \times 4\pi$}
    \label{fig7:skew4pi512Cooke}
    \end{subfigure}
    \vskip\baselineskip
    \begin{subfigure}[b]{0.49\textwidth}
        \centering
        \includegraphics[width=1.0\textwidth]{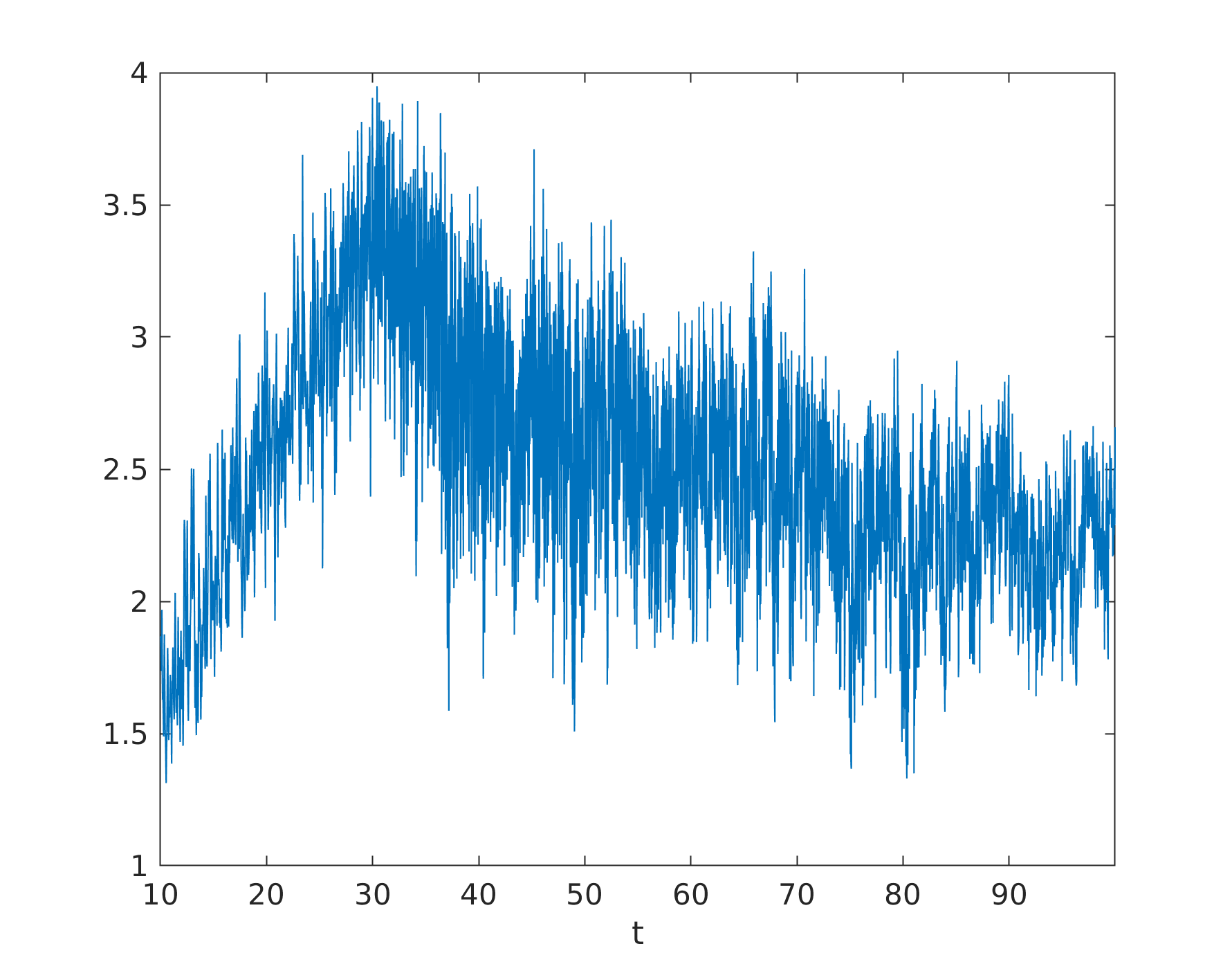}
    \caption{$8\pi \times 8\pi$}
    \label{fig7:skew8pi1024Cooke}
    \end{subfigure}
    \begin{subfigure}[b]{0.49\textwidth}
        \centering
        \includegraphics[width=1.0\textwidth]{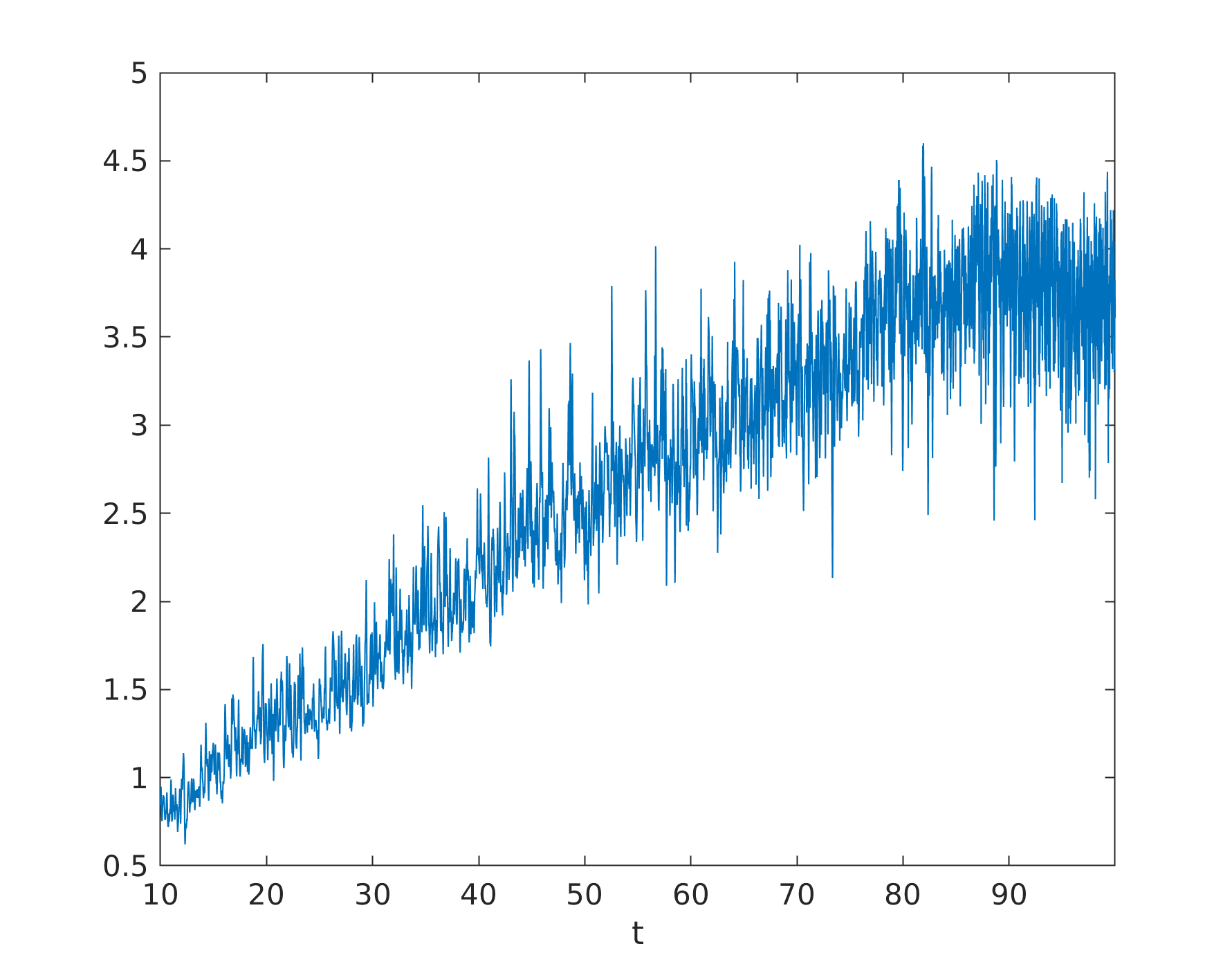}
    \caption{$16\pi \times 16\pi$}
    \label{fig7:skew16pi2048Cooke}
    \end{subfigure}
        \caption{Plots of the skewness of $\beta$ as a function of time for the Cahn--Hilliard--Cook equation with a symmetric mixture over varying domain sizes.}
    \label{fig7:skewCook}
\end{figure}

The variance of $\beta$ for the case of a symmetric Cahn--Hilliard--Cook mixture can be seen in Figure~\ref{fig7:varianceCooke}.
The time dependence of the variance is clearly evident in these plots, particularly at early times, thus we can say that the underlying dynamics of the system is driven by anomalous diffusion. 
The development of a steady state with respect to the variance can be seen in the $8\pi \times 8\pi$ plot in Figure~\ref{fig7:variance8pi1024Cooke}, this steady state suggests that at late times the diffusion is constant rather than anomalous. 
We can see that the $16\pi \times 16 \pi$ domain has yet to reach this steady state, while the two smaller domains $2\pi$ and $4\pi$ are experiencing greater movement in variance due to experiencing considerably earlier stage finite size effects in some of their simulations.
This is particularity clear in Figure~\ref{fig7:spacetime4pi512Cooke} where the width of the distribution is extremely wide due a diverse range of simulation behaviours, some approaching finite size effects like those in Figure~\ref{fig7:interconnected} while others are still growing.
A broad range of behaviours is also evident in the earlier development of the $2\pi$ case shown in Figure~\ref{fig7:spacetime2pi256Cooke}.

\begin{figure}
    \centering
        \includegraphics[width=0.6\textwidth]{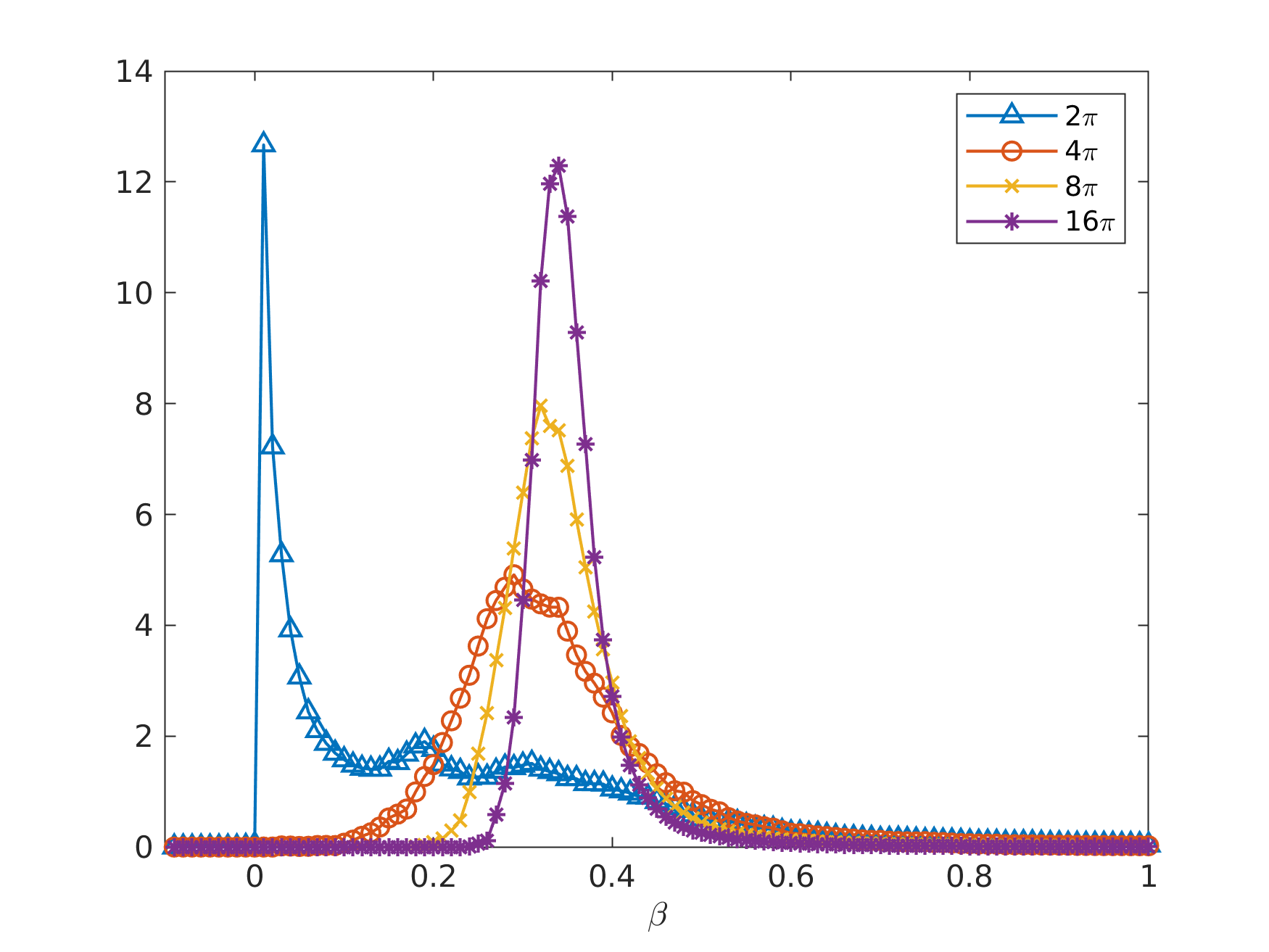}
        \caption{Plot showing the evolution of the distribution of $\beta$ for the Cahn--Hilliard--Cook equation with increasing domain size, we can see a clear similarity between these results and those extracted from the ODE model in Figure~\ref{fig7:betaVaryN}.}
    \label{fig7:cookeBeta}
\end{figure}

In Figure~\ref{fig7:skewSymmetric} we present the skewness of $\beta$ as a function of time.
It is clear in all cases that the distribution is indeed positively skewed, this ensures that no negative growth rates appear which is an expected result, there should be no reduction is domain length scales in any of the simulations.
Like the variance the skewness is time dependent and again we can see the development of a steady state in the $8\pi \times 8\pi$ case shown in Figure~\ref{fig7:skew8pi1024}.
Again the $2\pi$ and $4\pi$ results are dominated by early stage finite size effects and the $16\pi$ domain has yet to reach its steady phase of growth.

Finally in Figure~\ref{fig7:symmetricBeta} we plot the histograms of all values of $\beta$ presented in this section for various domain sizes, the histograms are sampled across all values of $\beta$ between $10 < t < 100$ shown in this section.
We can see a clear similarity between the results presented here and those for the ODE simulations of the bubble model presented in Figure~\ref{fig7:betaVaryN}.
The smallest domain/number of bubbles has a distribution centred at $0$ and then as the number of bubbles/size of the domain is increased the distribution moves toward $1/3$.
In this study of the Cahn--Hilliard--Cook equation we can see a clear trend of increasing height of the distribution and narrowing around $1/3$ as domain size increases.
\Acomment{
It again evident here, as in the symmetric Cahn--Hilliard results that as the domain size increases the distribution should approach a $\delta$ function at $\beta = 1/3$ as predicted by our LSW results for Ostwald ripening on a finite domain.
}

Summarising our results for the evolution of symmetric mixtures simulated with the \Acomment{Cahn--Hilliard--Cook} equation we have shown how $\beta$ develops with increasing domain size is in--keeping with the results for $\beta$ when increasing bubble numbers in the ODE model.
The positions and steadiness of the distributions are dependent on the finite domain size and in particular the steadiness improves with increasing domain size. 
The finite size of the domain leads to a smearing of the $\delta$ function at $1/3$ predicted by LSW theory and in the limit of an infinite domain they will indeed approach the LSW predictions.


\section{Discussion and Conclusions}
\label{sec7:disc}
\Acomment{In this section we first present a brief statistical convergence study to show that our statistical results in the previous sections are independent of grid resolution.
We then present a unified discussion to provide} a cohesive picture of the Cahn--Hilliard equation results versus the predictions of LSW Theory and the ODE simulation results and how finite bubble numbers/finite domain sizes effect the $t^{1/3}$ scaling result.
Following this the derivation of a stochastic model is presented which captures the dynamics of the results and provides a unified theory to explain the results presented throughout this chapter.
Finally we present some concluding remarks and potential future avenues of work.

\begin{figure}
    \centering
        \includegraphics[width=0.6\textwidth]{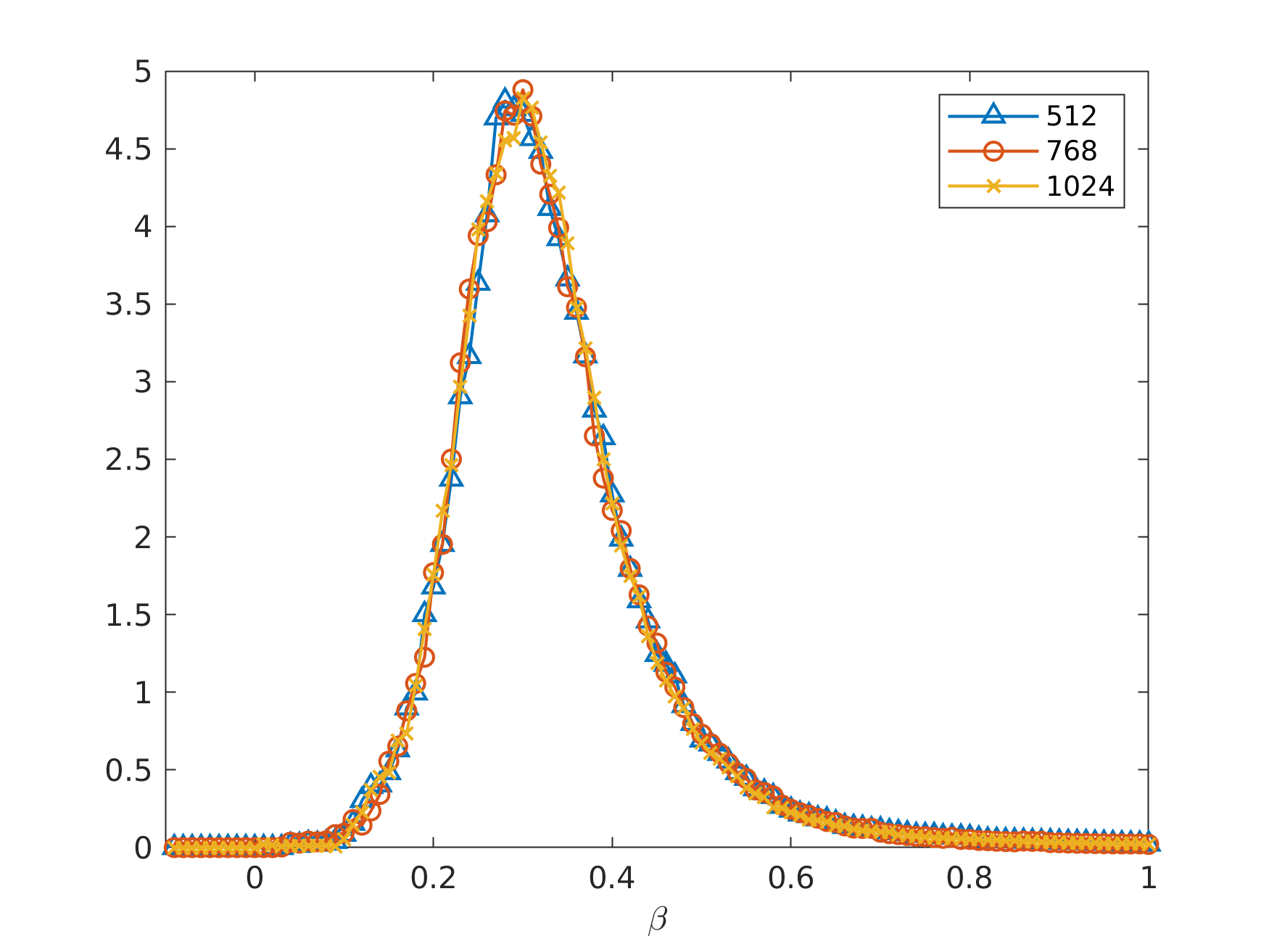}
        \caption{\Acomment{Plot showing the distribution of $\beta$ for a range of domain resolutions on a $4\pi \times 4 \pi$ domain. The initial condition was a symmetric mixture. We can see from this plot that the statistical distribution of $\beta$ due to finite size effects is independent of grid resolution.}}
    \label{fig7:betaConvergeNumerical}
\end{figure}

\Acomment{
\subsection{Grid independence of statistical picture}
In order to show grid independence of the statistics presented we examine a batch of simulations with symmetric mixture on a $4\pi \times 4\pi$ domain.
The batches of simulations are completed with resolutions of $512 \times 512$, $768 \times 768$ and $1024 \times 1024$ points.
In Figure~\ref{fig7:betaConvergeNumerical} we can see the results of the distribution of $\beta$ for $10 < t < 100$. 
It is clear from this plot that the distribution of $\beta$ due to finite size effects is independent of grid resolution and thus is not a numerical feature.
We now summarise the results presented in this chapter and then provide a physical model to explain this statistical behaviour.
}

\subsection{Summary of Results: Cahn--Hilliard vs. LSW Theory vs. ODE Simulations}
Throughout our four independent studies of coarsening phenomena presented in Sections~\ref{sec7:drop_pop},~\ref{sec7:CH},~\ref{sec7:CH_symm} and~\ref{sec7:cooke} we witnessed many repeated features and trends for the distributions of $\beta$.
It is this repetitive nature that allows us to now draw concrete universal conclusions to all of these results.
The presence of a finite domain/finite number of bubbles, for which we now refer for this discussion to as finite size effects, has a clear and precise impact on the distribution of $\beta$.
The finite size smears the $\delta$ function at $\beta = 1/3$, predicted by LSW theory, into a distribution which peaks at a value of between $0$ and $1/3$.
This distribution is always positively skewed, in keeping with the physics that the domain is always coarsening, thus the length--scales are increasing.
The distribution has a variance that is \Acomment{time--dependent}, thus the underlying dynamics is driven by anomalous diffusion, in other words a non--constant diffusion coefficient.
As the finite size of the domain is made larger, the distribution will centre with a peak at $1/3$ and this peak will grow and narrow to yield the predicted $\delta$ function.
Finally the steadiness of the distribution improves with increased domain sizes, this is reflected in the behaviour of the various statistical moments captured in this study.

Now that we have established a clear picture of how $\beta$ behaves in a finite domain we now present a unified model capturing the behaviour of Ostwald Ripening, the Cahn--Hilliard equation with symmetric and asymmetric mixtures and the Cahn--Hilliard--Cooke equation.
The theory relies on ideas similar to that of Kolmolgorov and his famous 1941 theory of turbulence~\cite{kol1941} and his lesser known 1962 extensions to that theory~\cite{kol1962}.
Kolmogorov's 1941 theory provides a universal scaling law for regions of flow that are sufficiently small when compared to the global size of the domain.
So in other words in a large flow there should appear subregions which themselves are 'typical' in a statistical sense. 
Here `typical' meaning statistically stationary, homogeneous and isotropic with $E(k) \propto k^{-5/3}$.
But in reality any single arbitrary region has its own scaling which depends on local features of the flow.
Kolmogorov states this in his 1962 theory where he revises his arguments from 1941 and states that $E(k) \propto k^{-5/3}$ is asymptotically valid~\cite{kol1962, kraich1974} and in reality for a given subregion of the flow $E(k) \propto k^{-5/3 - \mu}$ where $\mu$ depends on the specific local non-linear interactions within the Navier-Stokes equation. 
The dependence of a given scaling of a subregion on local values also means that this distribution is not a feature of the central limit theorem.

It is this idea we extend to the area concerned, a typical solution of the Cahn--Hilliard equation will not exhibit a scaling of exactly $t^{1/3}$ and instead scale like $t^{\beta}$.
But where $\beta$ is a value drawn from distribution with the properties expressed above, a distribution with a mean of $1/3$, a time dependent variance and a positive skew.
In addition to this, the distribution will narrow with increasing domain size, this behaviour is much like taking more and more averages of homogeneous regions in the Kolomogorov theory, the larger the domain the more typical it is and thus the more \Acomment{likely} it is for the scaling to have a value of $\beta = 1/3$.
We now present a theory to extract such a model.

\subsection{Stochastic Model for Symmetric Mixture Scaling}
\label{sec7:stochastic}
We examine the growth of an interface in a generic Cahn--Hilliard system, on the interface between two phases of the mixture we have that
\begin{equation}
C(\vecx_I, t) = 0
\end{equation}
In order to study how this interface develops we take the time derivative to give
\begin{equation}
\left.\frac{\partial C}{\partial t}\right|_{\vecx_i} + \frac{\partial \vecx_I}{\partial t} \cdot \nabla C = 0
\label{eq7:interTimeDer}
\end{equation}
From here the normal interface velocity an be identified as $V = \frac{\partial \vecx_I}{\partial t} \cdot \hat{\vecn}$ and we can rearrange equation.~\eqref{eq7:interTimeDer} to give
\begin{equation}
V = \left. - \frac{1}{|\nabla C|} \frac{\partial C}{\partial t} \right|_{\vecx_I}
\end{equation}
Approximating $|\nabla C|$ using jump in concentration between the two phases across an interface of width $l$ characterised by $\sqrt{\gamma}$ gives us the expression
\begin{equation}
|\nabla C| \approx \frac{C_+ - C_-}{l}
\end{equation}
Similarly approximating \Acomment{the} Laplacian term at the interface with
\begin{equation}
\nabla^2 \mu \approx \frac{\hat{\vecn} \cdot \nabla \mu_+  - \hat{\vecn} \cdot \nabla \mu_-}{l}
\end{equation}
allows us to substitute equation~\eqref{eq2:ch} in our expression for $V$ and then recast the interface velocity in terms of Mullins-Sekerka Dynamics
\begin{equation}
V = - \frac{[\hat{\vecn} \cdot \nabla \mu]}{[C]}
\label{eq7:vel}
\end{equation}
where $[A] = A_+ - A_-$ is the jump the value of quantity $A$ across an interface.
Rigorous application of Mullins-Sekerka Dynamics to the Cahn-Hilliard equation by Pego in~\cite{pego1989} yielded the same result as equation~\eqref{eq7:vel}.
This was achieved using expansions of order $\gamma t$ to examine the time scale of interface migration.
This examination additionally led to a relation between chemical potential at the interface and the curvature of the interface $\kappa$

\begin{equation}
\mu_I = - \frac{S \kappa}{[C]}
\label{eq7:pego}
\end{equation}
where $S$ is the surface tension coefficient, equation~\eqref{eq7:vel} can then be recovered from further analysis. 
Physically these results tells us that interface motion is due to diffusion away from the interface caused by a mismatch in mass flux. 
We can now use these results to recover the standard $O(t^{1/3})$ scaling result of the Cahn--Hilliard equation. 
We relate the interface velocity to the rate of change of an arbitrary bubble radius $R_b$, where the radius is defined in the sense of the signed distance function to the interface when dealing with \Acomment{a} symmetric mixture or an actual bubble radius when dealing with Ostwald Ripening 
\begin{equation}
V = \frac{d R_b}{dt}
\end{equation}
The jump in normal derivative across the interface can also be related to the bubble radius by 
\begin{equation}
[\hat{\vecn} \cdot \nabla \mu] \approx \frac{\mu_I}{R_b}
\end{equation}
Combining these along with Eq.~\eqref{eq7:vel} and Eq.\eqref{eq7:pego} gives
\begin{equation}
\frac{d R_b}{d t} = \Gamma \frac{1}{R_b^2}
\end{equation}
where $\Gamma = S / [C]^2$ is a constant and we have also used that $R_b = 1 / \kappa$.
Integrating and allowing $t \rightarrow \infty$ the standard bubble scaling result for the Cahn-Hilliard equation is recovered
\begin{equation}
R_b \propto t^{\frac{1}{3}}
\label{eq7:StandardScale}
\end{equation}

Taking the value of $\Gamma$ above as a constant is a idealisation, actual bubbles experience a local value which changes in time.
This local value is where connection with Kolomogorov can be seen as considering it to be non--constant will  lead to a distribution of $\beta$ around a value of $1/3$.
We denote the desired local value as $\Gamma_t$. 
So now for the bubble radius we have a new version of equation~\eqref{eq7:StandardScale} where substituting in $\Gamma_t$ gives
\begin{equation}
R_b = \Gamma_t^{\frac{1}{3}} t^{\frac{1}{3}}
\end{equation}
Taking the difference of this value between $t$ and $t + \Delta t$ we get
\begin{equation}
R_b(t + \Delta t) - R_b(t) = \Gamma_{t+\Delta t}^{\frac{1}{3}} (t + \Delta t)^{\frac{1}{3}} - \Gamma_t^{\frac{1}{3}} t^{\frac{1}{3}}
\label{eq7:diffRb}
\end{equation}
$\Gamma_{t + \Delta t}^{\frac{1}{3}}$ can be eliminated using
\begin{equation} 
\Gamma_{t + \Delta t}^{\frac{1}{3}} - \Gamma_{t}^{\frac{1}{3}} = \sigma \Gamma_{t}^{\frac{1}{3}} \Delta w_t
\end{equation}
where $\Delta w_t$ is a generic stochastic process and $\sigma$ is the variance of this process.
Substituting into Eq.~\eqref{eq7:diffRb} yields 
\begin{equation}
\Delta R_b = R_b \left( \frac{\Delta t}{3t} + \sigma \Delta w_t\right)
\end{equation}
which can then be used to find a stochastic differential equation describing the scaling of a bubble in time due to local dependency of $\Gamma$
\begin{equation}
\frac{t}{R_b} \frac{dR_b}{dt} = \frac{1}{3} + t \sigma \xi_t
\label{eq7:stochmodel}
\end{equation}
where $\xi_t = \frac{\Delta w_t}{\Delta t}$.
This is exactly an equation for $\beta$
\begin{equation}
\beta = \frac{1}{3} + t \sigma \xi_t
\label{eq7:betaDConst}
\end{equation}
Thus we have recovered a model for $\beta$ that captures the desired behaviour the distributions found in Sections~\ref{sec7:drop_pop},~\ref{sec7:CH},~\ref{sec7:CH_symm} and~\ref{sec7:cooke}.
We remark on the time dependence can be seen on the RHS of our recovered expression, this is in--keeping with the expression for the scaling from LSW theory discussed in Section~\ref{sec7:LSW} where a $t$ dependence also appears.
The time dependence also ensures that the variance $\sigma$ is time--dependent, matching our numerical results for the variance which was indeed time dependent.
Another remark we make is that as the expression $t \sigma \xi_t$ goes to $0$, as expected in an infinite domain, we indeed recover exactly the LSW theory result of $\beta = 1/3$.
The positive skew of the expression is ensured by the fact that we have extracted it by manipulating $\Gamma$ which itself depends on the surface tension and the square of the jump in $C$ across an interface, both strictly positive quantities.

Finally we note \Acomment{that in previous studies~\cite{latestagecoarsePRE, spectralCahn} where} the value of $\beta$ has been determined by fitting lines to the equation $R_b(t)^3 = mt + c$ \Acomment{there are errors} on the fitted line. 
Using the model we have developed in equation~\eqref{eq7:betaDConst} these error can now be attributed to the underlying dynamics of the Cahn--Hilliard equation and are not some sampling or numerical error.
We also note that it is not surprising that a line \Acomment{fit would} recover the value of $\beta = 1/3$ as the process of fitting a line in some sense captures the average trend of a piece of data, just as averaging our model will yield $\beta = 1/3$.

\subsection{Conclusions}
We conclude the chapter by summarising our results.
It has been shown that there does indeed exist a distribution for $\beta$ at finite domain sizes, whether that is a finite number of bubbles in the case of Ostwald ripening or finite domain sizes in the case of the symmetric Cahn--Hilliard equation.
The results also extend to the Cahn--Hilliard--Cooke equation.
Specifically also we have shown that in the limit of infinite domains we will get a distribution of $\delta(\beta - 1/3)$ for interface growth.
Finally we have presented a stochastic model that captures this behaviour of $\beta$.

\lhead{\emph{Conclusions}}  
\chapter{Conclusions}
\label{chapter:con}
We conclude the thesis with a brief overview of the research conducted throughout and then present a short discussion on potential future work carrying on from this thesis.

\section{Summary}
In this thesis we have explored two major topics, GPU computing for PDEs and then using this methodology to study the Cahn--Hilliard equation.
With regards the GPU computing a library to \Acomment{apply} finite difference stencils to batched 1D and 2D equations has been developed.
In addition to this we have developed several versions of pentadiagonal solvers which take advantage of various aspects of both the GPU and the systems of equations being solved, this allowed us to outperform the state of the art algorithm in the cuSPARSE library in each case.

With regards the Cahn--Hilliard equation we have presented two results with regards the 1D equation.
We have numerically confirmed the 1D theoretical result that on average the length--scale is proportional to $\log(t)$.
Then we presented a methodology for producing and classifying a large dataset of 1D solutions to produce flow--pattern maps, a methodology that can be extended to further PDEs.
While in 2D we have examined the distribution of $\beta$ presenting a new picture of how this parameter behaves in finite domain sizes, we concluded this discussion by presenting a stochastic model for the distribution.

\section{Future Work}
With regards future extensions of work conducted in this thesis the following is a non--exhaustive list of potential areas of work.

\begin{itemize}
\item The extension of the cuSten library to more general data types such as \codeword{float} and \codeword{int}. 
Indeed it is intended that the next release will cover the former by introducing function templates to deal with the generalisation. 

\item Further extension of the library to 3D stencils.
This will present new challenges with how to optimally access the data.

\item Optimisation of the library to use warp shuffles to perform the core stencil operations, limiting the number of reads of shared memory.
As these are register operations they should produce increased performance.

\item Further extend the cuPentBatch solver and its tridiagonal equivalent to allow for variable vector lengths within a batch and only store exactly a single copy of each LHS matrix needed. 
So should there exist only 6 say LHS matrices within the batch but 100000 RHS matrices then the RHS matrices could be associated with the correct LHS.
This would bring the development of these functions to their natural conclusion with optimisation for both data storage and allowing for generalised matrix dimensions within a batch.

\item Apply and further develop the methodologies for flow pattern maps within the study of two--phased flow and look for applications for studying large parameter spaces in more general PDEs.

\item Unify the statistical picture of the Cahn--Hilliard equation with similar statical studies of the \Acomment{Kuramoto--Sivashinsky (KS)} and \Acomment{Kardar--Parisi--Zhang (KPZ)} equations. 
We note here the connections between the Convective--Cahn--Hilliard equation and the KS equation~\cite{golovin2001convective} and also a recent work showing the KS equation to be part of the KPZ universality class~\cite{roy2019one}.
While potentially aspirational, a picture of a common behaviour underlying the Cahn--Hilliard, KS and KPZ equations is beginning to emerge, particularly when tackled from a statistical physics stand--point. 
Future work would involve further exploring this common behaviour both numerically and theoretically, hopefully leading to a unification of the areas under one universal theory.
\end{itemize}


\addtocontents{toc}{\vspace{2em}} 

\appendix 
 
\lhead{\emph{Appendix}}

\addtocontents{toc}{\vspace{2em}}  
\backmatter

\label{Bibliography}
\lhead{\emph{Bibliography}}  
\bibliographystyle{unsrtnat} 
\bibliography{bibliography/bibliography.bib}  

\end{document}